\newcommand{\algname}[1]{\texttt{#1}}
\numberwithin{equation}{section}
\theoremstyle{plain}
\newtheorem{theorem}{Theorem}[section]
\newtheorem{corollary}[theorem]{Corollary}
\newtheorem{lemma}[theorem]{Lemma}
\newtheorem*{theorem*}{Theorem}
\theoremstyle{definition}
\newtheorem{definition}[theorem]{Definition}
\newtheorem{remark}[theorem]{Remark}
\newtheorem{observation}{Observation}
\newtheorem{example}[theorem]{Example}
\DeclareMathOperator{\Des}{Des}
\DeclareMathOperator{\des}{des}
\DeclareMathOperator{\negative}{neg}
\DeclareMathOperator{\maj}{maj}
\DeclareMathOperator{\col}{col}
\DeclareMathOperator{\fmaj}{fmaj}
\DeclareMathOperator{\sgn}{sgn}
\DeclareMathOperator{\Negative}{Neg}
\DeclareMathOperator{\C}{\mathfrak{C}}
\newcommand{\addresseshere}{%
  \enddoc@text\let\enddoc@text\relax
}
\title[Descent sets of cyclic permutations in types B and D]{Descent sets of cyclic permutations \\ in types B and D}
\author{Kevin Liu}
\address{Department of Mathematics and Computer Science, The University Of The South}
\email{keliu@sewanee.edu}
\date{}
\keywords{signed permutation, Coxeter group, permutation statistic, cyclic, descent, flag major index}
\subjclass{05A05, 05E16, 60C05}
\begin{document}

\begin{abstract}
   Elizalde constructed a bijection $\phi$ from the cyclic permutations $\pi\in S_{n+1}$ to the symmetric group $S_n$ satisfying $\Des(\pi)\cap \{1,2,\ldots,n-1\}=\Des(\phi(\pi))$. We give a corresponding result on the signed symmetric group $B_n$ by constructing a function $\Phi$ from the cyclic signed permutations $\pi\in B_{n+1}$ to $B_n$ satisfying $\Des(\pi)\cap \{0,1,\ldots,n-1\}=\Des(\Phi(\pi))$. Moreover, letting $D_{n+1}\subseteq B_{n+1}$ be the subgroup consisting of signed permutations with an even number of sign changes, we show that the restriction of $\Phi$ to the cyclic signed permutations in $D_{n+1}$ or its complement is a bijection. Our function $\Phi$ reduces to Elizalde's original bijection $\phi$ under the natural identification of the symmetric groups as subgroups of the signed symmetric groups. One application of our results is asymptotic normality of the descent and flag major index statistics on the cyclic signed permutations in $B_{n}$ and $D_n$. 
\end{abstract}

\maketitle 

\section{Introduction}

For a permutation $\pi\in S_n$, a descent is any index $i\in \{1,2,\ldots,n-1\}$ satisfying $\pi(i)>\pi(i+1)$, and the descent statistic counts the number of descents in a permutation. These are widely studied properties of permutations and appear in numerous contexts. Examples include card shuffling \cite{BayerDiaconis}, carries when adding numbers \cite{carries_shuffling,holte}, and flag varieties \cite{FULMAN1999390}. See also \cite{Petersen2015} for an extensive treatment of the descent statistic. 

Properties of descents have also been studied on permutations with a fixed cycle type, and the cyclic permutations are a special case. These are the permutations in $S_n$ whose cycle notation consists of a single cycle of length $n$, and we denote the set of cyclic permutations in $S_n$ using $\C_{S,n}$. For cyclic permutations, Elizalde established the following result.

\begin{theorem}\cite[Theorem 1]{elizalde}\label{thm:elizalde}
    For every positive integer $n$, there is a bijection $\phi:\C_{S,n+1}\to S_n$ satisfying
    \[\Des(\pi)\cap \{1,2,\ldots,n-1\}=\Des(\phi(\pi))\]
    for every $\pi\in \C_{S,n+1}$.
\end{theorem}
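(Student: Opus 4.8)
The plan is to reduce the existence of $\phi$ to a purely enumerative statement and then to a cycle-lemma–type count. The first observation is that the target descent set is intrinsic to a short prefix of $\pi$: for $\pi\in S_{n+1}$, an index $i\in\{1,\dots,n-1\}$ lies in $\Des(\pi)$ exactly when $\pi(i)>\pi(i+1)$, and both entries lie among the first $n$. Hence $\Des(\pi)\cap\{1,\dots,n-1\}$ equals the descent set of the standardization of the prefix word $\pi(1)\cdots\pi(n)$. Since $|\C_{S,n+1}|=n!=|S_n|$, to produce a bijection with the stated property it suffices to prove the refined equidistribution
\[\#\{\pi\in\C_{S,n+1}:\Des(\pi)\cap\{1,\dots,n-1\}=S\}=\#\{\rho\in S_n:\Des(\rho)=S\}\]
for every $S\subseteq\{1,\dots,n-1\}$; any fiberwise matching then gives $\phi$. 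I would resist using prefix-standardization itself as $\phi$: it preserves the descent set but is not injective on $\C_{S,n+1}$ (already in $S_4$ the two $4$-cycles $2413$ and $3421$ share the prefix pattern $231$), so genuine enumerative input is needed.

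By inclusion--exclusion over $S$ it is enough to treat the ``contained in'' version. Writing $S=\{s_1<\cdots<s_k\}$ and recalling $\#\{\rho\in S_n:\Des(\rho)\subseteq S\}=\binom{n}{s_1,\,s_2-s_1,\,\dots,\,n-s_k}$, the claim becomes: among the permutations of $\{1,\dots,n+1\}$ whose length-$n$ prefix is increasing within each block $[1,s_1],[s_1+1,s_2],\dots,[s_k+1,n]$, exactly the fraction $1/(n+1)$ are $(n+1)$-cycles. Indeed the number of such permutations is $(n+1)\binom{n}{s_1,\dots,n-s_k}$ -- choose the value $\pi(n+1)$ in $n+1$ ways, then distribute the remaining values among the blocks, each sorted increasingly -- so the assertion is precisely that $\binom{n}{s_1,\dots,n-s_k}$ of them are cyclic. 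This proportion holds in small cases and aggregates over all $S$ to the familiar count $n!=(n+1)!/(n+1)$ of all $(n+1)$-cycles.

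To prove the uniform $1/(n+1)$ proportion I would set up a free action of $\mathbb{Z}/(n+1)$ on the set of block-increasing-prefix permutations whose orbits each contain exactly one $(n+1)$-cycle; this is the cycle-lemma heart of the argument, and sending each orbit's unique cycle to its standardized prefix would furnish $\phi$ directly. The action I would build tracks the trajectory $n+1\mapsto\pi(n+1)\mapsto\pi^2(n+1)\mapsto\cdots$ of the cycle through the deleted position and rotates the roles of the values in a way designed to preserve the block-increasing condition. As a fallback I would invoke the Gessel--Reutenauer description of the joint distribution of descent set and cycle type, which expresses the left-hand count as $\langle \ell_{n+1},h_\alpha\rangle$ for the degree-$(n+1)$ symmetric function $\ell_{n+1}=\frac{1}{n+1}\sum_{d\mid n+1}\mu(d)\,p_d^{(n+1)/d}$ and the complete homogeneous $h_\alpha$ indexed by the composition $\alpha=(s_1,\dots,n-s_k,1)$ of $n+1$, reducing the identity to a symmetric-function computation.

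The main obstacle is pinning down the correct $(n+1)$-to-one correspondence: it must simultaneously respect the block-increasing prefix (so descent sets are preserved) and isolate a single cycle in each orbit (so the map is a bijection), and naive rotations of values or positions fail the first requirement. I expect to resolve this by defining the action through the cycle trajectory rather than through a literal relabeling, verifying both the orbit-size and one-cycle-per-orbit claims by induction on $n$, peeling off the block containing position $n$ and tracking how inserting the value $n+1$ into the trajectory interacts with that last block. Establishing descent preservation and invertibility of the resulting $\phi$ would then be a routine check.
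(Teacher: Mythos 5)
Your proposal takes a genuinely different route from the paper's, and it does prove the statement as literally asserted --- but only through your ``fallback.'' The paper's proof (Elizalde's, recovered here as the sign-free special case of \cref{thm:main_thm}; see \cref{appendix:A}) is an explicit construction: a Foata-type map sends $\pi$ to a permutation of $[n]$ in cycle notation, descent-correcting swaps repair the discrepancies, and bijectivity is shown by exhibiting an inverse algorithm. You instead reduce existence to the equidistribution of descent sets between $\C_{S,n+1}$ and $S_n$, which is a valid reduction since any fiberwise matching of equinumerous fibers yields the desired $\phi$; your observation that prefix standardization preserves the relevant descents but fails to be injective correctly identifies why genuine enumerative input is needed. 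Your Gessel--Reutenauer computation does close the argument: the contained-in count is $\langle \ell_{n+1},h_\alpha\rangle$ with $\alpha=(s_1,s_2-s_1,\dots,n-s_k,1)$, and since $\alpha$ has a part equal to $1$, every term $p_d^{(n+1)/d}$ of $\ell_{n+1}$ with $d>1$ pairs to zero against $h_\alpha$, leaving $\frac{1}{n+1}\binom{n+1}{s_1,\dots,n-s_k,1}=\binom{n}{s_1,\dots,n-s_k}$ as required. The trade-off is that this is non-constructive: it produces only an abstract matching, whereas the explicit $\phi$ is precisely the object this paper generalizes to $B_{n+1}$ and the source of the additional statistic-preserving structure (e.g.\ weak excedances) noted in the related-work discussion. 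Be aware that your primary route --- the free $\mathbb{Z}/(n+1)$-action on block-increasing-prefix permutations with exactly one $(n+1)$-cycle per orbit --- is never actually constructed; you yourself note that naive rotations destroy the block-increasing condition, and nothing in the proposal resolves this, so that half of the plan remains a gap and the theorem rests entirely on the symmetric-function fallback.
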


Elizalde's proof involves an explicit construction of $\phi$. This begins with a Foata-type bijection to map a cyclic permutation $\pi\in \C_{S,n+1}$ to a permutation in $S_n$ in cycle notation, followed by a methodical procedure that swaps specific entries in the cycle notation. These swaps correct discrepancies in the descent sets so that the resulting permutation satisfies $\Des(\pi)\cap \{1,2,\ldots,n-1\}=\Des(\phi(\pi))$. Miraculously, the resulting function $\phi$ is a bijection, which Elizalde showed by constructing its inverse. 

\subsection*{Main Results}

The symmetric groups are the type $A$ Coxeter groups, and our work concerns the types $B$ and $D$ Coxeter groups. The signed symmetric group $B_n$ consists of permutations on $\{\pm1,\pm2,\ldots,\pm n\}$ that satisfy $\pi(-i)=-\pi(i)$ for all $i\in \{1,2,\ldots,n\}$. The permutations where $\{\pi(1),\pi(2),\ldots,\pi(n)\}$ consists of an even number of negatives form a subgroup, $D_n$. Signed permutations have a notion of descents that is largely analogous with the one on $S_n$, except that a descent at position $0$ is possible and occurs when $\pi(1)<0$. This convention is related to its structure as a Coxeter group. See \cite{BB} for details. 

As in $S_n$, elements in $B_n$ can be expressed in cycle notation, which consists of cycles of the form
\[\begin{pmatrix} a_{\ell} &  a_1 & \ldots & a_{\ell-1} \\
\epsilon_1 a_1 & \epsilon_2a_2 & \ldots &  \epsilon_{\ell} a_{\ell}\end{pmatrix},\]
where $a_1,a_2,\ldots,a_{\ell}\in [n]$ and $\epsilon_1,\epsilon_2,\ldots,\epsilon_{\ell}\in \{-1,1\}$. The {cyclic signed permutations} are the elements $\pi\in B_n$ whose cycle notation consists of a single cycle of length $n$. We will use $\C_{B,n}$ to denote the cyclic signed permutations in $B_n$, and we partition $\C_{B,n}$ into $\C_{D,n}$ and $\overline{\C_{D,n}}$ based on whether a cyclic signed permutation is in $D_n$ or its complement. %These sets $\C_{D,n}$ and $\overline{\C_{D,n}}$ are conjugacy classes of $B_n$, similar to how the cyclic permutations in $S_n$ form a conjugacy class. 
Our main result is the following analog of \cref{thm:elizalde} for cyclic signed permutations.

\begin{theorem}\label{thm:main_thm}
    For every positive integer $n$, there exists a function $\Phi:\C_{B,n+1}\to B_n$ with the following properties:
    \begin{enumerate}[label=(\alph*)]
        \item for all $\pi\in \C_{B,n+1}$, we have $\Des(\pi)\cap \{0,1,\ldots,n-1\}=\Des(\Phi(\pi))$, and
        \item the restriction of $\Phi$ to $\C_{D,n+1}$ or $\overline{\C_{D,n+1}}$ is a bijection.
    \end{enumerate}
\end{theorem}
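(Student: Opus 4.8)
The plan is to follow the template of Elizalde's construction of $\phi$ (\cref{thm:elizalde})---a Foata-type map followed by corrective swaps---while isolating the two features that have no type $A$ analog: the sign data carried by a signed cycle and the descent at position $0$. The natural starting point is a cardinality count. Since a single signed cycle of length $n+1$ on the full set $\{1,\dots,n+1\}$ can be formed in $2^{n+1}n!$ ways, we have $|\C_{B,n+1}| = 2^{n+1}n! = 2\,|B_n|$, and the parity of the number of sign changes splits this count evenly, so $|\C_{D,n+1}| = |\overline{\C_{D,n+1}}| = |B_n|$. Thus part (b) asserts that $\Phi$ is $2$-to-$1$ onto $B_n$, with exactly one preimage in each fiber lying in $\C_{D,n+1}$ and one in $\overline{\C_{D,n+1}}$.

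First I would put each $\pi \in \C_{B,n+1}$ into a normal form. Using the relation $\pi(-i)=-\pi(i)$, rotate the cycle so that $n+1$ occupies a fixed slot and normalize the sign attached to that slot; what remains is a word $a_1\cdots a_n$ on $\{1,\dots,n\}$ together with a sign vector $(\epsilon_1,\dots,\epsilon_n)$, recording $n!\,2^n = |B_n|$ possibilities, plus a single residual bit recording whether the signed cycle is positive or negative. This residual bit is precisely the parity distinguishing $\C_{D,n+1}$ from $\overline{\C_{D,n+1}}$. The design principle is to define $\Phi(\pi)$ so that it depends only on the word and the sign vector and not on the residual bit; any such $\Phi$ is automatically $2$-to-$1$ with the fiber structure demanded by (b), which reduces part (b) to showing that the induced correspondence from normalized data to $B_n$ is a bijection.

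Next I would build this correspondence. On the magnitudes $|a_1|,\dots,|a_n|$ I would invoke Elizalde's swap procedure to reconcile the cyclic word with the function-descents at positions $1,\dots,n-1$, exactly as in type $A$. The signs are then attached to the entries of the resulting permutation, with two jobs: to reproduce the sign-driven descents of $\pi$ at positions $1,\dots,n-1$, and to encode the descent at position $0$, which for a signed permutation is governed by the sign of the leading entry. Verifying part (a) then amounts to checking, position by position, that a descent of $\pi$ in $\{0,1,\dots,n-1\}$---whether induced by a comparison of magnitudes or by an intervening sign---is matched by a descent of $\Phi(\pi)$, with the comparison at position $n$ correctly discarded.

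The main obstacle is that, unlike in type $A$, a descent can be created or destroyed either by relative magnitude or by a sign, and the position-$0$ descent has no type $A$ counterpart, so the corrective swaps must be arranged to fix the magnitude comparisons and the sign comparisons simultaneously while still admitting an explicit inverse. I expect the crux to be the construction of this inverse: recovering the normalized word, the sign vector, and hence (together with a choice of residual bit) the cyclic signed permutation, and checking that the inverse is well defined on all of $B_n$. Once invertibility of the normalized correspondence is established, part (b) follows from the decoupling of the residual bit, and part (a) follows from the position-by-position descent check; finally, the reduction of $\Phi$ to $\phi$ under $S_n \hookrightarrow B_n$ is a consistency check obtained by restricting to the all-positive sign vectors.
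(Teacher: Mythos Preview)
Your plan contains a genuine gap at the ``residual bit'' step. You propose to normalize $\pi\in\C_{B,n+1}$ so that $\pm(n+1)$ sits in a fixed slot of the cycle, record the signed word $(\epsilon_1 a_1,\dots,\epsilon_n a_n)$ as data, and treat the sign on $\pm(n+1)$ as a free bit that $\Phi$ ignores. But that bit is \emph{not} decoupled from $\Des(\pi)\cap\{0,\dots,n-1\}$. If $\pi=(\epsilon_1 a_1,\dots,\epsilon_n a_n,\,n+1)$ and $\pi'=(\epsilon_1 a_1,\dots,\epsilon_n a_n,\,-(n+1))$, then in one-line notation $\pi(|a_n|)=n+1$ while $\pi'(|a_n|)=-(n+1)$; since $|a_n|\in[n]$, this flips the descents at positions $|a_n|-1$ and $|a_n|$ whenever those lie in $\{0,\dots,n-1\}$. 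Concretely, for $n=2$ take $\pi=(1,2,3)=[2,3,1]$ and $\pi'=(1,2,-3)=[2,-3,1]$: then $\Des(\pi)\cap\{0,1\}=\emptyset$ but $\Des(\pi')\cap\{0,1\}=\{1\}$. If $\Phi(\pi)=\Phi(\pi')$ as your design requires, part~(a) forces these two descent sets to agree, a contradiction. So a $\Phi$ that ignores the residual bit cannot satisfy part~(a), and your reduction of part~(b) to ``decoupling the bit'' collapses.

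There is a second, related difficulty with ``run Elizalde on the magnitudes, then attach signs.'' In $B_n$ the total order is the integer order on $[\pm n]$, so whether $i$ is a descent is not determined separately by the signs and by the unsigned comparison: e.g.\ $(-2,-5)$ is an ascent while $(2,5)$ is also an ascent, but $(-5,-2)$ is an ascent whereas $(5,2)$ is a descent. Hence fixing the magnitude descents first and grafting signs afterward does not control the type~$B$ descents; the corrective swaps must be run on the signed entries themselves. The paper does exactly this: its Algorithm~\ref{algorithm} swaps absolute values while keeping the signs in place, and the analysis (order and swap properties, \cref{thm:order_swap}) is carried out for signed entries throughout. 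To pass from $\C_{B,n+1}^+$ to $\C_{B,n+1}^-$ the paper uses the global negation $\pi\mapsto -\pi$ (which complements the descent set, so it is compatible with~(a)), and the position-$0$ descent is repaired by an explicit left multiplication by the transposition $(-1)$ in the cases isolated by \cref{cor:descent0}. Part~(b) is then obtained not by a decoupling argument but by building explicit inverses $\Psi_{\C_{D,n+1}}$ and $\Psi_{\overline{\C_{D,n+1}}}$ via casework on $\sigma(1)\in\{-1,1\}$ and the parity of $\negative(\sigma)$ (\cref{lem:preimages_1,lem:preimages_2}, \cref{cor:preimages_2}, \cref{lem:inverses}).
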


\begin{corollary}\label{thm:main_cor}
    For any positive integer $n$ and $I\subseteq \{0,1,\ldots,n-1\}$, the following are equal:
    \begin{enumerate}[label=(\alph*)]
        \item the number of $\pi\in B_n$ with $\Des(\pi)=I$,
        \item the number of $\pi\in \C_{D,n+1}$ with $\Des(\pi)\cap \{0,1,\ldots,n-1\}=I$, and 
        \item the number of $\pi\in \overline{\C_{D,n+1}}$ with $\Des(\pi)\cap \{0,1,\ldots,n-1\}=I$. 
    \end{enumerate}
\end{corollary}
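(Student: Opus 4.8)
The plan is to obtain \cref{thm:main_cor} as an essentially immediate consequence of \cref{thm:main_thm}, since the theorem already packages together both the descent-preservation and the bijectivity that the corollary needs; the corollary is simply the statement that a descent-preserving bijection sorts the two domains into fibers of equal size indexed by the prescribed descent set. So the strategy is to fix $I\subseteq\{0,1,\ldots,n-1\}$ and show that the bijections furnished by \cref{thm:main_thm}(b) restrict to bijections between the relevant level sets.

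Concretely, I would introduce notation for the three quantities being compared: let $\mathcal{A}_I=\{\pi\in B_n:\Des(\pi)=I\}$ count (a), let $\mathcal{B}_I=\{\pi\in\C_{D,n+1}:\Des(\pi)\cap\{0,1,\ldots,n-1\}=I\}$ count (b), and let $\mathcal{C}_I=\{\pi\in\overline{\C_{D,n+1}}:\Des(\pi)\cap\{0,1,\ldots,n-1\}=I\}$ count (c). The goal is $|\mathcal{A}_I|=|\mathcal{B}_I|=|\mathcal{C}_I|$. First I would argue that $\Phi$ maps $\mathcal{B}_I$ into $\mathcal{A}_I$: for any $\pi\in\mathcal{B}_I$, property (a) of \cref{thm:main_thm} gives $\Des(\Phi(\pi))=\Des(\pi)\cap\{0,1,\ldots,n-1\}=I$, so $\Phi(\pi)\in\mathcal{A}_I$. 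Conversely, the same identity shows $\pi\in\mathcal{B}_I$ whenever $\pi\in\C_{D,n+1}$ satisfies $\Phi(\pi)\in\mathcal{A}_I$, so under $\Phi$ the preimage of $\mathcal{A}_I$ inside $\C_{D,n+1}$ is exactly $\mathcal{B}_I$.

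Next I would invoke \cref{thm:main_thm}(b): the restriction $\Phi|_{\C_{D,n+1}}\colon \C_{D,n+1}\to B_n$ is a bijection. A bijection restricts to a bijection between any subset and its image, and by the previous paragraph this image is precisely $\mathcal{A}_I$ while the corresponding subset of the domain is precisely $\mathcal{B}_I$. Hence $\Phi|_{\C_{D,n+1}}$ restricts to a bijection $\mathcal{B}_I\to\mathcal{A}_I$, giving $|\mathcal{B}_I|=|\mathcal{A}_I|$, which is the equality of (a) and (b). Running the identical argument with $\overline{\C_{D,n+1}}$ in place of $\C_{D,n+1}$—again using that $\Phi|_{\overline{\C_{D,n+1}}}$ is a bijection onto $B_n$ by \cref{thm:main_thm}(b) and that descent-preservation from part (a) holds for all of $\C_{B,n+1}$—yields a bijection $\mathcal{C}_I\to\mathcal{A}_I$ and thus $|\mathcal{C}_I|=|\mathcal{A}_I|$, establishing the equality of (a) and (c).

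I do not anticipate a genuine obstacle here, as all the substantive work is carried by \cref{thm:main_thm}; the only point requiring a moment's care is to verify that the descent-preservation identity in part (a) is stated for all of $\C_{B,n+1}$, so that it applies verbatim to elements of both $\C_{D,n+1}$ and $\overline{\C_{D,n+1}}$ and may be combined with the two separate bijection statements in part (b). Once that is checked, the chain $|\mathcal{B}_I|=|\mathcal{A}_I|=|\mathcal{C}_I|$ closes the proof.
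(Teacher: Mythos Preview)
Your proposal is correct and matches the paper's approach: the paper simply states that \cref{thm:main_cor} is immediate from \cref{thm:main_thm}, and your argument spells out exactly the routine verification that the descent-preserving bijections restrict to bijections on the fibers over each $I$.
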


Our general approach is similar in spirit to Elizalde's original bijection. However, numerous changes are needed to account for the negatives that appear in the cycle notation, and we construct separate inverses for the sets $\C_{D,n+1}$ and $\overline{\C_{D,n+1}}$. An implementation of the functions in this paper using \texttt{Python} is available at \[\text{\url{https://github.com/kliu-math/desBn.git}}.\]
This implementation includes verifying \cref{thm:main_thm} for $\Phi:\C_{B,n+1}\to B_n$ when $n\leq 9$ and on randomly generated signed cyclic permutations for larger values of $n$. In the case where $\pi\in \C_{B,n+1}$ does not have any negatives in its cycle notation, we can view $\pi$ as a permutation in $S_{n+1}$. In this case, our function $\Phi$ reduces to Elizalde's original bijection $\phi$, so \cref{thm:main_thm} implies \cref{thm:elizalde}. Some of our intermediate results for establishing \cref{thm:main_thm} reduce to ones established by Elizalde for proving \cref{thm:elizalde}, while others are specific to signed permutations. 

As one application of our result, we analyze the asymptotic distributions of the descent and flag major index statistics on $\C_{B,n},$ $\C_{D,n}$, and $\overline{\C_{D,n}}$. Our approach is to use \cref{thm:main_thm,thm:main_cor} to approximate the distributions of these statistics on $\C_{B,n},$ $\C_{D,n}$, and $\overline{\C_{D,n}}$ with the corresponding distributions on $B_{n-1}$, which are known to be asymptotically normal \cite{CM2012}. This approximation introduces some error due to the potential descent at position $n$, and we show that this error disappears in the asymptotic distribution to establish the following result. 

\begin{theorem}\label{thm:CLT}
    Let $(X_n)_{n\geq 1}$ be the random variables corresponding to either the descent or flag major index statistic on $(\C_{B,n})_{n\geq 1}$, $(\C_{D,n})_{n\geq 1}$, or $(\overline{\C_{D,n}})_{n\geq 1}$ with means $(\mu_n)_{n\geq 1}$ and variances $(\sigma_n^2)_{n\geq 1}$. The standardized random variable $(X_n-\mu_n)/\sigma_n$ converges in distribution to the standard normal distribution. 
\end{theorem}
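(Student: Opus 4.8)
The plan is to prove asymptotic normality by transferring the problem from the cyclic sets to the full groups $B_{n-1}$, where the relevant central limit theorems are already known, and then controlling the discrepancy introduced by the position-$n$ descent. The starting observation is that \cref{thm:main_cor} gives an exact matching between the distribution of the truncated descent set $\Des(\pi)\cap\{0,1,\ldots,n-1\}$ on each of $\C_{B,n+1}$, $\C_{D,n+1}$, and $\overline{\C_{D,n+1}}$ and the distribution of $\Des$ on $B_n$. Consequently, if I define $X_n'$ to be the descent statistic (or flag major index) computed only from positions in $\{0,1,\ldots,n-1\}$ on the cyclic set, then $X_n'$ has exactly the same distribution as the corresponding statistic on $B_{n-1}$. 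Since the descent and flag major index on $B_{n-1}$ are known to be asymptotically normal by \cite{CM2012}, the standardized version of $X_n'$ converges to the standard normal distribution.

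The next step is to compare the true statistic $X_n$ on the cyclic set with the truncated statistic $X_n'$. For the descent statistic, $X_n - X_n'$ is simply the indicator $\mathbf{1}[n\in\Des(\pi)]$, which is either $0$ or $1$; for the flag major index, $X_n - X_n'$ is a bounded contribution from the potential descent at position $n$ together with the sign term, and I would verify that this difference is bounded by a quantity of order $O(n)$ while the standard deviation $\sigma_n$ grows like $\Theta(n^{3/2})$ (for $\maj$-type statistics) or $\Theta(\sqrt n)$ (for $\des$). The key point is that the correction is of strictly smaller order than $\sigma_n$, so $(X_n - X_n')/\sigma_n \to 0$. I would also need to confirm that the means and variances of $X_n$ and $X_n'$ agree up to lower-order terms, so that standardizing by $(\mu_n,\sigma_n)$ versus the moments of $X_n'$ makes no difference in the limit.

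With these two pieces in hand, the conclusion follows from a routine application of Slutsky's theorem: writing
\[
\frac{X_n-\mu_n}{\sigma_n}=\frac{X_n'-\mu_n'}{\sigma_n'}\cdot\frac{\sigma_n'}{\sigma_n}+\frac{\mu_n'-\mu_n}{\sigma_n}+\frac{X_n-X_n'}{\sigma_n},
\]
the first factor converges in distribution to a standard normal, the ratio $\sigma_n'/\sigma_n\to 1$, and the remaining two terms converge to $0$ in probability. Here $\mu_n'$ and $\sigma_n'$ denote the mean and standard deviation of $X_n'$, which coincide with those of the statistic on $B_{n-1}$.

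I expect the main obstacle to be the careful bookkeeping needed to show that the asymptotic mean and variance of $X_n$ match those of $X_n'$ up to negligible error, and in particular establishing the correct growth rate of $\sigma_n$ so that the position-$n$ correction is provably of smaller order. For the flag major index this requires estimating the variance contribution of the sign statistic and the extra descent, and ensuring these do not accidentally cancel the leading-order variance; a clean way to handle this is to note that the truncated and untruncated statistics differ by a uniformly bounded number of terms, each contributing at most $O(n)$ to the flag major index, which is dominated by $\sigma_n=\Theta(n^{3/2})$. Once the order-of-magnitude comparison is secured, the rest is a direct invocation of \cite{CM2012} combined with Slutsky's theorem.
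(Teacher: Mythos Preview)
Your approach is essentially the paper's: the same Slutsky decomposition, the same use of \cref{thm:main_thm,thm:main_cor} to identify the law of $X_n'$ with the statistic on $B_{n-1}$, and the same appeal to \cite{CM2012} for the CLT on $B_{n-1}$. Two points where the paper is sharper than your sketch. First, the obstacle you flag---verifying that $\mu_n,\sigma_n$ match those of $X_n'$ to lower order---is bypassed entirely: the paper invokes \cref{thm:CLLSY,thm:CLLSY2} (consequences of \cite{CLLSY}) stating that for $n\ge 5$ the mean and variance of $\des$ and $\fmaj$ on each of $\C_{B,n}$, $\C_{D,n}$, $\overline{\C_{D,n}}$ are \emph{exactly} those on $B_n$, so $\sigma_{n-1}/\sigma_n\to 1$ and $(\mu_{n-1}-\mu_n)/\sigma_n\to 0$ follow immediately from the explicit formulas in \cref{thm:CM,thm:CM2} with no bookkeeping. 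Second, for $\fmaj$ your definition of $X_n'$ as ``the flag major index computed only from positions in $\{0,\ldots,n-1\}$'' does not quite work: \cref{thm:main_cor} controls only the descent set and says nothing about $\negative(\pi)$, so a position-truncated $\fmaj$ on $\pi$ need not be distributed as $\fmaj$ on $B_{n-1}$. The paper instead takes $X_n'=\fmaj(\Phi(\pi))$; since $\Phi$ restricted to $\C_{D,n}$ or $\overline{\C_{D,n}}$ is a bijection onto $B_{n-1}$, this has the right distribution, and from the construction of $\Phi$ one gets $0\le X_n-X_n'\le 2n+1$.
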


\subsection*{Related Work}

Elizalde's bijection in \cref{thm:elizalde} also preserves the set of weak excedances. \cref{thm:elizalde} also inspired subsequent work by Baril \cite{baril}, who constructed another bijection from $\C_{S,n+1}$ to $S_n$. Baril's bijection preserves weak excedances, translates quasi-fixed points into fixed points, and preserves left-to-right maxima. Aside from our \cref{thm:main_thm}, we are not aware of any other bijections from $\C_{D,n+1}$ or $\overline{\C_{D,n+1}}$ to $B_n$ with similar statistic-preserving properties.

There is extensive literature on descents and general cycle type in $S_n$ and $B_n$. For $S_n$, the distribution of the descent statistic by cycle type was derived by Diaconis, McGrath, and Pitman \cite{diaconismcgrathpitman} and also by Fulman \cite{fulman}. The asymptotic normality of the descent statistic by cycle type was established by Kim and Lee \cite{kimlee}. In $B_n$, the distribution of the descent statistic by cycle type is given in \cite[Proposition 5.7]{CLLSY}, which built on prior work of Reiner \cite{reiner} involving a different definition of descents in $B_n$. Another closely related problem is to count the number of permutations or signed permutations with a fixed descent set and cycle type. This was studied in $S_n$ by Gessel and Reutenauer \cite{gessel} and more recently by Elizalde and Troyka \cite{troyka}. In $B_n$, this was studied by Poirier \cite{poirier}, who also derived some general results involving the colored permutation groups $S_{n,r}=\mathbb{Z}_r\wr S_n$. These contain $S_n\cong S_{n,1}$ and $B_n\cong S_{n,2}$ as special cases.  

Our \cref{thm:CLT} for the descent statistic is closely related to \cite[Theorem 1.3]{CLLSY}, which gives asymptotic normality for the descent statistic on any sequence of cycle types in $B_n$ where the limiting number of cycles of any fixed length approaches 0 as $n\to\infty$. This contains $(\C_{D,n})_{n\geq 1}$ and $(\overline{\C_{D,n}})_{n\geq 1}$ as special cases, and one can then derive the corresponding result for $(\C_{B,n})_{n\geq 1}$. While \cref{thm:CLT} for the descent statistic appears to be a special case of \cite[Theorem 1.3]{CLLSY}, the Method of Moments \cite[Section 30]{billingsley} and \cite[Theorem 1.1]{CLLSY} show that these two results for the descent statistic are actually equivalent. The same reasoning shows that \cref{thm:CLT} for the flag major index also translates to any sequence of cycle types where the limiting number of cycles of any fixed length approaches 0 as $n\to\infty$. 

Finally, an analog of \cite[Theorem 1.3]{CLLSY} for the descent and flag major index statistics on the colored permutation groups $S_{n,r}=\mathbb{Z}_r\wr S_n$ is given in \cite[Theorem 1.1]{des_fmaj}. However, the descent and flag major index statistics in $B_n$ do not align with the corresponding statistics on $S_{n,2}$, so these two results are not directly related. However, one can generalize \cref{thm:elizalde} to colored permutation groups and establish an analog of \cref{thm:CLT} in this setting, and we will outline this in an Appendix. By the Method of Moments \cite[Section 30]{billingsley} and \cite[Theorem 1.1]{CLLSY}, this analog of \cref{thm:CLT} is equivalent to \cite[Theorem 1.1]{des_fmaj}.

\subsection*{Outline of Paper}

We begin in \cref{sec:background} with background on signed permutations. In \cref{sec:bijection}, we will construct our function $\Phi:\C_{B,n+1}\to B_n$ and show that it preserves descents in $\{0,1,\ldots,n-1\}$. We then construct its inverses on $\C_{D,n+1}$ and $\overline{\C_{D,n+1}}$ in \cref{sec:inverses} to establish \cref{thm:main_thm} and \cref{thm:main_cor}. We apply these results in \cref{sec:normal} to prove \cref{thm:CLT}. We show that $\Phi$ reduces to Elizalde's original bijection in \cref{appendix:A}, and we also discuss analogs of our results for colored permutation groups in \cref{appendix:B}.

\section{Background and definitions}\label{sec:background}

We begin with preliminary information on signed permutations, including cycle notation and the relevant signed permutation statistics. We refer the reader to \cite{BB} for a general treatment of Coxeter groups, as well as \cite{adin_roichman}, \cite{brenti94},  and \cite{reiner} for additional background specific to signed permutations.

\subsection{Signed Permutations}\label{sec:background1}

For any positive integer $n$, we will use the notation $[n]=\{1,2,\ldots,n\}$ and $[\pm n]=\{\pm1 , \pm 2,\ldots,\pm n\}$. A \emph{signed permutation} is a bijection $\sigma:[\pm n]\to [\pm n]$ satisfying $\sigma(-i)=-\sigma(i)$ for all $i\in [n]$. The property $\sigma(-i)=-\sigma(i)$ shows that $\sigma$ is uniquely determined by the images of elements in $[n]$. The 
\emph{one-line notation} of $\sigma$ is given by writing these elements in the form $[\sigma(1),\sigma(2),\ldots,\sigma(n)]$. 

\begin{example}\label{ex:1}
    Consider the signed permutation $\sigma$ on $[\pm 6]$ defined by $\sigma(1)=-3$, $\sigma(2)=1$, $\sigma(3)=2$, $\sigma(4)=-5$, $\sigma(5)=-4$, and $\sigma(6)=6$. 
    Its one-line notation is $\sigma=[-3,1,2,-5,-4,6].$ From the property $\sigma(-i)=-\sigma(i)$ for $i\in [n]$, we find that $\sigma(-1)=3$, $\sigma(-2)=-1$, $\sigma(-3)=-2$, $\sigma(-4)=5$, $\sigma(-5)=4$, and $\sigma(-6)=-6$.
\end{example}

The \emph{type B Coxeter group} $B_n$, also called the \emph{signed symmetric group}, consists of signed permutations on $[\pm n]$ with group operation given by function composition. For brevity, we will abuse terminology and refer to this group operation as multiplication, and we will indicate it by writing elements of $B_n$ adjacent to one another. As this operation is function composition, it is performed from right to left, so 
\[\pi\sigma=[\pi\circ \sigma(1),\pi\circ \sigma(2), \ldots ,\pi\circ \sigma(n)]\]
for all $\sigma,\pi\in B_n$. The \emph{type $D$ Coxeter group} $D_n$ is the subgroup of $B_n$ consisting of the signed permutations whose one-line notation contains an even number of negatives.

\begin{example}\label{ex:2}
    Let $\pi=[1,-3,-2,5,6,4]\in B_6$. Letting $\sigma\in B_6$ be the permutation from \cref{ex:1}, we have that
    $\pi \sigma = [2,1,-3,-6,-5,4].$ Observe that $\pi\in D_6$, while $\sigma,\pi\sigma\notin D_6$. 
\end{example}

Like in the symmetric group $S_n$, elements in $B_n$ can be expressed in cycle notation. The \emph{two-line cycle notation} of $\sigma\in B_n$ expresses the signed permutation as a product of disjoint cycles of the form
\[\begin{pmatrix} a_{\ell} &  a_1 & \ldots & a_{\ell-1} \\
\epsilon_1 a_1 & \epsilon_2a_2 & \ldots &  \epsilon_{\ell} a_{\ell}\end{pmatrix},\]
where $a_1,a_2,\ldots,a_{\ell}\in [n]$ and $\epsilon_1,\epsilon_2,\ldots,\epsilon_{\ell}\in \{-1,1\}$ are chosen according to $\sigma(a_i)=\epsilon_{i+1}a_{i+1}$ for all $i\in [\ell]$, with the convention that $\epsilon_{\ell+1}a_{\ell+1}=\epsilon_1a_1$. The \emph{(one-line) cycle notation} is obtained by deleting the first line, resulting in
\[(\epsilon_1a_1,\epsilon_2a_2,\ldots,\epsilon_{\ell}a_\ell).\] 
We include commas between elements in the cycle notation for clarity. For brevity, we will occasionally omit cycles of the form $(i)$ for $i\in [n]$, similar to the convention in $S_n$.

\begin{example}\label{ex:3}
    The permutation $\sigma$ from \cref{ex:1} can be expressed as \[\begin{pmatrix} 1 & 3 & 2 \\ -3 & 2 & 1 \end{pmatrix} \begin{pmatrix}
    4 & 5 \\ -5 & -4 
\end{pmatrix} \begin{pmatrix}
    6 \\ -6
\end{pmatrix}=(-3, 2, 1)(-5, -4)(6).\]
Observe that the set of elements that appear in the cycle notation and one-line notation coincide, and we will use this fact freely throughout our work. 
\end{example}

Given a cycle notation $\sigma_1\sigma_2\dots \sigma_m$ for $\sigma\in B_n$, we abuse notation and use $i\in \sigma_j$ to indicate that $i$ appears in the cycle $\sigma_j$. We will also abuse terminology and refer to $\sigma(|i|)$ (resp. $\pm \sigma^{-1}(i)$) as the element following (resp. preceding) $\sigma$ in the cycle notation. 

As with permutations in $S_n$, the cycle notation of a signed permutation in $B_n$ is not unique. For our work, we will often use a specific convention based on one in $S_n$. The cycle notation $\sigma=\sigma_1\sigma_2\dots\sigma_m$ for $\sigma\in B_n$ is \emph{canonical} if
\begin{itemize}
    \item the largest element in each $\sigma_i$ is in the first position, and 
    \item the first elements in $\sigma_1,\sigma_2,\ldots,\sigma_m$ are in increasing order when read from left to right. 
\end{itemize}

\begin{example}
Consider $\sigma=(-3,2,1)(-5,-4)(6)$ from \cref{ex:3}. Rewriting each cycle so that the largest element is first, we obtain $\sigma=(2,1,-3)(-4,-5)(6)$. Rearranging cycles so that first elements are in increasing order, we obtain the canonical cycle notation $\sigma=(-4,-5)(2,1,-3)(6)$. 
\end{example}

Now suppose $\sigma\in B_n$ has been expressed in cycle notation $\sigma_1\sigma_2\dots \sigma_m$. The \emph{length} of a cycle $\sigma_i$ is the number of elements in it. A signed permutation $\sigma\in B_n$ is \emph{cyclic} if its cycle notation consists of a single cycle of length $n$. We will use $\C_{B,n}\subseteq B_n$ to denote the set of cyclic signed permutations in $B_n$. Furthermore, we define two subsets $\C_{D,n}=\C_{B,n}\cap D_n$ and $\overline{\C_{D,n}}=\C_{B,n}\setminus D_n$ based on whether or not elements are in $D_n$.

\subsection{Statistics}\label{sec:background2}

We next describe the signed permutation statistics and probability theory that will be relevant for our work. Throughout, we will assume some general familiarity with random variables.  We will use $P(\,\cdot\,)$ for the probability measure associated to a random variable.

A \emph{signed permutation statistic} is any function from $B_n$ to $\mathbb{R}$, and by equipping $B_n$ with the uniform measure, we will consider a statistic as a random variable. For the statistics in this paper, we first define the \emph{descent set} of $\sigma\in B_n$ by
\[\Des(\sigma)=\{i:0\leq i\leq n-1 \text{ and }\sigma(i)>\sigma(i+1)\},\]
where $0$ is a fixed point by convention. Alternatively, $0\in \Des(\sigma)$ whenever $\sigma(1)<0$. Additionally, the \emph{negative set} of $\sigma\in B_n$ is
\[\Negative(\sigma)=\{i:1\leq i\leq n \text{ and }\sigma(i)<0\}.\] 
Using these, we define the following statistics:
\begin{itemize}
    \item the \emph{descent statistic} is $\des(\sigma)=|\Des(\sigma)|$,
    \item the \emph{major index statistic} is $\maj(\sigma)=\sum_{i\in \Des(\sigma)} \, i $,
    \item the \emph{negative statistic} is $\negative(\sigma)=|\Negative(\sigma)|$, and
    \item the \emph{flag major index statistic} is $\fmaj(\sigma)=2\cdot \maj(\sigma)+\negative(\sigma)$. 
\end{itemize}
Note that the descent and flag major index statistics on $B_n$ are analogs of the descent and major index statistics in $S_n$. Similar to how the major index statistic is equidistributed with the length (or inversion) statistic in $S_n$, the flag major index statistic is equidistributed with the length statistic in $B_n$ \cite{adin_roichman}. 

\begin{example}
    For the permutation $\sigma=[-3,1,2,-5,-4,6]$ from \cref{ex:1}, we have that $\Des(\sigma)=\{0,3\}$ and $\Negative(\sigma)=\{1,3,4\}$. Using these, we calculate $\des(\sigma)=2$, $\maj(\sigma)=3$, $\negative(\sigma)=3,$ and $\fmaj(\sigma)=9$. 
\end{example}

We now recall the definitions for two types of convergence for random variables. We also state a tool involving these two types of convergence. 

\begin{definition}
    Let $(X_n)_{n\geq 1}$ be a sequence of real-valued random variables, and let $X$ be a real-valued random variable. Define $F_n(x)=P(X_n\leq x)$ and $F(x)=P(X\leq x)$ to be their respective cumulative distribution functions. 
    \begin{enumerate}[label=(\alph*)]
        \item $X_n$ \emph{converges in distribution} to $X$, denoted $X_n\xrightarrow{d} X$, if $F_n(x)\to F(x)$ for all $x$ that are continuity points of $F$. 
        \item $X_n$ \emph{converges in probability} to $X$, denoted $X_n\xrightarrow{p} X$, if for all $\epsilon>0$, we have that 
        \begin{equation}\label{eq:converge}
            \lim_{n\to\infty} P(|X_n-X|>\epsilon)= 0.
        \end{equation}
    \end{enumerate}
    It is well known that convergence in probability implies convergence in distribution. 
\end{definition}

\begin{theorem}[Slutsky's Theorem]\label{thm:Slutsky}
    Let $(X_n)_{n\geq 1}$ and $(Y_n)_{n\geq 1}$ be sequences of random variables such that $X_n\xrightarrow{d} X$ and $Y_n\xrightarrow{p} c$, where $c\in \mathbb{R}$ is a constant. Then $X_n+Y_n\xrightarrow{d} X+c$ and $X_nY_n\xrightarrow{d} X\cdot c$.
\end{theorem}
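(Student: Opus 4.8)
The plan is to prove the additive statement first by a direct sandwiching argument on cumulative distribution functions, and then to reduce the multiplicative statement to the additive one. Throughout I write $F_X(t)=P(X\le t)$ for the distribution function of $X$, and I use that $Y_n\xrightarrow{p}c$ means $P(|Y_n-c|>\epsilon)\to 0$ for every $\epsilon>0$, so that on the high-probability event $\{|Y_n-c|\le\epsilon\}$ I may replace $Y_n$ by $c$ up to an error of $\epsilon$.

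For the sum, I would fix a continuity point $x$ of $G(x)=P(X+c\le x)=F_X(x-c)$ and a small $\epsilon>0$, then condition on $\{|Y_n-c|\le\epsilon\}$ and its complement. On that event $c-\epsilon\le Y_n\le c+\epsilon$, so the event $\{X_n+Y_n\le x\}$ is squeezed between $\{X_n\le x-c-\epsilon\}$ and $\{X_n\le x-c+\epsilon\}$, while the complement contributes at most $P(|Y_n-c|>\epsilon)$. This yields
\[
P(X_n\le x-c-\epsilon)-P(|Y_n-c|>\epsilon)\;\le\;P(X_n+Y_n\le x)\;\le\;P(X_n\le x-c+\epsilon)+P(|Y_n-c|>\epsilon).
\]
Taking $\liminf$ and $\limsup$ as $n\to\infty$, choosing $\epsilon$ so that $x-c\pm\epsilon$ are continuity points of $F_X$ (possible since discontinuities are countable), and using $X_n\xrightarrow{d}X$ together with $P(|Y_n-c|>\epsilon)\to 0$, I would sandwich the limit between $F_X(x-c-\epsilon)$ and $F_X(x-c+\epsilon)$. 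Letting $\epsilon\to 0$ and invoking continuity of $F_X$ at $x-c$ then collapses both bounds to $F_X(x-c)=G(x)$, establishing $X_n+Y_n\xrightarrow{d}X+c$.

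For the product, I would decompose $X_nY_n=cX_n+X_n(Y_n-c)$. The first term satisfies $cX_n\xrightarrow{d}cX$, since scaling by the constant $c$ is a homeomorphism of $\mathbb{R}$ when $c\ne 0$ and is trivial when $c=0$. It therefore suffices to show $X_n(Y_n-c)\xrightarrow{p}0$ and then apply the additive result already proved, now with constant $0$. To verify the convergence in probability, I would fix $\epsilon,\delta>0$ and a threshold $M$, and split according to whether $|X_n|\le M$: on $\{|X_n|\le M\}$ the event $\{|X_n(Y_n-c)|>\epsilon\}$ forces $|Y_n-c|>\epsilon/M$, whose probability vanishes, while the complementary contribution is at most $P(|X_n|>M)$.

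The main obstacle is controlling $P(|X_n|>M)$ in this last step, which is exactly where tightness of $(X_n)$ enters: because $X_n\xrightarrow{d}X$ and $X$ has a genuine (tight) distribution, for every $\delta>0$ one can choose $M$ so large that $\limsup_n P(|X_n|>M)<\delta$. With this choice $\limsup_n P(|X_n(Y_n-c)|>\epsilon)<\delta$, and since $\delta$ was arbitrary the product tends to $0$ in probability, completing the argument. I note that a more conceptual route would package both cases at once, by first upgrading $X_n\xrightarrow{d}X$ and $Y_n\xrightarrow{p}c$ to the joint convergence $(X_n,Y_n)\xrightarrow{d}(X,c)$ and then applying the continuous mapping theorem to $(x,y)\mapsto x+y$ and $(x,y)\mapsto xy$; this repackaging, however, relocates rather than removes the tightness input, so I would favor the elementary sandwich argument above.
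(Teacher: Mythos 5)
Your proposal is correct, but there is nothing in the paper to compare it against: the paper states Slutsky's Theorem purely as a background tool (in \cref{sec:background2}) and never proves it, simply invoking the classical result by name before applying it in the proofs of \cref{thm:CLTdes,thm:CLTfmaj}. What you have written is the standard textbook proof, and it is sound in all its steps: the CDF sandwich $P(X_n\le x-c-\epsilon)-P(|Y_n-c|>\epsilon)\le P(X_n+Y_n\le x)\le P(X_n\le x-c+\epsilon)+P(|Y_n-c|>\epsilon)$ for the additive part, with the correct care taken to choose $\epsilon$ so that $x-c\pm\epsilon$ avoid the (countable) discontinuity set of $F_X$; the decomposition $X_nY_n=cX_n+X_n(Y_n-c)$ for the multiplicative part; and the tightness argument showing $X_n(Y_n-c)\xrightarrow{p}0$, which you correctly identify as the one place where convergence in distribution of $(X_n)$ must be used quantitatively. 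Your closing remark is also accurate: the joint-convergence-plus-continuous-mapping route is equivalent and hides the same tightness input inside the joint convergence step. If anything were to be added, it would only be the half-line of justification that tightness follows from $X_n\xrightarrow{d}X$ by picking $\pm M$ to be continuity points of $F_X$ with $P(|X|>M)<\delta/2$, but this is routine and your proof is complete as a blueprint.
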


The means, variances, and asymptotic distributions for the descent and flag major index statistics on $B_n$ are known. Throughout, we use $\mathcal{N}(0,1)$ to denote the standard normal distribution. 

\begin{theorem}\cite[Theorems 3.1 and 3.4]{CM2012}\label{thm:CM}
    Let $X_n$ be the random variable corresponding to the descent statistic on elements of $B_n$. Then $X_n$ has mean $\mu_n=n/2$ and variance $\sigma_n^2=(n+1)/12$. Furthermore, as $n\to\infty$, the standardized random variable $(X_n-\mu_n)/\sigma_n$ converges in distribution to $\mathcal{N}(0,1)$.
\end{theorem}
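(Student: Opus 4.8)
The plan is to realize the descent statistic as a sum of Bernoulli indicators and exploit the weak dependence among them. Under the uniform measure on $B_n$, write $D_i=\mathbf{1}[\sigma(i)>\sigma(i+1)]$ for $0\le i\le n-1$, with the convention $\sigma(0)=0$ so that $D_0=\mathbf{1}[\sigma(1)<0]$; then $\des(\sigma)=\sum_{i=0}^{n-1}D_i$. For the mean, a symmetry argument handles each term: right-multiplying by the adjacent transposition swapping positions $i$ and $i+1$ is a measure-preserving involution of $B_n$ that flips $D_i$, so $E[D_i]=1/2$ for $i\ge 1$, while right-multiplying by the sign change at position $1$ gives $E[D_0]=1/2$. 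Summing yields $\mu_n=n/2$.

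For the variance I would first establish the crucial independence statement: if $\{i,i+1\}$ and $\{j,j+1\}$ are disjoint sets of positions, then $D_i$ and $D_j$ are independent. This follows by conditioning on the assignment of signed values to each position-block, since within a block the relative order is uniform and independent across blocks, and each $D_i$ depends only on the order within a single block. Consequently the only nonzero covariances come from adjacent pairs. A direct count gives $\mathrm{Cov}(D_i,D_{i+1})=P\!\left(\sigma(i)>\sigma(i+1)>\sigma(i+2)\right)-\tfrac14=\tfrac16-\tfrac14=-\tfrac1{12}$ for $1\le i\le n-2$, while the boundary pair satisfies $\mathrm{Cov}(D_0,D_1)=P\!\left(\sigma(2)<\sigma(1)<0\right)-\tfrac14=\tfrac18-\tfrac14=-\tfrac18$. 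Combining $\mathrm{Var}(D_i)=\tfrac14$ with these covariances gives $\mathrm{Var}(\des)=\tfrac n4-\tfrac14-\tfrac{n-2}{6}=\tfrac{n+1}{12}$, so $\sigma_n^2=(n+1)/12$.

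For the limit law, the same block-independence shows that $(D_0,D_1,\ldots,D_{n-1})$ is a bounded, essentially stationary $1$-dependent sequence: families of indicators whose position-blocks are separated by a gap of at least one are independent. Since the variance grows linearly, I would invoke a central limit theorem for ($m$-dependent) triangular arrays — for instance the Hoeffding--Robbins theorem or a Stein's-method version — to conclude that $(\des-\mu_n)/\sigma_n\xrightarrow{d}\mathcal{N}(0,1)$. The main obstacle is precisely this step: because the $D_i$ live on the space $B_n$ that varies with $n$, one cannot directly apply the classical stationary $m$-dependent CLT and must instead use a triangular-array version, verifying its (mild) hypotheses and accommodating the non-stationary boundary term $D_0$. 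An alternative that sidesteps the dependence analysis entirely is to invoke real-rootedness of the type $B$ Eulerian polynomial $\sum_{\sigma\in B_n}t^{\des(\sigma)}$: real roots together with nonnegative coefficients force $\des$ to have the distribution of a sum of independent Bernoulli variables, after which the classical Lyapunov CLT applies and the mean and variance can be recovered from the first two derivatives of this polynomial at $t=1$.
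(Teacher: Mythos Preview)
Your argument is sound: the symmetry computations for the mean and the pairwise covariances are correct, the $1$-dependence is genuine (the block-conditioning argument you sketch does work in $B_n$, since permuting the entries within disjoint position-blocks is a measure-preserving bijection), and either a triangular-array $m$-dependent CLT or the real-rootedness of the type $B$ Eulerian polynomial legitimately finishes the job.

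That said, there is nothing in the paper to compare against: this theorem is not proved here but simply quoted from \cite{CM2012}, which treats the more general wreath products $S_{n,r}=\mathbb{Z}_r\wr S_n$. That reference proceeds via generating functions for the relevant Eulerian-type polynomials and their real-rootedness---essentially the alternative route you outline at the end. Your direct probabilistic computation of the first two moments is more elementary and transparent for $B_n$ specifically, while the cited approach handles all $r$ uniformly and delivers asymptotic normality without any dependence analysis.
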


\begin{theorem}\cite[Theorems 4.1 and 4.3]{CM2012}\label{thm:CM2}
     Let $X_n$ be the random variable corresponding to the flag major index statistic on elements of $B_n$. Then $X_n$ has mean $\mu_n=n^2/4$ and variance $\sigma_n^2=(4n^3+6n^2-n)/36$. Furthermore, as $n\to\infty$, the standardized random variable $(X_n-\mu_n)/\sigma_n$ converges in distribution to $\mathcal{N}(0,1)$.
\end{theorem}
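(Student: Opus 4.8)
The plan is to reduce the entire statement to a fact about a sum of independent random variables, using the equidistribution of $\fmaj$ with the length statistic on $B_n$ recalled above from \cite{adin_roichman}. The length generating function of $B_n$ is its Poincar\'e polynomial
\[\sum_{\sigma\in B_n}q^{\ell(\sigma)}=\prod_{i=1}^{n}\bigl(1+q+q^2+\cdots+q^{2i-1}\bigr),\]
a standard consequence of the product-over-exponents formula for finite Coxeter groups, the exponents of $B_n$ being $1,3,5,\ldots,2n-1$ (see \cite{BB}). By the equidistribution, this same product equals $\sum_{\sigma\in B_n}q^{\fmaj(\sigma)}$, the generating function for $\fmaj$.

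First I would read this product probabilistically. Because the generating function factors, $\fmaj$ --- viewed as a random variable on $B_n$ under the uniform measure --- has the same distribution as $Y_1+Y_2+\cdots+Y_n$, where $Y_1,\ldots,Y_n$ are independent and each $Y_i$ is uniform on $\{0,1,\ldots,2i-1\}$. This is the essential structural step: it turns a group statistic into a sum of independent, bounded, explicitly described summands, after which the mean, variance, and limit law can all be extracted by elementary means.

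Next I would compute the first two moments by additivity. Each summand has $E[Y_i]=\tfrac{2i-1}{2}$ and $\mathrm{Var}(Y_i)=\tfrac{(2i)^2-1}{12}=\tfrac{4i^2-1}{12}$, so by independence $\mu_n=\sum_{i=1}^{n}\tfrac{2i-1}{2}$ and $\sigma_n^2=\sum_{i=1}^{n}\tfrac{4i^2-1}{12}=\tfrac{4n^3+6n^2-n}{36}$, where the last equality uses $\sum_{i=1}^n i^2=\tfrac{n(n+1)(2n+1)}{6}$. This gives the mean and variance $\mu_n$ and $\sigma_n^2$ for the standardization in the statement.

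For the asymptotic normality I would invoke a central limit theorem for independent but non-identically distributed summands, verifying the Lyapunov condition. The relevant estimates are $|Y_i-E[Y_i]|\le i\le n$ for every $i$ together with $\sigma_n^2=\Theta(n^3)$, so $\sigma_n=\Theta(n^{3/2})$. Bounding the third absolute central moments by $E|Y_i-E[Y_i]|^3\le i\cdot\mathrm{Var}(Y_i)$ gives $\sum_{i=1}^n E|Y_i-E[Y_i]|^3\le n\,\sigma_n^2=O(n^4)=o(\sigma_n^3)$, so the Lyapunov ratio tends to $0$ and $(X_n-\mu_n)/\sigma_n\xrightarrow{d}\mathcal{N}(0,1)$. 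I expect this final step to be the main obstacle: the summands are bounded but neither identically distributed nor of comparable size, so the classical i.i.d.\ central limit theorem does not apply and one must instead check a Lindeberg- or Lyapunov-type condition. What makes the condition hold is precisely that the individual fluctuations satisfy $|Y_i-E[Y_i]|=O(n)=o(\sigma_n)$, so that no single summand dominates the sum.
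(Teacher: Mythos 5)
Your proposal cannot be measured against an internal argument, because the paper never proves \cref{thm:CM2}: it is quoted from \cite{CM2012} and used as a black box. As a self-contained derivation, your route is the standard one and is structurally correct: $\fmaj$ is equidistributed with the length statistic on $B_n$ \cite{adin_roichman}, the Poincar\'e polynomial of $B_n$ factors as $\prod_{i=1}^{n}(1+q+\cdots+q^{2i-1})$, and dividing by $|B_n|=\prod_{i=1}^{n}2i$ exhibits the probability generating function of $\fmaj$ as a product, so $\fmaj$ has the law of $Y_1+\cdots+Y_n$ with $Y_i$ independent and uniform on $\{0,1,\ldots,2i-1\}$. Your variance computation and your verification of the Lyapunov condition (namely $\sum_{i=1}^{n}E\lvert Y_i-E[Y_i]\rvert^3\le n\sigma_n^2=O(n^4)=o(\sigma_n^3)$ because $\sigma_n=\Theta(n^{3/2})$) are both correct.

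The genuine problem is the mean. You wrote $\mu_n=\sum_{i=1}^{n}\frac{2i-1}{2}$ and asserted that this ``gives the mean \ldots{} in the statement,'' but you never evaluated the sum: it equals $n^2/2$, not the $n^2/4$ claimed in \cref{thm:CM2}. So, as written, your proof claims to establish a formula that your own computation contradicts. In fact your value is the correct one and the printed statement carries a typo: for $n=1$ the flag major index takes the values $0,1$ on $B_1$, with mean $1/2=n^2/2$; for $n=2$ its generating function is $(1+q)(1+q+q^2+q^3)$, with mean $2=n^2/2$, while $n^2/4$ fails in both cases. The variance formula $(4n^3+6n^2-n)/36$ is consistent with the independent-uniform decomposition, and the paper's own arithmetic in the proof of \cref{thm:CLTfmaj} --- where $(\mu_{n-1}-\mu_n)/\sigma_n$ is simplified to $-3(2n-1)/\sqrt{4n^3+6n^2-n}$ --- silently uses $\mu_n=n^2/2$ (starting from $n^2/4$ one would get half that quantity). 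The fix for your write-up is to evaluate the sum, state that the correct mean is $n^2/2$, and flag the misprint in the quoted theorem; the asymptotic normality conclusion is unaffected, since the standardization is by the actual mean and variance.
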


\begin{remark}
    The original statements of \cref{thm:CM,thm:CM2} given in \cite{CM2012} involve the more general setting of colored permutation groups. These are wreath products of the form $S_{n,r}=\mathbb{Z}_r\wr S_n$, where $\mathbb{Z}_r$ is the cyclic group of order $r$. They contain $B_n\cong S_{n,2}$ as a special case. See \cref{appendix:B} for further details.
    
    While the descent statistic on $S_{n,2}$ does not align with our descent statistic on $B_n$ under the usual isomorphism between these groups, results of Steingr\'{i}msson \cite{steingrim94} and Brenti \cite{brenti94} show that the distributions of these statistics are the same on $S_{n,2}\cong B_n$. One can also show that the distributions of the two different flag major index statistics align, e.g., see \cite[Remark 4.8]{des_fmaj}. Hence, the current statements in \cref{thm:CM,thm:CM2} involving these statistics on $B_n$ is appropriate. 
\end{remark}

Our work will also consider the descent and flag major index statistics as random variables on $\C_{B,n}$, $\C_{D,n}$, or $\overline{\C_{D,n}}$ by equipping these sets with the uniform measure. For the descent statistic, the following lemma follows from \cite[Theorem 1.1 and Example 3.5]{CLLSY} with standard tools in probability theory.

\begin{lemma}\label{thm:CLLSY}
    Let $X_n$ be the random variable corresponding to the descent statistic on $\C_{B,n},\C_{D,n}$, or $\overline{\C_{D,n}}$, and let $Y_n$ be the corresponding random variable on 
    $B_n$. If $n\geq 5$, then the mean and variance of $X_n$ are the same as the mean and variance of $Y_n$, respectively. 
\end{lemma}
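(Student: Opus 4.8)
The plan is to reduce the equality of means and variances to an equality of first and second moments, and to obtain the latter from the moment formula of \cite[Theorem 1.1]{CLLSY}. That formula expresses each moment of the descent statistic on a fixed (signed) cycle type in $B_n$ in such a way that its dependence on the cycle type enters only through the number of cycles of each small length, with the $m$-th moment insensitive to all cycles whose length exceeds a bound depending on $m$; \cite[Example 3.5]{CLLSY} records this dependence in the form we need. Since the mean is the first moment and the variance equals the second moment minus the square of the first, it suffices to show that the first and second moments of $X_n$ agree with those of $Y_n$.

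I would first translate the three cyclic sets into signed cycle types. A cyclic signed permutation in $\C_{B,n}$ is a single cycle of length $n$, and for such a cycle the number of negative entries in one-line notation coincides with the number of negative signs $\epsilon_i$ in the cycle; hence membership in $D_n$ is equivalent to the cycle being positive, i.e. $\prod_i \epsilon_i = 1$. Thus $\C_{D,n}$ and $\overline{\C_{D,n}}$ are each a single signed cycle type (a positive, respectively negative, cycle of length $n$), so \cite[Theorem 1.1]{CLLSY} applies to each directly. For $n \geq 5$ this cycle type has no cycle of length below the bound governing the first two moments, so the formula forces the first and second moments on both $\C_{D,n}$ and $\overline{\C_{D,n}}$ to equal the values on all of $B_n$; passing back from moments to mean and variance (and comparing with \cref{thm:CM} for concreteness) gives the claim for these two sets.

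Finally, I would recover the statement for $\C_{B,n}$ by a mixture argument. Since $\C_{B,n} = \C_{D,n} \sqcup \overline{\C_{D,n}}$ under the uniform measure, each moment of $X_n$ on $\C_{B,n}$ is the convex combination, weighted by the relative sizes of the two parts, of the corresponding moments on $\C_{D,n}$ and $\overline{\C_{D,n}}$. As those two sets share the same first and second moments, so does $\C_{B,n}$, and therefore its mean and variance again match those of $Y_n$.

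The step I expect to be the main obstacle is pinning down the exact threshold in \cite[Theorem 1.1]{CLLSY} and confirming that it yields precisely $n \geq 5$ for the second moment. The heuristic is that the second moment is built from joint descent events at two positions $i$ and $j$, which probe the relative order of the four values $\sigma(i),\sigma(i+1),\sigma(j),\sigma(j+1)$; a cycle can couple these values and perturb the generic probabilities only if it has length at most $4$, so a single cycle of length $n \geq 5$ is too long to contribute any correction and the moment reverts to its full-group value. Verifying that \cite[Example 3.5]{CLLSY} supplies this bound in usable form, and that the no-short-cycle value of the moment formula indeed returns the $B_n$ moments, is where the careful bookkeeping lies.
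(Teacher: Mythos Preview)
Your proposal is correct and takes essentially the same approach as the paper. The paper does not give a detailed argument for this lemma, stating only that it ``follows from \cite[Theorem 1.1 and Example 3.5]{CLLSY} with standard tools in probability theory''; your write-up is exactly the natural unpacking of that citation, including the identification of $\C_{D,n}$ and $\overline{\C_{D,n}}$ as single signed cycle types and the mixture argument needed to pass to $\C_{B,n}$.
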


One can also apply the methods in \cite{CLLSY} to the flag major index. It straightforward to show an analog of \cref{thm:CLLSY} for this statistic. 
 
\begin{lemma}\label{thm:CLLSY2}
    Let $X_n$ be the random variable corresponding to the flag major index statistic on $\C_{B,n},\C_{D,n}$, or $\overline{\C_{D,n}}$, and let $Y_n$ be the corresponding random variable on 
    $B_n$. If $n\geq 5$, then the mean and variance of $X_n$ are the same as the mean and variance of $Y_n$, respectively. 
\end{lemma}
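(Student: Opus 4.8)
The plan is to mirror the proof of \cref{thm:CLLSY}, using the decomposition $\fmaj(\sigma) = 2\maj(\sigma) + \negative(\sigma)$ to reduce the claim to statements about the first and second moments of $\maj$ and $\negative$. Since the mean and variance of a linear combination are governed by moments up to order two, it suffices to show that each of $E[\maj]$, $E[\negative]$, $\operatorname{Var}(\maj)$, $\operatorname{Var}(\negative)$, and $\operatorname{Cov}(\maj,\negative)$ agrees between the relevant cyclic set and $B_n$ for $n \geq 5$. The means then match by linearity, and the variances match because
\[\operatorname{Var}(\fmaj) = 4\operatorname{Var}(\maj) + \operatorname{Var}(\negative) + 4\operatorname{Cov}(\maj,\negative).\]

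The key observation that makes the method of \cite[Theorem 1.1]{CLLSY} applicable is that $\C_{D,n}$ and $\overline{\C_{D,n}}$ are each a single signed cycle type: a single length-$n$ cycle whose product of signs is $+1$ or $-1$, respectively. Indeed, the product of the signs equals $(-1)^{\negative(\sigma)}$, so it is $+1$ exactly when $\sigma \in D_n$; and $\C_{B,n}$ is the disjoint union of these two signed cycle types. Thus all three sets fall directly within the cycle-type framework already used for the descent statistic. Because $\maj$ is determined by $\Des(\sigma)$, the agreement of $E[\maj]$ and $\operatorname{Var}(\maj)$ with their $B_n$ values for $n \geq 5$ follows from the same generating-function computation that underlies \cref{thm:CLLSY}. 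The genuinely new inputs are the negative statistic $\negative$ and the covariance $\operatorname{Cov}(\maj,\negative)$, the latter requiring the \emph{joint} distribution of $\Des$ and $\Negative$ by cycle type rather than either marginal alone.

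To handle these, I would extend the generating function of \cite[Theorem 1.1]{CLLSY} to also record the negative statistic. This is natural because $\negative$ is additive along the cycle notation and the sign data is already part of the signed-cycle-type bookkeeping. The principle I would exploit is that, on any signed cycle type all of whose cycles are long relative to the moment order, the joint moments of these statistics coincide with those on the uniform distribution on $B_n$; the discrepancies between a fixed cycle type and the full group are controlled entirely by short cycles. For moments up to order two this is precisely the regime already verified for $\des$ at the threshold $n \geq 5$, since a single $n$-cycle has no short cycles once $n \geq 5$. Combining the agreement of all joint moments of $(\maj,\negative)$ up to order two then yields the desired equalities for the mean and variance of $\fmaj$ on each of $\C_{B,n}$, $\C_{D,n}$, and $\overline{\C_{D,n}}$.

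I expect the main obstacle to be the covariance $\operatorname{Cov}(\maj,\negative)$. Unlike the type $A$ setting, the descent and negative sets interact: position $0$ is a descent exactly when $\sigma(1) < 0$, so $\Des$ and $\Negative$ are not independent, and this cross term must be computed rather than dismissed, separately for the positive and the negative $n$-cycle. Confirming that the single threshold $n \geq 5$ which governs the second moment of $\des$ also governs this joint second moment, uniformly over both signed cycle types, is the crux; once the extended generating function is in place, the remaining moment extractions are routine and parallel those in \cref{thm:CLLSY}.
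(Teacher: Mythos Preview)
The paper offers no proof of this lemma beyond the single sentence ``One can also apply the methods in \cite{CLLSY} to the flag major index. It [is] straightforward to show an analog of \cref{thm:CLLSY} for this statistic.'' Your proposal is therefore more detailed than anything in the paper itself, and it is consistent with what ``applying the methods in \cite{CLLSY}'' would mean in practice: the decomposition $\fmaj = 2\maj + \negative$, the reduction to second-order joint moments of $(\maj,\negative)$, the identification of $\C_{D,n}$ and $\overline{\C_{D,n}}$ as single signed cycle types, and the principle that a single long cycle has no short-cycle corrections once $n\geq 5$ are all sound and match the framework of \cite{CLLSY}. In short, your outline is a correct and reasonable elaboration of the paper's bare citation; there is nothing to compare because the paper supplies no argument of its own.
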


\section{Mapping cyclic signed permutations to signed permutations}\label{sec:bijection}

In this section, we will define our function $\Phi:\C_{B,n+1}\to B_n$ and show that it satisfies $\Des(\pi)\cap \{0,1,2,\ldots,n-1\}=\Des(\Phi(\pi))$ for all $\pi\in \C_{B,n+1}$. Throughout, we will use $\C_{B,n+1}^+$ (resp. $\C_{B,n+1}^-$) for the subset of $\C_{B,n+1}$ consisting of permutations where $n+1$ (resp. $-(n+1)$) appears in the cycle or one-line notation. Our approach begins with an adaptation of Elizalde's algorithm in \cite{elizalde} to elements in $\C_{B,n+1}^+$. After establishing several properties of the algorithm, we construct $\Phi$ by applying variations of this algorithm to elements in $\C_{B,n+1}$.

\subsection{An algorithm on cyclic signed permutations}\label{sec:algorithm_on_cycles}

Throughout, fix a positive integer $n$. We first construct an algorithm that starts with $\pi\in \C_{B,n+1}^+$ and outputs a permutation $\sigma\in B_n$. We will need the following definitions.

\begin{definition}
    Let $\pi\in \C_{B,n+1}$ and $\sigma\in B_n$. For brevity, we will use the notation
    \[\Des_{\Delta}(\pi,\sigma)=\Des(\pi)\Delta \Des(\sigma),\]
    where $\Delta$ denotes the symmetric difference of two sets. Using this, define for any nonnegative integers $x$ and $y$ the Boolean function
    \[P_{\pi,\sigma}(x,y)=\begin{cases}
        \texttt{True} & \text{if $|x-y|=1$ and $\min\{x,y\}\in \Des_{\Delta}(\pi,\sigma)\cap [n-1]$} \\
        \texttt{False} & \text{otherwise.}
    \end{cases}\]
\end{definition}

\begin{definition}
    In a cycle notation for $\pi\in \C_{B,n+1}$, an element $x\in [\pm n]$ is a \emph{left-to-right maximum} if it is larger than every element that appears to its left.
\end{definition}

Additionally, we use $\sgn:\mathbb{Z}\to \{-1,0,1\}$ to denote the sign function
\[\sgn(x)=\begin{cases} 1 & \text{ if $x>0$} \\
0 & \text{ if $x=0$} \\
-1 & \text{ if $x<0$.} \end{cases}\]
Combining these definitions, we now define an algorithm and function $\phi:\C_{B,n+1}^+\to B_n$. For cyclic permutations in $S_{n+1}$ under its natural identification as a subgroup of $B_{n+1}$, the definition of $\phi$ coincides with Elizalde's original bijection from \cref{thm:elizalde}. This reduction is not immediate, so we postpone a detailed description until \cref{appendix:A}.

\begin{definition}\label{def:phi}
    Define $\phi: \C_{B,n+1}^+\to B_n$ to be the function mapping each $\pi\in \C_{B,n+1}^+$ to its output from Algorithm \ref{algorithm}.
\end{definition}

\begin{algorithm}\DontPrintSemicolon
\caption{\algname{Cyclic to Signed Permutation}}\label{algorithm}
\KwIn{$\pi\in \C_{B,n+1}^+$}
\KwOut{a permutation in $B_n$}
express $\pi$ in the cycle notation with $n+1$ in the final position \\
$\pi_{i_1},\pi_{i_2},\ldots,\pi_{i_{m+1}}\coloneqq $ the left-to-right maxima in the cycle notation above\\
set $\sigma=\sigma_1\sigma_2\dots \sigma_{m}\coloneqq (\pi_{i_1},\ldots,\pi_{i_2-1})(\pi_{i_2},\ldots,\pi_{i_3-1})\dots (\pi_{i_{m}},\ldots,\pi_{i_{m+1}-1})$ \label{line:starting}\\
\For{$j=1,2,\ldots,m$\label{line:forloop}} 
{
    $z\coloneqq $ the rightmost entry of $\sigma_j$ \\
    \If{\normalfont $P_{\pi,\sigma}(|z|,|z-1|)$ or $P_{\pi,\sigma}(|z|,|z+1|)$\label{line:if0}}
    {
        set $\epsilon\in \{-1,1\}$ to be the value such that $P_{\pi,\sigma}(|z|,|z+\epsilon|)$ is \texttt{True} and $\pi(|z+\epsilon|)$ is largest\\
        \While{\normalfont $P_{\pi,\sigma}(|z|,|z+\epsilon|)$\label{line:whileloop1}} 
        {
            $x\coloneqq z$ \\
            $y\coloneqq $ whichever of $z+\epsilon$ or $-(z+\epsilon)$ that appears in $\sigma$\\
            \While{\normalfont $P_{\pi,\sigma}(|x|,|y|)$\label{line:whileloop2}}
            {
                respectively replace $x$ and $y$ with $\sgn(x)\cdot |y|$ and $\sgn(y)\cdot |x|$ in the cycle notation of $\sigma$ \\
                \If{\normalfont the replacement did not involve the first element of $\sigma_j$\label{line:if1}}
                {
                $x,y\coloneqq $ the elements respectively preceding the ones replaced
                }
            }
            $z\coloneqq $ the rightmost entry of $\sigma_j$
        }
    }
}
\Return $\sigma$
\end{algorithm}

Throughout, we will refer to the step where $x$ and $y$ are replaced with $\sgn(x)\cdot |y|$ and $\sgn(y)\cdot |x|$ as a \emph{swap}. Additionally, $y$ is chosen based on which of $z+\epsilon$ or $-(z+\epsilon)$ appears in the cycle notation, but for brevity, we will abuse notation and refer to this element as $\pm (z+\epsilon)$. We now give examples to illustrate the algorithm.

\begin{example}\label{ex:phi}
    Consider the signed permutation \[\pi = (-4, -1, 2, 5, -3, -6, 7)=[2,5,-6,-1,-3,7,-4] \in \C_{B,7}^+.\] Its descent set is $\{2,4,6\}$. After finding the left-to-right maxima of the cycle notation, Algorithm \ref{algorithm} begins with
    \[\sigma= (-4)(-1)(2)(5,-3,-6)=[-1,2,-6,-4,-3,5].\]
     In the first iteration of the \texttt{for} loop in line \ref{line:forloop}, we see that $P_{\pi,\sigma}(4,5)$ is \texttt{True} and $P_{\pi,\sigma}(4,3)$ is \texttt{False}. Then the algorithm chooses $\epsilon=1$, and the first replacement in the \texttt{while} loop on line \ref{line:whileloop2} results in
    \[\sigma=(-5)(-1)(2)(4,-3,-6)=[-1,2,-6,-3,-5,4].\]
    Since $P_{\pi,\sigma}(5,6)$ is \texttt{False}, the first iteration of the \texttt{for} loop ends here. In the second iteration of the \texttt{for} loop, we see that $P_{\pi,\sigma}(1,2)$ is \texttt{False} and $P_{\pi,\sigma}(1,0)$ is always \texttt{False}, so no swaps occur. In the third iteration, again no swaps occur, as $P_{\pi,\sigma}(2,1)$ and $P_{\pi,\sigma}(2,3)$ are both \texttt{False}. Consequently, \[\phi(\pi)=(-5)(-1)(2)(4,-3,-6)=[-1,2,-6,-3,-5,4].\] Observe that $\Des(\phi(\pi))=\{0,2,4\}$, so its descents in $[5]$ coincide with those of $\pi$. However, the descent $0\in \Des(\phi(\pi))$ is introduced by the algorithm, and the descent $6\in \Des(\pi)$ cannot occur for $\phi(\pi)\in B_6$.  
\end{example}

\begin{example}\label{ex:phi2}
    For a larger example, consider 
    \begin{equation*}
        \begin{split}
            \pi & =(1,-4,8,-6, 11, 2, -3, 7,  -5, 10, 12,  9,13) \\
            & =[-4,-3,7,8,10,11,-5,-6,13,12,2,9,1]\in \C_{B,13}^+,
        \end{split}
    \end{equation*}
    which has descent set $\{0,6,7,9,10,12\}$. The algorithm begins with
    \begin{equation*}
        \begin{split}
            \sigma & =(1,-4)(8,-6)(11, 2, -3, 7,-5, 10)(12, 9) \\ & =[-4,-3,7,1,10,8,-5,-6,12,11,2,9].
        \end{split}
    \end{equation*}

    In the first iteration of the \texttt{for} loop, we see that $P_{\pi,\sigma}(4,3)$ is \texttt{True} and $P_{\pi,\sigma}(4,5)$ is \texttt{False}, so the algorithm performs swaps beginning with $4$ and $3$ to obtain
    \begin{equation*}
        \begin{split}
            \sigma & =(2,-3)(8,-6)(11, 1, -4, 7, -5, 10)(12, 9) \\ & =[-4,-3,2,7,10,8,-5,-6,12,11,1,9].
        \end{split}
    \end{equation*}
    Since $P_{\pi,\sigma}(3,2)$ is \texttt{False}, the first iteration of the \texttt{for} loop ends here. 
    
    In the second iteration, $P_{\pi,\sigma}(6,5)$ is \texttt{True} and $P_{\pi,\sigma}(6,7)$ is \texttt{False}, so the algorithm performs swaps beginning with $6$ and $5$ to obtain
    \begin{equation*}
        \begin{split}
            \sigma & =(2,-3)(7,-5)(11, 1, -4,8,- 6, 10)(12,9) \\ & =[-4,-3,2,8,7,10,-5,-6,12,11,1,9].
        \end{split}
    \end{equation*}
    Since $P_{\pi,\sigma}(5,4)$ is now \texttt{True}, the algorithm performs further swaps beginning with $5$ and $4$ to obtain
    \begin{equation*}
        \begin{split}
            \sigma & =(2,-3)(7,-4)(11, 1, -5,8, -6, 10)(12,9) \\ & =[-5,-3,2,7,8,10,-4,-6,12,11,1,9].
        \end{split}
    \end{equation*}
    As $P_{\pi,\sigma}(4,3)$ is \texttt{False}, the second iteration of the \texttt{for} loop end here.

    In the third iteration, both $P_{\pi,\sigma}(10,11)$ and $P_{\pi,\sigma}(10,9)$ are \texttt{False}. Hence, the algorithm does not perform any swaps, so
    \begin{equation*}
        \begin{split}
            \phi(\pi) & =(2,-3)(7,-4)(11, 1, -5,8, -6, 10)(12,9) \\ & =[-5,-3,2,7,8,10,-4,-6,12,11,1,9].
        \end{split}
    \end{equation*}
    This has descent set $\{0,6,7,9,10\}$, and this matches $\Des(\pi)$ at all elements except $12$, which is not a possible descent for $\phi(\pi)\in B_{12}$.
\end{example}

With $\phi$ defined, we now consider its various properties. We begin by describing properties of the signed permutation $\sigma$ at the start of the algorithm.

\begin{lemma}\label{lem:algstart}
    Consider the application of Algorithm \ref{algorithm} on $\pi\in \C_{B,n+1}^+$, let \[\sigma=\sigma_1\sigma_2\dots \sigma_{m}\coloneqq (\pi_{i_1},\ldots,\pi_{i_2-1})(\pi_{i_2},\ldots,\pi_{i_3-1})\ldots (\pi_{i_{m}},\ldots,\pi_{i_{m+1}-1})\] be the signed permutation formed at the start of the algorithm in line \ref{line:starting}, and define \[L=\{|\pi_{i_2-1}|,|\pi_{i_3-1}|,\ldots, |\pi_{i_{m+1}-1}|\}.\]
    \begin{enumerate}[label=(\alph*)]
        \item We have that $\pi_{i_1}<\pi_{i_2}<\dots < \pi_{i_m}$. Furthermore, $\pi_{i_j}$ is the largest element in $\sigma_1,\sigma_2,\ldots,\sigma_j$ for each $j\in [m]$. 
        \item For any $x\in [n]\setminus L$, we have that $\pi(x)=\sigma(x)$, and for any $x\in L$, we have that $\pi(x)>\sigma(x)$. 
        \item If an element $x\in [n-1]$ is in $\Des_{\Delta}(\pi,\sigma)$, then one of the following must be true:
        \begin{itemize}
            \item $x\in L$, $x+1\notin L$, $\pm (x+1)$ appears right of $\pm x$ in $\sigma$, and 
            $\pi(x)>\pi(x+1)=\sigma(x+1)>\sigma(x),$
            or
            \item $x\notin L$, $x+1\in L$, $\pm x$ appears right of $\pm (x+1)$ in $\sigma$, and
            $\pi(x+1)>\pi(x)=\sigma(x)>\sigma(x+1).$
        \end{itemize}
        In particular, $\pm x$ and $\pm (x+1)$ cannot be in the same cycle.
    \end{enumerate} 
\end{lemma}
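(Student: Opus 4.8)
The plan is to read everything off the Foata-type decomposition defining $\sigma$. Write the cycle notation of $\pi$ with $n+1$ last as $(\pi_1, \pi_2, \ldots, \pi_{n+1})$, so that $\pi_{n+1} = n+1$ and $\pi(|\pi_k|) = \pi_{k+1}$ cyclically, and let the left-to-right maxima sit at positions $1 = i_1 < i_2 < \cdots < i_{m+1} = n+1$. Then $\sigma_j = (\pi_{i_j}, \ldots, \pi_{i_{j+1}-1})$, and $\sigma$ simply drops $\pi_{n+1}$. The single fact I would record first is how the two permutations act: within a block $\sigma(|\pi_k|) = \pi_{k+1}$ exactly as in $\pi$, while at a rightmost block entry $\sigma(|\pi_{i_{j+1}-1}|) = \pi_{i_j}$ (the block wraps around) rather than $\pi_{i_{j+1}}$. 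Hence $\pi$ and $\sigma$ differ precisely on $L$, the absolute values of the rightmost block entries.

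Parts (a) and (b) then follow quickly. The inequalities $\pi_{i_1} < \cdots < \pi_{i_m}$ hold because each left-to-right maximum exceeds all earlier entries, hence all earlier maxima; and $\pi_{i_j}$ is the largest entry of $\sigma_1, \ldots, \sigma_j$ because the running maximum of $\pi_1, \ldots, \pi_{i_{j+1}-1}$ is constantly $\pi_{i_j}$ on that range, as no entry strictly between consecutive maxima is a maximum. For (b), if $x = |\pi_k| \notin L$ then $\pi_k$ is interior to its block and both permutations send $x$ to $\pi_{k+1}$, whereas if $x = |\pi_{i_{j+1}-1}| \in L$ then $\pi(x) = \pi_{i_{j+1}} > \pi_{i_j} = \sigma(x)$ by the chain of maxima.

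Part (c) is the heart, and I would split it according to which of $x, x+1$ lie in $L$; since $1 \le x \le n-1$, a descent at $x$ is just the comparison of the values at positions $x$ and $x+1$, and part (b) controls how $\pi$ and $\sigma$ differ there. If neither of $x, x+1$ is in $L$, the two permutations agree at both positions and no discrepancy occurs. If both lie in $L$, say $x$ ends block $\sigma_a$ and $x+1$ ends block $\sigma_b$ with $a \ne b$, then $\pi(x), \sigma(x), \pi(x+1), \sigma(x+1)$ equal the maxima $\pi_{i_{a+1}}, \pi_{i_a}, \pi_{i_{b+1}}, \pi_{i_b}$, and since maxima increase with their index each of the conditions $\pi(x) > \pi(x+1)$ and $\sigma(x) > \sigma(x+1)$ reduces to $a > b$; so again there is no discrepancy. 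The only surviving cases are the asymmetric ones. When only $x \in L$ we have $\pi(x) > \sigma(x)$ and $\pi(x+1) = \sigma(x+1)$; using that $\pi$ (resp. $\sigma$) is injective, so $\pi(x) \ne \pi(x+1)$ (resp. $\sigma(x) \ne \sigma(x+1)$), a three-way comparison of $\pi(x+1) = \sigma(x+1)$ against $\sigma(x)$ and $\pi(x)$ shows the descent statuses disagree exactly when $\pi(x) > \pi(x+1) = \sigma(x+1) > \sigma(x)$, which is the first alternative; the case $x+1 \in L$ is symmetric and gives the second.

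The step I expect to be the main obstacle is the positional claim that $\pm(x+1)$ appears to the right of $\pm x$ (resp. $\pm x$ to the right of $\pm(x+1)$), which simultaneously yields the closing assertion that $\pm x$ and $\pm(x+1)$ lie in different cycles. Here I would argue by contradiction using the maximality in part (a): in the case $x \in L$ ending block $\sigma_a$, we have $\sigma(x) = \pi_{i_a}$, the largest entry among the blocks $\sigma_1, \ldots, \sigma_a$, and the discrepancy forces $\sigma(x+1) > \sigma(x) = \pi_{i_a}$. Were $\pm(x+1)$ to lie in one of $\sigma_1, \ldots, \sigma_a$, then because $x+1 \notin L$ its successor $\sigma(x+1)$ would lie in the same block and hence be at most $\pi_{i_a}$, a contradiction; thus $\pm(x+1)$ lies in a strictly later block, to the right of $\pm x$. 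The symmetric argument handles $x+1 \in L$. The one place demanding care is the boundary block $j = m$, where the relevant maximum is $\pi_{i_{m+1}} = n+1$; since $n+1$ is globally largest, all the maxima comparisons still behave as claimed.
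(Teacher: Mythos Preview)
Your argument is correct and follows the same route as the paper: read (a) and (b) directly off the left-to-right-maximum decomposition, then split (c) into the four cases depending on which of $x,x+1$ lies in $L$, ruling out the two symmetric cases via (a) and (b). The paper compresses the asymmetric cases to ``the two possibilities follow from (a), (b), and casework,'' whereas you actually supply that casework, including the contradiction argument for the positional claim (using that $\sigma(x)=\pi_{i_a}$ is maximal in $\sigma_1,\ldots,\sigma_a$); this is a welcome elaboration rather than a different approach.
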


\begin{proof}
    For (a), the algorithm creates the individual cycles so that left-to-right maxima are the first elements in their respective cycles. From this, we conclude $\pi_{i_1}<\pi_{i_2}<\dots < \pi_{i_m}$. Additionally, $\pi_{i_j}$ is the right-most left-to-right maximum in $\sigma_1,\sigma_2,\ldots,\sigma_j$, so it must be larger than all elements in those cycles. 

    For (b), the construction of $\sigma$ also implies that the element following any $x\in [n]\setminus L$ in the cycle notations of $\pi$ and $\sigma$ coincide, so $\pi(x)=\sigma(x)$. For each $x\in L$, we find the cycle $\sigma_j$ containing $\pm x$ as the last element and use (a) to conclude
    \[\pi(x)=\pi_{i_{j+1}}>\pi_{i_{j}}=\sigma(x),\]
    so (b) follows. 
    
    For (c), note that if both $x,x+1\notin L$, then (b) implies $\pi(x)=\sigma(x)$ and $\pi(x+1)=\sigma(x+1)$, so $x\notin \Des_{\Delta}(\pi,\sigma)$. If both $x,x+1\in L$, then let $\sigma_j$ and $\sigma_k$ be the cycles containing $\pm x$ and $\pm (x+1)$ in the last entry, respectively. Using (a), the relative order of $\pi(x)=\pi_{i_{j+1}}$ and $\pi(x+1)=\pi_{i_{k+1}}$ matches that of $\sigma(x)=\pi_{i_j}$ and $\sigma(x+1)=\pi_{i_k}$, so $x\notin \Des_{\Delta}(\pi,\sigma)$. Combined, we conclude $x\in \Des_{\Delta}(\pi,\sigma)$ requires that exactly one of $x$ or $x+1$ is in $L$. The two possibilities given in (c) now follow from (a), (b), and casework on which of $x$ or $x+1$ is in $L$.
\end{proof}

\begin{lemma}\label{cor:algstart}
    Consider the application of Algorithm \ref{algorithm} on $\pi\in \C_{B,n+1}^+$, and let \[\sigma=\sigma_1\sigma_2\ldots \sigma_{m}\coloneqq (\pi_{i_1},\ldots,\pi_{i_2-1})(\pi_{i_2},\ldots,\pi_{i_3-1})\dots (\pi_{i_{m}},\ldots,\pi_{i_{m+1}-1})\] be the signed permutation at the start of Algorithm \ref{algorithm}. Suppose $x$ is the last element in some cycle of $\sigma$ and  $\epsilon\in \{-1,1\}$. If $P_{\pi,\sigma}(|x|,|x+\epsilon|)$ is \texttt{True} and $P_{\pi,\sigma}(|x|,|x-\epsilon|)$ is \texttt{False}, then $\sigma(|x|)<\sigma(|x-\epsilon|)<\sigma(|x+\epsilon|)$ cannot occur.
\end{lemma}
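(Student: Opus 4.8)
The plan is to assume the forbidden chain holds and derive a contradiction, using \cref{lem:algstart} throughout. Write $a=|x|$, $b=|x+\epsilon|$, and $c=|x-\epsilon|$, so that $\{b,c\}=\{a-1,a+1\}$ and, since $x$ is the last element of a cycle of $\sigma$, we have $a\in L$ in the notation of \cref{lem:algstart}. The goal is to show $\sigma(a)<\sigma(c)<\sigma(b)$ is impossible.

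First I would extract structural information from the \texttt{True} hypothesis. Because $P_{\pi,\sigma}(a,b)$ is \texttt{True}, the element $\min\{a,b\}$ lies in $\Des_\Delta(\pi,\sigma)\cap[n-1]$, so \cref{lem:algstart}(c) applies to it. Using $a\in L$ to select the correct alternative in that part (the first alternative when $b=a+1$, the second when $b=a-1$), in either case I obtain that $b\notin L$ and
\[\pi(a)>\pi(b)=\sigma(b)>\sigma(a).\]
Next I would bring in the \texttt{False} hypothesis. Suppose toward a contradiction that $\sigma(a)<\sigma(c)<\sigma(b)$. Since $P_{\pi,\sigma}(a,c)$ is \texttt{False} while $|a-c|=1$, the position $\min\{a,c\}$ is not in $\Des_\Delta(\pi,\sigma)\cap[n-1]$, so (in the generic range) $\pi$ and $\sigma$ agree on whether there is a descent between the adjacent positions $a$ and $c$. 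Comparing this agreement with the assumed $\sigma(a)<\sigma(c)$ and splitting on the two cases $c=a\pm1$, I conclude in both that $\pi(c)>\pi(a)$. Combined with the display above and the assumed chain, this gives
\[\pi(c)>\pi(a)>\pi(b)=\sigma(b)>\sigma(c),\]
and in particular $\pi(c)>\sigma(c)$.

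Finally I would close with a dichotomy on $c$. If $c\notin L$, then \cref{lem:algstart}(b) gives $\pi(c)=\sigma(c)$, contradicting $\pi(c)>\sigma(c)$. If $c\in L$, let $\sigma_k$ and $\sigma_l$ be the cycles of $\sigma$ ending in $\pm a$ and $\pm c$; then $\sigma(a)$, $\sigma(c)$, and $\pi(a)$ are respectively the left-to-right maxima $\pi_{i_k}$, $\pi_{i_l}$, $\pi_{i_{k+1}}$, which are strictly increasing in their index by \cref{lem:algstart}(a). The assumed $\sigma(a)<\sigma(c)$ forces $k<l$, while $\sigma(c)<\sigma(b)<\pi(a)=\pi_{i_{k+1}}$ forces $\pi_{i_l}<\pi_{i_{k+1}}$ and hence $l\le k$, a contradiction.

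I expect the main obstacle to be the boundary positions rather than this generic argument. The \texttt{False} hypothesis does not by itself keep $c=|x-\epsilon|$ inside $[n]$, so the degenerate cases $|x-\epsilon|=0$ (and, symmetrically, $|x+\epsilon|>n$) fall outside the reach of Step~2, where I used that $\min\{a,c\}$ is a genuine position in $[n-1]$. These cases must be handled separately, leaning on the $\cap[n-1]$ restriction built into $P_{\pi,\sigma}$ together with the descent-at-$0$ convention $\sigma(0)=0$; pinning down exactly how the boundary behaves is the delicate part of the proof and is where I would concentrate the careful bookkeeping.
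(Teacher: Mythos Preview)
Your argument is essentially the paper's own proof, repackaged as a contradiction. The paper likewise extracts $\pi(|x|)>\pi(|x+\epsilon|)=\sigma(|x+\epsilon|)>\sigma(|x|)$ from \cref{lem:algstart}(c), then splits the \texttt{False} hypothesis into the two ``agreement'' alternatives $\bigl(\sigma(|x|)\gtrless\sigma(|x-\epsilon|)$ and $\pi(|x|)\gtrless\pi(|x-\epsilon|)\bigr)$; your contradiction hypothesis $\sigma(a)<\sigma(c)$ lands you directly in the second alternative, after which both proofs run the same dichotomy on whether $|x-\epsilon|\in L$, invoking parts (b) and (a) of \cref{lem:algstart} in the two subcases. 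Your handling of the $c\in L$ subcase via the index comparison $k<l$ versus $l\le k$ is a cosmetic variant of the paper's ``$\pm(x-\epsilon)$ appears to the right of $\pm x$, hence $\sigma(|x-\epsilon|)\ge\pi_{i_{j+1}}$'' line.

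On the boundary you flag: the paper does not treat it either. Its proof simply presents the two agreement cases as if they exhausted the ways $P_{\pi,\sigma}(|x|,|x-\epsilon|)$ can be \texttt{False}, tacitly assuming $\min\{|x|,|x-\epsilon|\}\in[n-1]$. This is harmless for the paper because the sole invocation of this lemma (the opening sentence of the proof of \cref{lem:epsilon_orders}) is made under exactly that extra hypothesis, so you need not labor over the degenerate positions.
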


\begin{proof}
    Suppose that $x$ is the last element in $\sigma_j$. Since $P_{\pi,\sigma}(|x|,|x+\epsilon|)$ is \texttt{True}, \cref{lem:algstart}(c) implies that
    \begin{equation}\label{eq:algstart}
    \pi_{i_{j+1}}=\pi(|x|)>\pi(|x+\epsilon|)=\sigma(|x+\epsilon|)>\sigma(|x|)=\pi_{i_j}.
    \end{equation}
    There are two cases for $P_{\pi,\sigma}(|x|,|x-\epsilon|)$ being \texttt{False}. If $\sigma(|x|)>\sigma(|x-\epsilon|)$ and $\pi(|x|)>\pi(|x-\epsilon|)$, then combined with \eqref{eq:algstart}, we conclude \[\sigma(|x+\epsilon|)>\sigma(|x|)>\sigma(|x-\epsilon|).\] Alternatively, suppose that $\sigma(|x|)<\sigma(|x-\epsilon|)$ and $\pi(|x|)<\pi(|x-\epsilon|)$. If $\pm (x-\epsilon)$ is not the last element of a cycle, then \cref{lem:algstart}(b) implies $\sigma(|x-\epsilon|)=\pi(|x-\epsilon|)$, so combined with \eqref{eq:algstart}, we conclude \[\sigma(
    |x-\epsilon|)=\pi(|x-\epsilon|)>\pi(|x|)>\sigma(|x+\epsilon|)>\sigma(|x|).\]
    If $\pm (x-\epsilon)$ is the last element of a cycle, then \cref{lem:algstart}(a) with $\pi(|x|)<\pi(|x-\epsilon|)$ implies $\pm (x-\epsilon)$ appears to the right of $\pm x$. Combined with \eqref{eq:algstart}, we find
    \[\sigma(|x-\epsilon|)\geq \pi_{i_{j+1}}>\sigma(|x+\epsilon|) >\sigma(|x|).\]
    In all cases, $\sigma(|x|)<\sigma(|x-\epsilon|)<\sigma(|x+\epsilon|)$ is not possible. 
\end{proof}

\subsection{Swaps and descents throughout the \texttt{for} loop}\label{sec:for_loop}

We next consider what occurs in Algorithm \ref{algorithm} throughout the \texttt{for} loop. We will show that for any $\pi\in \C_{B,n+1}^+$, the outputted permutation $\phi(\pi)$ satisfies $\Des(\pi)\cap [n-1]=\Des(\phi(\pi))$. We then consider possibilities for the descent at position $0$ for $\pi$ and $\phi(\pi)$.

Throughout this section, we will fix some notation. Let $\pi\in \C_{B,n+1}^+$ be a cyclic signed permutation inputted into Algorithm \ref{algorithm}, and let
\begin{equation}
    \pi_1\pi_2\dots \pi_m=(\pi_{1,1},\ldots,\pi_{1,\ell_1})(\pi_{2,1},\ldots,\pi_{2,\ell_2})\dots (\pi_{m,1},\ldots,\pi_{m,\ell_m})
\end{equation}
be the permutation in line \ref{line:starting} of the algorithm, which is formed by creating cycles using the left-to-right maxima of $\pi$. Our focus will be on any signed permutation at the start of an iteration of the \texttt{for} loop in line \ref{line:forloop} or the \texttt{while} loop in line \ref{line:whileloop1}, which we will denote using
\begin{equation}
        \begin{split}
            \sigma & =\sigma_1\sigma_2\dots\sigma_m=(\sigma_{1,1},\ldots,\sigma_{1,\ell_1})(\sigma_{2,1},\ldots,\sigma_{2,\ell_2})\dots(\sigma_{m,1},\ldots,\sigma_{m,\ell_m}).
        \end{split}
    \end{equation}
We will assume that the current iteration of the \texttt{for} loop is the $j$-th iteration, so any swaps that are being performed involve the elements in $\sigma_j$ and another cycle. We will be primarily interested in the following properties of $\sigma$.

\begin{definition} 
The \emph{order properties} of ${\sigma}$ are the following.  
\begin{enumerate}[label=(\Alph*)]
    \item The relative order of elements within each cycle $\sigma_1,\ldots,\sigma_m$ matches the relative order within the respective cycle of $\pi_1,\ldots,\pi_m$. In particular, the largest element in each cycle is the first one. \label{op:relative}
    \item The first elements of the cycles satisfy $\sigma_{1,1}<\sigma_{2,1}<\dots <\sigma_{m,1}$. Additionally, for every $i\in [m]$, the first element $\sigma_{i,1}$ is the largest element that appears in $\sigma_1,\sigma_2,\ldots,\sigma_i$. \label{op:first}
    \item Suppose $x\in [\pm n]$ appears in the cycle notation of $\sigma$, and let $k\geq j+1$. Then $\pi(|x|)>\pi_{k,1}$ if and only if $\sigma(|x|)\geq \sigma_{k,1}$, where equality holds when $x\in \sigma_k$ is the last element. Furthermore, any $x$ satisfying these equivalent properties has not been involved in any swaps.\label{op:large}
    \item If $d\in [n-1]$ is in symmetric difference $\Des_{\Delta}(\pi,\sigma)$, then exactly one of $\pm d$ and $\pm (d+1)$ is the last element in some ${\sigma}_{j},\ldots,{\sigma}_{m}$ and the remaining element appears in the non-last position of some cycle to its right. Furthermore, letting $x$ be the element that appears in a last position and $\pm (x+\epsilon)$ with $\epsilon\in \{-1,1\}$ be the other element,  it must be that $\pi(|x|)>\pi(|x+\epsilon|)$ and ${\sigma}(|x|)<{\sigma}(|x+\epsilon|)$. \label{op:descents}
\end{enumerate}
\end{definition}

Our general approach to showing the above properties hold is to use strong induction. The next lemma is our base case. 

\begin{lemma}\label{lem:basecase}
    The order properties hold when $\sigma$ is the initial permutation in line \ref{line:starting} of Algorithm \ref{algorithm}.
\end{lemma}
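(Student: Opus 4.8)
The plan is to exploit the fact that in line~\ref{line:starting} the permutation $\sigma$ is literally $\pi_1\pi_2\cdots\pi_m$, so that $\sigma_i=\pi_i$ and $\sigma_{i,1}=\pi_{i_i}$ for every $i$, and moreover no swaps have yet been performed. Under this identification I expect all four order properties to collapse onto \cref{lem:algstart}. For the purposes of the base case I would adopt the convention that it corresponds to $j=1$, so that the phrase ``last element in some $\sigma_j,\ldots,\sigma_m$'' in property (D) refers to all of the cycles and imposes no extra restriction. Properties (A) and (B) then follow immediately: (A) holds because $\sigma_i$ and $\pi_i$ are the same cycle, and the ``largest element first'' clause is precisely the assertion in \cref{lem:algstart}(a) that each $\pi_{i_j}$ is the largest element of $\sigma_j$; (B) is verbatim \cref{lem:algstart}(a), since $\sigma_{1,1}<\cdots<\sigma_{m,1}$ is the chain $\pi_{i_1}<\cdots<\pi_{i_m}$ and ``$\sigma_{i,1}$ is largest among $\sigma_1,\ldots,\sigma_i$'' is the second clause there.

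The only property requiring genuine case analysis is (C). Writing $L=\{|\pi_{i_2-1}|,\ldots,|\pi_{i_{m+1}-1}|\}$ as in \cref{lem:algstart}, I would split on whether $|x|\in L$, working throughout with $|x|$ rather than $x$, since $\sigma(|x|)$ and $\pi(|x|)$ depend only on the absolute value and the signs in the cycle notation play no role. If $|x|\notin L$, then \cref{lem:algstart}(b) gives $\sigma(|x|)=\pi(|x|)$, so both sides of the ``iff'' compare the same number against $\pi_{k,1}=\sigma_{k,1}$; here I must still rule out the boundary case $\sigma(|x|)=\sigma_{k,1}$, which I would dispatch by noting that the $\sigma$-preimage of a first element $\sigma_{k,1}$ is the last element of $\sigma_k$, forcing $|x|\in L$, a contradiction. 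If instead $x$ is the last element of some cycle $\sigma_p$ (so $|x|\in L$), then \cref{lem:algstart}(a)--(b) give $\sigma(|x|)=\pi_{i_p}$ and $\pi(|x|)=\pi_{i_{p+1}}$, and using the strict chain $\pi_{i_1}<\cdots<\pi_{i_m}<\pi_{i_{m+1}}=n+1$ one sees that
\[
\pi(|x|)>\pi_{k,1}\iff p\ge k\iff \sigma(|x|)\ge\sigma_{k,1},
\]
with equality $\sigma(|x|)=\sigma_{k,1}$ occurring exactly when $p=k$, i.e.\ when $x$ is the last element of $\sigma_k$. The closing clause of (C), that such an $x$ has not been involved in any swaps, is vacuous at the base case because no swaps have occurred.

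Finally, property (D) is a direct translation of \cref{lem:algstart}(c): its two bullet cases ($d\in L$, $d+1\notin L$ versus $d\notin L$, $d+1\in L$) correspond to taking $x=d,\ \epsilon=1$ and $x=d+1,\ \epsilon=-1$ respectively, and in each case \cref{lem:algstart}(c) already supplies that exactly one of $\pm d,\pm(d+1)$ is a last element, that the other lies strictly to its right in a non-last position (invoking the ``cannot be in the same cycle'' clause), together with the inequalities $\pi(|x|)>\pi(|x+\epsilon|)$ and $\sigma(|x|)<\sigma(|x+\epsilon|)$. I anticipate the main (though modest) obstacle to be the bookkeeping in (C): keeping the weak inequality $\sigma(|x|)\ge\sigma_{k,1}$ distinct from the strict $\pi(|x|)>\pi_{k,1}$, and checking that the equality case lands precisely on ``$x$ is the last element of $\sigma_k$,'' where an off-by-one between the indices $p$ and $p+1$ is the easiest error to make.
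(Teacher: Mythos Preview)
Your proposal is correct and follows exactly the same route as the paper: the paper's own proof simply observes that initially $\sigma=\pi_1\pi_2\cdots\pi_m$ and says the order properties ``follow immediately from this or from \cref{lem:algstart}.'' You have unpacked that one-sentence deferral into an explicit verification, with the only nontrivial work being your case split for \ref{op:large}, which is handled correctly (including the equality case and the off-by-one between $p$ and $p+1$).
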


\begin{proof}
    The algorithm starts with $\sigma=\sigma_1\sigma_2\dots \sigma_m=\pi_1\pi_2\dots \pi_m$. Each of the claims in the order properties then follow immediately from this or from \cref{lem:algstart}.
\end{proof}

Showing the order properties hold for any $\sigma$ will require a careful analysis of the swaps performed in the \texttt{while} loop in line \ref{line:whileloop1}, which may include multiple iterations of the \texttt{while} loop in line \ref{line:whileloop2}.  We will first consider the case where no swaps are performed.

\begin{lemma}\label{lem:no_swaps}
    Suppose the order properties hold for $\sigma$ and no swaps are performed in this iteration of the \texttt{for} loop. Then the order properties hold for $\sigma$ considered as the signed permutation at the start of iteration $j+1$ of the \texttt{for} loop. 
\end{lemma}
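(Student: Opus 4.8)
The plan is to exploit the fact that when no swaps occur, the signed permutation $\sigma$ is \emph{literally unchanged} throughout iteration $j$; the only thing that differs when we regard $\sigma$ as the permutation entering iteration $j+1$ is the index $j$ itself, which appears only in order properties (C) and (D). So the whole task reduces to re-deriving each order property with $j$ replaced by $j+1$ from the same property as it held at the start of iteration $j$. Properties (A) and (B) never mention the current index, so they transfer verbatim. Property (C) at iteration $j+1$ quantifies over $k\ge j+2$, a subrange of the $k\ge j+1$ for which it already holds, so it is an immediate weakening; the accompanying clause that no qualifying entry has been involved in a swap also persists, since no swaps were performed.

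The substance of the lemma therefore lies in property (D), whose conclusion tightens from ``the distinguished last element lies in one of $\sigma_j,\ldots,\sigma_m$'' to ``it lies in one of $\sigma_{j+1},\ldots,\sigma_m$.'' First I would convert the hypothesis that no swaps are performed into the statement that the \texttt{if}-test on line~\ref{line:if0} fails on the rightmost entry $z$ of $\sigma_j$, i.e.\ that both $P_{\pi,\sigma}(|z|,|z-1|)$ and $P_{\pi,\sigma}(|z|,|z+1|)$ are \texttt{False}. This equivalence rests on the observation that whenever the \texttt{if}-test succeeds, $\epsilon$ is chosen so that the outer \texttt{while} guard holds, which forces at least one swap. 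Unwinding the definition of $P_{\pi,\sigma}$ together with the $\min$ in it, these two guards failing says exactly that neither $|z|-1$ nor $|z|$ lies in $\Des_{\Delta}(\pi,\sigma)\cap[n-1]$.

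With this in hand, I would argue by contradiction: suppose some $d\in\Des_{\Delta}(\pi,\sigma)\cap[n-1]$ has its distinguished last element (in the sense of property (D) at iteration $j$) lying in $\sigma_j$. Since the rightmost entry of $\sigma_j$ is $z$, that element must be $z$, so $\{d,d+1\}=\{|z|,|z|+\epsilon\}$ for the relevant $\epsilon\in\{-1,1\}$ and hence $d=\min\{|z|,|z|+\epsilon\}\in\{|z|-1,|z|\}$. This contradicts the preceding paragraph, so every such $d$ has its distinguished last element among $\sigma_{j+1},\ldots,\sigma_m$, which is exactly property (D) at iteration $j+1$. The remaining assertions of (D) --- that the other element occupies a non-last position to its right and that $\pi(|x|)>\pi(|x+\epsilon|)$ and $\sigma(|x|)<\sigma(|x+\epsilon|)$ --- are inherited without change, because $\pi$, $\sigma$, and the positions of all entries coincide with those at the start of iteration $j$.

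I expect the main obstacle to be precisely this bookkeeping in property (D): matching the index $d$ of a symmetric-difference descent to the rightmost entry $z$ of $\sigma_j$ through the definition of $P_{\pi,\sigma}$, and confirming carefully that ``no swaps'' is equivalent to \emph{both} Boolean guards failing rather than merely to some inner loop terminating. Everything else is the routine observation that an unchanged $\sigma$ automatically preserves the index-free properties and only weakens the index-dependent one.
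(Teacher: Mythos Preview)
Your proposal is correct and matches the paper's own proof essentially step for step: both observe that properties \ref{op:relative} and \ref{op:first} are index-free, that \ref{op:large} at iteration $j+1$ is the specialization $k\ge j+2$ of \ref{op:large} at iteration $j$, and that for \ref{op:descents} the failure of both $P_{\pi,\sigma}(|z|,|z\pm1|)$ forces the last element $z$ of $\sigma_j$ not to participate in any descent of $\Des_{\Delta}(\pi,\sigma)\cap[n-1]$, allowing $\sigma_j$ to be dropped from the list. Your version is a bit more explicit in unwinding the definition of $P_{\pi,\sigma}$ and in justifying why ``no swaps'' is equivalent to both guards failing, but the logical content is identical.
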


\begin{proof}
    Order properties \ref{op:relative} and \ref{op:first} are independent of the \texttt{for} loop iteration, so these are immediate. By starting with order property \ref{op:large} for $\sigma$ as the signed permutation in iteration $j$ and specializing to $k\geq j+2$, we conclude order property \ref{op:large} for $\sigma$ as the signed permutation in iteration $j+1$. For order property \ref{op:descents}, let $z\in \sigma_j$ be the last element. As the \texttt{for} loop does not perform any swaps, it must be that both $P_{\pi,\sigma}(|z|,|z-1|)$ and $P_{\pi,\sigma}(|z|,|z+1|)$ are \texttt{False}. Hence, $z\in \sigma_j$ is not involved in any descent in $\Des_{\Delta}(\pi,\sigma)\cap [n-1]$, so $\sigma_j$ can be omitted from order property \ref{op:descents} for $\sigma$ as the signed permutation in iteration $j$. This results in order property \ref{op:descents} when $\sigma$ is considered as the signed permutation in iteration $j+1$.
\end{proof}

We now consider the case where swaps are performed. Denote the signed permutation resulting from the swaps in the \texttt{while} loop of line \ref{line:whileloop2} as 
    \begin{equation}
        \begin{split}
            \sigma' & =\sigma_1'\sigma_2'\dots\sigma_m'=(\sigma_{1,1}',\ldots,\sigma_{1,\ell_1}')(\sigma_{2,1}',\ldots,\sigma_{2,\ell_2}')\dots(\sigma_{m,1}',\ldots,\sigma_{m,\ell_m}').
        \end{split}
    \end{equation} 
Our focus will be on the following properties involving the swaps that produce $\sigma'$.

\begin{definition}
The \emph{swap properties} of ${\sigma}$ are the following statements about the swaps performed on $\sigma$ by the \texttt{while} loop in line \ref{line:whileloop2}. 
    \begin{enumerate}[label=(\Roman*)]
        \item All swaps occur between an element in the $j$-th cycle and an element in a cycle to its right. \label{sp:right}
        \item The last elements in cycles $\sigma_{j+1},\ldots,\sigma_m$ are not affected by swaps. Furthermore, if the first swap is between $z\in \sigma_j$ and $\pm (z+\epsilon)$ where $\epsilon\in \{-1,1\}$, then no element right of $\pm (z+\epsilon)$ in the cycle notation of $\sigma$ is affected by a swap. \label{sp:last}
        \item If an element $x$ in the cycle notation of $\sigma$ satisfies $\sigma(|x|)\geq \sigma_{j+1,1}$, then $x$ is not affected by swaps. \label{sp:large}
        \item Suppose the last element $z\in \sigma_j$ is swapped with $\pm (z+\epsilon)$ where $\epsilon\in \{-1,1\}$. Then the following statements hold for the cumulative effects of the swaps on $\Des_{\Delta}(\pi,\sigma)\cap [n-1]$. \label{sp:descents}
        \begin{itemize}
            \item The descent $\min\{|z|,|z+\epsilon|\}\in \Des_{\Delta}(\pi,\sigma)$ is removed from the symmetric difference.
            \item Suppose $\min\{|z|,|z-\epsilon|\}\in [n-1]$. If $\min\{|z|,|z-\epsilon|\}\in \Des_{\Delta}(\pi,\sigma)$, then this is removed from the symmetric difference, and if $\min\{|z|,|z-\epsilon|\}\notin \Des_{\Delta}(\pi,\sigma)$, then this is not introduced. 
            \item Suppose $\min\{|z+\epsilon|,|z+2\epsilon|\}\in [n-1]$. If $\min\{|z+\epsilon|,|z+2\epsilon|\}\in \Des_{\Delta}(\pi,\sigma)$, then this descent is removed. If $\min\{|z+\epsilon|,|z+2\epsilon|\}\notin \Des_{\Delta}(\pi,\sigma)$ and $\sigma(|z+2\epsilon|)>\sigma(|z+\epsilon|)$, then this descent is not introduced to the symmetric difference. 
            \item Any value in $[n-1]$ that is not $\min\{|z+\epsilon|,|z+2\epsilon|\}$ cannot be introduced into the symmetric difference. 
        \end{itemize}
\end{enumerate}
\end{definition}

Our goal is to show that the swap properties hold for $\sigma$, which will then allow us to show that the order properties hold for $\sigma'$. We begin by describing when the \texttt{while} loop in line \ref{line:whileloop2} continues or terminates.

\begin{lemma}\label{lem:continued_swaps}
    Suppose the order properties hold for $\sigma$. Then the \texttt{while} loop in line \ref{line:whileloop2} continues until one of the following occurs:
    \begin{enumerate}[label=(\alph*)]
        \item the first element of $\sigma_j$ is involved in a swap,
        \item the swapped elements $x$ and $y$ do not have the same sign, or
        \item the absolute values of the elements preceding $x$ and $y$ differ by more than $1$. 
    \end{enumerate}
\end{lemma}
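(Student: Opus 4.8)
The plan is to follow a single pass of the inner \texttt{while} loop of line~\ref{line:whileloop2} and determine precisely when it fires again. After it performs a swap of the current elements $x$ and $y$, the algorithm either finds that the swap involved the first element of $\sigma_j$, in which case line~\ref{line:if1} skips the reassignment, or it reassigns $x$ and $y$ to the elements $\tilde x$ and $\tilde y$ preceding the two swapped positions and re-evaluates $P_{\pi,\sigma}(|\tilde x|,|\tilde y|)$. Each swap toggles the descent of $\sigma$ at $\min\{|x|,|y|\}$ and hence removes the very discrepancy in $\Des_{\Delta}(\pi,\sigma)$ that triggered it, so in the first case the re-evaluated $P_{\pi,\sigma}(|x|,|y|)$ is now \texttt{False} and the loop stops, which is exactly outcome~(a). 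In the second case the loop continues if and only if $P_{\pi,\sigma}(|\tilde x|,|\tilde y|)$ is \texttt{True}. Because each absolute value in $[n]$ appears once, $|\tilde x|\ne|\tilde y|$, so the requirement $||\tilde x|-|\tilde y||=1$ fails precisely when these absolute values differ by more than $1$; this is outcome~(c). It remains to treat $||\tilde x|-|\tilde y||=1$ and to show that the descent condition $\min\{|\tilde x|,|\tilde y|\}\in\Des_{\Delta}(\pi,\sigma)$ then holds if and only if $x$ and $y$ had a common sign, so that opposite signs give outcome~(b) and equal signs force continuation.

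The heart of the argument is this sign dichotomy, and I would first settle the $\sigma$-side. Writing $d=\min\{|\tilde x|,|\tilde y|\}$, the positions $|\tilde x|$ and $|\tilde y|$ are exactly $d$ and $d+1$, and the swap changes the values there from $x=\sigma(|\tilde x|)$ and $y=\sigma(|\tilde y|)$ to $\sgn(x)|y|$ and $\sgn(y)|x|$. If $x$ and $y$ share a sign $s$, these two values $\{s|x|,s|y|\}$ are merely interchanged between the two positions, so the descent/ascent of $\sigma$ at $d$ flips. If instead $x$ and $y$ have opposite signs, then the swap keeps the positive value at its original one of the two positions and the negative value at the other (only the magnitudes shift by $1$), so the larger value stays put and the descent/ascent of $\sigma$ at $d$ is unchanged. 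As $\pi$ is held fixed throughout the algorithm, this shows that the swap flips the membership of $d$ in $\Des_{\Delta}(\pi,\sigma)$ exactly in the common-sign case.

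To turn this into the required equivalence I would combine it with the invariant that $d$ is not already a discrepancy immediately before the swap, i.e.\ $d\notin\Des_{\Delta}(\pi,\sigma)$ for the configuration entering this step. Given that invariant, the flip computation yields $d\in\Des_{\Delta}(\pi,\sigma)$ after the swap precisely when $x$ and $y$ have a common sign, which is exactly the claimed trichotomy. Establishing the invariant is where the order properties earn their keep: order property~\ref{op:descents} severely restricts which indices can lie in $\Des_{\Delta}(\pi,\sigma)$, and I would use it together with the relative-order property~\ref{op:relative}, the maximality property~\ref{op:first}, and the comparison property~\ref{op:large} to pin down the order of $\pi(|\tilde x|)$ and $\pi(|\tilde y|)$ from the facts that $\tilde x$ sits just left of the last element of $\sigma_j$ while $\tilde y$ sits just left of an element in a cycle to its right.

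The main obstacle is precisely this last point: keeping the comparison between the $\pi$- and $\sigma$-values faithful as the labels in the cycle notation are overwritten by successive swaps, and verifying that no preceding pair is ever already a discrepancy. I expect to handle it by carrying the order properties as loop invariants so that each entry into the inner loop inherits the structural constraints of~\ref{op:relative}--\ref{op:descents}, and then by a short case analysis on whether $|\tilde x|$ or $|\tilde y|$ equals $d$ and on the two sign patterns. The opposite-sign case is the most delicate, since there one must confirm that the sign change genuinely prevents a new discrepancy from appearing at $d$ rather than merely relocating it to a neighboring index.
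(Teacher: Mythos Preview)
Your structure and the sign--dichotomy computation are right and match the paper: if $\sgn(x)=\sgn(y)$ the swap literally interchanges the two values at positions $|\tilde x|,|\tilde y|$ and flips the descent there, while if $\sgn(x)\ne\sgn(y)$ the positive and negative values stay on their respective sides and the descent is unchanged. Combined with the invariant ``$d=\min\{|\tilde x|,|\tilde y|\}\notin\Des_\Delta(\pi,\sigma)$ just before the swap'', this gives exactly the trichotomy.

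The gap is in how you propose to maintain that invariant across successive inner swaps. You plan to ``carry the order properties as loop invariants so that each entry into the inner loop inherits the structural constraints of \ref{op:relative}--\ref{op:descents}''. But order property~\ref{op:descents} is \emph{not} preserved by an individual inner swap: after swapping $z$ and $\pm(z+\epsilon)$ you create a discrepancy at $\min\{|x'|,|y'|\}$ in which \emph{neither} of $x',y'$ sits at the last position of a cycle among $\sigma_j,\ldots,\sigma_m$ (here $x'$ is second-to-last in $\sigma_j$). So \ref{op:descents} fails for the intermediate permutation, and you cannot re-invoke it step by step.

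The paper sidesteps this entirely. It uses \ref{op:descents} only for the \emph{original} $\sigma$ at the start of the outer \texttt{while} iteration, observing that for every pair $(x',y')$, $(x'',y'')$, $\ldots$ of predecessors, the current permutation agrees with the original $\sigma$ at those positions: each swap touches only entries strictly to the right of the next pair to be examined, so the values $\sigma(|x^{(k)}|)$ and $\sigma(|y^{(k)}|)$ are unchanged. Since $x^{(k)}$ is always a non-last entry of $\sigma_j$, \ref{op:descents} for the original $\sigma$ rules out a discrepancy at $\min\{|x^{(k)}|,|y^{(k)}|\}$, which is the invariant you need. Replace your ``loop-invariant \ref{op:relative}--\ref{op:descents}'' plan with this one-time appeal to \ref{op:descents} plus the untouched-predecessor observation, and your argument goes through.
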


\begin{proof}
    Let $z$ be the last element of $\sigma_j$, and let $\pm(z+\epsilon)$ with $\epsilon\in \{-1,1\}$ be the element that it is swapped with. A swap occurs due to $P_{\pi,\sigma}(|x|,|y|)$ being \texttt{True}, where $x=z$ and $y=\pm (z+\epsilon)$. If $x$ is the first element of $\sigma_j$, then by the construction of the \texttt{while} loop, it will terminate after this swap. 
    
    Otherwise, let $\tilde{\sigma}$ be the permutation after $x$ and $y$ are swapped, and consider the elements $x'$ and $y'$ that precede the ones swapped. Order property \ref{op:descents} for $\sigma$ implies that $P_{\pi,\sigma}(|x'|,|y'|)$ is \texttt{False}. If $x$ and $y$ did not have the same sign, then the elements that follow $x'$ and $y'$ in the cycle notations of $\sigma$ and $\tilde{\sigma}$ have the same relative order, so $P_{\pi,\tilde{\sigma}}(|x'|,|y'|)$ is also \texttt{False}. If $|x'|$ and $|y'|$ differ by more than $1$, then $P_{\pi,\tilde{\sigma}}(|x'|,|y'|)$ is \texttt{False} from its definition. Hence, the \texttt{while} loop will terminate in both of these cases at $\tilde{\sigma}$.
    
    In the remaining cases, $x$ and $y$ must be consecutive elements in $[\pm n]$, and $|x'|$ and $|y'|$ must be consecutive elements in $[n]$. Swapping $x$ and $y$ in $\sigma$ causes $P_{\pi,\tilde{\sigma}}(|x'|,|y'|)$ to be \texttt{True}, and the \texttt{while} loop will continue. Iterating this argument, we see that the \texttt{while} loop will continue until one of the three stated situations occurs. 
\end{proof}

We now give a sequence of lemmas that establish individual swap properties for $\sigma$ and order properties for $\sigma'$. As our approach is to use strong induction, we will typically assume that the order properties and/or swap properties hold for signed permutations at the start of previous iterations of the \texttt{for} loop in line \ref{line:forloop} or the \texttt{while} loop in line \ref{line:whileloop1}. We will refer to these signed permutations as ones \emph{prior} to $\sigma$ in Algorithm \ref{algorithm}. Note that these specifically exclude the intermediate permutations produced during the \texttt{while} loop in line \ref{line:whileloop2}. Additionally, individual results typically do not need all of the order and swap properties, but the exact dependence is overly technical, so for simplicity, we will just assume the entirety of these properties.

\begin{lemma}\label{lem:SPOP1}
    Suppose the order properties hold for $\sigma$. Then swap property \ref{sp:right} holds for $\sigma$, and order property \ref{op:relative} holds for $\sigma'$. 
\end{lemma}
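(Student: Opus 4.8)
The plan is to verify the two assertions separately, taking the order properties of $\sigma$ as hypothesis and carefully tracking the cascade of swaps performed by the inner \texttt{while} loop on line \ref{line:whileloop2}.

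For swap property \ref{sp:right}, I would first locate the very first swap. It is triggered because $P_{\pi,\sigma}(|z|,|z+\epsilon|)$ is \texttt{True}, where $z$ is the last element of $\sigma_j$; writing $d=\min\{|z|,|z+\epsilon|\}$, this says $d\in\Des_{\Delta}(\pi,\sigma)\cap[n-1]$ with $\{\pm z,\pm(z+\epsilon)\}=\{\pm d,\pm(d+1)\}$. Applying order property \ref{op:descents} to $d$, exactly one of these two elements is the last element of some cycle among $\sigma_j,\ldots,\sigma_m$ and the other occupies a non-last position strictly to its right. Since $z$ is already the last element of $\sigma_j$, it must be the former, so $\pm(z+\epsilon)$ lies in a non-last position of some cycle $\sigma_k$ with $k>j$. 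This gives a first swap between $\sigma_j$ and a cycle to its right. The remaining swaps are governed by the update ``$x,y\coloneqq$ the elements preceding the ones replaced'': this moves $x$ backward within $\sigma_j$ (the guard on line \ref{line:if1} prevents it from leaving $\sigma_j$) and moves $y$ backward within $\sigma_k$ (the preceding element of any element of a cycle stays in that cycle). Hence every swap in the cascade is between $\sigma_j$ and $\sigma_k$, which establishes \ref{sp:right}.

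For order property \ref{op:relative} on $\sigma'$, since \ref{op:relative} is assumed for $\sigma$ and swaps only touch $\sigma_j$ and $\sigma_k$, it suffices to show each individual swap preserves the rank order of positions within each of these two cycles; the statement for $\sigma'$ then follows by induction over the swaps composing the cascade, each intermediate configuration again being a signed permutation. Consider one swap of $x$ (in $\sigma_j$) with $y$ (in $\sigma_k$); by \cref{lem:continued_swaps}, or directly from the definition of $P_{\pi,\sigma}$ that triggers it, $|x|$ and $|y|$ differ by exactly $1$. The swap fixes the sign at each position and only replaces the value $x=\sgn(x)|x|$ at its position of $\sigma_j$ by $\sgn(x)|y|$, a value of the same sign whose magnitude differs by $1$; thus the old and new values are consecutive integers. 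No other element of $\sigma_j$ can lie strictly between them, and the new value $\sgn(x)|y|$ cannot already occur in $\sigma_j$, because the absolute value $|y|$ appears in the entire cycle notation only at the position of $y$, which lies in $\sigma_k$. Consequently every pairwise comparison inside $\sigma_j$ is unchanged, so the rank order of positions is preserved, and the symmetric argument handles $\sigma_k$. Since this rank order matches that of $\pi$ for $\sigma$ and is maintained through every swap, it still matches for $\sigma'$, giving \ref{op:relative}; in particular the largest element remains in the first position.

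I expect the conceptual content to be light and the difficulty to be mostly bookkeeping. The one genuinely load-bearing observation is that swaps always exchange \emph{consecutive} absolute values while preserving the sign attached to each position, which is precisely what makes ``changing a value to an adjacent integer'' rank-preserving. The subtle points to get right are (i) confirming via order property \ref{op:descents} that $z$, rather than $\pm(z+\epsilon)$, is the last-position element, so the partner lies strictly to the right, and (ii) justifying the ``new value cannot already be present'' claim by invoking that each absolute value in $[n]$ occurs exactly once in the cycle notation. I would be careful to apply these arguments to each intermediate permutation in the cascade, not merely to $\sigma$, since the absolute-value content of $\sigma_j$ and $\sigma_k$ shifts as swaps accumulate.
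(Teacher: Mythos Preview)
Your proposal is correct and follows essentially the same route as the paper's proof: invoke order property \ref{op:descents} to place $\pm(z+\epsilon)$ in some $\sigma_k$ with $k>j$, observe that the backward updates keep $x\in\sigma_j$ and $y\in\sigma_k$, and then argue that a swap changes one entry in each cycle by $\pm 1$ so the internal rank order is preserved. Your version is somewhat more explicit about why a $\pm 1$ change cannot upset any pairwise comparison (the new absolute value is already occupied elsewhere), but this is the same idea the paper compresses into a single sentence.
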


\begin{proof}
    Order property \ref{op:descents} for $\sigma$ implies that the first swap in the \texttt{while} loop is between $z$, the last element of $\sigma_j$, and $\pm(z+\epsilon)$ with $\epsilon\in \{-1,1\}$, which must be a non-last element in some $\sigma_k$ with $k>j$. Consequently, swaps throughout the \texttt{while} loop only occur between $\sigma_j$ and $\sigma_{k}$, so swap property \ref{sp:right} holds for $\sigma$. 
    
    Order property \ref{op:relative} for $\sigma$ implies that the relative order of elements in each cycle of $\sigma_1,\sigma_2,\ldots,\sigma_m$ matches the corresponding ones in $\pi_1,\pi_2,\ldots,\pi_m$. Swap property \ref{sp:right} for $\sigma$ implies that swaps only occur between elements in different cycles. Each of these swaps increases or decreases a single entry in each cycle by $1$, so the relative order of elements within each cycle cannot change. We conclude order property \ref{op:relative} holds for $\sigma'$.
\end{proof}

\begin{lemma}\label{lem:plus1}
    Suppose the order properties hold for $\sigma$. If $\sigma_{j+1,1}=\sigma_{j,1}+1$, then swaps do not occur between $\sigma_j$ and $\sigma_{j+1}$.
\end{lemma}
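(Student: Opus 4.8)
The plan is to argue by contradiction, reducing everything to the \emph{first} swap of the inner \texttt{while} loop. If no swaps are performed the claim is vacuous, so I assume swaps occur. Since the order properties hold for $\sigma$, order property \ref{op:descents} forces the first swap to be between the last element $z$ of $\sigma_j$ and an element $\pm(z+\epsilon)$ (with $\epsilon\in\{-1,1\}$) occupying a \emph{non-last} position of some cycle strictly to the right of $\sigma_j$, and this pair satisfies $\pi(|z|)>\pi(|z+\epsilon|)$ together with $\sigma(|z|)<\sigma(|z+\epsilon|)$. By \cref{lem:SPOP1}, and the refinement in its proof that all swaps of a round stay between $\sigma_j$ and the single cycle housing the first partner, it suffices to show this first partner $\pm(z+\epsilon)$ cannot lie in $\sigma_{j+1}$.

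The heart of the argument is a one-line integer pinch. Because $z$ is the last element of $\sigma_j$, the cycle structure gives $\sigma(|z|)=\sigma_{j,1}$, so order property \ref{op:descents} yields $\sigma(|z+\epsilon|)>\sigma_{j,1}$. Suppose for contradiction that $\pm(z+\epsilon)\in\sigma_{j+1}$. Since it is non-last there, the following entry $\sigma(|z+\epsilon|)$ is again an element of $\sigma_{j+1}$, hence at most $\sigma_{j+1,1}$ by order property \ref{op:first}. Combining with the hypothesis $\sigma_{j+1,1}=\sigma_{j,1}+1$ gives $\sigma_{j,1}<\sigma(|z+\epsilon|)\le\sigma_{j,1}+1$, and since all entries are integers this forces $\sigma(|z+\epsilon|)=\sigma_{j+1,1}$.

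To finish, I would read $\sigma(|z+\epsilon|)=\sigma_{j+1,1}$ as saying $\sigma$ sends $\pm(z+\epsilon)$ to the first entry of $\sigma_{j+1}$; but within a cycle the only element mapped to the first entry is the last one. Thus $\pm(z+\epsilon)$ would have to be the last element of $\sigma_{j+1}$, contradicting that it sits in a non-last position. This excludes $\sigma_{j+1}$ as the location of the first swap partner, and hence of any swap in the round.

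I expect the main obstacle to be careful bookkeeping rather than a conceptual hurdle: I must confirm that $\sigma(|z|)=\sigma_{j,1}$ holds precisely when the swap is initiated, orient the inequalities of order property \ref{op:descents} so that $z$ is the last-position element and $\pm(z+\epsilon)$ is its non-last partner, and check that the integer pinch survives the possibility that the relevant cycle entries are negative. This last point is fine because every comparison is among signed values in $[\pm n]$. The only imported fact is the single-cycle refinement from the proof of \cref{lem:SPOP1}, which guarantees that once the first partner avoids $\sigma_{j+1}$, no later swap of the round can reach it.
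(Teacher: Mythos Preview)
Your argument is correct and follows essentially the same route as the paper: assume a swap occurs between $\sigma_j$ and $\sigma_{j+1}$, identify the first partner $\pm(z+\epsilon)$ via order property~\ref{op:descents}, and then pinch $\sigma(|z+\epsilon|)$ between $\sigma_{j,1}$ and $\sigma_{j+1,1}=\sigma_{j,1}+1$. The only cosmetic difference is in how the contradiction is phrased at the end: you allow $\sigma(|z+\epsilon|)\le\sigma_{j+1,1}$ and then rule out equality by noting it would force $\pm(z+\epsilon)$ to be the last element of $\sigma_{j+1}$, whereas the paper uses the non-last position to get the strict inequality $\sigma(|z+\epsilon|)<\sigma_{j+1,1}$ up front, then excludes $\sigma(|z+\epsilon|)=\sigma_{j,1}$ via $\pm(z+\epsilon)\notin\sigma_j$, and finally reads off $\sigma(|z+\epsilon|)<\sigma(|z|)$ as a direct contradiction of order property~\ref{op:descents}.
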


\begin{proof}
    Assume by contradiction that $\sigma_{j+1,1}=\sigma_{j,1}+1$ and a swap occurs between $\sigma_j$ and $\sigma_{j+1}$. The first swap is between the last element $z\in \sigma_j$ and $\pm (z+\epsilon)\in \sigma_{j+1}$ due to $\min\{|z|,|z+\epsilon|\}\in \Des_{\Delta}(\pi,\sigma)$. Order property \ref{op:descents} for $\sigma$ implies $\pm(z+\epsilon)$ is not the last element of $\sigma_{j+1}$ and $\pi(|z|)>\pi(|z+\epsilon|)$. Since $\pm(z+\epsilon)$ is not the last element of $\sigma_{j+1}$, order property \ref{op:first} for $\sigma$ implies that $\sigma(|z+\epsilon|)<\sigma_{j+1,1}=\sigma_{j,1}+1$. Since $\pm(z+\epsilon)\notin \sigma_j$, we have that $\sigma(|z+\epsilon|)\neq \sigma_{j,1}$, so combined with $\sigma(|z+\epsilon|)<\sigma_{j+1,1}=\sigma_{j,1}+1$, we conclude $\sigma(|z+\epsilon|)<\sigma_{j,1} = \sigma(|z|)$. This aligns with the relative order of $\pi(|z+\epsilon|)$ and $\pi(|z|)$, which contradicts $\min\{|z|,|z+\epsilon|\}\in \Des_{\Delta}(\pi,\sigma)$.
\end{proof}

\begin{corollary}\label{cor:plus1}
    Suppose the order properties hold for $\sigma$ and the \texttt{while} loop performs swaps between elements in $\sigma_j$ and $\sigma_{k}$ with $k>j$. If a swap involving $\sigma_{k,1}$ occurs, then $\sigma_{k,1}>0$, and this swap must be with $-\sigma_{k,1}+ \epsilon\in \sigma_j$ for some $\epsilon \in \{-1,1\}$. 
\end{corollary}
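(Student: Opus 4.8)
The plan is to argue by contradiction on the signs of $\sigma_{k,1}$ and its swap partner, reducing the only forbidden case to \cref{lem:plus1}. First I would pin down the partner. By swap property \ref{sp:right}, every swap is between an element of $\sigma_j$ and an element of $\sigma_k$, so the element $x$ that is swapped with $\sigma_{k,1}$ lies in $\sigma_j$. The technical point I would establish here is that, at the moment this swap is performed, both $\sigma_{k,1}$ and $x$ still carry their original values from $\sigma$: the inner \texttt{while} loop propagates leftward through the two cycles, visiting each position at most once, and $\sigma_{k,1}$ is the leftmost entry of $\sigma_k$, hence the last entry of $\sigma_k$ to be reached. Consequently its partner $x$ occupies a position of $\sigma_j$ not yet touched by a swap, and I may apply the order properties of $\sigma$ to $x$ and $\sigma_{k,1}$ directly.

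Next I would record two facts. Since a swap requires the governing Boolean function to be \texttt{True}, we have $\bigl|\,|x|-|\sigma_{k,1}|\,\bigr|=1$. And by order property \ref{op:first}, $\sigma_{k,1}$ is the largest element appearing in $\sigma_1,\ldots,\sigma_k$, so as integers $x<\sigma_{k,1}$ (these are distinct values, lying in different cycles). Now suppose for contradiction that $x$ and $\sigma_{k,1}$ have the same sign. A short case check (both positive, or both negative), combined with $x<\sigma_{k,1}$ and $\bigl|\,|x|-|\sigma_{k,1}|\,\bigr|=1$, forces $x=\sigma_{k,1}-1$. But order property \ref{op:first} applied to $\sigma_j$ also gives $x\le\sigma_{j,1}<\sigma_{k,1}$, and together with $x=\sigma_{k,1}-1$ this pins down $\sigma_{j,1}=\sigma_{k,1}-1$. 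Since the heads satisfy $\sigma_{j,1}<\sigma_{j+1,1}<\cdots<\sigma_{k,1}$ and $\sigma_{j,1},\sigma_{k,1}$ are consecutive integers, there is no room for an intermediate head, so $k=j+1$ and $\sigma_{j+1,1}=\sigma_{j,1}+1$. Then \cref{lem:plus1} asserts that no swap occurs between $\sigma_j$ and $\sigma_{j+1}=\sigma_k$, contradicting the hypothesis that swaps occur between $\sigma_j$ and $\sigma_k$.

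Therefore $x$ and $\sigma_{k,1}$ must have opposite signs. If $\sigma_{k,1}<0$, then the maximum $\sigma_{k,1}$ of $\sigma_1,\ldots,\sigma_k$ would be negative, forcing every element there (including $x\in\sigma_j$) to be negative and contradicting opposite signs; hence $\sigma_{k,1}>0$ and $x<0$. Writing $|x|=\sigma_{k,1}\pm1$ then yields $x=-\sigma_{k,1}+\epsilon$ with $\epsilon\in\{-1,1\}$, which together with $x\in\sigma_j$ is exactly the claim.

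I expect the main obstacle to be the bookkeeping in the first step, namely justifying that $x$ and $\sigma_{k,1}$ may be treated as their original $\sigma$-values so that the order properties apply, rather than as the possibly altered entries appearing in the intermediate permutation during the \texttt{while} loop; examples such as \cref{ex:phi2} show that heads of cycles really can change value mid-loop, so this point cannot be skipped. Once the leftward, one-visit-per-position structure of the propagation is used to secure the freshness of these two entries, the sign dichotomy and the appeal to \cref{lem:plus1} are routine.
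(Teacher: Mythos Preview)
Your argument is correct and follows essentially the same route as the paper: both hinge on order property \ref{op:first} to bound $\sigma_{j,1}$ against $\sigma_{k,1}$ and on \cref{lem:plus1} to exclude the one remaining same-sign configuration, then read off the sign of $\sigma_{k,1}$ from its maximality. The paper does this directly by first proving $\sigma_{k,1}>\sigma_{j,1}+1$ (splitting on $k=j+1$ versus $k>j+1$) and then observing $\sigma_{k,1}\pm1\notin\sigma_j$, whereas you argue by contradiction from a hypothetical same-sign partner and are forced into $k=j+1$, $\sigma_{j+1,1}=\sigma_{j,1}+1$; these are minor reorderings of the same idea. Your explicit ``freshness'' step---arguing that the leftward propagation has not yet touched either $\sigma_{k,1}$ or its partner, so the order properties of $\sigma$ apply verbatim---is a point the paper takes for granted, and it is good that you flag it.
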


\begin{proof}
    If $k>j+1$, then order property \ref{op:first} for $\sigma$ implies $\sigma_{k,1}>\sigma_{j+1,1}\geq \sigma_{j,1}+1$. If $k=j+1$, then \cref{lem:plus1} implies $\sigma_{k,1}\neq \sigma_{j,1}+1$, so combined with order property \ref{op:first} for $\sigma$, we similarly conclude that $\sigma_{k,1}>\sigma_{j,1}+1$. Order property \ref{op:relative} for $\sigma$ implies $\sigma_{j,1}$ is the largest element in $\sigma_j$, so in both cases, we see that $\sigma_{k,1}\pm 1\notin \sigma_j$. As swaps can only involve pairs of elements whose absolute values differ by $1$, we conclude that a swap involving $\sigma_{k,1}\in \sigma_k$ can only occur if this is swapped with $-\sigma_{k,1}+\epsilon\in \sigma_j$ for some $\epsilon\in \{-1,1\}$. Order property \ref{op:first} for $\sigma$ implies that $\sigma_{k,1}$ is larger than all elements in $\sigma_j$, so it must also be that $\sigma_{k,1}>0$ and $-\sigma_{k,1}+\epsilon<0$.
\end{proof}

\begin{lemma}\label{lem:SPOP2}
    Suppose the order properties hold for $\sigma$.  Then swap property \ref{sp:last} holds for $\sigma$, and order property \ref{op:first} holds for $\sigma'$. 
\end{lemma}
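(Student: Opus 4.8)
The plan is to prove swap property \ref{sp:last} directly from the leftward structure of the \texttt{while} loop in line \ref{line:whileloop2}, and then to combine it with order property \ref{op:relative} for $\sigma'$ (already available from \cref{lem:SPOP1}) to obtain order property \ref{op:first} for $\sigma'$. For swap property \ref{sp:last}: swap property \ref{sp:right} for $\sigma$ shows that every swap of this pass pairs an entry of $\sigma_j$ with an entry of a single cycle $\sigma_k$ with $k>j$, and order property \ref{op:descents} for $\sigma$ identifies the first swap as being between the last entry $z$ of $\sigma_j$ and a \emph{non-last} entry $\pm(z+\epsilon)$ of $\sigma_k$. Since each iteration replaces a pair and then passes to the \emph{preceding} entries, the only entries of $\sigma_k$ ever touched are $\pm(z+\epsilon)$ and entries to its left. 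Hence nothing to the right of $\pm(z+\epsilon)$ in the cycle notation of $\sigma$ is affected: entries to the right within $\sigma_k$ are skipped, and cycles beyond $\sigma_k$ are untouched by swap property \ref{sp:right}. In particular the last entry of $\sigma_k$ lies to the right of the non-last $\pm(z+\epsilon)$ and is preserved, while the last entries of the other cycles $\sigma_{j+1},\ldots,\sigma_m$ are preserved by swap property \ref{sp:right}.

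For order property \ref{op:first} for $\sigma'$, order property \ref{op:relative} for $\sigma'$ already gives that each $\sigma'_{i,1}$ is the largest entry of its own cycle, so only the cross-cycle claims $\sigma'_{1,1}<\cdots<\sigma'_{m,1}$ and the running-maximum property remain. By swap property \ref{sp:right} only $\sigma_j$ and $\sigma_k$ have entries altered, so every first entry except possibly $\sigma_{j,1}$ and $\sigma_{k,1}$ is unchanged. Moreover, by \cref{lem:continued_swaps}(a) the pass halts once the first entry of $\sigma_j$ is swapped, and a swap involving $\sigma_{k,1}$ forces $\sigma_{k,1}>0$ and pairs it with a negative entry $-\sigma_{k,1}+\epsilon\in\sigma_j$ by \cref{cor:plus1}, so by \cref{lem:continued_swaps}(b) it too ends the pass; thus each of $\sigma_{j,1}$ and $\sigma_{k,1}$ is touched at most once, and only in the terminal swap. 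A short computation then gives the updated values, e.g.\ $\sigma'_{k,1}=\sigma_{k,1}-\epsilon$ with the partner $\sigma_j$-entry becoming $-\sigma_{k,1}$, together with an analogous formula when the terminal swap reaches $\sigma_{j,1}$.

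The main obstacle is verifying that these altered first entries still obey the strict increase and running-maximum conditions. The two facts that make this work are that $\sigma_{j,1}$ and $\sigma_{k,1}$ were themselves running maxima for $\sigma$ by order property \ref{op:first}, and that each absolute value in $[n]$ occurs exactly once in a signed permutation. Because the absolute value produced at a moved first entry was, before the swap, occupied by the partner entry in the other affected cycle, it cannot coincide with a neighboring first entry $\sigma_{k-1,1}$, $\sigma_{k+1,1}$, or $\sigma_{j+1,1}$; combined with \cref{lem:plus1}, which forbids the adjacency $\sigma_{j+1,1}=\sigma_{j,1}+1$, this forces the relevant integer gaps and hence the strict inequalities, while the running-maximum property follows since the unique larger absolute value has been removed from the initial cycles. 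I expect the one genuinely delicate case to be a single terminal swap that changes $\sigma_{j,1}$ and $\sigma_{k,1}$ simultaneously, which \cref{cor:plus1} permits only when $\sigma_{j,1}=-\sigma_{k,1}+\epsilon<0$, but the same distinctness-and-running-maximum argument settles it.
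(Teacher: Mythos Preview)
Your argument for swap property \ref{sp:last} has a gap. You write that because each iteration passes to the \emph{preceding} entries, the only entries of $\sigma_k$ ever touched are $\pm(z+\epsilon)$ and entries to its left. But ``preceding'' in a cycle is circular: the entry preceding $\sigma_{k,1}$ is the last entry $\sigma_{k,\ell_k}$. The algorithm's termination check in line \ref{alg3:if1} only tests whether the swap involved the first element of $\sigma_j$, not of $\sigma_k$; so if the pass reaches and swaps $\sigma_{k,1}$ without hitting $\sigma_{j,1}$, the algorithm would next set $y$ to $\sigma_{k,\ell_k}$, precisely the element swap property \ref{sp:last} must protect. The paper closes this gap by invoking \cref{cor:plus1} and \cref{lem:continued_swaps}(b) at this point: any swap involving $\sigma_{k,1}$ pairs the positive $\sigma_{k,1}$ with a negative partner in $\sigma_j$, so the opposite-sign termination rule fires and the pass ends before wraparound. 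You have exactly this observation in your second paragraph, used there to bound how many times $\sigma_{k,1}$ is touched, but it is needed already in the first paragraph to complete swap property \ref{sp:last}.

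Once that is repositioned, your argument for order property \ref{op:first} is correct and essentially parallel to the paper's, though more elaborate. The paper argues more directly: among first entries only $\sigma_{j,1}$ and $\sigma_{k,1}$ can move, each by $\pm 1$ (or both by $\pm 1$ with opposite signs in the simultaneous case via \cref{cor:plus1}); a $\pm 1$ change cannot collapse a strict inequality between neighboring first entries because equality would force two entries of the signed permutation $\sigma'$ to share an absolute value. Your distinctness-and-gap reasoning, together with the appeal to \cref{lem:plus1}, is spelling out the same mechanism case by case. Note also that once you have $\sigma'_{1,1}<\cdots<\sigma'_{m,1}$ and order property \ref{op:relative} for $\sigma'$, the running-maximum clause is automatic, so the separate ``unique larger absolute value removed'' argument is unnecessary.
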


\begin{proof}
    By order property \ref{op:descents} for $\sigma$, the swaps start at the last element $z\in \sigma_j$ and some non-last element $\pm (z+\epsilon)\in \sigma_{k}$ with $k>j$. If the swaps terminate without a swap involving $\sigma_{k,1}$, then the last element in $\sigma_{k}$ cannot be involved in a swap. Hence, we must analyze when a swap involving $\sigma_{k,1}$ occurs. In this case, \cref{cor:plus1} implies that $\sigma_{k,1}$ must swap with an element of the opposite sign, so \cref{lem:continued_swaps} implies that the swaps will terminate at this step. Hence, the algorithm will not proceed into swapping the last element of $\sigma_{k}$. As the last element of $\sigma_k$ is not involved in a swap, neither can any element right of $\pm (z+\epsilon)$ in the cycle notation of $\sigma$, so swap property \ref{sp:last} holds for $\sigma$.

    We now consider order property \ref{op:first} for $\sigma'$. Order property \ref{op:first} holds for $\sigma$, so the first elements in $\sigma_1,\sigma_2,\ldots,\sigma_m$ are in increasing order. By \cref{lem:SPOP1}, swaps only occur between $\sigma_j$ and some $\sigma_{k}$ with $k>j$, so consider any swaps that affect the first elements in these cycles, which are $\sigma_{j,1}$ and $\sigma_{k,1}$. If no swaps occur involving these elements, then it follows that the first elements in $\sigma_1',\sigma_2',\ldots,\sigma_m'$ are also in increasing order. If a swap involves only $\sigma_{j,1}$ (resp. $\sigma_{k,1}$), then the first element in cycle $j$ (resp. $k$) changes by $1$, while all other remaining first elements are unchanged. This also cannot affect the relative order between any of the first elements in the cycles. If a swap involves both $\sigma_{j,1}$ and $\sigma_{k,1}$, then \cref{cor:plus1} implies that $\sigma_{j,1}$ and $\sigma_{k,1}$ have opposite signs. A swap changes these elements of opposite signs by $1$ each, which again cannot affect the relative order between any of the first elements. In all cases, the first elements in $\sigma_1',\sigma_2',\ldots,\sigma_m'$ are in increasing order, which is the first claim in order property \ref{op:first}. \cref{lem:SPOP1} implies that the largest element in each cycle of $\sigma'$ is the first one. Combined, we see that for each $i\in [m]$, $\sigma_{i,1}'$ is the larger than any element in $\sigma_1',\ldots,\sigma_i'$, so order property \ref{op:first} holds for $\sigma'$. 
\end{proof}

\begin{lemma}\label{lem:SP3}
Suppose the order properties hold for $\sigma$, and the order and swap properties hold for all signed permutations prior to $\sigma$ in the algorithm. Then swap property \ref{sp:large} holds for $\sigma$. 
\end{lemma}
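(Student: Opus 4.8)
The plan is to establish the contrapositive of swap property \ref{sp:large}: every entry that the current chain of swaps touches has image, under the original $\sigma$, strictly below $\sigma_{j+1,1}$. First I would localize the swaps. By \cref{lem:SPOP1} swap property \ref{sp:right} already holds for $\sigma$, so the chain exchanges entries only between $\sigma_j$ and a single cycle $\sigma_k$ with $k>j$; any entry outside $\sigma_j\cup\sigma_k$ is untouched and satisfies \ref{sp:large} vacuously. For $x\in\sigma_j$, order properties \ref{op:relative} and \ref{op:first} give $\sigma(|x|)\le\sigma_{j,1}<\sigma_{j+1,1}$, so the hypothesis of \ref{sp:large} fails and there is nothing to prove. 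This reduces everything to the affected entries of $\sigma_k$. Writing $\sigma_{k,p}=\pm(z+\epsilon)$ for the entry of the first swap, swap property \ref{sp:last} (\cref{lem:SPOP2}) rules out anything to the right of position $p$, and since the inner \texttt{while} loop walks leftward in lockstep through $\sigma_j$ and $\sigma_k$, the affected entries of $\sigma_k$ form a contiguous block $\sigma_{k,q},\dots,\sigma_{k,p}$. It then suffices to bound their cyclic successors, i.e.\ to show $\sigma_{k,q+1},\dots,\sigma_{k,p+1}<\sigma_{j+1,1}$.

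I would handle the two ends of this block separately. For the successor $\sigma_{k,p+1}$ of the first swapped entry, order property \ref{op:descents} applies to the triggering descent $\min\{|z|,|z+\epsilon|\}\in\Des_{\Delta}(\pi,\sigma)\cap[n-1]$ (with $z$ the last entry of $\sigma_j$), giving $\pi(|z|)>\pi(|z+\epsilon|)$. Since $\sigma(|z|)=\sigma_{j,1}<\sigma_{j+1,1}$, order property \ref{op:large} with $k=j+1$ forces $\pi(|z|)\le\pi_{j+1,1}$, hence $\pi(|z+\epsilon|)<\pi_{j+1,1}$, and \ref{op:large} again yields $\sigma_{k,p+1}=\sigma(|z+\epsilon|)<\sigma_{j+1,1}$. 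For a remaining position $s\in\{q+1,\dots,p\}$, the entry $\sigma_{k,s}$ is itself swapped, and because the chain continues strictly past it ($s>q$ is not the terminal swap), \cref{lem:continued_swaps} shows $\sigma_{k,s}$ and its partner $\sigma_{j,r}\in\sigma_j$ share a sign and have absolute values differing by exactly $1$. A short case analysis on this common sign then pins down $\sigma_{k,s}<\sigma_{j+1,1}$, using $\sigma_{j,r}\le\sigma_{j,1}<\sigma_{j+1,1}$ and that distinct cycles carry distinct magnitudes.

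I expect this sign analysis to be the main obstacle. In the positive case $|\sigma_{k,s}|\le|\sigma_{j,r}|+1\le\sigma_{j,1}+1\le\sigma_{j+1,1}$, so $\sigma_{k,s}=|\sigma_{k,s}|\le\sigma_{j+1,1}$, with equality excluded because $\sigma_{j+1,1}$ lies in $\sigma_{j+1}$ and so has a magnitude different from $\sigma_{k,s}\in\sigma_k$ (for $k=j+1$ the bound is instead immediate from \ref{op:relative}, as $\sigma_{k,s}$ is then a non-first entry of $\sigma_{j+1}$). The delicate case is $\sigma_{j+1,1}<0$, which forces every entry of $\sigma_1,\dots,\sigma_{j+1}$ to be negative: assuming $\sigma_{k,s}\ge\sigma_{j+1,1}$ gives $|\sigma_{k,s}|\le|\sigma_{j+1,1}|<|\sigma_{j,1}|\le|\sigma_{j,r}|$, so $|\sigma_{j,r}|=|\sigma_{k,s}|+1$, which squeezes $|\sigma_{k,s}|=|\sigma_{j+1,1}|$ and again contradicts distinctness of magnitudes across cycles. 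Beyond the casework itself, the bookkeeping that needs care is confirming that each swapped position is encountered for the first time (so the magnitude that the predicate $P_{\pi,\sigma}$ compared is the original one) and isolating the terminal swap at position $q$, whose own value may be large but whose predecessor $\sigma_{k,q-1}$ lies outside the affected block and hence imposes no constraint.
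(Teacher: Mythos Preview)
Your argument is correct and rests on the same two ingredients as the paper's proof: order property \ref{op:large} (together with \ref{op:descents}) to control the first swapped position, and the sign clause of \cref{lem:continued_swaps} to control the remaining ones. You run the implication in the contrapositive direction, explicitly tracking the affected block $\sigma_{k,q},\dots,\sigma_{k,p}$ and bounding each successor $\sigma_{k,t}$ from above.

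The paper instead fixes an $x$ with $\sigma(|x|)\ge\sigma_{j+1,1}$ and argues directly that it is never touched. After reducing to $x$ a non-last entry of some $\sigma_k$ with $k>j+1$ (so $\sigma(|x|)>\sigma_{j+1,1}$), it rules out $x$ being the \emph{first} swap by exactly your computation for $\sigma_{k,p+1}$, and it rules out $x$ being a \emph{later} swap as follows: the successor $\sigma(|x|)$ would have to be swapped first with some entry of $\sigma_j$, but $\sigma(|x|)\pm1>\sigma_{j,1}$ (from $\sigma(|x|)>\sigma_{j+1,1}>\sigma_{j,1}$), so neither $\sigma(|x|)+1$ nor $\sigma(|x|)-1$ can lie in $\sigma_j$; the only possible partner is therefore $-\sigma(|x|)\pm1$, which has the opposite sign, and \cref{lem:continued_swaps} then stops the loop before it reaches $x$. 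This single observation replaces your sign-by-sign case analysis (in particular the squeeze when $\sigma_{j+1,1}<0$), yielding a shorter write-up; your version, on the other hand, makes the geometry of the affected block fully explicit.
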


\begin{proof}
Let $x$ be an element appearing in the cycle notation of $\sigma$ that satisfies $\sigma(|x|)\geq \sigma_{j+1,1}$. Order property \ref{op:first} for $\sigma$ implies that $x$ must be in one of the cycles $\sigma_{j+1},\ldots,\sigma_k$. Furthermore, if $x$ is the last element in the cycle that contains it, then \cref{lem:SPOP2} implies that $x$ cannot be involved in any swaps. Hence, it suffices to consider when $x$ appears as a non-last element in some cycle $\sigma_k$, where $k\geq j+1$. As $\sigma(|x|)=\sigma_{j+1,1}$ can only occur if $x$ is the last element of $\sigma_{j+1}$, we can assume $k>j+1$, so $\sigma(|x|)>\sigma_{j+1,1}$. We will consider cases based on iteration of the \texttt{while} loop in line \ref{line:whileloop2}.

We first show by contradiction that $x$ cannot be involved in the first swap of the \texttt{while} loop. For this to occur, order property \ref{op:descents} for $\sigma$ implies that $x=\pm (z+\epsilon)$, where $z$ is the last element of $\sigma_j$, $\epsilon\in \{-1,1\}$, and $\min\{|x|,|z|\}\in \Des_{\Delta}(\pi,\sigma)$. By order property \ref{op:descents} for $\sigma$, we have that
\[\sigma(|x|)> \sigma_{j+1,1}>\sigma_{j,1}=\sigma(|z|)\]
and $\pi(|x|)<\pi(|z|)$. As $\sigma(|x|)> \sigma_{j+1,1}$, order property \ref{op:large} for $\sigma$ implies that $\pi(|x|)>\pi_{j+1,1}$, so combined, we have $\pi(|z|)>\pi(|x|)>\pi_{j+1,1}$. However, order property \ref{op:large} for $\sigma$ then implies that $z$ cannot have been involved in swaps. As $z$ is the last element in $\sigma_j$, it must also be the last element in $\pi_j$. Then $\pi(|z|)=\pi_{j+1,1}$, which contradicts $\pi(|z|)>\pi(|x|)>\pi_{j+1,1}$.

We next consider the case where $x$ is not the first swap in this iteration of the \texttt{while} loop. For $x$ to be involved in a swap, the element following $x$, which is $\sigma(|x|)$, must be swapped first. As $\sigma(|x|)>\sigma_{j+1,1}>\sigma_{j,1}$, order property \ref{op:first} for $\sigma$ implies that $\sigma(|x|)\pm 1\notin \sigma_j$. Then $\sigma(|x|)$ can only being swapped with $-\sigma(|x|)\pm 1$. By \cref{lem:continued_swaps}, the \texttt{while} loop will terminate after swapping these elements of opposite signs, so $x$ itself will not be involved in a swap. In all cases, $x$ cannot be involved in a swap, so swap property \ref{sp:large} holds for $\sigma$. 
\end{proof}

\begin{lemma}\label{lem:OP3}
Suppose the order properties hold for $\sigma$, and the order and swap properties hold for all signed permutations prior to $\sigma$ in the algorithm. Then order property \ref{op:large}  holds for $\sigma'$. 
\end{lemma}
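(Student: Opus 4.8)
The plan is to exploit that the two quantities $\pi(|x|)$ and $\pi_{k,1}$ appearing on the left-hand side of the equivalence in order property \ref{op:large} are intrinsic to the fixed permutation $\pi$ and to the initial cycles $\pi_1\dots\pi_m$, and so are untouched by the swaps producing $\sigma'$. Consequently the set of pairs $(x,k)$ with $k\ge j+1$ for which $\pi(|x|)>\pi_{k,1}$ is the same whether $x$ is read off $\sigma$ or $\sigma'$, and it suffices to transport the $\sigma$-side inequality $\sigma(|x|)\ge\sigma_{k,1}$ to the $\sigma'$-side inequality $\sigma'(|x|)\ge\sigma_{k,1}'$, along with the equality and the no-swap clauses. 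Throughout I would lean on three structural facts about the swaps: by \cref{lem:SPOP1} they occur only between $\sigma_j$ and a single cycle $\sigma_{k_0}$ with $k_0>j$; each swap preserves the sign attached to a position and merely exchanges absolute values, changing each by exactly $1$; and by \cref{lem:SP3} (swap property \ref{sp:large}), together with the head monotonicity from order property \ref{op:first}, every element $x$ with $\sigma(|x|)\ge\sigma_{j+1,1}$ is left untouched.

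I would first settle the ``furthermore'' clause and the easy implication. If $\pi(|x|)>\pi_{k,1}$ for some $k\ge j+1$, then $\pi(|x|)>\pi_{j+1,1}$, so order property \ref{op:large} for $\sigma$ gives $\sigma(|x|)\ge\sigma_{j+1,1}$; swap property \ref{sp:large} then shows $x$ is untouched in the current iteration, while order property \ref{op:large} for $\sigma$ already records that $x$ was untouched earlier. This also controls successors: writing $w=\sigma(|x|)\ge\sigma_{j+1,1}>\sigma_{j,1}$, order property \ref{op:first} places $w$ in a cycle $\sigma_c$ with $c\ge j+1$, and $w$ can be moved only if $c=k_0$, in which case it is replaced by $w\pm1$ with its positive sign preserved. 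I would then organize the equivalence by the index $k$. For $k\ne k_0$ the cycle $\sigma_k$ is untouched, so $\sigma_{k,1}'=\sigma_{k,1}$ and, crucially, the value $\sigma_{k,1}$ still occupies its position inside the unswapped cycle $\sigma_k$. Because each absolute value in $[n]$ occurs exactly once in the cycle notation, no value displaced into $\sigma_{k_0}$ can equal $\sigma_{k,1}$; combined with the fact that swaps move a successor value by at most $1$, this rules out any spurious crossing of the threshold $\sigma_{k,1}$, so the comparison $\sigma'(|x|)\ge\sigma_{k,1}$ agrees with $\sigma(|x|)\ge\sigma_{k,1}$ in both directions, and the equality case (which forces $x$ to be the last element of $\sigma_k$, hence $x$ and its head both untouched) is inherited directly.

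The remaining case $k=k_0$ is the heart of the matter and, I expect, the main obstacle, since it is the only place where both sides of $\sigma'(|x|)\ge\sigma_{k_0,1}'$ can move at once. Here I would invoke \cref{cor:plus1}: if $\sigma_{k_0,1}$ is swapped it is swapped with $-\sigma_{k_0,1}+\epsilon\in\sigma_j$, so the head shifts by exactly one to $\sigma_{k_0,1}'=\sigma_{k_0,1}-\epsilon$, and \cref{lem:plus1} with order property \ref{op:first} keeps $\sigma_{k_0,1}'>\sigma_{j,1}$. The task is to show this single-step shift of the threshold is exactly compensated. Since $\sigma_{k_0,1}$ is the largest element of $\sigma_{k_0}$, the only element whose successor equals the head is the last element $\ell$ of $\sigma_{k_0}$; as $\ell$ is untouched it remains last in $\sigma_{k_0}'$ with $\sigma'(|\ell|)=\sigma_{k_0,1}'$, so the equality case shifts in lockstep with the head and is preserved. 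For every other $x$ one has $\sigma(|x|)<\sigma_{k_0,1}$, and I would again use that the absolute value $\sigma_{k_0,1}$, and the absolute value introduced at the head by the swap, each occur only once—so no swapped successor can land on $\sigma_{k_0,1}'$—to conclude that the comparison against $\sigma_{k_0,1}'$ cannot flip. Assembling the three cases yields order property \ref{op:large} for $\sigma'$.
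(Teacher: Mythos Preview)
Your argument is correct and rests on the same engine as the paper's proof: swap property \ref{sp:large} freezes every element $x$ with $\sigma(|x|)\ge\sigma_{j+1,1}$, and then the threshold comparison $\sigma(|x|)\ge\sigma_{k,1}$ cannot flip because swaps exchange absolute values (so a successor landing exactly on the threshold would duplicate a value already present elsewhere). The chief difference is organizational. You split into $k\ne k_0$ and $k=k_0$, invoking \cref{cor:plus1} for the head shift in the latter case; the paper instead treats all $k\ge j+1$ uniformly by observing that neither the value $\sigma(|x|)$ nor $\sigma_{k,1}$ lies in $\sigma_j$ (both being at least $\sigma_{j+1,1}$), so no single swap can move both, and a swap moving only one cannot reverse their order. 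Your case split is sound but unnecessary.

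A couple of imprecisions to tighten: the phrase ``with its positive sign preserved'' should read ``with its sign preserved,'' since $w=\sigma(|x|)$ need not be positive; and the sentence ``For every other $x$ one has $\sigma(|x|)<\sigma_{k_0,1}$'' is false as written---elements $x$ in cycles $\sigma_{k_0+1},\ldots,\sigma_m$ can have $\sigma(|x|)>\sigma_{k_0,1}$. Those cycles are untouched, so the comparison there is trivially preserved, but you should say so rather than assert the inequality globally. Finally, the paper notes that if the \texttt{for} loop advances to iteration $j{+}1$ one must specialize to $k\ge j{+}2$; you leave this implicit.
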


\begin{proof}
We start with order property \ref{op:large} for $\sigma$, which states that for $k\geq j+1$, we have $\pi(|x|)> \pi_{k,1}$ if and only if $\sigma(x)\geq \sigma_{k,1}$, where equality holds only when $x$ is the last element in $\sigma_k$. Swap property \ref{sp:large} for $\sigma$ holds by \cref{lem:SP3}, so this implies that any such $x$ will not be involved in any swaps. If $x$ is the last element of $\sigma_k$, then it is clear that $\sigma'(|x|)=\sigma_{k,1}'$. Otherwise, $\sigma(|x|)$ and $\sigma_{k,1}$ are distinct elements.  A swap cannot simultaneously involve both $\sigma_{k,1}$ and $\sigma(|x|)$ by \cref{lem:SPOP1}, as neither element is in $\sigma_j$. Additionally, a swap that changes only one of $\sigma_{k,1}$ or the element following $x$ does not affect their relative order. Consequently, we see that $\sigma'(|x|)\geq \sigma'_{k,1}$ holds. Similar reasoning shows that swaps cannot introduce a new element $x'$ satisfying $\sigma'(|x'|)\geq\sigma'_{k,1}$ without swapping some $x$ satisfying $\sigma(|x|)\geq \sigma_{k,1}$, so $\sigma(|x|)\geq \sigma_{k,1}$ if and only if $\sigma'(|x|)\geq \sigma'_{k,1}$, where it is clear that equality can only hold when $x$ is the last element of $\sigma_k$ and $\sigma_k'$. In the case where the \texttt{for} loop remains in iteration $j$ after swaps are performed, we conclude order property \ref{op:large} for $\sigma'$. In the case where the \texttt{for} loop proceeds to iteration $j+1$, then order property \ref{op:large} follows by specializing to $k\geq j+2$. 
\end{proof}

Finally, we turn our attention to swap property \ref{sp:descents} for $\sigma$ and order property \ref{op:descents} for $\sigma'$. We will prove various parts of swap property \ref{sp:descents} in separate lemmas.

\begin{lemma}\label{lem:removeddescent}
    Suppose the order properties hold for $\sigma$, and the swap properties hold for all signed permutations prior to $\sigma$ in the algorithm. Let $z$ be the last element in $\sigma_j$, and let $\pm (z+\epsilon)$ with $\epsilon\in \{-1,1\}$ be the element that it is swapped with. The swaps in the algorithm remove the descent $\min\{|z|,|z+\epsilon|\}\in \Des_{\Delta}(\pi,\sigma)$. 
\end{lemma}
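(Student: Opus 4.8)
The plan is to track the two one-line values $\sigma(|z|)$ and $\sigma(|z+\epsilon|)$ through the swaps and show that, after all of them, the comparison between $\sigma'(|z|)$ and $\sigma'(|z+\epsilon|)$ becomes consistent with the comparison between $\pi(|z|)$ and $\pi(|z+\epsilon|)$, so that the position $d=\min\{|z|,|z+\epsilon|\}$ leaves the symmetric difference. First I would record the input data. Since the first swap is triggered by $d\in\Des_\Delta(\pi,\sigma)\cap[n-1]$, order property \ref{op:descents} for $\sigma$ gives that $z$ is the last entry of $\sigma_j$, its partner $\pm(z+\epsilon)$ is a non-last entry of some cycle $\sigma_k$ with $k>j$, and
\[\pi(|z|)>\pi(|z+\epsilon|),\qquad \sigma(|z|)<\sigma(|z+\epsilon|).\]
Writing $\pm(z+\epsilon)=\sigma_{k,p}$ with $p<\ell_k$, the first swap moves the magnitude $|z|$ into position $p$ of $\sigma_k$ and the magnitude $|z+\epsilon|$ into the last position $\ell_j$ of $\sigma_j$, so that $\sigma'(|z|)$ equals the entry at position $p+1$ of $\sigma_k$, namely $\sigma(|z+\epsilon|)$, while $\sigma'(|z+\epsilon|)$ equals the entry following position $\ell_j$ in $\sigma_j$. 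By \cref{lem:SPOP2} (swap property \ref{sp:last}) the entries to the right of $\sigma_{k,p}$ are never touched, so $\sigma'(|z|)=\sigma(|z+\epsilon|)$ persists through all of the swaps, not just the first.

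The core of the argument is then to show $\sigma'(|z+\epsilon|)<\sigma(|z+\epsilon|)$. Combined with the identity above this gives $\sigma'(|z|)>\sigma'(|z+\epsilon|)$, and since $\{|z|,|z+\epsilon|\}=\{d,d+1\}$ while $\pi(|z|)>\pi(|z+\epsilon|)$, the descent status at $d$ then agrees for $\pi$ and $\sigma'$: both have a descent at $d$ when $|z|=d$, and neither does when $|z|=d+1$, so $d$ is removed from $\Des_\Delta$. To control $\sigma'(|z+\epsilon|)$, which is the entry following $|z+\epsilon|$ at position $\ell_j$, I would split into cases. When $\ell_j>1$ and the inner \texttt{while} loop terminates before reaching the first entry of $\sigma_j$, that entry remains $\sigma_{j,1}<\sigma(|z+\epsilon|)$ by order property \ref{op:descents}, and we are done. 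When $\ell_j=1$, the loop terminates immediately by \cref{lem:continued_swaps}(a), and the entry following $|z+\epsilon|$ is $\sgn(z)|z+\epsilon|$ itself; here I would use that $\pm(z+\epsilon)$ lies in a cycle of length at least two, hence is not a fixed point, so $\sigma(|z+\epsilon|)\ne\pm|z+\epsilon|$, and combine this with $\sigma(|z+\epsilon|)>\sigma(|z|)=\sgn(z)|z|$ to force $\sigma(|z+\epsilon|)>\sgn(z)|z+\epsilon|$.

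The main obstacle is the remaining case, where $\ell_j>1$ and the walk does reach $\sigma_{j,1}$, so that the entry following $|z+\epsilon|$ changes in the terminating swap. By \cref{lem:SPOP1} the swaps stay inside $\sigma_j$ and $\sigma_k$, and by \cref{lem:continued_swaps} together with \cref{cor:plus1} this last swap replaces $\sigma_{j,1}$ by $\sgn(\sigma_{j,1})$ times a magnitude differing from $|\sigma_{j,1}|$ by one, so the new entry lies in $\{\sigma_{j,1}-1,\sigma_{j,1}+1\}$. The delicate point is ruling out that the "$+1$" value crosses $\sigma(|z+\epsilon|)$. Here I expect to use that each value of $[\pm n]$ occurs exactly once in the cycle notation, so the value $\sigma_{j,1}+1$ either already occupies the swap partner's position in $\sigma_k$ (distinct from position $p+1$) or does not appear at all; in either case $\sigma(|z+\epsilon|)=\sigma_{k,p+1}\ne\sigma_{j,1}+1$, which upgrades $\sigma(|z+\epsilon|)>\sigma_{j,1}$ to $\sigma(|z+\epsilon|)>\sigma_{j,1}+1$.

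This uniqueness-of-values bookkeeping, carried out for both signs of $\sigma_{j,1}$, is the most technical part of the proof; everything else reduces to the clean two-value comparison set up in the first paragraph. I would finish by collecting the three cases to conclude $\sigma'(|z+\epsilon|)<\sigma(|z+\epsilon|)=\sigma'(|z|)$ and reading off, via the casework on whether $|z|=d$ or $|z|=d+1$, that $d$ is removed from $\Des_\Delta(\pi,\sigma)$.
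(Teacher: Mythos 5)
Your proposal is correct and follows essentially the same route as the paper: the first swap flips the relative order of the entries following $\pm z$ and $\pm(z+\epsilon)$, the entry following $\pm z$ is then frozen by swap property \ref{sp:last} (via \cref{lem:SPOP2}), and the entry following $\pm(z+\epsilon)$ can move by at most $1$ in later swaps without crossing the frozen value. The only difference is that you make explicit (via distinctness of entries and the $\ell_j=1$ case) the "cannot cross" step that the paper's proof asserts in one line; this is a faithful filling-in of the same argument rather than a new one.
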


\begin{proof}
    From order property \ref{op:descents} for $\sigma$, swaps begin due to $\min\{|z|,|z+\epsilon|\}\in \Des_{\Delta}(\pi,\sigma)$, where $ z\in \sigma_j$ is the last element and $\pm (z+\epsilon)$ is a non-last element in some $\sigma_{k}$ with $k>j$. The first swap interchanging $z$ with $\pm(z+\epsilon)$ changes the relative order of the elements following them in the cycle notation, which removes the descent $\min\{|z|,|z+\epsilon|\}\in \Des_{\Delta}(\pi,\sigma)$.  \cref{lem:SPOP2} implies that the element now following $\pm z$ in the cycle notation does not change from the later swaps.  While these later swaps can involve the element following $\pm (z+\epsilon)$,  this element changes by at most $1$, so the relative order of what follows $z$ and $\pm(z+\epsilon)$ does not change after the first swap. We conclude the descent $\min\{|z|,|z+\epsilon|\}\in \Des_{\Delta}(\pi,\sigma)$ is removed. 
\end{proof}

\begin{lemma}\label{lem:epsilon_orders}
    Suppose the order properties hold for $\sigma$, and the order and swap properties hold for all signed permutations prior to $\sigma$ in the algorithm. Let $z$ be the last element in $\sigma_j$, let $\pm (z+\epsilon)$ with $\epsilon\in \{-1,1\}$ be the element that it is swapped with, and suppose $\min\{|z|,|z-\epsilon|\}\in [n-1]$. If $\min\{|z|,|z+\epsilon|\}\in \Des_{\Delta}(\pi,\sigma)$ and $\min\{|z|,|z-\epsilon|\}\notin \Des_{\Delta}(\pi,\sigma)$, then $\sigma(|z|)<\sigma(|z-\epsilon|)<\sigma(|z+\epsilon|)$ cannot occur. 
\end{lemma}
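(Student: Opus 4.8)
The plan is to read this as the running-algorithm analog of \cref{cor:algstart}: the two statements have the same hypotheses and conclusion, the only difference being that there $\sigma$ is the initial permutation of line \ref{line:starting} (so \cref{lem:algstart} applies directly) while here $\sigma$ is an arbitrary permutation reached during the \texttt{for} loop. I would therefore follow the skeleton of the proof of \cref{cor:algstart}, substituting the order properties of $\sigma$ for each use of \cref{lem:algstart}. First I translate the hypotheses. Since $z$ is the last entry of $\sigma_j$ we have $\sigma(|z|)=\sigma_{j,1}$, and since $\min\{|z|,|z+\epsilon|\}\in\Des_\Delta(\pi,\sigma)$, order property \ref{op:descents} gives $\pi(|z|)>\pi(|z+\epsilon|)$ and $\sigma(|z|)<\sigma(|z+\epsilon|)$ with $\pm(z+\epsilon)$ a non-last entry of some $\sigma_k$, $k>j$. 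The hypothesis $\min\{|z|,|z-\epsilon|\}\notin\Des_\Delta(\pi,\sigma)$ (with $\min\{|z|,|z-\epsilon|\}\in[n-1]$) says that the $\pi$- and $\sigma$-orders of $|z|$ and $|z-\epsilon|$ agree, which splits into the same two cases as in \cref{cor:algstart}.

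In the first case $\sigma(|z|)>\sigma(|z-\epsilon|)$, which already contradicts the middle inequality of the forbidden chain $\sigma(|z|)<\sigma(|z-\epsilon|)<\sigma(|z+\epsilon|)$, so there is nothing to do. In the second case $\sigma(|z|)<\sigma(|z-\epsilon|)$ and $\pi(|z|)<\pi(|z-\epsilon|)$; here $\sigma(|z-\epsilon|)>\sigma_{j,1}$ together with order property \ref{op:first} places $\pm(z-\epsilon)$ in some $\sigma_l$ with $l\geq j+1$, and it remains to rule out $\sigma(|z-\epsilon|)<\sigma(|z+\epsilon|)$. Assume this chain holds. The key lever is order property \ref{op:large} applied to $z$: because $\sigma(|z|)=\sigma_{j,1}<\sigma_{c,1}$ for every $c\geq j+1$, it forces $\pi(|z|)\leq\pi_{c,1}$ for every such $c$, so $\pi(|z|)>\pi_{c,1}$ can never hold for $c\geq j+1$. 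Consequently $\sigma(|z+\epsilon|)<\sigma_{j+1,1}$: otherwise order property \ref{op:large} would give $\pi(|z+\epsilon|)>\pi_{j+1,1}$, whence $\pi(|z|)>\pi(|z+\epsilon|)>\pi_{j+1,1}$, which is impossible. Thus the chain forces both $\sigma(|z-\epsilon|)$ and $\sigma(|z+\epsilon|)$ into the open band $(\sigma_{j,1},\sigma_{j+1,1})$ below the next leader, and in particular $\pm(z-\epsilon)$ is not the last entry of its cycle.

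I expect this band to be the main obstacle. No cycle leader lies between $\sigma_{j,1}$ and $\sigma_{j+1,1}$, so order property \ref{op:large} cannot separate $\sigma(|z-\epsilon|)$ from $\sigma(|z+\epsilon|)$, even though the $\pi$-order $\pi(|z-\epsilon|)>\pi(|z|)>\pi(|z+\epsilon|)$ is completely determined; this is exactly the point where the argument departs from \cref{cor:algstart}, whose corresponding subcase was immediate because there $\sigma$ and $\pi$ agree on every non-last entry by \cref{lem:algstart}(b). To finish I would show that within this band the $\sigma$-order of entries still tracks their $\pi$-order, using order property \ref{op:relative} (the relative order inside each cycle matches $\pi$) together with a direct check—via the termination description in \cref{lem:continued_swaps} and swap property \ref{sp:large}—of which swaps in the current iteration can reach such small-image entries; this would yield $\sigma(|z-\epsilon|)>\sigma(|z+\epsilon|)$, the desired contradiction.
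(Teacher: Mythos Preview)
Your setup is sound: the translation of the hypotheses through order property \ref{op:descents}, the trivial first case $\sigma(|z|)>\sigma(|z-\epsilon|)$, and the use of order property \ref{op:large} to force $\sigma(|z+\epsilon|)<\sigma_{j+1,1}$ are all correct. But the final step---showing that inside the band $(\sigma_{j,1},\sigma_{j+1,1})$ the $\sigma$-order of $|z-\epsilon|$ and $|z+\epsilon|$ must agree with their $\pi$-order---is a genuine gap, and the tools you name do not close it. Order property \ref{op:relative} compares the relative sizes of the \emph{entries} within a cycle, not the $\sigma$-images of two elements that may lie in different cycles; swap property \ref{sp:large} is silent about elements whose images are below $\sigma_{j+1,1}$, which is exactly where you have placed both $\pm(z-\epsilon)$ and $\pm(z+\epsilon)$; and \cref{lem:continued_swaps} describes termination of the current \texttt{while} loop, which has not yet acted on $\sigma$. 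No monotonicity principle of the sort you want is available from the order properties of $\sigma$ alone.

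The paper's proof instead looks one step back in the algorithm, to the permutation $\tilde\sigma$ whose swaps produced $\sigma$, and splits on whether the \texttt{for} loop was already in iteration $j$ at that point. If it was not, then by swap property \ref{sp:last} for all prior permutations $z$ has never been swapped, so $\pi(|z|)=\pi_{j+1,1}$ \emph{exactly} (not merely $\leq$). Then $\pi(|z-\epsilon|)>\pi(|z|)=\pi_{j+1,1}$ and order property \ref{op:large} force $\sigma(|z-\epsilon|)\geq\sigma_{j+1,1}$, contradicting your band bound. If it was, then $\pm z$ was swapped into the last position of cycle $j$ from $\pm(z-\epsilon)$, and one checks via order property \ref{op:descents} for $\tilde\sigma$ that both $\pm z$ and $\pm(z+\epsilon)$ were non-last in $\tilde\sigma$, so $\min\{|z|,|z+\epsilon|\}\notin\Des_\Delta(\pi,\tilde\sigma)$; this descent was therefore \emph{introduced} by the swaps on $\tilde\sigma$, and swap property \ref{sp:descents} for $\tilde\sigma$ then gives $\tilde\sigma(|z+\epsilon|)<\tilde\sigma(|z|)$. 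Tracing the single swap of $\pm(z-\epsilon)$ and $\pm z$ (and noting later swaps move what follows $\pm(z+\epsilon)$ by at most $1$) yields $\sigma(|z+\epsilon|)<\sigma(|z-\epsilon|)$, the contradiction. The missing idea is this backward look and the invocation of swap property \ref{sp:descents} for the \emph{previous} permutation; without it you cannot separate $\sigma(|z-\epsilon|)$ from $\sigma(|z+\epsilon|)$ inside the band.
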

\begin{proof}
    If $\sigma$ is the signed permutation at the start of Algorithm \ref{algorithm}, then this follows from \cref{cor:algstart}. Otherwise, some swaps have occurred, and we will prove the result in this case by contradiction. Assume $\sigma(|z|)<\sigma(|z-\epsilon|)<\sigma(|z+\epsilon|)$ holds. Since $\min\{|z|,|z+\epsilon|\}\in \Des_{\Delta}(\pi,\sigma)$ and $\min\{|z|,|z-\epsilon|\}\notin \Des_{\Delta}(\pi,\sigma)$, it must be that
    \begin{equation}\label{eq:epsilon_orders}
        \pi(|z-\epsilon|)>\pi(|z|)>\pi(|z+\epsilon|).
    \end{equation}
    Now let $\tilde{\sigma}=\tilde{\sigma}_1\tilde{\sigma}_2\dots \tilde{\sigma}_m$ be the signed permutation prior to the last sequence of swaps, so in a previous iteration of the \texttt{while} loop, swaps on $\tilde{\sigma}$ resulted in $\sigma$. We consider two cases based on the iteration of the \texttt{for} loop. 

    First, suppose that the \texttt{for} loop was not in iteration $j$ when swaps are performed on $\tilde{\sigma}$. Then swap property \ref{sp:last} for permutations prior to $\sigma$ implies that the last element of cycle $j$ has not been involved in any swaps, and since this is $ z$ in $\sigma$, it must have been $ z$ throughout the algorithm. In particular, \eqref{eq:epsilon_orders} becomes
    \begin{equation}\label{eq:epsilon_orders2}
        \pi(|z-\epsilon|)>\pi(|z|)=\pi_{j+1,1}>\pi(|z+\epsilon|).
    \end{equation}
    From order property \ref{op:large} for $\sigma$, we conclude $\sigma(|z-\epsilon|)>\sigma_{j+1,1}$. However,  combined with $\sigma(|z+\epsilon|)>\sigma(|z-\epsilon|)$, this implies $\sigma(|z+\epsilon|)>\sigma_{j+1,1}$. Comparing with \eqref{eq:epsilon_orders2}, we see that $\pm (z+\epsilon)$ is an element contradicting order property \ref{op:large} for $\sigma$.

    Alternatively, suppose the \texttt{for} loop was in iteration $j$ when swaps are performed on $\tilde{\sigma}$. We begin by considering the positions of $z$, $\pm (z-\epsilon)$, and $\pm (z+\epsilon)$ in the cycles of $\tilde{\sigma}$ containing them. As $z$ is the last element of $\sigma_j$, this element must have been swapped into this position during the swaps on $\tilde{\sigma}$. By construction of the algorithm, $\pm z$ must have been swapped with $\pm (z-\epsilon)$ due to $\min\{|z|,|z-\epsilon|\}\in \Des_{\Delta}(\pi,\tilde{\sigma})$. In particular, order property \ref{op:descents} for $\tilde{\sigma}$ implies that $\pm (z-\epsilon)$ is the last element of $\tilde{\sigma}_j$ and $\pm z$ is a non-last of a cycle to the right of $\tilde{\sigma}_j$. Order property \ref{op:descents} for $\sigma$ with $\min\{|z|,|z+\epsilon|\}\in \Des_{\Delta}(\pi,\sigma)$ implies that $\pm (z+\epsilon)$ is a non-last element in $\sigma_{j+1},\ldots,\sigma_m$. Then this must have been a non-last element in $\tilde{\sigma}_j,\tilde{\sigma}_{j+1},\ldots,\tilde{\sigma}_{j+1}$ by swap properties \ref{sp:right} and \ref{sp:last} for $\tilde{\sigma}$. 
    
   Combined, we see that $\pm (z-\epsilon)$ is the last element in $\tilde{\sigma}_j$, and both $\pm z$ and $\pm (z+\epsilon)$ are non-last elements in the cycles of $\tilde{\sigma}$ containing them. In particular, order property \ref{op:descents} for $\tilde{\sigma}$ implies that $\min\{|z|,|z+\epsilon|\}\notin \Des_{\Delta}(\pi,\tilde{\sigma})$. The swaps on $\tilde{\sigma}$ must introduce the descent $\min\{|z|,|z+\epsilon|\}\in \Des_{\Delta}(\pi,\sigma)$, so swap property \ref{sp:descents} for $\tilde{\sigma}$ implies that $\tilde{\sigma}(|z+\epsilon|)<\tilde{\sigma}(|z|)$. As order property \ref{op:descents} for $\tilde{\sigma}$ implies $\tilde{\sigma}(|z-\epsilon|)<\tilde{\sigma}(|z|)$, we see that swapping $\pm (z-\epsilon)$ and $\pm z$ results in the element following $\pm (z-\epsilon)$ being larger than the one following $\pm (z+\epsilon)$. \cref{lem:SPOP2} implies that swaps do not affect the element following $\pm (z-\epsilon)$ in the cycle notation, while they can only change what follows $\pm (z+\epsilon)$ by $1$. Hence, we conclude $\sigma(|z+\epsilon|)<\sigma(|z-\epsilon|)$, and this contradicts $\sigma(|z|)<\sigma(|z-\epsilon|)<\sigma(|z+\epsilon|)$.
\end{proof}

\begin{lemma}\label{lem:removeddescent2}
    Suppose the order properties hold for $\sigma$, and the swap properties hold for all signed permutations prior to $\sigma$ in the algorithm. Let $z$ be the last element in $\sigma_j$, let $\pm (z+\epsilon)$ with $\epsilon\in \{-1,1\}$ be the element that it is swapped with, and suppose $\min\{|z|,|z-\epsilon|\}\in [n-1]$. 
    \begin{enumerate}[label=(\alph*)]
        \item If $\min\{|z|,|z-\epsilon|\}\in \Des_{\Delta}(\pi,\sigma)$, then swaps remove this descent from the symmetric difference.
        \item If $\min\{|z|,|z-\epsilon|\}\notin \Des_{\Delta}(\pi,\sigma)$, then swaps do not introduce this to the symmetric difference.
    \end{enumerate}
\end{lemma}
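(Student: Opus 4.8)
The plan is to analyze how the swaps affect the descent status of $\min\{|z|,|z-\epsilon|\}$ in the symmetric difference $\Des_\Delta(\pi,\sigma)$, where $z$ is the last element of $\sigma_j$. The key observation is that this quantity concerns the relative order of $\pm z$ and $\pm(z-\epsilon)$. Since $z$ is the last element of $\sigma_j$ and the first swap interchanges $z$ with $\pm(z+\epsilon)$, the element $\pm z$ moves into a cycle to the right, while $\pm(z-\epsilon)$ must be examined carefully. The first thing I would do is determine where $\pm(z-\epsilon)$ lives relative to the swaps: by Lemma~\ref{lem:SPOP2} (swap property~\ref{sp:last}) and the fact that swaps only move between $\sigma_j$ and a single cycle $\sigma_k$ to its right (swap property~\ref{sp:right}, Lemma~\ref{lem:SPOP1}), I can pin down whether $\pm(z-\epsilon)$ is touched at all.

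For part (a), assume $\min\{|z|,|z-\epsilon|\}\in\Des_\Delta(\pi,\sigma)$. The goal is to show the swaps remove it. Order property~\ref{op:descents} for $\sigma$ forces a specific configuration: exactly one of $\pm z,\pm(z-\epsilon)$ is a last element of some $\sigma_{j},\ldots,\sigma_m$ with the other a non-last element to its right, together with the inequalities $\pi(|z|)$ versus $\pi(|z-\epsilon|)$ and the reversed $\sigma$-inequality. Since $z$ is \emph{given} as the last element of $\sigma_j$ and is being swapped with $\pm(z+\epsilon)$ (not $\pm(z-\epsilon)$), I expect $\pm(z-\epsilon)$ to be a non-last element lying to the right of $\pm z$; the case $\min\{|z|,|z-\epsilon|\}\in\Des_\Delta$ must then be reconciled with the choice of $\epsilon$ in line~\ref{line:if0}, where the algorithm picks the $\epsilon$ maximizing $\pi(|z+\epsilon|)$. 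I would argue that after the first swap moves $\pm z$ rightward past $\pm(z-\epsilon)$, the relative order of the elements following them reverses, removing the discrepancy; Lemma~\ref{lem:SPOP2} guarantees the element following $\pm z$ is not disturbed by subsequent swaps, and the element following $\pm(z-\epsilon)$ changes by at most $1$, preserving the corrected order. This parallels the argument in Lemma~\ref{lem:removeddescent}.

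For part (b), assume $\min\{|z|,|z-\epsilon|\}\notin\Des_\Delta(\pi,\sigma)$, and show the swaps do not introduce it. Here I would invoke Lemma~\ref{lem:epsilon_orders}: the hypotheses $\min\{|z|,|z+\epsilon|\}\in\Des_\Delta(\pi,\sigma)$ (which holds since swaps begin) together with $\min\{|z|,|z-\epsilon|\}\notin\Des_\Delta(\pi,\sigma)$ are exactly its hypotheses, so I may conclude $\sigma(|z|)<\sigma(|z-\epsilon|)<\sigma(|z+\epsilon|)$ cannot occur. Combining this with the inequality $\sigma(|z|)<\sigma(|z+\epsilon|)$ coming from order property~\ref{op:descents} (since $\min\{|z|,|z+\epsilon|\}\in\Des_\Delta$), I should be able to force $\sigma(|z-\epsilon|)\geq\sigma(|z+\epsilon|)$ or place $\pm(z-\epsilon)$ in a position unaffected by the swaps. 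The effect of the first swap on the order between $\pm(z-\epsilon)$ and $\pm z$ then needs to be checked against both the $\pi$-order and the resulting $\sigma$-order after swapping, showing no new discrepancy is created.

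The main obstacle I anticipate is the careful bookkeeping of where $\pm(z-\epsilon)$ sits and which of the two elements $\pm z,\pm(z-\epsilon)$ is the last element of a cycle, since order property~\ref{op:descents} permits either configuration in principle, and I must rule out or handle each using the specific knowledge that $z$ is the last element of $\sigma_j$ and that the algorithm chose to swap with $\pm(z+\epsilon)$ rather than $\pm(z-\epsilon)$. The interaction between the $\epsilon$-selection rule (which maximizes $\pi(|z+\epsilon|)$) and the sign $\epsilon$ versus $-\epsilon$ will likely drive several of the inequality comparisons, and getting the absolute-value and sign arguments exactly right across the possible sign combinations of $z,z\pm\epsilon$ is where the delicacy lies.
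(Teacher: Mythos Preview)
Your outline is essentially the paper's own approach: part~(a) hinges on order property~\ref{op:descents} together with the algorithm's $\epsilon$-selection rule, and part~(b) hinges on Lemma~\ref{lem:epsilon_orders} to eliminate the bad ordering $\sigma(|z|)<\sigma(|z-\epsilon|)<\sigma(|z+\epsilon|)$, after which the two surviving orderings correspond exactly to the paper's two cases $\pi(|z|)\gtrless\pi(|z-\epsilon|)$. The one spot to tighten is in~(a): the selection rule gives $\pi(|z+\epsilon|)>\pi(|z-\epsilon|)$, and what you actually need (and what the paper asserts) is $\sigma(|z+\epsilon|)>\sigma(|z-\epsilon|)>\sigma(|z|)$, since after the first swap the element following $\pm z$ becomes the old $\sigma(|z+\epsilon|)$; your phrase ``moves $\pm z$ rightward past $\pm(z-\epsilon)$'' obscures that it is this $\sigma$-inequality, not any positional change relative to $\pm(z-\epsilon)$, that removes the discrepancy.
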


\begin{proof}
    We first consider (a). In this case, we have that $\min\{|z|,|z+\epsilon|\}$ and $\min\{|z|,|z-\epsilon|\}$ are both elements of $\Des_{\Delta}(\pi,\sigma)$. Order property \ref{op:descents} for $\sigma$ implies that  $\pi(|z|)>\pi(|z+\epsilon|)$, $\sigma(|z|)<\sigma(|z+\epsilon|)$, $\pi(|z|)>\pi(|z-\epsilon|)$, and $\sigma(|z|)<\sigma(|z-\epsilon|)$. Algorithm \ref{algorithm} chooses to swap $z$ with $\pm (z+\epsilon)$ instead of $\pm (z-\epsilon)$, and this can only occur if $\sigma(|z+\epsilon|)>\sigma(|z-\epsilon|)>\sigma(|z|)$. Consequently, swapping $ z$ with $\pm (z+\epsilon)$ also removes the descent $\min\{|z|,|z-\epsilon|\}$. Using a similar argument as in \cref{lem:removeddescent} and \cref{lem:epsilon_orders}, the status of this descent is not affected by the remaining swaps in this iteration of the \texttt{while} loop in line \ref{line:whileloop2}. 
    
    For (b), we have two cases for $\min\{|z|,|z-\epsilon|\}\notin \Des_{\Delta}(\pi,\sigma)$. The first scenario is $\pi(|z|)>\pi(|z-\epsilon|)$ and $\sigma(|z|)>\sigma(|z-\epsilon|)$, and in this case, order property \ref{op:descents} implies $\sigma(|z+\epsilon|)>\sigma(|z|)>\sigma(|z-\epsilon|)$. Then swapping $z$ with $\pm(z+\epsilon)$ does not introduce the descent $\min\{|z|,|z-\epsilon|\}$ into the symmetric difference. As in earlier arguments, subsequent swaps also do not. The second scenario is $\pi(|z|)<\pi(|z-\epsilon|)$ and $\sigma(|z|)<\sigma(|z-\epsilon|)$. Order property \ref{op:descents} for $\sigma$ implies that  $\pi(|z|)>\pi(|z+\epsilon|)$ and $\sigma(|z|)<\sigma(|z+\epsilon|)$, and combined with \cref{lem:epsilon_orders}, we conclude that $\sigma(|z|)<\sigma(|z+\epsilon|)<\sigma(|z-\epsilon|)$. A similar argument shows that swaps again do not introduce $\min\{|z|,|z-\epsilon|\}$ into the symmetric difference. 
\end{proof}

\begin{lemma}\label{lem:removeddescent3}
    Suppose the order properties hold for $\sigma$, and the swap properties hold for all signed permutations prior to $\sigma$ in the algorithm. Let $z$ be the last element in $\sigma_j$, let $\pm (z+\epsilon)$ with $\epsilon\in \{-1,1\}$ be the element that it is swapped with, and suppose $\min\{|z+\epsilon|,|z+2\epsilon|\}\in [n-1]$. 
    \begin{enumerate}[label=(\alph*)]
        \item If $\min\{|z+\epsilon|,|z+2\epsilon|\}\in \Des_{\Delta}(\pi,\sigma)$, then this descent is removed. 
        \item If $\min\{|z+\epsilon|,|z+2\epsilon|\}\notin \Des_{\Delta}(\pi,\sigma)$ and $\sigma(|z+2\epsilon|)>\sigma(|z+\epsilon|)$, then this descent is not introduced to the symmetric difference
    \end{enumerate}
\end{lemma}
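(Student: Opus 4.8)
The plan is to reduce both parts to tracking how a single cascade of swaps (one execution of the \texttt{while} loop in line \ref{line:whileloop2}) changes the relative order of the images at the two positions $|z+\epsilon|$ and $|z+2\epsilon|$. Write $p=\min\{|z+\epsilon|,|z+2\epsilon|\}$. Note first that $z+\epsilon$ and $z+2\epsilon$ are nonzero integers differing by $\epsilon\in\{-1,1\}$, hence have the same sign, so $|z+\epsilon|$ and $|z+2\epsilon|$ are the consecutive integers $p$ and $p+1$. Consequently, whether $p\in\Des_{\Delta}(\pi,\sigma)$ (and likewise for $\sigma'$) is determined entirely by whether the relative orders of $\pi(|z+\epsilon|)$ versus $\pi(|z+2\epsilon|)$ and of $\sigma(|z+\epsilon|)$ versus $\sigma(|z+2\epsilon|)$ agree; since $\pi$ is fixed, I only need to understand $\sigma'(|z+\epsilon|)$ and $\sigma'(|z+2\epsilon|)$. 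The first swap moves the entry $\pm(z+\epsilon)$ into the last position of $\sigma_j$, and since later swaps in the cascade only act on earlier positions, this entry stays there; hence $\sigma'(|z+\epsilon|)=\sigma'_{j,1}$, the first (and, by order property \ref{op:first} for $\sigma'$, largest) element of $\sigma_j$ in $\sigma'$.

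For part (a), I would apply order property \ref{op:descents} for $\sigma$ to $p\in\Des_{\Delta}(\pi,\sigma)$. Since $\pm(z+\epsilon)$ is the non-last element that $z$ is swapped with, this forces $\pm(z+2\epsilon)$ to be the last element of some cycle $\sigma_{j'}$ with $\pm(z+\epsilon)$ appearing to its right, and gives $\pi(|z+2\epsilon|)>\pi(|z+\epsilon|)$ together with $\sigma(|z+2\epsilon|)<\sigma(|z+\epsilon|)$. A short position argument (using that $z$ is the last element of $\sigma_j$ and that $\pm(z+\epsilon)$ lies in the cycle $\sigma_k$) shows $j<j'<k$, so $\pm(z+2\epsilon)$ lies in a cycle distinct from $\sigma_j$ and $\sigma_k$ and is therefore untouched by the cascade by swap property \ref{sp:right}. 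Thus $\sigma'(|z+2\epsilon|)=\sigma(|z+2\epsilon|)=\sigma_{j',1}$, and order property \ref{op:first} for $\sigma'$ gives $\sigma'_{j,1}<\sigma_{j',1}$. Both $\pi$ and $\sigma'$ now assign the larger value to position $|z+2\epsilon|$, so their descent statuses at $p$ agree and the descent is removed from the symmetric difference.

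For part (b), the triggering swap and order property \ref{op:descents} give $\sigma(|z|)=\sigma_{j,1}<\sigma(|z+\epsilon|)$, and combined with the hypothesis $\sigma(|z+2\epsilon|)>\sigma(|z+\epsilon|)$ this yields the chain $\sigma(|z+2\epsilon|)>\sigma(|z+\epsilon|)>\sigma_{j,1}$, in particular a gap of at least $2$. Since $p\notin\Des_{\Delta}(\pi,\sigma)$ and $\sigma$ assigns the larger value to position $|z+2\epsilon|$, the same must hold for $\pi$, i.e.\ $\pi(|z+2\epsilon|)>\pi(|z+\epsilon|)$; it therefore suffices to prove $\sigma'(|z+2\epsilon|)>\sigma'_{j,1}$, for then $\sigma'$ agrees with $\pi$ at $p$ and no descent is introduced. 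When $\pm(z+2\epsilon)$ lies to the right of $\pm(z+\epsilon)$ or in a cycle other than $\sigma_j$ and $\sigma_k$, swap properties \ref{sp:last} and \ref{sp:right} leave it and its follower untouched, so $\sigma'(|z+2\epsilon|)=\sigma(|z+2\epsilon|)$; since the cascade changes each element of $\sigma_j$ by at most one unit, $\sigma'_{j,1}\le\sigma_{j,1}+1$, and the gap of at least $2$ forces the desired inequality.

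The hard part is the remaining case of part (b), where $\pm(z+2\epsilon)$ lies to the left of $\pm(z+\epsilon)$ inside $\sigma_k$ and could in principle be reached by the backward cascade. Here I expect the hypothesis $\sigma(|z+2\epsilon|)>\sigma(|z+\epsilon|)$ to be exactly what prevents a harmful swap: analyzing the entry immediately following $\pm(z+2\epsilon)$ at the moment it would be swapped forces inequalities of the form $z<\sigma_{j,1}<\sigma(|z+\epsilon|)<z+1$ (using $\sigma_{j,1}$ being the maximum of $\sigma_j$ together with the triggering relations), which is impossible for integers. Making this rigorous in full generality --- for $\pm(z+2\epsilon)$ reached at an arbitrary depth of the cascade and across all four sign combinations of $z$ and $\epsilon$ --- is the main obstacle, and I would handle it by strong induction using the swap properties for the permutations prior to $\sigma$, together with \cref{lem:continued_swaps} to control when the cascade terminates and \cref{cor:algstart} for the base case in which $\sigma$ is the initial permutation of line \ref{line:starting}.
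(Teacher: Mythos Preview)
Your argument for part (a) is correct and essentially the same as the paper's: order property \ref{op:descents} forces $\pm(z+2\epsilon)$ to be the last element of some $\sigma_{j'}$ with $j<j'<k$, hence in a cycle untouched by the cascade, and comparing first elements via order property \ref{op:first} for $\sigma'$ finishes.

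For part (b), however, you introduce an unnecessary case split on the location of $\pm(z+2\epsilon)$ and then concede that the case where it lies to the left of $\pm(z+\epsilon)$ inside $\sigma_k$ is incomplete. Your suggested fix via strong induction and \cref{cor:algstart} is overkill, and the sketched integer contradiction ``$z<\sigma_{j,1}<\sigma(|z+\epsilon|)<z+1$'' is not correct as stated. The paper handles all positions of $\pm(z+2\epsilon)$ uniformly with one observation you are missing. Let $\tilde\sigma$ be the result of just the first swap. Then $\tilde\sigma(|z|)=\sigma(|z+\epsilon|)$, and because a single swap of adjacent values preserves relative order with any third value, the chain $\sigma(|z|)<\sigma(|z+\epsilon|)<\sigma(|z+2\epsilon|)$ becomes
\[
\tilde\sigma(|z+\epsilon|)<\tilde\sigma(|z|)<\tilde\sigma(|z+2\epsilon|).
\]
In particular $\tilde\sigma(|z+\epsilon|)\ne\tilde\sigma(|z+2\epsilon|)-1$: the followers of $\pm(z+\epsilon)$ and of $\pm(z+2\epsilon)$ are separated by the value $\tilde\sigma(|z|)$. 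Since every later swap in the cascade exchanges a pair of entries whose absolute values differ by exactly $1$, no later swap can simultaneously involve both of these followers, so their relative order persists to $\sigma'$. This single observation closes your ``hard case'' (and all the others) without induction, without sign casework, and without appealing to \cref{cor:algstart}.
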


\begin{proof}
    For (a), both $\min\{|z|,|z+\epsilon|\}$ and $\min\{|z+\epsilon|,|z+2\epsilon|\}$ are elements of $\Des_{\Delta}(\pi,\sigma)$. By order property \ref{op:descents} for $\sigma$, this can only occur if $\pm (z+\epsilon)$ is a non-last element in some cycle $\sigma_k$ with $k>j$, and combined with order property \ref{op:first}, we also conclude that $\pm (z+2\epsilon)$ must be the last element in some cycle in $\sigma_{j+1},\ldots,\sigma_{k-1}$. Additionally, the combination of these properties also implies $\sigma(|z|)<\sigma(|z+2\epsilon|)<\sigma(|z+\epsilon|)$. The first swap between $z$ and $\pm (z+\epsilon)$ will remove $\min\{|z+\epsilon|,|z+2\epsilon|\}\in \Des_{\Delta}(\pi,\sigma)$. Later swaps in the \texttt{while} loop can only change the element following $\pm (z+\epsilon)$ by at most $1$ and do not affect the cycle containing $\pm (z+2\epsilon)$. Consequently, $\min\{|z+\epsilon|,|z+2\epsilon|\}$ cannot be re-introduced into the symmetric difference, so (a) follows. 

    For (b), order property \ref{op:descents} for $\sigma$ and the assumptions imply $\sigma(|z|)<\sigma(|z+\epsilon|)<\sigma(|z+2\epsilon|)$, where $\pm (z+\epsilon)$ appears as a non-last element in $\sigma_{j+1},\ldots,\sigma_{m}$. We let $\tilde{\sigma}$ be the signed permutation obtained by swapping $z$ and $\pm (z+\epsilon)$, which satisfies $\sigma(|z+\epsilon|)=\tilde{\sigma}(|z|)$. The relative orders of this element and what follows $z$ and $\pm (z+2\epsilon)$ in the cycle notation of $\sigma$ is preserved by this first swap, so $\tilde{\sigma}(|z+\epsilon|)<\tilde{\sigma}(|z|)<\tilde{\sigma}(|z+2\epsilon|)$. As $\tilde{\sigma}(|z+\epsilon|)\neq \tilde{\sigma}(|z+2\epsilon|)-1$, later swaps cannot simultaneously involve the elements following $z+\epsilon$ and $\pm (z+2\epsilon)$ in the cycle notation of $\tilde{\sigma}$. Consequently, the relative order of these elements is preserved, and $\min\{|z+\epsilon|,|z+2\epsilon|\}$ is not introduced into the symmetric difference. We conclude that (b) holds.
\end{proof}

\begin{lemma}\label{lem:SP4}
    Suppose the order properties hold for $\sigma$, and the swap properties hold for all signed permutations prior to $\sigma$ in the algorithm.  Then swap property \ref{sp:descents} holds for $\sigma$. 
\end{lemma}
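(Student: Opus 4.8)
The statement of swap property \ref{sp:descents} consists of four bullet points, and the first three of them—concerning the positions $\min\{|z|,|z+\epsilon|\}$, $\min\{|z|,|z-\epsilon|\}$, and $\min\{|z+\epsilon|,|z+2\epsilon|\}$—are exactly the conclusions of \cref{lem:removeddescent}, \cref{lem:removeddescent2}, and \cref{lem:removeddescent3}, respectively. Since the hypotheses of those lemmas are subsumed by the hypotheses here, the first three bullets follow immediately. The plan is thus to reduce \cref{lem:SP4} to its only remaining content, the fourth bullet: that no value $d\in[n-1]$ with $d\neq \min\{|z+\epsilon|,|z+2\epsilon|\}$ is introduced into $\Des_{\Delta}(\pi,\sigma)$ by the swaps.

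To prove this, I would argue locally at each position. Since $\pi$ is fixed throughout the algorithm, a value $d$ is introduced into $\Des_{\Delta}(\pi,\sigma)$ only if the descent status of $d$ in $\sigma$ changes, which in turn requires that one of $\sigma(d)$ or $\sigma(d+1)$—equivalently, the element following $\pm d$ or $\pm(d+1)$ in the cycle notation—be altered by some swap. First I would catalog exactly which following-elements are changed over the entire execution of the \texttt{while} loop in line \ref{line:whileloop2}. Every swap interchanges the absolute values of a pair of elements differing by $1$, and after each swap the algorithm passes to their immediate predecessors; using \cref{lem:continued_swaps} together with order property \ref{op:descents} and swap properties \ref{sp:right} and \ref{sp:last}, the chain of swapped pairs is confined to $\sigma_j$ and a single cycle $\sigma_k$ to its right and terminates as soon as a discrepancy is no longer being pushed inward.

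The key structural observation to establish is that the cascade removes one discrepancy at a time and reintroduces at most one at the immediately preceding pair, which the next swap then removes; this is precisely the mechanism visible in the proof of \cref{lem:continued_swaps}, where swapping a pair forces $P_{\pi,\sigma}$ to be \texttt{True} at the predecessors, which had been \texttt{False} by order property \ref{op:descents}. Consequently, at every intermediate step the relative order of the two affected following-elements is preserved—each changes by exactly $1$ while its partner moves in lockstep—so no intermediate position is left as a new discrepancy, and the loop halts (by the termination cases of \cref{lem:continued_swaps}) without creating one at the inner end. The only positions whose descent status can survive the cascade as a genuinely new discrepancy are those adjacent to the endpoints of the first swap, namely $\min\{|z|,|z+\epsilon|\}$, $\min\{|z|,|z-\epsilon|\}$, and $\min\{|z+\epsilon|,|z+2\epsilon|\}$, all handled above; for every other $d$, both $\sigma(d)$ and $\sigma(d+1)$ are either unchanged or shifted together, so $d$ cannot be introduced.

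The main obstacle will be the sign bookkeeping across the two interacting cycles: because a swap keeps each position's sign but exchanges absolute values, I will have to track carefully how the following-element of each swapped absolute value is rerouted and verify in every case that the consecutive-integer pairing required by $P_{\pi,\sigma}$ is respected, so that no descent is accidentally created at a value $d$ lying between two steps of the chain. Controlling this requires order property \ref{op:descents}, to know that the only pre-existing discrepancies sit at the cycle-end/non-cycle-end boundaries, and swap property \ref{sp:last}, to guarantee that the last elements of the cycles to the right of $\sigma_j$ are untouched, so that the chain cannot wrap around and disturb a position far from its endpoints.
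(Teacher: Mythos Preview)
Your proposal is correct and follows essentially the same approach as the paper: both reduce the first three bullets to \cref{lem:removeddescent,lem:removeddescent2,lem:removeddescent3}, and for the fourth bullet both analyze how a swap of $x$ and $y$ affects only $\sigma(|x|),\sigma(|y|)$ and (by $\pm 1$) the images of their predecessors, so that any intermediate discrepancy is precisely the one corrected by the next swap, with \cref{lem:continued_swaps} governing termination. The paper's write-up is slightly more explicit about the terminal case in which the first element of $\sigma_j$ is swapped (where the only possibly affected descents involve $\pm(z+\epsilon)$ and are already covered), but this is implicit in your endpoint discussion.
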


\begin{proof}
    The first three claims of swap property \ref{sp:descents} for $\sigma$ involve $\min\{|z|,|z+\epsilon|\}$, $\min\{|z|,|z-\epsilon|\}$, and $\min\{|z+\epsilon|,|z+2\epsilon|\}$, and the results for these are given in \cref{lem:removeddescent,lem:removeddescent2,lem:removeddescent3}. For all other descents, consider the effect on the one-line notation when $z$ and $\pm (z+\epsilon)$ are swapped. In addition to affecting the images of $|z|$ and $|z+\epsilon|$, this changes the images of $|\sigma^{-1}(z)|$ and $|\sigma^{-1}(z+\epsilon)|$ by $1$. From the one-line notation of $\sigma$, this cannot introduce other descents into the symmetric difference unless $|\sigma^{-1}(z)|$ and $|\sigma^{-1}(z+\epsilon)|$ are consecutive elements of $[n]$. In this case, the algorithm then swaps the elements proceeding $z$ and $\pm(z+\epsilon)$ to remove this descent. Iterating this argument, swaps continue until one of the three cases in \cref{lem:continued_swaps} occurs. If swaps end due to the first element of $\sigma_j$ being swapped, then this can only affect descents that involve $\pm (z+\epsilon)$, and these are accounted for in \cref{lem:removeddescent,lem:removeddescent3}. In the remaining two cases, the final swap does not introduce any descent, so we conclude swap property \ref{sp:descents} holds for $\sigma$. 
\end{proof}

\begin{lemma}\label{lem:OP4}
    Suppose the order properties hold for $\sigma$, and the swap properties hold for all signed permutations prior to $\sigma$ in the algorithm.  Then order property \ref{op:descents} holds for $\sigma'$. 
\end{lemma}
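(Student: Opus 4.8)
The plan is to obtain order property \ref{op:descents} for $\sigma'$ from swap property \ref{sp:descents} for $\sigma$ (just proved in \cref{lem:SP4}) together with the structural swap properties \ref{sp:right}--\ref{sp:large} and the order properties for $\sigma$. The governing observation is that swap property \ref{sp:descents} pins down exactly how $\Des_{\Delta}(\pi,\cdot)\cap[n-1]$ changes in passing from $\sigma$ to $\sigma'$: every element $d$ of $\Des_{\Delta}(\pi,\sigma')\cap[n-1]$ either already lay in $\Des_{\Delta}(\pi,\sigma)$ (a surviving discrepancy) or is the unique value $\min\{|z+\epsilon|,|z+2\epsilon|\}$ that the last bullet of swap property \ref{sp:descents} permits a cascade to introduce. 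I would therefore verify order property \ref{op:descents} at $d$ in these two cases.

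For a surviving discrepancy $d$, I start from order property \ref{op:descents} for $\sigma$, which hands me the last-position element $x\in\{\pm d,\pm(d+1)\}$ of some cycle $\sigma_k$ with $k\geq j$, its companion $\pm(x+\delta)$ in a non-last position to the right, and the inequalities $\pi(|x|)>\pi(|x+\delta|)$ and $\sigma(|x|)<\sigma(|x+\delta|)$. Since $d$ survived, swap property \ref{sp:descents} forbids $d$ from equaling $\min\{|z|,|z+\epsilon|\}$, $\min\{|z|,|z-\epsilon|\}$, or (when those lie in the symmetric difference) $\min\{|z+\epsilon|,|z+2\epsilon|\}$; in particular the last-position element $x$ cannot be the last element $z$ of $\sigma_j$, so $x$ is the last element of a cycle $\sigma_k$ with $k>j$ and is left untouched by swap property \ref{sp:last}. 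I would then use swap properties \ref{sp:right}--\ref{sp:large} to argue that $x$ stays last in $\sigma_k'$, that the companion stays non-last to its right, and that the single $\sigma$-inequality transfers to $\sigma'$; the $\pi$-inequality is automatic because $\pi$ is fixed.

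For the introduced discrepancy $d=\min\{|z+\epsilon|,|z+2\epsilon|\}$, I would read off the terminal configuration directly. Because the last position of $\sigma_j$ is touched only by the opening swap of $z$ with $\pm(z+\epsilon)$ and every subsequent swap in the cascade moves strictly leftward, the last element of $\sigma_j'$ is the relocated $\pm(z+\epsilon)$, so I may take $x=\pm(z+\epsilon)$; swap properties \ref{sp:right}--\ref{sp:large} place $\pm(z+2\epsilon)$ non-last to its right. The hypothesis that $d$ is introduced is precisely the complement of the clean case of \cref{lem:removeddescent3}(b), namely $\sigma(|z+2\epsilon|)<\sigma(|z+\epsilon|)$ with $d\notin\Des_{\Delta}(\pi,\sigma)$; a short casework on whether $|z+\epsilon|<|z+2\epsilon|$ turns this into $\pi(|z+\epsilon|)>\pi(|z+2\epsilon|)$, supplying the required $\pi$-inequality, while tracking the successors of $\pm(z+\epsilon)$ and $\pm(z+2\epsilon)$ through the opening swap yields $\sigma'(|z+\epsilon|)<\sigma'(|z+2\epsilon|)$.

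Finally, I would reconcile the two readings of order property \ref{op:descents} for $\sigma'$ exactly as in \cref{lem:OP3}: if the \texttt{while} loop in line \ref{line:whileloop1} continues, $\sigma'$ is taken at iteration $j$, $\sigma_j'$ is retained, and the introduced discrepancy at its last element is a legitimate iteration-$j$ discrepancy; if that loop terminates, the failure of the triggering condition at the last element of $\sigma_j'$ shows $\min\{|z+\epsilon|,|z+2\epsilon|\}\notin\Des_{\Delta}(\pi,\sigma')$, so no discrepancy involves $\sigma_j'$ and I may specialize to $k\geq j+1$. The main obstacle I anticipate is the introduced-discrepancy case: carefully bookkeeping the signs and the successor values through the opening swap to establish $\sigma'(|z+\epsilon|)<\sigma'(|z+2\epsilon|)$ and the last/non-last placement, since here the cumulative effect of the cascade, rather than a single transparent swap, must be controlled.
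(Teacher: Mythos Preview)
Your proposal is correct and follows essentially the same route as the paper's proof: split $\Des_{\Delta}(\pi,\sigma')\cap[n-1]$ into surviving discrepancies (handled via order property \ref{op:descents} for $\sigma$ together with swap properties \ref{sp:right}--\ref{sp:large} to show $x$ sits last in some $\sigma_k'$ with $k>j$ and the companion stays non-last to its right) and the possibly introduced discrepancy $\min\{|z+\epsilon|,|z+2\epsilon|\}$ (handled using the contrapositive of \cref{lem:removeddescent3}(b) to get $\sigma(|z+2\epsilon|)<\sigma(|z+\epsilon|)$ and hence $\pi(|z+2\epsilon|)<\pi(|z+\epsilon|)$). The obstacle you flag---pinning down that $\pm(z+2\epsilon)$ lands non-last in a cycle strictly right of $\sigma_j'$---is precisely what the paper spells out via a short case analysis: swap property \ref{sp:large} gives $\sigma(|z+2\epsilon|)<\sigma_{j+1,1}$, so $\pm(z+2\epsilon)$ is not last in $\sigma_{j+1},\ldots,\sigma_m$ (and swap property \ref{sp:last} keeps it that way), while if it landed in $\sigma_1',\ldots,\sigma_j'$ then order property \ref{op:first} for $\sigma'$ would force $\sigma'(|z+2\epsilon|)<\sigma'(|z+\epsilon|)$, contradicting membership in $\Des_{\Delta}(\pi,\sigma')$; once this is done, $\sigma'(|z+\epsilon|)<\sigma'(|z+2\epsilon|)$ falls out directly from $\min\{|z+\epsilon|,|z+2\epsilon|\}\in\Des_{\Delta}(\pi,\sigma')$ and the established $\pi$-inequality, so no successor-tracking through the cascade is needed.
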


\begin{proof}
    We begin with order property \ref{op:descents} for $\sigma$, and apply the swap properties for $\sigma$, which hold by \cref{lem:SPOP1,lem:SPOP2,lem:SP3,lem:SP4}. Swap property \ref{sp:descents} implies that $\min\{|z|,|z-\epsilon|\}$ and $\min\{|z|,|z+\epsilon|\}$ cannot be elements of $\Des_{\Delta}(\pi,\sigma')$, and descents other than $\min\{|z+\epsilon|,|z+2\epsilon|\}$ cannot be introduced. We will begin by considering descents different from these three aforementioned ones and then consider possibilities for $\min\{|z+\epsilon|,|z+2\epsilon|\}$.
    
    Let $d\in \Des_{\Delta}(\pi,\sigma)$ be a descent that is not $\min\{|z|,|z+\epsilon|\}$, $\min\{|z|,|z-\epsilon|\}$, or $\min\{|z+\epsilon|,|z+2\epsilon|\}$. Using order property \ref{op:descents} for $\sigma$, we let $x\in \{\pm d,\pm (d+1)\}$ be whichever element appears as the last element of a cycle in $\sigma_{j+1},\ldots,\sigma_m$ and $y\in \{\pm d,\pm (d+1)\}$ be the element that appears in the non-last position of a cycle to its right. Combined with order property \ref{op:first}, we also have that $\sigma_{j+1,1}\leq \sigma(|x|)<\sigma(|y|)$ while $\pi(|x|)>\pi(|y|)$. By swap property \ref{sp:right} for $\sigma$, the cycles containing $x$ and $y$ cannot both be affected by swaps, and by swap property \ref{sp:large}, $x$ and $y$ themselves are not affected by swaps. Hence, swaps preserve the positions of $x$ and $y$ in the cycle notation and change the element following at most one of them by $1$, resulting in $\sigma'(|x|)<\sigma'(|y|)$. We conclude that the claims in order property \ref{op:descents} involving $d\in \Des_{\Delta}(\pi,\sigma')$ hold.

    Finally, we consider $\min\{|z+\epsilon|,|z+2\epsilon|\}$. If $\min\{|z+\epsilon|,|z+2\epsilon|\}\notin \Des_{\Delta}(\pi,\sigma')$, then the \texttt{for} loop terminates iteration $j$, and the last element of $\sigma_{j}'$, which is $\pm (z+\epsilon)$, is not involved in any descents in $\Des_{\Delta}(\pi,\sigma')$. In this case, we conclude order property \ref{op:descents} holds for $\sigma'$. It now suffices to consider when swaps result in $\min\{|z+\epsilon|,|z+2\epsilon|\}\in \Des_{\Delta}(\pi,\sigma')$. By swap property \ref{sp:descents} for $\sigma$, this can only occur if $\min\{|z+\epsilon|,|z+2\epsilon|\}\notin \Des_{\Delta}(\pi,\sigma)$ and $\sigma(|z+2\epsilon|)<\sigma(|z+\epsilon|)$, which combined also imply $\pi(|z+2\epsilon|)<\pi(|z+\epsilon|)$. We now rule out cases for where $\pm (z+2\epsilon)$ can appear in $\sigma'$.
    \begin{itemize}
        \item As $\pm (z+\epsilon)$ is involved in the swaps on $\sigma$, swap property \ref{sp:large} for $\sigma$ shows that $\sigma(|z+2\epsilon|)<\sigma(|z+\epsilon|)<\sigma_{j+1,1}.$ Then order property \ref{op:first} for $\sigma$ implies that $\pm (z+2\epsilon)$ cannot be the last element in $\sigma_{j+1},\ldots,\sigma_m$, and swap property \ref{sp:last} for $\sigma$ implies that swaps cannot result in $\pm (z+2\epsilon)$ becoming the last element in $\sigma_{j+1}',\ldots,\sigma_m'$.
        \item If $\pm (z+2\epsilon)$ is in $\sigma_1',\sigma_2',\ldots,\sigma_j'$, then \cref{lem:SPOP2} implies that $\sigma'(|z+2\epsilon|)<\sigma_{j,1}'=\sigma'(|z+\epsilon|)$. Since this order aligns with $\pi(|z+2\epsilon|)<\pi(|z+\epsilon|)$, this would imply $\min\{|z+\epsilon|,|z+2\epsilon|\}\notin \Des_{\Delta}(\pi,\sigma)$. 
    \end{itemize}
    Combined, we see that $\min\{|z+\epsilon|,|z+2\epsilon|\}\in \Des_{\Delta}(\pi,\sigma')$ can only occur if $\pm(z+2\epsilon)$ is a non-last element in $\sigma_{j+1}',\ldots,\sigma_m'$. Furthermore, it must be that $\pi(|z+2\epsilon|)<\pi(|z+\epsilon|)$ and $\sigma'(|z+2\epsilon|)>\sigma'(|z+\epsilon|)$, so order property \ref{op:descents} holds for $\sigma'$.
\end{proof}

\begin{theorem}\label{thm:order_swap}
    Let $\sigma$ be the signed permutation at the start of an iteration of the \texttt{for} loop or the \texttt{while} loop in line \ref{line:whileloop1} of Algorithm \ref{algorithm}. Then the order properties and swap properties hold for $\sigma$. 
\end{theorem}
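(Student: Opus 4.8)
The plan is to prove this by strong induction on the sequence of signed permutations that arise at the start of an iteration of the \texttt{for} loop (line~\ref{line:forloop}) or of the outer \texttt{while} loop (line~\ref{line:whileloop1}). These are exactly the permutations I will call \emph{prior} in the sense used throughout the section; crucially, the intermediate permutations produced inside the inner \texttt{while} loop (line~\ref{line:whileloop2}) are excluded, so the induction is over a well-defined finite list of configurations. The induction hypothesis is that both the order properties \ref{op:relative}--\ref{op:descents} and the swap properties \ref{sp:right}--\ref{sp:descents} hold for every signed permutation strictly prior to the current one.

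First I would dispatch the base case: by \cref{lem:basecase}, the order properties hold for $\sigma$ equal to the initial permutation on line~\ref{line:starting}. For the inductive step, I would fix $\sigma$ at the start of some iteration and split into two cases according to whether swaps are performed in that iteration. If no swaps occur, then \cref{lem:no_swaps} immediately gives that the order properties hold for $\sigma$ regarded as the permutation at the start of iteration $j+1$, and the swap properties hold vacuously since there are no swaps to constrain. If swaps do occur, the argument is to \emph{first} establish all four swap properties for $\sigma$, and \emph{then} use them to establish the four order properties for the resulting $\sigma'$. This is precisely the sequence of lemmas already proved: swap property \ref{sp:right} and order property \ref{op:relative} for $\sigma'$ come from \cref{lem:SPOP1}; swap property \ref{sp:last} and order property \ref{op:first} for $\sigma'$ come from \cref{lem:SPOP2}; swap property \ref{sp:large} comes from \cref{lem:SP3} and order property \ref{op:large} for $\sigma'$ from \cref{lem:OP3}; and swap property \ref{sp:descents} comes from \cref{lem:SP4} with order property \ref{op:descents} for $\sigma'$ from \cref{lem:OP4}.

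The key observation that makes the induction legitimate is the dependency structure of those lemmas. Each lemma assumes only that the order properties hold for the \emph{current} $\sigma$, together with the order and/or swap properties for all permutations \emph{prior} to $\sigma$; none of them assumes a swap property for $\sigma$ itself as an input to deriving the order properties for $\sigma$. So within a single inductive step I can chain them in the stated order without circularity: the order properties for $\sigma$ are supplied either by \cref{lem:basecase} (base case) or by the previous inductive step applied to the permutation immediately prior to $\sigma$, and the prior-permutation hypotheses are exactly the strong induction hypothesis. Thus $\sigma'$, which becomes the permutation at the start of the next iteration (either a further pass of the \texttt{while} loop on line~\ref{line:whileloop1} or the next \texttt{for} iteration), inherits all four order properties, closing the induction.

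The main obstacle is not any new calculation but verifying that this bookkeeping is airtight — specifically, that ``prior to $\sigma$'' is the correct induction parameter and that every lemma invoked respects it. The subtlety is that $\sigma'$ may re-enter as the start of the \emph{same} \texttt{for}-iteration $j$ (another turn of the outer \texttt{while} loop) rather than advancing to iteration $j+1$; I would note that \cref{lem:OP3,lem:OP4} are already phrased to handle both cases (specializing $k \geq j+2$ when the \texttt{for} loop advances). Once this is confirmed, the theorem follows by assembling the cited lemmas, and I would write the proof as a short paragraph enumerating the two cases and citing \cref{lem:basecase,lem:no_swaps,lem:SPOP1,lem:SPOP2,lem:SP3,lem:OP3,lem:SP4,lem:OP4} in order.
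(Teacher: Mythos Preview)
Your proposal is correct and matches the paper's own proof essentially line for line: both use strong induction over the sequence of permutations at the start of a \texttt{for} or outer \texttt{while} iteration, invoke \cref{lem:basecase} (equivalently \cref{lem:algstart}) for the base case with the swap properties vacuous for prior permutations, and then cite \cref{lem:no_swaps,lem:SPOP1,lem:SPOP2,lem:SP3,lem:OP3,lem:SP4,lem:OP4} to propagate the order and swap properties forward. Your extra discussion of the dependency structure and the $j$ versus $j+1$ subtlety is sound and simply makes explicit what the paper leaves implicit.
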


\begin{proof}
    We prove this using strong induction. \cref{lem:algstart} implies that the order properties hold for the permutation $\sigma$ at the start of the algorithm. Additionally, the swap properties vacuously hold for all permutations prior to $\sigma$ in the algorithm. Assuming the order properties hold for $\sigma$, and the order and swap properties held prior to $\sigma$, \cref{lem:no_swaps,lem:SPOP1,lem:SPOP2,lem:SP3,lem:OP3,lem:SP4,lem:OP4} imply that the swap properties hold for $\sigma$ and the order properties will hold for the signed permutation at the next iteration of the \texttt{for} or \texttt{while} loop. Hence, by strong induction, these properties hold throughout the algorithm.
\end{proof}

\begin{corollary}\label{lem:descents_preserved}
    Let $\pi\in \C_{B,n+1}^+$. Then $\Des(\pi)\cap [n-1]=\Des(\phi(\pi))$. 
\end{corollary}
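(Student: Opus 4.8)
The plan is to read the descent identity straight off the order properties, applied to the algorithm's terminal state rather than to an intermediate one. Write $\sigma=\phi(\pi)$ for the output of Algorithm~\ref{algorithm}. The key is to identify $\sigma$ with the signed permutation that one would encounter \emph{at the start of iteration $m+1$} of the \texttt{for} loop: the loop ranges over $j=1,\ldots,m$, and the permutation returned after the final iteration $j=m$ is exactly the $\sigma'$ whose order properties are guaranteed by the inductive step behind \cref{thm:order_swap}, now read with loop index $j=m+1$. So the first step is to make this identification precise and invoke \cref{thm:order_swap} (via \cref{lem:no_swaps,lem:OP3,lem:OP4}, applied at $j=m$) to assert that the order properties hold for the output $\sigma=\phi(\pi)$ with the loop index set to $m+1$.

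The second step is a one-line application of order property \ref{op:descents}. For the terminal $\sigma$ with $j=m+1$, that property says that if some $d\in[n-1]$ lies in $\Des_{\Delta}(\pi,\sigma)$, then one of $\pm d$, $\pm(d+1)$ must be the last element of a cycle among $\sigma_{m+1},\ldots,\sigma_m$, with the other appearing to its right. But the list of cycles $\sigma_{m+1},\ldots,\sigma_m$ is empty, so no such $d$ can exist. Hence $\Des_{\Delta}(\pi,\sigma)\cap[n-1]=\varnothing$; equivalently, $\Des(\pi)$ and $\Des(\phi(\pi))$ contain exactly the same elements of $\{1,2,\ldots,n-1\}$. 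Since $\phi(\pi)\in B_n$ has all of its descents in $\{0,1,\ldots,n-1\}$, this establishes the stated agreement of the two descent sets on $[n-1]$, the only position at which they can still differ being $0$ (handled separately in the discussion that follows).

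Essentially all of the work has been front-loaded into \cref{thm:order_swap}, so the argument itself is very short, and I expect the main obstacle to be bookkeeping rather than a new estimate. Concretely, one must be comfortable that the invariant ``the order properties hold at the start of iteration $j$'' extends legitimately to the fictitious iteration $j=m+1$, i.e.\ to the returned permutation, even though the \texttt{for} loop never physically begins such an iteration. This is justified by observing that each inductive lemma concludes with the order properties for $\sigma'$ \emph{viewed as the input of iteration $j+1$}; pushing this through $j=m$ yields the order properties for the output with the empty cycle range $\sigma_{m+1},\ldots,\sigma_m$, and it is precisely this empty range that forces the symmetric difference on $[n-1]$ to vanish.
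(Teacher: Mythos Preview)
Your proposal is correct and essentially follows the paper's approach: both invoke order property \ref{op:descents} from \cref{thm:order_swap} and observe that the relevant cycle range to the right is empty, forcing $\Des_{\Delta}(\pi,\phi(\pi))\cap[n-1]=\varnothing$. The only cosmetic difference is that the paper works at the (real) iteration $j=m$, notes there are no cycles to the right of $\sigma_m$, and concludes that no swaps occur so $\phi(\pi)=\sigma^{(m)}$; you instead push the order properties to the fictitious iteration $j=m+1$, which saves the ``no swaps in the final iteration'' observation at the cost of the bookkeeping you flag.
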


\begin{proof}
    Let $\sigma$ be the signed permutation in iteration $m$ of the \texttt{for} loop. By \cref{thm:order_swap}, order property \ref{op:descents} holds for $\sigma$. Then $d\in \Des_{\Delta}(\pi,\sigma)\cap [n-1]$ can only exist if $\pm d$ is the last element of $\sigma_m$ and $\pm (d+1)$ is in a cycle to its right, but there are no such cycles. We conclude $\Des_{\Delta}(\pi,\sigma)\cap [n-1]=\emptyset$, and this last iteration of the \texttt{for} loop performs no swaps. From this, we see that $\phi(\pi)=\sigma$ and $\Des(\pi)\cap [n-1]=\Des(\phi(\pi))$.
\end{proof}

\begin{corollary}\label{cor:canonical}
    Let $\sigma$ be the signed permutation at the start of an iteration of the \texttt{for} loop or the \texttt{while} loop in line \ref{line:whileloop1} of Algorithm \ref{algorithm}. Then $\sigma$ is in canonical cycle notation.
\end{corollary}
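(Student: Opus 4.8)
The plan is to recognize that \cref{cor:canonical} is little more than a repackaging of two order properties that \cref{thm:order_swap} has already established for exactly the permutations $\sigma$ under consideration. First I would unwind the definition of canonical cycle notation given earlier: a cycle notation $\sigma=\sigma_1\sigma_2\dots\sigma_m$ is canonical precisely when (i) the largest element of each cycle sits in its first position, and (ii) the first elements $\sigma_{1,1},\sigma_{2,1},\ldots,\sigma_{m,1}$ increase from left to right.

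Then I would line these two requirements up against the order properties. Requirement (i) is verbatim the closing clause of order property \ref{op:relative}, that the largest element in each cycle is the first one, and requirement (ii) is verbatim the opening clause of order property \ref{op:first}, that $\sigma_{1,1}<\sigma_{2,1}<\dots<\sigma_{m,1}$. Because \cref{thm:order_swap} asserts that the order properties hold for every $\sigma$ occurring at the start of an iteration of the \texttt{for} loop or the \texttt{while} loop in line \ref{line:whileloop1}, both clauses hold for such $\sigma$, and the corollary follows immediately.

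The only point warranting a moment's attention is confirming that the ambient order defining ``largest'' in the canonical convention is the usual signed total order on $[\pm n]$, matching the worked example given after the definition; this is the same order used throughout the order properties, so the identification is exact. Consequently there is no genuine obstacle here: essentially all of the combinatorial difficulty has already been absorbed into the inductive machinery culminating in \cref{thm:order_swap}, and the corollary requires no further argument beyond invoking properties \ref{op:relative} and \ref{op:first}.
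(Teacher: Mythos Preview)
Your proposal is correct and essentially the same as the paper's own proof: both simply invoke \cref{thm:order_swap} and read off the canonical cycle notation conditions from the order properties. The only cosmetic difference is that the paper cites order property \ref{op:first} alone (its second clause already forces the first element of each cycle to be its maximum), whereas you split the two requirements between \ref{op:relative} and \ref{op:first}; either packaging works.
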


\begin{proof}
    Observe that the canonical cycle notation properties are given by order property \ref{op:first}. Hence, this follows immediately from \cref{thm:order_swap}.
\end{proof}

\cref{lem:descents_preserved} shows that applying $\phi$ to $\pi\in \C_{B,n+1}^+$ will result in a signed permutation with the same descents in $[n-1]$. As demonstrated in \cref{ex:phi}, it is possible that $0\in \Des_{\Delta}(\pi,\phi(\pi))$, and we will now address the descent at position $0$. For the remainder of this section, we will use the order and swap properties freely without explicitly citing \cref{thm:order_swap}.

\begin{lemma}\label{lem:nodescent0preserved}
    If $0\in \Des(\pi)$, then $0\in \Des(\sigma)$. In this case, we have that $\pi(1)<-1$ and $\sigma(1)<-1$. 
\end{lemma}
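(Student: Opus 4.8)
The plan is to handle $\pi$ directly and then track the image of the value $1$ through \cref{algorithm}. Since $0\in\Des(\pi)$ is equivalent to $\pi(1)<0$, the only thing left to check for $\pi$ is that $\pi(1)\neq -1$. If $\pi(1)=-1$, then $\pi(-1)=-\pi(1)=1$, so $\{1,-1\}$ is a closed orbit and $(-1)$ is a cycle of length $1$ in the cycle notation of $\pi$; this is impossible because $\pi\in\C_{B,n+1}$ is a single cycle of length $n+1\geq 2$. Hence $\pi(1)\leq -2$, i.e.\ $\pi(1)<-1$.

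For $\sigma$, I would prove by strong induction along the algorithm that $\sigma(1)<-1$ holds for the signed permutation at the start of every iteration of the \texttt{for} loop in line \ref{line:forloop} and the \texttt{while} loop in line \ref{line:whileloop1}; the conclusion $0\in\Des(\sigma)$ is then immediate since $\sigma(1)<-1<0$. The base case is the permutation in line \ref{line:starting}: \cref{lem:algstart}(b) gives $\sigma(1)\leq\pi(1)$, with equality unless $1\in L$, so $\sigma(1)\leq\pi(1)<-1$.

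For the inductive step I would use two structural facts about a swap: it leaves the sign attached to each position of the cycle notation unchanged and merely transports absolute values, and by swap property \ref{sp:right} it always exchanges a position in $\sigma_j$ with a position in a cycle to its right. Write $\sigma(1)=-v$ with $v\geq 2$, so the entry following the value $1$ in its cycle is exactly $-v$. If the value $1$ itself is not moved, then this follower keeps its negative sign, and its absolute value can change only by being swapped with the value $v-1$ or $v+1$ from a different cycle; in particular it can drop to $1$ only by swapping the value $2$ with the value $1$, which is impossible, since the value $1$ is the entry immediately preceding it in the \emph{same} cycle while swaps are strictly cross-cycle. Thus in this case the follower stays negative of absolute value at least $2$, and $\sigma(1)<-1$ persists.

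The main obstacle is the remaining case, in which the value $1$ is itself relocated by a swap. Such a swap must pair the value $1$ with the value $2$ across two distinct cycles—so necessarily $\sigma(1)\neq\pm 2$—and it replaces $\sigma(1)$ by the former image of the value $2$. Settling this cleanly is the technical heart of the argument: I would use the left-propagating chain structure of the \texttt{while} loop in line \ref{line:whileloop2}, together with swap properties \ref{sp:last} and \ref{sp:large} and order properties \ref{op:first} and \ref{op:descents}, to locate the values $1$ and $2$ (one lying in $\sigma_j$ and the other in a cycle to its right) and to show that the entry which becomes the new follower of the value $1$ is still negative of absolute value at least $2$. Once this relocation case is resolved, the induction closes, yielding $\sigma(1)<-1$ and hence $0\in\Des(\sigma)$.
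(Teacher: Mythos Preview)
Your overall strategy matches the paper's: establish $\pi(1)<-1$ from cyclicity, then maintain the invariant $\sigma(1)<-1$ by induction along the algorithm. The base case via \cref{lem:algstart}(b) and the ``value $1$ not moved'' case are both correct.

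The incomplete part is the ``value $1$ is relocated'' case, and here your sketch misses the clean decomposition the paper uses. The paper first rules out the possibility that $\pm 1$ appears in the \emph{initial} swap of the chain: since $\sigma(1)<-1$, order property \ref{op:first} forces $\pm 1$ into a non-last position, so by order property \ref{op:descents} its partner $\pm 2$ would have to be the last entry of some $\sigma_j$ to the left with $\sigma(2)<\sigma(1)$; as $\pm 2$ is then in a different cycle from $\pm 1$ we get $\sigma(1)\neq -2$, hence $\sigma(2)<\sigma(1)\leq -3$, contradicting $\sigma(2)=\sigma_{j,1}\geq -2$. With that excluded, any swap involving $\pm 1$ is preceded in the chain by a swap of the followers of $\pm 1$ and $\pm 2$, and the decisive tool is \cref{lem:continued_swaps}: the chain continues past the followers only if they share a sign, hence $\sigma(2)<0$. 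After the follower swap the position following $\pm 2$ now carries $-v$ (not $\sigma(2)$, as your description suggests), so once $\pm 1$ and $\pm 2$ are swapped, value $1$ lands where $\pm 2$ was with follower $-v<-1$. Swap properties \ref{sp:last} and \ref{sp:large}, which you planned to invoke, are not needed; the single missing ingredient is the same-sign continuation criterion of \cref{lem:continued_swaps}.
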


\begin{proof}
    We will use a proof by induction, beginning with the case where $\sigma$ is the initial permutation in the algorithm. As $\pi$ is a cyclic signed permutation, $\pi(1)<0$ can only occur if $\pi(1)<-1$. Then $\pi(1)$ cannot be a left-to-right maximum, so by the construction of $\sigma$ using left-to-right maxima, we conclude $\sigma(1)=\pi(1)<-1$.

    For our induction step, assume that $\sigma(1)<-1$, and let $\sigma'$ be the signed permutation after swaps in the \texttt{while} loop in line \ref{line:whileloop2}. If no swaps involving $\pm 1$ or the element following it occur, then $\sigma'(1)=\sigma(1)<-1$. Hence, we consider the remaining cases where swaps do occur. 

    We first prove by contradiction that swaps cannot begin with $\pm 1$. If this occurs, $\pm 1$ must be swapped with $\pm 2$. As $\sigma(1)<-1$, order property \ref{op:first} implies that $\pm 1$ cannot be the last element in the cycle containing it. Then order property \ref{op:descents} implies that $\pm 2$ appears as the last element of some cycle to the left of $\pm 1$ and $\sigma(2)<\sigma(1)<-1$. As $\sigma(1)\notin \{-1,-2\}$, we find that $\sigma(1)\leq -3$ and $\sigma(2)<-3$. However, $\pm 2$ is the last element in the cycle containing it, so $\sigma(2)<-2$ is the first element of that cycle, and this contradicts order property \ref{op:first}.

    It now suffices to consider when the element following $\pm 1$ is swapped first. The sign of the element following $\pm 1$ does not change, so if swaps terminate after this, then $\sigma'(1)<-1$. Otherwise, $\pm 1$ must then be swapped with $\pm 2$. By \cref{lem:continued_swaps}, this can only occur if the elements following $\pm 1$ and $\pm 2$ in $\sigma$ share the same sign, and hence must both be negative. We see that even in the case when $\pm 1$ and $\pm 2$ are swapped, we have $\sigma'(1)<-1$. 
\end{proof}

\begin{corollary}\label{cor:nodescent0preserved}
     If $0\in \Des(\pi)$, then $0\in \Des(\phi(\pi))$. Furthermore, in this case, we have that $\pi(1)<-1$ and $\phi(\pi)(1)<-1$. 
\end{corollary}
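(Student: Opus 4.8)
The plan is to obtain this corollary as an immediate consequence of \cref{lem:nodescent0preserved} together with the description of the algorithm's final iteration. The key observation is that the output $\phi(\pi)$ is itself a signed permutation of the form handled by \cref{lem:nodescent0preserved}: in the proof of \cref{lem:descents_preserved}, we saw that the $m$-th iteration of the \texttt{for} loop performs no swaps, so $\phi(\pi)$ coincides with the signed permutation $\sigma$ appearing at the start of that iteration. Since \cref{thm:order_swap} guarantees the order properties hold for this $\sigma$, it falls within the scope of \cref{lem:nodescent0preserved}.

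Granting this identification, I would argue as follows. Suppose $0 \in \Des(\pi)$. Applying \cref{lem:nodescent0preserved} to the signed permutation $\sigma$ at the start of the $m$-th \texttt{for} loop iteration gives $0 \in \Des(\sigma)$, $\pi(1) < -1$, and $\sigma(1) < -1$. Because $\phi(\pi) = \sigma$, substituting yields $0 \in \Des(\phi(\pi))$, $\pi(1) < -1$, and $\phi(\pi)(1) < -1$, which are precisely the two assertions of the corollary.

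Since the substance of the argument is carried entirely by \cref{lem:nodescent0preserved}---whose proof already performs the induction showing that $\sigma(1) < -1$ is maintained across every swap in the inner \texttt{while} loop---there is no real obstacle here. The only point requiring care is confirming that $\phi(\pi)$ is genuinely the signed permutation at the start of an iteration rather than some intermediate state produced inside the \texttt{while} loop of line \ref{line:whileloop2}, and this is exactly what the no-swaps observation in the proof of \cref{lem:descents_preserved} supplies.
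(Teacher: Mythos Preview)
Your proposal is correct and matches the paper's approach: the paper states this corollary without proof immediately after \cref{lem:nodescent0preserved}, treating it as an immediate consequence. Your explicit identification of $\phi(\pi)$ with the signed permutation $\sigma$ at the start of the $m$-th \texttt{for} loop iteration (via the no-swaps observation from the proof of \cref{lem:descents_preserved}) is exactly the justification the paper leaves implicit.
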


\begin{lemma}\label{lem:descent0}
    If $0\notin \Des(\pi)$, then either $0\notin \Des(\sigma)$ or $\sigma(1)=-1$.
\end{lemma}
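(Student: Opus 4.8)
The plan is to prove the equivalent reformulation that, under the standing hypothesis $0\notin\Des(\pi)$ (equivalently $\pi(1)>0$), every signed permutation $\sigma$ arising at the start of an iteration of the \texttt{for} loop in line \ref{line:forloop} or the \texttt{while} loop in line \ref{line:whileloop1} satisfies $\sigma(1)>0$ or $\sigma(1)=-1$; that is, the value $\sigma(1)\le -2$ never occurs. As in \cref{lem:nodescent0preserved}, I would argue by induction on the progress of Algorithm \ref{algorithm}, with the base case being the initial permutation and the inductive step passing from $\sigma$ to the permutation $\sigma'$ produced by the swaps in the \texttt{while} loop of line \ref{line:whileloop2}.

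For the base case I would use \cref{lem:algstart}. If $1\notin L$, then $\sigma(1)=\pi(1)>0$ by part (b). If $1\in L$, then $\pm 1$ is the last entry of some cycle $\sigma_j$; when this entry is $+1$ we get $\sigma(1)=\sigma_{j,1}\ge 1$, since $\sigma_{j,1}$ is the largest element of a cycle containing $1$, and when it is $-1$ the hypothesis $\pi(1)>0$ forces $\pi_{i_{j+1}}<0$, whence the increasing left-to-right maxima give $\pi_{i_j}<0$, contradicting that $\pi_{i_j}=\sigma_{j,1}\ge -1$ is the maximum of a cycle containing $-1$. Thus $\sigma(1)>0$ initially; this is exactly the computation in the base case of \cref{lem:nodescent0preserved}, with the inequalities reversed.

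For the inductive step I would track how the first entry of the one-line notation changes, noting that $\sigma(1)$ can only be altered if $\pm 1$ itself is swapped or the entry following $\pm 1$ is swapped, and that each swap preserves the sign of every individual entry and can only exchange $\pm 1$ with $\pm 2$. The routine cases are: (i) neither $\pm 1$ nor its successor is swapped, so $\sigma'(1)=\sigma(1)$; (ii) the successor of $\pm 1$ is swapped but $\pm 1$ itself is not, where sign preservation together with swap property \ref{sp:last} keeps $\sigma'(1)$ of the same sign as $\sigma(1)$; and (iii) $\sigma(1)=-1$, i.e. the entry $-1$ forms the length-one cycle $(-1)$. In case (iii) I would use \cref{cor:plus1} to rule out swapping this first element when it lies to the right of the active cycle, observe that rightward swaps cannot reach it when it lies to the left, and handle the possibility that $(-1)$ is itself the active cycle directly: the single permitted swap turns $(-1)$ into $(-2)$ and yields $\sigma'(1)=\sigma(2)$, which is positive because order property \ref{op:descents} forces $\sigma(2)>\sigma(1)=-1$. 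A similar direct computation handles the situation where the swaps begin at $z=\pm 1$, since $z=-1$ is incompatible with $\sigma(1)>0$ and $z=+1$ again gives $\sigma'(1)=\sigma(2)>0$.

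The hard part will be the remaining case, in which the entry $\pm 1$ is exchanged with $\pm 2$ in the middle of a cascade of swaps rather than at its first step. Here the key computation is that such an exchange sends $\sigma'(1)$ to the entry following $\pm 2$ in the permutation $\tilde\sigma$ present just before the exchange, so that controlling $\sigma'(1)$ amounts to bounding $\tilde\sigma(2)$. The obstacle is that order property \ref{op:descents} is only available at the \emph{start} of an iteration, not for the intermediate $\tilde\sigma$. To circumvent this I would invoke \cref{lem:continued_swaps}: the cascade can only reach the pair $(\pm 1,\pm 2)$ if the previously swapped pair, namely the successors of $\pm 1$ and $\pm 2$, shared a common sign and had consecutive absolute values. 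Combining this sign constraint with the sign preservation of swaps and with order property \ref{op:first} (to locate $\pm 1$, $\pm 2$, and the relevant first elements) should pin down the sign and magnitude of $\tilde\sigma(2)$ and show that it is at least $-1$, with the value $-1$ occurring exactly when an active cycle $(-2)$ is converted into $(-1)$. This is precisely the step that is genuinely new compared with \cref{lem:nodescent0preserved}, because it is where the boundary value $\sigma(1)=-1$ is created and where one must verify that the cascade can never overshoot to $\sigma(1)\le -2$.
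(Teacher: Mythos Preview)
Your inductive approach is exactly the paper's, but two parts of your case analysis break down.

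First, the base case is wrong. When $-1$ is the last entry of cycle $j$, the element following it in the cycle notation of $\pi$ is $\pi_{i_{j+1}}$, so $\pi(1)=\pi_{i_{j+1}}$; your claim that $\pi(1)>0$ forces $\pi_{i_{j+1}}<0$ has the inequality reversed. In fact $\sigma(1)=-1$ \emph{can} occur initially: for $\pi=(-2,-1,3,4,\ldots,n+1)$ the starting permutation has $(-1)$ as a singleton cycle. The correct base-case conclusion is just $\sigma(1)\ge -1$, which follows directly from order property~\ref{op:first} once $\pm 1$ is located.

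Second, the inductive step misplaces where the value $\sigma'(1)=-1$ is produced. The assertion ``$z=-1$ is incompatible with $\sigma(1)>0$'' is false (take $\sigma_j$ of length $\ge 2$ ending in $-1$, so $\sigma(1)=\sigma_{j,1}\ge 2$). More importantly, your cases omit the situation where the \emph{first} swap is between $z=\pm 2$ (last in $\sigma_j$) and $\pm 1$; this is precisely where $\sigma'(1)=-1$ can arise, namely when $\sigma_j=(-2)$ becomes $(-1)$. By contrast, in your ``hard part'' (mid-cascade) the sign condition of \cref{lem:continued_swaps} forces the successors of $\pm 1$ and $\pm 2$ to share the sign of $\sigma(1)>0$, so $\sigma'(1)=\tilde\sigma(2)=\sigma(1)>0$ and the boundary value never appears there. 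The paper organizes the step under $\sigma(1)>0$ into three subcases: $\pm 1$ not in the first swap (always $\sigma'(1)>0$); $\pm 1$ last in $\sigma_j$ (always $\sigma'(1)>0$); $\pm 2$ last in $\sigma_j$ (either $\sigma'(1)>0$, or the singleton $(-2)$ turns into $(-1)$).
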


\begin{proof}
    We will prove this using induction, again starting with the case where ${\sigma}$ is the signed permutation at the start of the algorithm. If $\pm 1$ is not the last element of a cycle in ${\sigma}$, then ${\sigma}(1)=\pi(1)$, so $0\notin \Des(\pi)$ implies $0\notin \Des(\sigma)$. Otherwise, ${\sigma}(1)$ is the first element of the cycle containing $\pm 1$, and order property \ref{op:first} implies that $\sigma(1)\geq -1$, so either $\sigma(1)=-1$ or $\sigma(1)\geq 1$. The latter case is equivalent to $0\notin \Des(\sigma)$.

    Now assume that the result holds for $\sigma$, and let $\sigma'$ be the permutation obtained after swaps in the \texttt{while} loop in line \ref{line:whileloop2}. As in \cref{lem:nodescent0preserved}, we can assume that some swap involves $\pm 1$ or the element following it in the cycle notation. Our argument will require cases based on which property from the induction hypothesis holds for $\sigma$.
    
    First, consider the induction hypotheses of $0\notin \Des(\sigma)$. We have two cases based on whether or not $\pm 1$ is involved in the first swap. If it is not, then the element following $\pm 1$, denoted $x$, is swapped first. As $0\notin \Des(\sigma)$, we know $x>0$. If swaps terminate after $x$ is swapped, then the sign of the element following $\pm 1$ has not changed, so $\sigma'(1)>0$. Otherwise, $\pm 1$ is then swapped with $\pm 2$, and the result is $\sigma'(1)=x>0$. We conclude that $0\notin \Des(\sigma')$ holds in each case. 
    
    Alternatively, suppose $\pm 1$ is involved in the first swap. Then this occurs due to $1\in \Des_{\Delta}(\pi,\sigma)$, and order property \ref{op:descents} for $\sigma$ implies that either $\pm 1$ or $\pm 2$ is the last element of the cycle containing it. If $\pm 1$ is the last element of a cycle, then order property \ref{op:descents} also implies $\pm 2$ appears in a cycle to the right of $\pm 1$ and $\sigma(2)>\sigma(1)>0$. Then the swap between $\pm 1$ and $\pm 2$ results in a positive number following $1$ in the cycle notation. By \cref{lem:SPOP2}, subsequent swaps cannot change this value, so $\sigma'(1)=\sigma(2)>0$ and $0\notin \Des(\sigma')$. If $\pm 2$ is the last element of a cycle, then order property \ref{op:first} implies $\sigma(2)\geq -2$. Combined with the fact that $\pm 1$ is not in the same cycle as $\pm 2$, we conclude that either $(-2)$ is a cycle in $\sigma$ or $\sigma(2)\geq 2$. In the first case, $(-1)$ will be a cycle in $\sigma'$, and in the second case, $\sigma'(1)\geq 1$. These respectively correspond to $\sigma'(1)=-1$ or $0\notin \Des(\sigma')$.

    Finally, consider the induction hypothesis that $(-1)$ is a cycle in $\sigma$. In this case, a swap involving $-1$ can only occur if it is swapped with $\pm 2$ in the first and only swap performed on $\sigma$. Order property \ref{op:descents} implies that $\pm 2$ appears as a non-last element in a cycle to the right of $(-1)$, and $\sigma(1)=-1<\sigma(2)$. In particular, we see that $\sigma(2)>0$, and the single swap in this \texttt{while} loop will then result in $\sigma'(1)=\sigma(2)>0$. Hence $0\notin \Des(\sigma')$. By induction, we conclude that the result holds for all $\sigma$ encountered in the algorithm. 
\end{proof}

\begin{corollary}\label{cor:descent0}
    We have that $0\in \Des_{\Delta}(\pi,\phi(\pi))$ if and only if $\pi(1)>0$ and $\phi(\pi)(1)=-1$. 
\end{corollary}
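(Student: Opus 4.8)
The plan is to decode the symmetric difference at position $0$ using the descent convention and then combine the two immediately preceding results, \cref{cor:nodescent0preserved} and \cref{lem:descent0}. Recall that by convention $0\in\Des(\sigma)$ exactly when $\sigma(1)<0$; since no entry is $0$, this is equivalent to $\sigma(1)\leq -1$, while $0\notin\Des(\sigma)$ is equivalent to $\sigma(1)\geq 1$. By definition of the symmetric difference, $0\in\Des_{\Delta}(\pi,\phi(\pi))$ holds if and only if exactly one of $\pi$ and $\phi(\pi)$ has a descent at position $0$.

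The first step is to rule out one of the two ways this can happen. By \cref{cor:nodescent0preserved}, $0\in\Des(\pi)$ forces $0\in\Des(\phi(\pi))$, so the configuration ``$0\in\Des(\pi)$ but $0\notin\Des(\phi(\pi))$'' never occurs. Consequently $0\in\Des_{\Delta}(\pi,\phi(\pi))$ is equivalent to the remaining configuration, namely $0\notin\Des(\pi)$ together with $0\in\Des(\phi(\pi))$; in terms of one-line images this reads $\pi(1)>0$ and $\phi(\pi)(1)<0$.

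It then remains to upgrade $\phi(\pi)(1)<0$ to the sharper statement $\phi(\pi)(1)=-1$ under the hypothesis $\pi(1)>0$. For this I would apply \cref{lem:descent0} with $\sigma=\phi(\pi)$, which is legitimate since $\phi(\pi)$ is precisely the signed permutation at the start of the final \texttt{for} loop iteration (that iteration performs no swaps, exactly as in the proof of \cref{lem:descents_preserved}). Because $0\notin\Des(\pi)$, the lemma yields that either $0\notin\Des(\phi(\pi))$ or $\phi(\pi)(1)=-1$. These two alternatives are mutually exclusive, since $\phi(\pi)(1)=-1$ forces $0\in\Des(\phi(\pi))$; hence, given $\pi(1)>0$, we have $0\in\Des(\phi(\pi))$ if and only if $\phi(\pi)(1)=-1$. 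Substituting this equivalence into the characterization from the previous step gives $0\in\Des_{\Delta}(\pi,\phi(\pi))$ if and only if $\pi(1)>0$ and $\phi(\pi)(1)=-1$, which is the claim; the converse direction is immediate, since $\pi(1)>0$ gives $0\notin\Des(\pi)$ and $\phi(\pi)(1)=-1$ gives $0\in\Des(\phi(\pi))$, so that exactly one of the two permutations has a descent at $0$.

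I do not expect any genuine obstacle here, as the corollary is essentially a bookkeeping consequence of \cref{cor:nodescent0preserved} and \cref{lem:descent0}. The only points requiring care are confirming that \cref{lem:descent0} applies to the \emph{output} $\phi(\pi)$ (not merely to intermediate permutations), and observing that its two alternatives are disjoint so that the biconditional with $\phi(\pi)(1)=-1$ is exact rather than one-directional.
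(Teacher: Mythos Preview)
Your proposal is correct and follows essentially the same approach as the paper: use \cref{cor:nodescent0preserved} to rule out the case $0\in\Des(\pi)$, $0\notin\Des(\phi(\pi))$, and then apply \cref{lem:descent0} to sharpen $\phi(\pi)(1)<0$ to $\phi(\pi)(1)=-1$. Your added justification that \cref{lem:descent0} indeed applies to $\sigma=\phi(\pi)$ (as the signed permutation at the start of the final \texttt{for} loop iteration) and that the two alternatives in that lemma are disjoint is helpful and accurate, though the paper leaves these points implicit.
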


\begin{proof}
    It is clear that if $\pi(1)>0$ and $\phi(\pi)(1)=-1$, then $0\in \Des_{\Delta}(\pi,\phi(\pi))$, so it suffices to show the converse.
    \cref{cor:nodescent0preserved} shows that $0\in \Des_{\Delta}(\pi,\phi(\pi))$ can only occur if $0\notin \Des(\pi)$ and $0\in \Des(\phi(\pi))$. Equivalently, $\pi(1)>0$ but $\phi(\pi)(1)<0$. By \cref{lem:descent0}, this can only occur if $\pi(1)>0$ and $\phi(\pi)(1)=-1$.
\end{proof}

\subsection{The descent-preserving function}\label{sec:descent_preserving_map}

Using the results from the preceding section, we are now prepared to define our descent-preserving function in \cref{thm:main_thm}. The basis of this function is several variations of $\phi$. There will be four separate cases depending on whether $\pi\in \C_{B,n+1}^+$ or $\pi\in \C_{B,n+1}^-$, and whether or not the descent $0$ is introduced by Algorithm \ref{algorithm}.

\begin{definition}
    For any $\pi\in \C_{B,n+1}$, define $-\pi=[-1,-2,\ldots,-n]  \pi$, which is the permutation obtained by changing the signs of all elements in the one-line or cycle notation. Define the function $\Phi:\C_{B,n+1}\to B_n$ by
    \[\Phi(\pi) = \begin{cases} \phi(\pi) & \text{ if $\pi \in \C_{B,n+1}^+$ and $0\notin \Des_{\Delta}(\pi,\phi(\pi))$}  \\
     (-1)  \phi(\pi) & \text{ if $\pi \in \C_{B,n+1}^+$ and $0\in \Des_{\Delta}(\pi,\phi(\pi))$}  \\
     -\phi(-\pi) & \text{ if $\pi\in \C_{B,n+1}^-$ and $0\notin \Des_{\Delta}(\pi,-\phi(-\pi))$}\\
     -(-1)  \phi(-\pi) & \text{ if $\pi\in \C_{B,n+1}^-$ and $0\in \Des_{\Delta}(\pi,-\phi(-\pi))$,}\end{cases}\]
     where $(-1)=[-1,2,\ldots,n]\in B_{n}$ is the signed permutation that interchanges $\pm 1$ with $\mp 1$ and fixes all other elements.
\end{definition}

\begin{example}
Recall the signed permutation \[\pi = (-4, -1, 2, 5, -3, -6, 7)=[2,5,-6,-1,-3,7,-4] \in \C_{B,7}^+\]
from \cref{ex:phi}. In that example, we found that
\[\phi(\pi)=(-5)(-1)(2)(4,-3,-6)=[-1,2,-6,-3,-5,4].\]
As $0\in \Des_{\Delta}(\pi,\phi(\pi))$, we have that
\[\Phi(\pi)=(-1)  \phi(\pi)=(-5)(1)(2)(4,-3,-6)=[1,2,-6,-3,-5,4].\]
\end{example}

\begin{example}
Consider the signed permutation 
\[\pi=(3,4,8,-1,5,7,2,-6,-9)=[5,-6,4,8,7,-9,2,-1,3]\in \C_{B,9}^-,\]
which has descents $\{1,4,5,7\}$. For calculating $\Phi(\pi)$, we first must apply Algorithm \ref{algorithm} to 
\[-\pi=(-3,-4,-8,1,-5,-7,-2,6,9)=[5,-6,4,8,7,-9,2,-1,3],\]
which has descents $\{0,2,3,6,8\}$. The algorithm will start with
\[\sigma=(-3,-4,-8)(1,-5,-7,-2)(6)=[-5,1,-4,-8,-7,6,-2,-3],\]
which has descents $\{0,2,3,6,7\}$. The first iteration of the \texttt{for} loop will swap $8$ and $7$, followed by $4$ and $5$, resulting in
\[\sigma=(-3,-5,-7)(1,-4,-8,-2)(6)=[-4,1,-5,-8,-7,6,-3,-2]\]
with descent set $\{0,2,3,6\}$. The algorithm terminates here, resulting in
\[-\phi(-\pi)=(3,5,7)(-1,4,8,2)(-6)=[4,-1,5,8,7,-6,3,2]\]
with descent set $\{1,4,5,7\}$. 
As $0\notin \Des_{\Delta}(\pi,-\phi(-\pi))$, we find that $\Phi(\pi)$ is the permutation $-\phi(-\pi)$ given above. 
\end{example}

We now establish that $\Phi$ preserves descents at all positions in $\{0,1,\ldots,n-1\}$. This proves part (a) of \cref{thm:main_thm}.

\begin{theorem}\label{thm:descents_preserved} 
    Let $\pi\in \C_{B,n+1}$. Then \[\Des(\pi)\cap \{0,1,\ldots,n-1\} =\Des(\Phi(\pi)).\]
\end{theorem}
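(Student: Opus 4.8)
The plan is to verify the descent-preservation statement by considering the four cases in the definition of $\Phi$ separately, building on the results already established for $\phi$ on $\C_{B,n+1}^+$. The key prior results are \cref{lem:descents_preserved}, which gives $\Des(\pi)\cap [n-1]=\Des(\phi(\pi))$ for $\pi\in \C_{B,n+1}^+$, together with \cref{cor:descent0} and \cref{cor:nodescent0preserved}, which control exactly when a discrepancy at position $0$ arises. Since the four cases split according to whether $\pi\in \C_{B,n+1}^+$ or $\C_{B,n+1}^-$ and whether the descent $0$ is introduced, I would first dispose of the two $\C_{B,n+1}^+$ cases and then reduce the two $\C_{B,n+1}^-$ cases to them via the sign-reversal map $-\pi$.

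For the first case, where $\pi\in \C_{B,n+1}^+$ and $0\notin \Des_\Delta(\pi,\phi(\pi))$, we have $\Phi(\pi)=\phi(\pi)$, so \cref{lem:descents_preserved} already gives agreement on $[n-1]$, and the hypothesis $0\notin \Des_\Delta(\pi,\phi(\pi))$ gives agreement at $0$ as well; combining yields $\Des(\pi)\cap\{0,1,\ldots,n-1\}=\Des(\Phi(\pi))$. For the second case, where $0\in \Des_\Delta(\pi,\phi(\pi))$, \cref{cor:descent0} tells us this forces $\pi(1)>0$ and $\phi(\pi)(1)=-1$; here $\Phi(\pi)=(-1)\,\phi(\pi)$, which only changes the sign of the entry $\pm 1$, sending $\phi(\pi)(1)=-1$ to $\Phi(\pi)(1)=1$. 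I would check that left-multiplying by $(-1)$ alters $\phi(\pi)$ only at the position holding $\pm 1$ in the one-line notation, hence cannot change any descent in $[n-1]$ (since $\pm 1$ is extremal in absolute value, flipping its sign between $-1$ and $1$ preserves all comparisons with its neighbors in the one-line order), while it removes the spurious descent at $0$ by making $\Phi(\pi)(1)=1>0$, matching $\pi(1)>0$. This is the case requiring the most care: I must confirm that the sole effect of the correction is at position $0$ and that the sign flip of the minimal-magnitude entry indeed leaves the $[n-1]$ descents intact.

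For the two $\C_{B,n+1}^-$ cases, the strategy is to transfer everything through the involution $-\pi$. If $\pi\in \C_{B,n+1}^-$, then $-\pi\in \C_{B,n+1}^+$, so $\phi(-\pi)$ is defined and $\Des(-\pi)\cap[n-1]=\Des(\phi(-\pi))$ by \cref{lem:descents_preserved}. I would establish the elementary relations $\Des(-\tau)\cap[n-1]$ versus $\Des(\tau)\cap[n-1]$ and $0\in\Des(-\tau)\iff 0\notin\Des(\tau)$ that describe how sign-reversal negates the one-line notation: for positions $1\le i\le n-1$, we have $(-\tau)(i)>(-\tau)(i+1)\iff \tau(i)<\tau(i+1)$, so a descent of $-\tau$ at $i$ corresponds to an ascent of $\tau$ at $i$, and at position $0$ the condition $(-\tau)(1)<0$ is equivalent to $\tau(1)>0$. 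Applying these to both $\pi$ and to $\Phi(\pi)=-\phi(-\pi)$ (or $-(-1)\phi(-\pi)$ in the fourth case), the $\C_{B,n+1}^+$ conclusions for $-\pi$ and $\phi(-\pi)$ translate directly into the desired identity for $\pi$ and $\Phi(\pi)$. The main obstacle I anticipate is bookkeeping the position-$0$ condition consistently through the negation: I must make sure the hypothesis $0\notin\Des_\Delta(\pi,-\phi(-\pi))$ in the third case and $0\in\Des_\Delta(\pi,-\phi(-\pi))$ in the fourth case correctly line up, via the relation $0\in\Des(-\tau)\iff 0\notin\Des(\tau)$, with the $\C_{B,n+1}^+$ results from \cref{cor:descent0} and \cref{cor:nodescent0preserved} applied to $-\pi$. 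Once those sign-reversal identities are in hand, the fourth case mirrors the second (correcting the position-$0$ discrepancy by the $(-1)$ factor) and the third mirrors the first.
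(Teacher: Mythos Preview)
Your proposal is correct and follows essentially the same route as the paper's proof: handle $\pi\in\C_{B,n+1}^+$ using \cref{lem:descents_preserved} and \cref{cor:descent0} (with the observation that left-multiplication by $(-1)$ flips only the entry $\pm 1$ at position~$1$ and hence cannot disturb descents in $[n-1]$), then reduce $\pi\in\C_{B,n+1}^-$ to the positive case via the sign-reversal $\pi\mapsto -\pi$ and the complementation identity $\Des(-\tau)=\{0,1,\ldots,n-1\}\setminus\Des(\tau)$. The paper's write-up is more compressed---it folds your four cases into two and leaves the verification that $(-1)$ preserves the $[n-1]$ descents implicit---but the logic is identical.
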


\begin{proof}
    We first consider $\pi\in \C_{B,n+1}^+$. By \cref{lem:descents_preserved}, we have
    \[\Des_{\Delta}(\pi,\phi(\pi))\cap \{0,1,\ldots,n-1\} \subseteq \{0\}.\]
    When equality holds, \cref{cor:descent0} implies that $\phi(\pi)(1)=-1$, so $\Des(\pi)\cap \{0,1,\ldots,n-1\}=\Des((-1)\phi(\pi))$. The function $\Phi$ makes the appropriate choice so that $\Des(\pi)\cap \{1,2,\ldots,n-1\}=\Des(\Phi(\pi))$, so the result holds for any $\pi\in \C_{B,n+1}^+$. 
    
    Next, consider $\pi\in \C_{B,n+1}^-$. Then $-\pi\in \C_{B,n+1}^+$. From argument above, $\Des(-\pi)\cap \{0,1,\ldots,n-1\}$ is equal to either $\Des(\phi(-\pi))$ or $\Des((-1)\phi(-\pi))$. This implies that $\Des(\pi)\cap \{0,1,\ldots,n-1\}$ is equal to either $\Des(-\phi(-\pi))$ or $\Des(-(-1)\phi(-\pi))$, and $\Phi$ makes the appropriate choice so that $\Des(\pi)\cap \{0,1,\ldots,n-1\}=\Des(\Phi(\pi))$. 
\end{proof}

\section{Mapping signed permutations to cyclic signed permutations}\label{sec:inverses}

In \cref{sec:bijection}, we constructed our descent-preserving function $\Phi:\C_{B,n+1} \to B_n$. We now consider the restrictions $\Phi_{\C_{D,n+1}}=\Phi|_{\C_{D,n+1}}$ and  $\Phi_{\overline{\C_{D,n+1}}}=\Phi|_{\overline{\C_{D,n+1}}}$, which respectively map $\C_{D,n+1}$ and $\overline{\C_{D,n+1}}$ to $B_n$. In this section, we will define inverses $\Psi_{\C_{D,n+1}}$ and $\Psi_{\overline{\C_{D,n+1}}}$ for $\Phi_{\C_{D,n+1}}$ and $\Phi_{\overline{\C_{D,n+1}}}$, which will allow us to conclude that $\Phi_{\C_{D,n+1}}$ and  $\Phi_{\overline{\C_{D,n+1}}}$ are bijections. 

\subsection{A reverse algorithm}\label{sec:inverse_algorithm}

We start with an algorithm that begins with a signed permutation in $B_n$ and outputs a cyclic permutation in $\C_{B,n+1}^+$. This algorithm is intended to reverse the steps in Algorithm \ref{algorithm}.

\begin{definition}
    Define $\psi:B_n\to \C_{B,n+1}^+$ to be the function mapping each $\sigma\in B_n$ to its output from Algorithm \ref{algorithm2}.
\end{definition}

\begin{algorithm}\DontPrintSemicolon
\caption{\algname{Signed to Cyclic Permutation}}\label{algorithm2}
\KwIn{$\sigma\in B_n$}
\KwOut{a permutation in $\C_{B,n+1}^+$}
express $\sigma=\sigma_1\sigma_2\dots\sigma_m$ in canonical cycle notation \\
set $\pi_j=\sigma_j$ for each $j\in \{1,2,\ldots,m\}$ \\
set $\pi_{m+1}=(n+1)$ to be a cycle of length $1$ \\
form $\pi \coloneqq (\pi_{1,1},\ldots,\pi_{1,\ell_1},\pi_{2,1},\ldots,\pi_{2,\ell_2},\ldots ,\pi_{m,1},\ldots \pi_{m,\ell_m},\pi_{m+1,1})$ by concatenating the cycles in $\pi_1,\pi_2,\ldots,\pi_m,\pi_{m+1}$ \label{alg3:setup}\\
\For{\normalfont $j=m-1,m-2,\ldots,1$}
{
    $z\coloneqq $ the rightmost entry of $\pi_j$ \\
    \If{\normalfont $P_{\pi,\sigma}(|z|,|z-1|)$ or $P_{\pi,\sigma}(|z|,|z+1|)$}
    {
        set $\epsilon\in \{-1,1\}$ to be the value such that $P_{\pi,\sigma}(|z|,|z+\epsilon|)$ is \texttt{True} and $\pi(|z+\epsilon|)$ is smallest\\
        \While{\normalfont $P_{\pi,\sigma}(|z|,|z+\epsilon|)$\label{alg3:whileloop1}} 
        {
            $x\coloneqq z$ \\
            $y\coloneqq $ whichever of $z+\epsilon$ or $-(z+\epsilon)$ that appears in $\pi$ \\
            \While{\normalfont $P_{\pi,\sigma}(|x|,|y|)$\label{alg3:whileloop2}}
            {
                respectively replace $x$ and $y$ with $\sgn(x)\cdot |y|$ and $\sgn(y)\cdot |x|$ in $\pi_1,\pi_2,\ldots,\pi_m$ \\
                redefine $\pi$ by concatenating the cycles in $\pi_1,\pi_2,\ldots,\pi_m,\pi_{m+1}$\\
                \If{\normalfont the replacement did not involve the first element of $\pi_j$\label{alg3:if1}}
                {
                $x,y\coloneqq $ the elements respectively preceding the ones replaced
                }
            }
            $z\coloneqq $ the rightmost entry of $\pi_j$
        }
    }
}
\Return $\pi$
\end{algorithm}

We will show that $\phi$ and $\psi$ are inverses of each other, and hence are bijections between $\C_{B,n+1}^+$ and $B_n$. We begin by defining some notation for this section. Fix $\pi\in \C_{B,n+1}^+$, and define $\sigma=\phi(\pi)\in B_n$. For each iteration $j=1,2,\ldots,m$ in the \texttt{for} loop of Algorithm \ref{algorithm} applied to $\pi$, let $\sigma^{(j)}$ be the signed permutation at the start of that loop expressed in the cycle notation from the algorithm, which is canonical by \cref{cor:canonical}. For $j=1,2,\ldots,m$, define $\pi^{(j)}$ to be the cyclic signed permutation formed by concatenating the cycles in $\sigma^{(j)}$ and inserting $n+1$ at the end. As noted in \cref{lem:descents_preserved}, $\sigma^{(m)}$ is the permutation outputted by Algorithm \ref{algorithm}, so the following result is immediate.

\begin{lemma}\label{lem:alg2start}
    The initial permutation in Algorithm \ref{algorithm2} applied to $\sigma$ is $\pi^{(m)}$.
\end{lemma}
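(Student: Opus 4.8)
The plan is to unwind the relevant definitions and observe that both $\pi^{(m)}$ and the permutation constructed in lines 1--4 of Algorithm \ref{algorithm2} are built from one and the same cycle decomposition of $\sigma$, so that the two objects literally coincide.

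First I would recall why $\sigma^{(m)}$ is already the final output. By the argument in the proof of \cref{lem:descents_preserved}, the permutation at the start of iteration $m$ of the \texttt{for} loop in Algorithm \ref{algorithm} satisfies $\Des_{\Delta}(\pi,\sigma^{(m)})\cap[n-1]=\emptyset$, so this last iteration performs no swaps and hence $\sigma^{(m)}=\phi(\pi)=\sigma$. Moreover, by \cref{cor:canonical}, the cycle notation in which Algorithm \ref{algorithm} expresses $\sigma^{(m)}$ is canonical. These two facts together identify $\sigma^{(m)}$ with $\sigma$ written in canonical cycle notation.

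Next I would compare the two constructions directly. By definition, $\pi^{(m)}$ is obtained by concatenating the cycles of $\sigma^{(m)}$, in the order given by Algorithm \ref{algorithm}, and inserting $n+1$ at the end. On the other hand, Algorithm \ref{algorithm2} applied to $\sigma$ first writes $\sigma=\sigma_1\sigma_2\dots\sigma_m$ in canonical cycle notation, sets $\pi_j=\sigma_j$, appends the length-one cycle $\pi_{m+1}=(n+1)$, and concatenates $\pi_1,\ldots,\pi_m,\pi_{m+1}$ into a single cycle. Since $\sigma^{(m)}=\sigma$ and canonical cycle notation is unique (the largest element of each cycle appearing first, with first elements of the cycles increasing left to right, pins down the representation), the canonical decomposition used by Algorithm \ref{algorithm} for $\sigma^{(m)}$ is exactly the one produced in line 1 of Algorithm \ref{algorithm2}. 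The two concatenations, each followed by inserting $n+1$ at the end, therefore yield identical cyclic permutations, which is precisely the claim $\pi^{(m)}$.

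I do not expect a genuine obstacle here, consistent with the remark that the result is immediate: the only content is the combination of $\sigma^{(m)}=\sigma$ with the uniqueness of canonical cycle notation, which guarantees that the setup phase of Algorithm \ref{algorithm2} reconstructs the same cycle representation from which $\pi^{(m)}$ was defined.
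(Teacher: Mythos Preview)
Your proposal is correct and follows essentially the same approach as the paper, which simply notes that $\sigma^{(m)}=\sigma$ (from the proof of \cref{lem:descents_preserved}) and declares the result immediate. You make explicit the one point the paper leaves implicit, namely that the cycle notation carried by Algorithm~\ref{algorithm} for $\sigma^{(m)}$ agrees with the canonical cycle notation computed in line~1 of Algorithm~\ref{algorithm2}, via \cref{cor:canonical} and uniqueness of canonical form.
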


We will need to show that Algorithm \ref{algorithm2} reverses the steps in Algorithm \ref{algorithm} by producing $\pi^{(m-1)},\ldots, \pi^{(2)},\pi^{(1)}$ in its \texttt{for} loop. This will require a careful analysis of this loop's relationship with the \texttt{for} loop in Algorithm \ref{algorithm}. As iteration $j$ of the \texttt{for} loop in Algorithm \ref{algorithm} could involve one or more iterations of the \texttt{while} loop in line \ref{line:whileloop1}, we express the permutations that appear between iterations of the loop as
\begin{equation}\label{eq:j-iteration}
    \sigma^{(j,0)},\sigma^{(j,1)},\ldots,\sigma^{(j,h)},
\end{equation}
where $h\geq 0$. Note that $\sigma^{(j,0)}=\sigma^{(j)}$ and $\sigma^{(j,h)}=\sigma^{(j+1)}$. Similar to our notation in \cref{sec:bijection}, we will use $\sigma^{(j,i)}_1\sigma^{(j,i)}_2\dots \sigma_m^{(j,i)}$ to express the individual cycles in $\sigma^{(j,i)}$, and we include a second index in the subscript when referring to a specific entry in a cycle. Finally, define $\pi^{(j,i)}\in \C_{B,n+1}^+$ to be the signed permutation obtained by concatenating the cycles in $\sigma^{(j,i)}$ into one long cycle and inserting $n+1$ at the end. \cref{cor:canonical} implies that each $\sigma^{(j,i)}$ is in canonical cycle notation, so the following lemma follows from a similar argument to the one used in \cref{lem:algstart}.

\begin{lemma}\label{lem:alg2_concatenation}
    Let $0\leq i\leq h$ and $x\in [n-1]$. If $x\in \Des_{\Delta}(\pi^{(j,i)},\sigma^{(j,i)})$, then one of the following must be true:
    \begin{itemize}
        \item $\pm x$ appears as the last element in some cycle of $\sigma^{(j,i)}$, $\pm (x+1)$ appears as a non-last element in some cycle to the right of $\pm x$, and $\pi^{(j,i)}(x)>\pi^{(j,i)}(x+1)=\sigma^{(j,i)}(x+1)>\sigma^{(j,i)}(x)$, or
        \item $\pm (x+1)$ appears as the last element in some cycle of $\sigma^{(j,i)}$, $\pm x$ appears as a non-last element in some cycle to the right of $\pm (x+1)$, and $\pi^{(j,i)}(x+1)>\pi^{(j,i)}(x)=\sigma^{(j,i)}(x)>\sigma^{(j,i)}(x+1)$. 
    \end{itemize}
\end{lemma}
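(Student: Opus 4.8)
The plan is to reduce the statement to \cref{lem:algstart}(c) by showing that the pair $(\pi^{(j,i)},\sigma^{(j,i)})$ is exactly an instance of the pair $(\pi,\sigma)$ that occurs at the start of Algorithm \ref{algorithm}. By \cref{cor:canonical}, each $\sigma^{(j,i)}$ is in canonical cycle notation, so by order property \ref{op:first} the cycle-first-elements satisfy $\sigma^{(j,i)}_{1,1}<\cdots<\sigma^{(j,i)}_{m,1}$, and each $\sigma^{(j,i)}_{k,1}$ is the largest element appearing in $\sigma^{(j,i)}_{1},\ldots,\sigma^{(j,i)}_{k}$ (in particular it is the largest element of its own cycle).

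The key step is to verify that $\sigma^{(j,i)}$ is precisely the signed permutation created in line \ref{line:starting} when Algorithm \ref{algorithm} is run on $\pi^{(j,i)}$. Recall that $\pi^{(j,i)}$ is the single cycle $(\sigma^{(j,i)}_{1,1},\ldots,\sigma^{(j,i)}_{m,\ell_m},n+1)$ obtained by concatenation, which already has $n+1$ in the final position. The canonical ordering forces the left-to-right maxima of this cycle to be exactly $\sigma^{(j,i)}_{1,1},\ldots,\sigma^{(j,i)}_{m,1}$ together with the trailing $n+1$: each cycle-first-element dominates everything to its left, while every non-first entry of a cycle is strictly smaller than that cycle's first element and hence cannot be a left-to-right maximum. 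Breaking $\pi^{(j,i)}$ at these maxima therefore reproduces the cycles of $\sigma^{(j,i)}$ (discarding the singleton $(n+1)$), so the initial permutation of Algorithm \ref{algorithm} on $\pi^{(j,i)}$ is $\sigma^{(j,i)}$.

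Once this identification is made, \cref{lem:algstart} applies directly with $\pi\mapsto\pi^{(j,i)}$ and $\sigma\mapsto\sigma^{(j,i)}$; the set $L$ there becomes $\{|\sigma^{(j,i)}_{1,\ell_1}|,\ldots,|\sigma^{(j,i)}_{m,\ell_m}|\}$, the absolute values of the last entries of the cycles. Part (c) of that lemma then yields the two listed alternatives: $x\in L$ translates to $\pm x$ being the last element of a cycle, the condition $x+1\notin L$ together with $\pm(x+1)$ lying to the right of $\pm x$ translates to $\pm(x+1)$ being a non-last element of a cycle to the right of $\pm x$, and the displayed inequalities are identical; the two elements lie in different cycles by the final clause of \cref{lem:algstart}(c). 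If one prefers not to invoke the reduction, the same conclusion follows by repeating the casework of \cref{lem:algstart}(c) on whether $x$ and $x+1$ lie in $L$: the cases where both or neither lie in $L$ force the relevant images to agree or to keep the same relative order (because the first elements are strictly increasing), and the remaining case produces the stated inequalities and cycle positions. I expect the only genuine obstacle to be the verification in the second paragraph---that inserting $n+1$ and reading off left-to-right maxima really inverts the cycle-breaking of line \ref{line:starting}---since that is the one point where the $n+1$ bookkeeping and the canonical ordering must be reconciled; the descent analysis itself is then a transcription of the earlier lemma.
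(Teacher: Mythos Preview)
Your proposal is correct and takes essentially the same approach as the paper, which simply remarks that the lemma ``follows from a similar argument to the one used in \cref{lem:algstart}'' after noting via \cref{cor:canonical} that $\sigma^{(j,i)}$ is in canonical cycle notation. Your reduction---observing that the left-to-right maxima of the concatenated cycle $\pi^{(j,i)}$ are exactly the cycle-first-elements of $\sigma^{(j,i)}$ together with $n+1$, so that $(\pi^{(j,i)},\sigma^{(j,i)})$ is literally an instance of the pair in \cref{lem:algstart}---makes that remark precise and lets you invoke the lemma directly rather than rerun the casework.
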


The next lemmas will establish several properties of the $\pi^{(j,i)}$ permutations that can be derived from properties of the $\sigma^{(j,i)}$ permutations. Recall that the order and swap properties hold for all $\sigma^{(j,i)}$ by \cref{thm:order_swap}. We will use these properties without explicitly citing \cref{thm:order_swap}. 

\begin{lemma}\label{lem:sigma_false}
    Let $z$ be the last element of $\sigma_j^{(j,0)}$ and $\epsilon\in \{-1,1\}$. If $P_{\pi,\sigma^{(j,0)}}(|z|,|z+\epsilon|)$ is \texttt{False}, then $P_{\pi^{(j,0)},\sigma}(|z|,|z+\epsilon|)$ is \texttt{False}.
\end{lemma}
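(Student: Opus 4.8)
The plan is to reduce the two-sided comparison to a one-sided statement about the single permutation $\sigma^{(j,0)}$ and its concatenation $\pi^{(j,0)}$, and then to rule out the only surviving configuration using order property \ref{op:large}. Write $d=\min\{|z|,|z+\epsilon|\}$. If $|z|$ and $|z+\epsilon|$ do not differ by exactly $1$, or if $d\notin[n-1]$, then $P_{\pi^{(j,0)},\sigma}(|z|,|z+\epsilon|)$ is \texttt{False} by definition and there is nothing to prove; so I would assume $\{d,d+1\}=\{|z|,|z+\epsilon|\}$ with $d\in[n-1]$, in which case the hypothesis says precisely $d\notin\Des_{\Delta}(\pi,\sigma^{(j,0)})$, i.e.\ $d\in\Des(\pi)\Leftrightarrow d\in\Des(\sigma^{(j,0)})$. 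The first real step is the reduction: since $\sigma=\phi(\pi)$, \cref{lem:descents_preserved} gives $d\in\Des(\pi)\Leftrightarrow d\in\Des(\sigma)$ for $d\in[n-1]$, and combining with the hypothesis yields $d\in\Des(\sigma)\Leftrightarrow d\in\Des(\sigma^{(j,0)})$. Hence $d\in\Des_{\Delta}(\pi^{(j,0)},\sigma)$ if and only if $d\in\Des_{\Delta}(\pi^{(j,0)},\sigma^{(j,0)})$, so it suffices to prove $d\notin\Des_{\Delta}(\pi^{(j,0)},\sigma^{(j,0)})$.

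The key structural fact I would isolate next is that $\pi(|z|)=\pi_{j+1,1}$. Since $z$ is the last element of $\sigma_j^{(j,0)}$, swap property \ref{sp:last} applied at each earlier iteration $i<j$ (in which cycle $j$ lies among the protected cycles $\sigma_{i+1},\dots,\sigma_m$) shows that the last element of cycle $j$ is never moved by a swap. Thus $z$ is already the last element of the initial $j$-th cycle formed from the left-to-right maxima of $\pi$, so the successor of $z$ in the cycle notation of $\pi$ is the $(j+1)$-st left-to-right maximum $\pi_{j+1,1}$; that is, $\pi(|z|)=\pi_{j+1,1}$. (When $j=m$ this reads $\pi(|z|)=n+1$, but then $z$ is the last element of the last cycle and the configuration below cannot occur.)

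Finally I would argue by contradiction, assuming $d\in\Des_{\Delta}(\pi^{(j,0)},\sigma^{(j,0)})$ and invoking \cref{lem:alg2_concatenation}. Because $z$ occupies a last position, only the bullet in which $\pm|z|$ is the last element and $\pm(z+\epsilon)$ is a non-last element to its right can hold. Suppose first $|z|=d$; the cited bullet gives $\pi^{(j,0)}(d)>\pi^{(j,0)}(d+1)=\sigma^{(j,0)}(d+1)>\sigma^{(j,0)}(d)$, so $d\notin\Des(\sigma^{(j,0)})$, and the hypothesis forces $\pi(d)<\pi(d+1)$. As $z$ is a last element, concatenation sends it to the first entry of the next cycle, so $\pi^{(j,0)}(d)=\sigma_{j+1,1}^{(j,0)}$ and therefore $\sigma^{(j,0)}(d+1)<\sigma_{j+1,1}^{(j,0)}$. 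On the other hand $\pi(d)=\pi(|z|)=\pi_{j+1,1}$ gives $\pi(d+1)>\pi_{j+1,1}$, and order property \ref{op:large} (with $k=j+1$, applied to the non-last element $\pm(d+1)$) then yields $\sigma^{(j,0)}(d+1)\geq\sigma_{j+1,1}^{(j,0)}$, a contradiction. The case $|z|=d+1$ is symmetric: the other bullet gives $d\in\Des(\sigma^{(j,0)})$ together with $\sigma^{(j,0)}(d)<\sigma_{j+1,1}^{(j,0)}$, the hypothesis forces $\pi(d)>\pi(d+1)=\pi_{j+1,1}$, and order property \ref{op:large} applied to $\pm d$ gives $\sigma^{(j,0)}(d)\geq\sigma_{j+1,1}^{(j,0)}$, again contradicting the bullet. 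In either case $d\notin\Des_{\Delta}(\pi^{(j,0)},\sigma^{(j,0)})$, which by the reduction completes the proof.

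I expect the main obstacle to be exactly the bridge between the original permutation $\pi$ and the rebuilt cycle $\pi^{(j,0)}$: \cref{lem:alg2_concatenation} controls $\pi^{(j,0)}$ against $\sigma^{(j,0)}$, while the hypothesis controls $\pi$ against $\sigma^{(j,0)}$, and the two permutations on the $\pi$-side are genuinely different elements of $\C_{B,n+1}^+$. What makes the two sides meet is the observation that the last element $z$ of cycle $j$ is never touched by a swap, so that $\pi(|z|)=\pi_{j+1,1}$, combined with order property \ref{op:large}, which converts this $\pi$-order fact into the needed comparison of $\sigma^{(j,0)}$-values.
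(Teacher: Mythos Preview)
Your proof is correct and follows essentially the same route as the paper: both reduce via \cref{lem:descents_preserved} to showing $d\notin\Des_{\Delta}(\pi^{(j,0)},\sigma^{(j,0)})$, invoke \cref{lem:alg2_concatenation}, use swap property \ref{sp:last} to see that $z$ has been the last element of cycle $j$ since the start (so $\pi(|z|)=\pi_{j+1,1}$), and finish with order property \ref{op:large} at $k=j+1$. The only cosmetic difference is that you split explicitly into the cases $|z|=d$ and $|z|=d+1$, whereas the paper handles both at once by working with $|z|$ and $|z+\epsilon|$ and deriving the contradiction in the contrapositive direction.
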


\begin{proof}
    We use proof by contradiction. Assume that $P_{\pi,\sigma^{(j,0)}}(|z|,|z+\epsilon|)$ is \texttt{False} and $P_{\pi^{(j,0)},\sigma}(|z|,|z+\epsilon|)$ is \texttt{True}. \cref{lem:descents_preserved} states that $\Des_{\Delta}(\pi,\sigma)\cap [n-1]=\emptyset$, so this combination can only occur if $P_{\pi^{(j,0)},\sigma^{(j,0)}}(|z|,|z+\epsilon|)$ is \texttt{True}. Consequently, we have that $\min\{|z|,|z+\epsilon|\}\in \Des_{\Delta}(\pi^{(j,0)},\sigma^{(j,0)})$, and \cref{lem:alg2_concatenation} implies that 
    \begin{equation}\label{eq:sigma_false}
        \sigma^{(j,0)}_{j+1,1}=\pi^{(j,0)}(|z|)>\pi^{(j,0)}(|z+\epsilon|)=\sigma^{(j,0)}(|z+\epsilon|)>\sigma^{(j,0)}(|z|)=\sigma^{(j,0)}_{j,1}.
    \end{equation}
    Swap property \ref{sp:last} on all signed permutations prior to $\sigma^{(j,0)}$ in Algorithm \ref{algorithm} implies that $\pm z$ has been the last element of cycle $j$ since the beginning of Algorithm \ref{algorithm}, so $\pi(|z|)=\sigma^{(1)}_{j+1,1}$ . However, order property \ref{op:large} for $\sigma$ with \eqref{eq:sigma_false} then implies that $\pi(|z|)=\sigma^{(1)}_{j+1,1}>\pi(|z+\epsilon|)$. This contradicts $P_{\pi,\sigma^{(j,0)}}(|z|,|z+\epsilon|)$ being \texttt{False} and $\sigma^{(j,0)}(|z+\epsilon|)>\sigma^{(j,0)}(|z|)$ from \eqref{eq:sigma_false}. 
\end{proof}

\begin{lemma}\label{lem:phi_iteration}
    Suppose $\pm z$ is the last element in cycle $j$ of $\sigma^{(j,0)}$ and is swapped with $\pm (z+\epsilon)$ in Algorithm \ref{algorithm}, so $\pm (z+i\epsilon)$ is the last element in cycle $j$ of $\sigma^{(j,i)}$ for $0\leq i\leq h$. Then $P_{\pi^{(j,i)},\sigma}(|z+i\epsilon|,|z+(i-1)\epsilon|)$ is \texttt{True} for all $1\leq i\leq h$. 
\end{lemma}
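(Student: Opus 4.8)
The plan is to unwind $P_{\pi^{(j,i)},\sigma}(|z+i\epsilon|,|z+(i-1)\epsilon|)$ directly from its definition. The requirement that the two arguments differ by $1$ in absolute value is immediate: the pass $\sigma^{(j,i-1)}\to\sigma^{(j,i)}$ is a single execution of the \texttt{while} loop in line \ref{line:whileloop1}, which only occurs because $P_{\pi,\sigma^{(j,i-1)}}(|z+(i-1)\epsilon|,|z+i\epsilon|)$ is \texttt{True}, and this already forces $\bigl||z+i\epsilon|-|z+(i-1)\epsilon|\bigr|=1$ together with $d:=\min\{|z+i\epsilon|,|z+(i-1)\epsilon|\}\in\Des_{\Delta}(\pi,\sigma^{(j,i-1)})\cap[n-1]$. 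Hence it remains to prove $d\in\Des_{\Delta}(\pi^{(j,i)},\sigma)$. I will also record the positional data the same pass produces: by order property \ref{op:descents} for $\sigma^{(j,i-1)}$, the first swap exchanges the last element $\pm(z+(i-1)\epsilon)$ of cycle $j$ with a non-last element $\pm(z+i\epsilon)$ of some cycle $\sigma^{(j,i-1)}_k$ with $k>j$, and $\pi(|z+(i-1)\epsilon|)>\pi(|z+i\epsilon|)$ while $\sigma^{(j,i-1)}(|z+(i-1)\epsilon|)<\sigma^{(j,i-1)}(|z+i\epsilon|)$. Since the remaining swaps of the pass move leftward (\cref{lem:SPOP1,lem:SPOP2}), in $\sigma^{(j,i)}$ the entry $\pm(z+i\epsilon)$ is the last element of cycle $j$ and $\pm(z+(i-1)\epsilon)$ occupies the former slot of $\pm(z+i\epsilon)$, a non-last position of cycle $k$.

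The main reduction is the telescoping identity
\[\Des_{\Delta}(\pi^{(j,i)},\sigma)=\Des_{\Delta}(\pi^{(j,i)},\sigma^{(j,i)})\ \Delta\ \Des_{\Delta}(\sigma^{(j,i)},\sigma),\]
so $d$ lies in the left-hand set exactly when it lies in precisely one of the two right-hand sets. For the second set, \cref{lem:descents_preserved} gives $\Des_{\Delta}(\pi,\sigma)\cap[n-1]=\emptyset$, whence $\Des_{\Delta}(\sigma^{(j,i)},\sigma)\cap[n-1]=\Des_{\Delta}(\pi,\sigma^{(j,i)})\cap[n-1]$; and the first bullet of swap property \ref{sp:descents}, applied to the pass $\sigma^{(j,i-1)}\to\sigma^{(j,i)}$, removes $d$ from $\Des_{\Delta}(\pi,\cdot)$, so $d\notin\Des_{\Delta}(\pi,\sigma^{(j,i)})$ and therefore $d\notin\Des_{\Delta}(\sigma^{(j,i)},\sigma)$. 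The whole problem thus collapses to showing $d\in\Des_{\Delta}(\pi^{(j,i)},\sigma^{(j,i)})$.

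To establish that last membership I will compare the relevant images. Set $v=\sigma^{(j,i)}(|z+(i-1)\epsilon|)$. The concatenation defining $\pi^{(j,i)}$ links the last element of cycle $j$ to the head of cycle $j+1$, so $\pi^{(j,i)}(|z+i\epsilon|)=\sigma^{(j,i)}_{j+1,1}$, whereas $\sigma^{(j,i)}(|z+i\epsilon|)=\sigma^{(j,i)}_{j,1}$; and since $\pm(z+(i-1)\epsilon)$ is non-last, both permutations send $|z+(i-1)\epsilon|$ to $v$. As $\sigma^{(j,i)}_{j,1}<\sigma^{(j,i)}_{j+1,1}$ by order property \ref{op:first}, the two pairs of images are oppositely ordered, i.e.\ $d\in\Des_{\Delta}(\pi^{(j,i)},\sigma^{(j,i)})$, exactly when $\sigma^{(j,i)}_{j,1}<v<\sigma^{(j,i)}_{j+1,1}$. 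The lower bound comes from $d\notin\Des_{\Delta}(\pi,\sigma^{(j,i)})$, which makes $\pi$ and $\sigma^{(j,i)}$ agree on the order of $|z+(i-1)\epsilon|$ and $|z+i\epsilon|$; combined with $\pi(|z+(i-1)\epsilon|)>\pi(|z+i\epsilon|)$ this yields $v>\sigma^{(j,i)}_{j,1}$. For the upper bound, the leftward swaps leave the successor slot of $\pm(z+i\epsilon)$ untouched, so $v=\sigma^{(j,i-1)}(|z+i\epsilon|)$; swap property \ref{sp:large} for $\sigma^{(j,i-1)}$ (the entry $z+i\epsilon$ is swapped) gives $v<\sigma^{(j,i-1)}_{j+1,1}$, and when $k>j+1$ cycle $j+1$ is untouched so $\sigma^{(j,i-1)}_{j+1,1}=\sigma^{(j,i)}_{j+1,1}$, while when $k=j+1$ I instead note that $v$ is a non-first element of cycle $j+1$ in $\sigma^{(j,i)}$ and apply order property \ref{op:first}.

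The step I expect to be the main obstacle is the upper bound $v<\sigma^{(j,i)}_{j+1,1}$. The head of cycle $j+1$ can be altered during the pass, and by \cref{cor:plus1} such a swap is terminal and can lower that head, so the clean equality $\sigma^{(j,i-1)}_{j+1,1}=\sigma^{(j,i)}_{j+1,1}$ fails precisely in the case $k=j+1$; this is exactly why that case must be treated separately through the non-first-element observation rather than by comparison with $\sigma^{(j,i-1)}$. A secondary nuisance is sign bookkeeping in the identifications $v=\sigma^{(j,i-1)}(|z+i\epsilon|)$, $\sigma^{(j,i)}(|z+i\epsilon|)=\sigma^{(j,i)}_{j,1}$, and $\pi^{(j,i)}(|z+i\epsilon|)=\sigma^{(j,i)}_{j+1,1}$, though each follows once one notes that evaluating a signed permutation at the absolute value of a cycle entry returns that entry's successor irrespective of its sign.
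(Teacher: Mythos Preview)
Your proof is correct. The core computation on the $\pi^{(j,i)}$ side---namely that $\pi^{(j,i)}(|z+i\epsilon|)=\sigma^{(j,i)}_{j+1,1}$ exceeds $v=\pi^{(j,i)}(|z+(i-1)\epsilon|)$ via swap property \ref{sp:large}---is exactly what the paper does as well, and your careful case split on $k=j+1$ versus $k>j+1$ in the upper bound actually fills in a step the paper leaves terse.

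Where you genuinely diverge from the paper is on the $\sigma$ side. The paper argues globally: from order property \ref{op:descents} it extracts the chain $\pi(|z|)>\pi(|z+\epsilon|)>\cdots>\pi(|z+h\epsilon|)$, then applies \cref{lem:descents_preserved} once to get $\sigma(|z|)>\sigma(|z+\epsilon|)>\cdots>\sigma(|z+h\epsilon|)$, which immediately supplies the $\sigma$-order at $d$ for every $i$. You instead telescope through $\sigma^{(j,i)}$, invoke swap property \ref{sp:descents} to place $d\notin\Des_\Delta(\pi,\sigma^{(j,i)})$, and then need the additional lower bound $v>\sigma^{(j,i)}_{j,1}$ to land $d\in\Des_\Delta(\pi^{(j,i)},\sigma^{(j,i)})$. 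Your route is a bit longer---the telescoping and the lower bound are machinery the paper's chain avoids---but it is locally self-contained (it only looks at the single pass $\sigma^{(j,i-1)}\to\sigma^{(j,i)}$) and is a perfectly valid alternative.
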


\begin{proof}
    Order property \ref{op:descents} for the respective last elements $\pm z,\pm (z+\epsilon),\ldots,\pm (z+h\epsilon)$ of $\sigma^{(j,0)},\sigma^{(j,1)},\ldots,\sigma^{(j,h)}$ implies that 
    \begin{equation}\label{eq:pi_order}
\pi(|z|)>\pi(|z+\epsilon|)>\pi(|z+2\epsilon|)>\dots > \pi(|z+h\epsilon|),
    \end{equation}
    and \cref{lem:descents_preserved} implies the corresponding order 
    \begin{equation}\label{eq:sigma_order}
\sigma(|z|)>\sigma(|z+\epsilon|)>\sigma(|z+2\epsilon|)>\dots > \sigma(|z+h\epsilon|).
    \end{equation}
    It suffices now to show that $\pi^{(j,i)}(|z+i\epsilon|)>\pi^{(j,i)}(|z+(i-1)\epsilon|)$. 

    We turn our attention to $\sigma^{(j,i-1)}$, where order property \ref{op:descents} implies that the last element $\pm (z+(i-1)\epsilon)\in \sigma_j^{(j,i-1)}$ was swapped with $\pm (z+i\epsilon)$, which appears as a non-last element in some cycle to its right. After swaps occur, $\pm (z+(i-1)\epsilon)$ is not the last element of a cycle in $\sigma^{(j,i)}$, and from this, we conclude \[\sigma^{(j,i)}(|z+(i-1)\epsilon|)=\pi^{(j,i)}(|z+(i-1)\epsilon|).\] 
    Swap property \ref{sp:large} for $\sigma^{(j,i-1)}$ implies that $\sigma^{(j,i-1)}(|z+i\epsilon|)<\sigma_{j+1,1}^{(j,i-1)}$, so after swaps are performed, $\sigma^{(j,i)}(|z+(i-1)\epsilon|)<\sigma_{j+1,1}^{(j,i)}$. Combined, we see that
    \[\pi^{(j,i)}(|z+i\epsilon|)=\sigma_{j+1,1}^{(j,i)}>\sigma^{(j,i)}(|z+(i-1)\epsilon|)=\pi^{(j,i)}(|z+(i-1)\epsilon|).\]
    Comparing with \eqref{eq:sigma_order}, we conclude $P_{\pi^{(j,i)},\sigma}(|z+i\epsilon|,|z+(i-1)\epsilon|)$ is \texttt{True}.
\end{proof}

\begin{lemma}\label{lem:phi_iteration2}
    Suppose $\pm z$ is the last element in cycle $j$ of $\sigma^{(j,0)}$ and is swapped with $\pm (z+\epsilon)$ in Algorithm \ref{algorithm}, so $\pm (z+i\epsilon)$ is the last element in cycle $j$ of $\sigma^{(j,i)}$ for $0\leq i\leq h$. If $P_{\pi^{(j,h)},\sigma}(|z+h\epsilon|,|z+(h+1)\epsilon|)$ is \texttt{True}, then $\pi^{(j,h)}(|z+(h-1)\epsilon)<\pi^{(j,h)}(|z+(h+1)\epsilon|)$. 
\end{lemma}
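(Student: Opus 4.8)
The plan is to abbreviate $w = z + h\epsilon$, so that $\pm w$ is the last element of cycle $j$ in $\sigma^{(j,h)}$, and to reduce the desired inequality $\pi^{(j,h)}(|w-\epsilon|) < \pi^{(j,h)}(|w+\epsilon|)$ to a comparison of successors in a single permutation. First I would locate $\pm(w-\epsilon)$ and $\pm(w+\epsilon)$ as non-last elements of $\sigma^{(j,h)}$ lying in cycles to the right of cycle $j$, so that $\pi^{(j,h)}(|w\pm\epsilon|) = \sigma^{(j,h)}(|w\pm\epsilon|)$ (concatenating cycles only changes the image of last elements). For $w-\epsilon$ this is exactly the content of \cref{lem:phi_iteration} with $i=h$, whose proof also yields $\sigma^{(j,h)}(|w-\epsilon|) = \sigma^{(j,h-1)}(|w|)$ via the swap performed at step $h$, together with $\sigma^{(j,h)}(|w-\epsilon|) < \sigma_{j+1,1}^{(j,h)} = \pi^{(j,h)}(|w|)$.

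The key preliminary is a Boolean bookkeeping argument that extracts usable information from the hypothesis. Write $d = \min\{|w|,|w+\epsilon|\}$. The hypothesis gives $d \in \Des_{\Delta}(\pi^{(j,h)},\sigma)$. On the other hand, the \texttt{while} loop in line \ref{line:whileloop1} of \cref{algorithm} terminated at $\sigma^{(j,h)}=\sigma^{(j+1)}$, which means $P_{\pi,\sigma^{(j,h)}}(|w|,|w+\epsilon|)$ is \texttt{False}, i.e. $d \notin \Des_{\Delta}(\pi,\sigma^{(j,h)})$; and \cref{lem:descents_preserved} gives $d \in \Des(\pi) \iff d \in \Des(\sigma)$. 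Combining these three facts shows that the descent status of $d$ agrees across $\pi$, $\sigma$, and $\sigma^{(j,h)}$ but is opposite in $\pi^{(j,h)}$; in particular $d \in \Des_{\Delta}(\pi^{(j,h)},\sigma^{(j,h)})$. Applying \cref{lem:alg2_concatenation} to this (with $\pm w$ playing the role of the last element) shows $\pm(w+\epsilon)$ is non-last to the right of cycle $j$ and that $\sigma^{(j,h)}(|w|) < \sigma^{(j,h)}(|w+\epsilon|)$. Feeding the same descent-status agreement back into the one-line comparison at $d$ then yields, in both sign cases for which of $|w|,|w+\epsilon|$ equals $d$, the clean conclusion $\pi(|w|) < \pi(|w+\epsilon|)$.

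With the positions settled, I would reduce the goal to $\sigma^{(j,h-1)}(|w|) < \sigma^{(j,h)}(|w+\epsilon|)$ using $\sigma^{(j,h)}(|w-\epsilon|) = \sigma^{(j,h-1)}(|w|)$, and then finish by identifying both successors with the original images $\pi(|w|)$ and $\pi(|w+\epsilon|)$. The idea is that $\pm w$ sits undisturbed as a non-last element until it is swapped at step $h$, and $\pm(w+\epsilon)$ is never touched during iteration $j$; tracking their successors back to the initial permutation of line \ref{line:starting}, where non-last successors equal the corresponding images of $\pi$ by \cref{lem:algstart}, gives $\sigma^{(j,h-1)}(|w|) = \pi(|w|)$ and $\sigma^{(j,h)}(|w+\epsilon|) = \pi(|w+\epsilon|)$. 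The inequality $\pi(|w|) < \pi(|w+\epsilon|)$ from the previous paragraph then closes the argument.

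The main obstacle is precisely this last value-tracking step. The coarse tools available, \cref{lem:alg2_concatenation} and order property \ref{op:large}, only control a successor by which thresholds $\sigma_{k,1}^{(j,h)}$ it exceeds, and here both $\sigma^{(j,h)}(|w-\epsilon|)$ and $\sigma^{(j,h)}(|w+\epsilon|)$ land in the same band $(\sigma_{j,1}^{(j,h)},\sigma_{j+1,1}^{(j,h)})$, so they cannot be separated by thresholds alone. The comparison therefore has to be made at the level of actual values, which forces a careful argument — using swap properties \ref{sp:right}, \ref{sp:last}, and \ref{sp:large} across all earlier iterations, and controlling the cascading swaps of the inner \texttt{while} loop — that the successors of $\pm w$ and of $\pm(w+\epsilon)$ are never altered before the relevant moment and hence coincide with their images under the original $\pi$. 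I expect verifying this ``undisturbed successor'' claim to be the delicate part of the proof.
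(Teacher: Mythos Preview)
Your Boolean bookkeeping and the positioning of $\pm(w\pm\epsilon)$ as non-last elements match the paper's argument exactly, including the derivation of $\pi(|w|)<\pi(|w+\epsilon|)$. The divergence is at the last step, where you propose proving the \emph{equalities} $\sigma^{(j,h-1)}(|w|)=\pi(|w|)$ and $\sigma^{(j,h)}(|w+\epsilon|)=\pi(|w+\epsilon|)$ via an ``undisturbed successor'' claim. This is where there is a genuine difficulty: the element following $\pm w$ in the cycle notation can in principle be altered by swaps in earlier \texttt{for}-loop iterations $j'<j$ (or earlier \texttt{while}-loop iterations within iteration $j$) whose inner cascade reaches into the cycle containing $\pm w$, and none of the swap properties forbids this at the level of individual values. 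The equalities you aim for are stronger than what is needed and are not obviously true; your own final paragraph correctly flags this as the obstacle, but the route you outline to overcome it does not clearly succeed.

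The paper bypasses this entirely. After establishing, as you do, that $\pm w$ and $\pm(w+\epsilon)$ are both non-last in $\sigma^{(j,h-1)}$, it invokes order property \ref{op:descents} for $\sigma^{(j,h-1)}$ (which holds by \cref{thm:order_swap}): since neither element is the last of its cycle, necessarily $\min\{|w|,|w+\epsilon|\}\notin \Des_{\Delta}(\pi,\sigma^{(j,h-1)})$, and combined with $\pi(|w|)<\pi(|w+\epsilon|)$ this yields $\sigma^{(j,h-1)}(|w|)<\sigma^{(j,h-1)}(|w+\epsilon|)$ directly---no value-tracking back to $\pi$ required. The swap at step $h$ then converts this into $\sigma^{(j,h)}(|w-\epsilon|)<\sigma^{(j,h)}(|w+\epsilon|)$, which is the desired $\pi^{(j,h)}$ inequality since both elements are non-last there. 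The key idea you are missing is to use the maintained descent-discrepancy invariant at $\sigma^{(j,h-1)}$ to compare the two successors at that stage, rather than attempting to identify them with their original $\pi$-values.
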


\begin{proof}
    Since Algorithm \ref{algorithm} terminated iteration $j$ of the \texttt{for} loop at $\sigma^{(j,h)}$, we know that $P_{\pi,\sigma^{(j,h)}}(|z+h\epsilon|,|z+(h+1)\epsilon|)$ is \texttt{False}. \cref{lem:descents_preserved} implies $\Des(\pi)\cap [n-1]=\Des(\sigma)$, and combined with $P_{\pi^{(j,h)},\sigma}(|z+h\epsilon|,|z+(h+1)\epsilon|)$ being \texttt{True}, we conclude that $\min\{|z+h\epsilon|,|z+(h+1)\epsilon|\}\in \Des_{\Delta}(\pi^{(j,h)},\sigma^{(j,h)})$.  As $\pm (z+h\epsilon)$ is the last element of $\sigma_j^{(j,h)}$, \cref{lem:alg2_concatenation} implies that $\pm (z+(h+1)\epsilon)$ must appear as the non-last element in some cycle to the right of $\sigma_j^{(j,h)}$ and
    \[\pi^{(j,h)}(|z+h\epsilon|)>\pi^{(j,h)}(|z+(h+1)\epsilon|)=\sigma^{(j,h)}(|z+(h+1)\epsilon|)>\sigma^{(j,h)}(|z+h\epsilon|).\]
    Since $P_{\pi,\sigma^{(j,h)}}(|z+h\epsilon|,|z+(h+1)\epsilon|)$ is \texttt{False}, we conclude
    \begin{equation}\label{eq:phi_iteration2}
        \pi(|z+(h+1)\epsilon|)>\pi(|z+h\epsilon|).
    \end{equation}
    
    We now turn our attention to $\sigma^{(j,h-1)}$. As $\pm (z+(h+1)\epsilon)$ is not the last element in a cycle of $\sigma_j^{(j,h)}$, swap properties \ref{sp:right} and \ref{sp:last} for $\sigma^{(j,h-1)}$ imply that $\pm (z+(h+1)\epsilon)$ cannot be the last element in a cycle of $\sigma^{(j,h-1)}$. Additionally, order property \ref{op:descents} for $\sigma^{(j,h-1)}$ implies that $\pm (z+h\epsilon)$ was not the last element in a cycle of $\sigma^{(j,h-1)}$. Since $\pm (z+(h+1)\epsilon)$ and $\pm (z+h\epsilon)$ are non-last elements in the cycles of $\sigma^{(j,h-1)}$ containing them, order property \ref{op:descents} for $\sigma^{(j,h-1)}$ implies $\min\{|z+h\epsilon|,|z+(h+1)\epsilon|\}\notin \Des_{\Delta}(\pi,\sigma^{(j,h-1)})$, so combined with \eqref{eq:phi_iteration2}, we see that $\sigma^{(j,h-1)}(|z+(h+1)\epsilon|)>\sigma^{(j,h-1)}(|z+h\epsilon|)$. Then after performing swaps to obtain $\sigma^{(j,h)}$, we have that 
    \begin{equation}\label{eq:hpm1}
        \sigma^{(j,h)}(|z+(h+1)\epsilon|)>\sigma^{(j,h)}(|z+(h-1)\epsilon|).
    \end{equation}Additionally, these swaps result in $\pm (z+(h-1)\epsilon)$ being a non-last element of a cycle in $\sigma^{(j,h)}$. As both $\pm (z+(h-1)\epsilon)$ and $\pm (z+(h+1)\epsilon)$ are non-last elements in the cycles containing them, we conclude that \eqref{eq:hpm1} is equivalent to $\pi^{(j,h)}(|z+(h+1)\epsilon|)>\pi^{(j,h)}(|z+(h-1)\epsilon|).$
\end{proof}

Combining these results, we show that $\phi$ and $\psi$ are inverses. Our approach is to show that the steps within Algorithm \ref{algorithm} are reversed by Algorithm \ref{algorithm2}.

\begin{theorem}\label{thm:inverses_no_n}
    The functions $\phi:\C_{B,n+1}^+\to B_n$ and $\psi:B_n\to \C_{B,n+1}^+$ are inverses. 
\end{theorem}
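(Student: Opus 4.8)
The plan is to show that Algorithm~\ref{algorithm2} applied to $\sigma=\phi(\pi)$ runs the steps of Algorithm~\ref{algorithm} in reverse, so that $\psi(\phi(\pi))=\pi$ for every $\pi\in\C_{B,n+1}^+$; the opposite identity $\phi(\psi(\sigma))=\sigma$ then comes for free from a counting argument. Indeed $|\C_{B,n+1}^+|=2^n\,n!=|B_n|$, since a permutation in $\C_{B,n+1}^+$ is determined by its canonical cycle $(n+1,\epsilon_2 a_2,\dots,\epsilon_{n+1}a_{n+1})$, giving $n!$ orderings of a permutation $a_2,\dots,a_{n+1}$ of $[n]$ and $2^n$ sign choices. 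Once $\psi\circ\phi=\mathrm{id}_{\C_{B,n+1}^+}$ is established, $\phi$ is an injection between finite sets of equal cardinality, hence a bijection, and composing $\psi\circ\phi=\mathrm{id}$ with $\phi^{-1}$ shows $\psi=\phi^{-1}$, so $\phi\circ\psi=\mathrm{id}_{B_n}$ as well.

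The core is an induction on the \texttt{for} loop of Algorithm~\ref{algorithm2}, whose iterations run $j=m-1,m-2,\dots,1$. I would maintain the invariant that at the start of iteration $j$ the working cycles $\pi_1,\dots,\pi_m$ coincide with $\sigma_1^{(j+1)},\dots,\sigma_m^{(j+1)}$, so their concatenation (with $n+1$ appended) equals $\pi^{(j+1)}$; the base case $j=m-1$ is precisely \cref{lem:alg2start}. For the inductive step I must show iteration $j$ carries $\pi^{(j+1)}=\pi^{(j,h)}$ back to $\pi^{(j)}=\pi^{(j,0)}$, undoing in reverse order the $h$ executions of the \texttt{while} loop in line \ref{line:whileloop1} done by Algorithm~\ref{algorithm}. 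Writing $z$ for the last element of $\sigma_j^{(j,0)}$ and $\epsilon$ for the direction chosen by Algorithm~\ref{algorithm}, so that the last element of $\sigma_j^{(j,i)}$ is $z+i\epsilon$, the decisive point is that Algorithm~\ref{algorithm2} must select the opposite direction $-\epsilon$: \cref{lem:phi_iteration} with $i=h$ makes $P_{\pi^{(j,h)},\sigma}(|z+h\epsilon|,|z+(h-1)\epsilon|)$ \texttt{True}, so $-\epsilon$ is available, and when the forward direction is also available \cref{lem:phi_iteration2} gives $\pi^{(j,h)}(|z+(h-1)\epsilon|)<\pi^{(j,h)}(|z+(h+1)\epsilon|)$, so the tie-break ``$\pi(|z+\epsilon|)$ smallest'' in Algorithm~\ref{algorithm2} correctly picks $-\epsilon$. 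Applying \cref{lem:phi_iteration} for every $i$ then keeps the outer \texttt{while} loop running through the last-element values $z+h\epsilon,\dots,z+\epsilon$, while in the degenerate case $h=0$ (no swaps in Algorithm~\ref{algorithm}) \cref{lem:sigma_false} forces both tests \texttt{False}, so Algorithm~\ref{algorithm2} idles as required.

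The two places I expect real work are the inner swap chains and termination. For the former, I would check that each inner \texttt{while} loop in line \ref{alg3:whileloop2} exactly inverts the corresponding one in line \ref{line:whileloop2}: a single swap replacing $x,y$ by $\sgn(x)\,|y|,\sgn(y)\,|x|$ is an involution on that pair, and the ``preceding element'' traversals are forced to revisit the same positions in reverse, so the $i$-th outer iteration sends $\pi^{(j,i)}$ to $\pi^{(j,i-1)}$. For termination, once the last element of $\pi_j$ returns to $z$ I must rule out a spurious extra swap by showing $P_{\pi^{(j,0)},\sigma}(|z|,|z-\epsilon|)$ is \texttt{False}; this follows from \cref{lem:sigma_false} when the $-\epsilon$ direction was already inactive in Algorithm~\ref{algorithm}, and otherwise from the descent-removal guarantees of swap property \ref{sp:descents}, which ensure the discrepancy at $\min\{|z|,|z-\epsilon|\}$ has already been cleared. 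Assembling these, the induction yields $\pi^{(1)}=\pi$, since the cycles of $\sigma^{(1)}$ concatenate back to the cycle notation of $\pi$ with $n+1$ appended, hence $\psi\circ\phi=\mathrm{id}$, and the counting argument finishes the proof. The main obstacle is the combined bookkeeping of these steps, namely pinning down simultaneously the inner-chain inversion, the direction choice, and the termination, all of which is funnelled through the three preparatory lemmas controlling the relevant $P$-conditions.
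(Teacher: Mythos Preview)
Your plan is correct and mirrors the paper's proof almost exactly: the same counting reduction to $\psi\circ\phi=\mathrm{id}$, the same backward induction on $j$ with base case \cref{lem:alg2start}, the same use of \cref{lem:sigma_false} for the no-swap case, and the same pairing of \cref{lem:phi_iteration,lem:phi_iteration2} to force Algorithm~\ref{algorithm2} to pick direction $-\epsilon$ and step through $\pi^{(j,h)},\dots,\pi^{(j,0)}$.

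The one place to sharpen is your termination argument when both $P_{\pi,\sigma^{(j,0)}}(|z|,|z\pm\epsilon|)$ were \texttt{True} in Algorithm~\ref{algorithm}. Swap property~\ref{sp:descents} is stated for the symmetric difference $\Des_\Delta(\pi,\sigma')$ with the \emph{original} cyclic $\pi$ and an intermediate $\sigma'$, whereas here you need $\min\{|z|,|z-\epsilon|\}\notin\Des_\Delta(\pi^{(j,0)},\sigma)$, comparing the \emph{concatenation} $\pi^{(j,0)}$ with the \emph{final} $\sigma=\phi(\pi)$; these are different objects, so the appeal is not direct. The paper instead argues by hand: the tie-break in Algorithm~\ref{algorithm} (``$\pi(|z+\epsilon|)$ largest'') together with order property~\ref{op:descents} forces $\pi^{(j,0)}(|z-\epsilon|)<\pi^{(j,0)}(|z+\epsilon|)$, and since the just-completed reverse swap removed $\min\{|z|,|z+\epsilon|\}$ from $\Des_\Delta(\pi^{(j,1)},\sigma)$ one gets $\pi^{(j,0)}(|z|)>\pi^{(j,0)}(|z+\epsilon|)>\pi^{(j,0)}(|z-\epsilon|)$, matching $\sigma(|z|)>\sigma(|z-\epsilon|)$ via \cref{lem:descents_preserved}. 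This is the short direct computation you should substitute for the swap-property appeal.
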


\begin{proof}
    It suffices to show that $\psi(\phi((\pi))=\pi$ for every $\pi\in \C_{B,n+1}^+$. As $\C_{B,n+1}^+$ and $B_n$ are finite sets of the same size $2^n\cdot n!$, it will follow that $\phi$ and $\psi$ are bijections that must be inverses of one another. By \cref{lem:alg2start}, we know that Algorithm \ref{algorithm2} applied to $\sigma=\phi(\pi)$ will begin with $\pi^{(m)}$. It suffices to show that if $\pi^{(j+1)}$ is the permutation at the start of iteration $j$ of the \texttt{for} loop in Algorithm \ref{algorithm2}, then $\pi^{(j)}$ will be the result at the end, as the result will then follow by induction on $j=m-1,m-2,\ldots,1$.
    
    We first consider the case where no swaps are performed in iteration $j$ of the \texttt{for} loop in Algorithm \ref{algorithm}, so $\sigma^{(j)}=\sigma^{(j+1)}$ and $\pi^{(j)}=\pi^{(j+1)}$. This occurs since $P_{\pi,\sigma^{(j)}}(|z|,|z+\epsilon|)$ is \texttt{False} for each value of $\epsilon\in \{-1,1\}$. Then \cref{lem:sigma_false} implies that $P_{\pi^{(j+1)},\sigma}(|z|,|z+\epsilon|)$ is \texttt{False} for each value of $\epsilon\in \{-1,1\}$, so iteration $j$ of the \texttt{for} loop in Algorithm \ref{algorithm2} will also perform no swaps. Hence, it will begin and terminate with $\pi^{(j+1)}=\pi^{(j)}$. 
    
    Alternatively, suppose swaps do occur. As we have throughout this section, we will use $\sigma^{(j,0)},\sigma^{(j,1)},\ldots,\sigma^{(j,h)}$ for the signed permutations in iteration $j$ of the \texttt{for} loop of Algorithm \ref{algorithm}, and the respective last elements of cycle $j$ will be $\pm z,\pm (z+\epsilon),\ldots,\pm (z+h\epsilon)$. By \cref{lem:phi_iteration,lem:phi_iteration2}, the \texttt{for} loop of Algorithm \ref{algorithm2} will start with $\pi^{(j,h)}$, swap $\pm (z+h\epsilon)$ and $\pm (z+(h-1)\epsilon)$, and then subsequently swap the elements preceding the ones swapped whenever another descent is affected by the prior swap. This reverses exactly the swaps of the \texttt{while} loop in line \ref{line:whileloop2} of Algorithm \ref{algorithm}, and hence results in $\pi^{(j,h-1)}$. Continuing with \cref{lem:phi_iteration} and a similar argument, we conclude that this iteration of the \texttt{for} loop in Algorithm \ref{algorithm2} will then produce $\pi^{(j,h-2)},\ldots,\pi^{(j,0)}$. It suffices now to show that the loop will terminate at $\pi^{(j,0)}$ due to $P_{\pi^{(j,0)},\sigma}(|z|,|z-\epsilon|)$ being \texttt{False}. If $P_{\pi,\sigma^{(j,0)}}(|z|,|z-\epsilon|)$ is \texttt{False}, then this follows from \cref{lem:sigma_false}, so it suffices to consider when $P_{\pi,\sigma^{(j,0)}}(|z|,|z-\epsilon|)$ is \texttt{True}. 
    
    In this case, order property \ref{op:descents} for $\sigma^{(j,0)}$ implies that both $\pm (z+\epsilon)$ and $\pm (z-\epsilon)$ appear as non-last elements in the cycles of $\sigma^{(j,0)}$ containing them. Additionally, Algorithm \ref{algorithm} chose to swap $\pm z$ with $\pm (z+\epsilon)$ when both $P_{\pi,\sigma^{(j,0)}}(|z|,|z-\epsilon|)$ and $P_{\pi,\sigma^{(j,0)}}(|z|,|z+\epsilon|)$ were \texttt{True}, so $\sigma^{(j,0)}(|z+\epsilon|)>\sigma^{(j,0)}(|z-\epsilon|)$. Combined with order property \ref{op:descents} for $\sigma^{(j,0)}$, it must be that
    \begin{equation}\label{eq:terminates}
        \sigma^{(j,0)}(|z|)<\sigma^{(j,0)}(|z-\epsilon|)=\pi^{(j,0)}(|z-\epsilon|)<\sigma^{(j,0)}(|z+\epsilon|)=\pi^{(j,0)}(|z+\epsilon|),
    \end{equation}
    while $\pi(|z|)>\pi(|z+\epsilon|)$ and $\pi(|z|)>\pi(|z-\epsilon|)$. \cref{lem:descents_preserved} then implies $\sigma(|z|)>\sigma(|z+\epsilon|)$ and $\sigma(|z|)>\sigma(|z-\epsilon|)$. The swaps producing $\pi^{(j,0)}$ from $\pi^{(j,1)}$ removed the descent $\min\{|z|,|z+\epsilon|\}\in \Des_{\Delta}(\pi^{(j,1)},\sigma)$, so $\pi^{(j,0)}(|z|)>\pi^{(j,0)}(|z+\epsilon|)$. Combined with \eqref{eq:terminates}, we conclude that $\pi^{(j,0)}(|z|)>\pi^{(j,0)}(|z-\epsilon|)$. As this matches the order of $\sigma(|z|)>\sigma(|z-\epsilon|)$, we see that $P_{\pi^{(j,0)},\sigma}(|z|,|z-\epsilon|)$ is \texttt{False}, so iteration $j$ of the \texttt{for} loop will terminate at $\pi^{(j)}=\pi^{(j,0)}$. 
\end{proof}

\subsection{Inverse functions}\label{sec:inverse_functions}

From \cref{thm:inverses_no_n}, we now have an inverse for $\phi$ given by $\psi$. Our descent-preserving function $\Phi$ was constructed using variations of $\phi$. We will use variations of $\psi$ to construct inverses for $\Phi_{\C_{D,n+1}}$ and $\Phi_{\overline{\C_{D,n+1}}}$.  

First recall from \cref{sec:descent_preserving_map} that $\Phi$ is given by
    \[\Phi(\pi) = \begin{cases} \phi(\pi) & \text{ if $\pi \in \C_{B,n+1}^+$ and $0\notin \Des_{\Delta}(\pi,\phi(\pi))$}  \\
     (-1)  \phi(\pi) & \text{ if $\pi \in \C_{B,n+1}^+$ and $0\in \Des_{\Delta}(\pi,\phi(\pi))$}  \\
     -\phi(-\pi) & \text{ if $\pi\in \C_{B,n+1}^-$ and $0\notin \Des_{\Delta}(\pi,-\phi(-\pi))$}\\
     -(-1)  \phi(-\pi) & \text{ if $\pi\in \C_{B,n+1}^-$ and $0\in \Des_{\Delta}(\pi,-\phi(-\pi))$.}\end{cases}\]
Based on this definition, there are several candidate pre-images for each $\sigma\in B_n$, which we describe in the next lemma.

\begin{lemma}\label{lem:preimages_1}
    For any $\sigma\in B_n$, the following hold.
    \begin{enumerate}[label=(\alph*)]
        \item We have $\psi(\sigma),\psi((-1)\sigma)\in \C_{B,n+1}^+$ and $-\psi(-\sigma),-\psi(-(-1)\sigma)\in \C_{B,n+1}^-$.
        \item The parities of the $\negative$ statistic on $\sigma$, $\psi(\sigma)$, and $-\psi(-(-1)\sigma)$ are the same, and the parities of the $\negative$ statistic on $(-1)\sigma$, $\psi((-1)\sigma)$, and $-\psi(-\sigma)$ are the same.
        \item We have $\Phi^{-1}(\{\sigma,(-1)\sigma\})=\{\psi(\sigma),\psi((-1)\sigma),-\psi(-\sigma),-\psi(-(-1)\sigma)\}$. Furthermore, these four elements belong to different subsets $\C_{B,n+1}^+\cap \C_{D,n+1}$, $\C_{B,n+1}^+\cap \overline{\C_{D,n+1}}$, $\C_{B,n+1}^-\cap \C_{D,n+1}$, and $\C_{B,n+1}^-\cap \overline{\C_{D,n+1}}$ of $\C_{B,n+1}$.  
    \end{enumerate}
\end{lemma}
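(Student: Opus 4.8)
The plan is to handle the three parts in order, relying on \cref{thm:inverses_no_n} (that $\phi$ and $\psi$ are mutually inverse bijections between $\C_{B,n+1}^+$ and $B_n$) and on careful bookkeeping for the two auxiliary operations $\sigma\mapsto(-1)\sigma$ and $\pi\mapsto-\pi$. For part (a), the memberships $\psi(\sigma),\psi((-1)\sigma)\in\C_{B,n+1}^+$ are immediate, since $\sigma,(-1)\sigma\in B_n$ and $\psi$ has codomain $\C_{B,n+1}^+$. For the remaining two, I would observe that negating every entry of a cyclic signed permutation preserves cyclicity (it flips each sign $\epsilon_i$ in the cycle notation while leaving the $|a_i|$ and the cycle structure unchanged) and sends the entry $n+1$ to $-(n+1)$; hence $\pi\mapsto-\pi$ is an involution interchanging $\C_{B,n+1}^+$ and $\C_{B,n+1}^-$, and applying it to $\psi(-\sigma),\psi(-(-1)\sigma)\in\C_{B,n+1}^+$ lands in $\C_{B,n+1}^-$.

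For part (b), the crux is that $\phi$, and therefore $\psi$, preserves the $\negative$ statistic. I would first record that for any signed permutation the number of negatively signed entries in its cycle notation equals its $\negative$ value (the signs $\epsilon_i$ are exactly the signs of the images). Forming the initial permutation in Algorithm \ref{algorithm} only regroups the signed entries of $\pi$ and deletes the positive entry $n+1$, while each swap replaces $x,y$ by $\sgn(x)\cdot|y|$ and $\sgn(y)\cdot|x|$ and so leaves the sign in each position fixed; thus $\negative(\phi(\pi))=\negative(\pi)$, giving $\negative(\psi(\tau))=\negative(\tau)$. Combining this with the two elementary identities $\negative(-\tau)=N-\negative(\tau)$, where $N$ is $n$ or $n+1$ according to the ambient group, and $\negative((-1)\sigma)=\negative(\sigma)\pm1$, a short parity computation yields both equalities of parities claimed in (b).

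Part (c) is the main step. The key structural observation is that $\{\sigma,(-1)\sigma\}$ is invariant under left multiplication by $(-1)$, while by definition $\Phi(\pi)\in\{\phi(\pi),(-1)\phi(\pi)\}$ for $\pi\in\C_{B,n+1}^+$ and $\Phi(\pi)\in\{-\phi(-\pi),-(-1)\phi(-\pi)\}$ for $\pi\in\C_{B,n+1}^-$. For $\pi\in\C_{B,n+1}^+$, invariance gives $\Phi(\pi)\in\{\sigma,(-1)\sigma\}$ iff $\phi(\pi)\in\{\sigma,(-1)\sigma\}$ iff $\pi\in\{\psi(\sigma),\psi((-1)\sigma)\}$, using that $\phi$ is a bijection. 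For $\pi\in\C_{B,n+1}^-$, I would first verify the commutation $-(-1)\sigma=(-1)(-\sigma)$ by a direct one-line computation, so that $\{-\phi(-\pi),-(-1)\phi(-\pi)\}$ is the $(-1)$-orbit of $-\phi(-\pi)$; the same invariance argument then reduces $\Phi(\pi)\in\{\sigma,(-1)\sigma\}$ to $\phi(-\pi)\in\{-\sigma,-(-1)\sigma\}$, i.e. to $\pi\in\{-\psi(-\sigma),-\psi(-(-1)\sigma)\}$. Since $\C_{B,n+1}=\C_{B,n+1}^+\sqcup\C_{B,n+1}^-$, taking the union yields the stated description of $\Phi^{-1}(\{\sigma,(-1)\sigma\})$.

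Finally, the four listed elements fall into the four subsets as claimed: part (a) separates the two $\C_{B,n+1}^+$ elements from the two $\C_{B,n+1}^-$ ones, and part (b) shows that within each pair the two elements have $\negative$ of opposite parity, so one lies in $\C_{D,n+1}$ and the other in $\overline{\C_{D,n+1}}$; this also forces the four elements to be distinct. I expect the main obstacle to be the sign and parity bookkeeping rather than any conceptual difficulty, namely deducing $\negative$-preservation of $\phi$ from the swap rule and tracking the $\pm1$ correction from $(-1)$ together with the complementation from $-$. The orbit-invariance argument in part (c) is clean once the commutation of the two operations has been checked.
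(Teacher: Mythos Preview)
Your proposal is correct and follows essentially the same approach as the paper: both argue that $\psi$ lands in $\C_{B,n+1}^+$ (and that negation flips to $\C_{B,n+1}^-$), that Algorithm~\ref{algorithm}/\ref{algorithm2} preserves the $\negative$ statistic because swaps keep signs in place, and then do the same parity bookkeeping and the same reduction of $\Phi^{-1}(\{\sigma,(-1)\sigma\})$ via the bijection $\phi\leftrightarrow\psi$ together with the definition of $\Phi$. Your orbit-invariance phrasing and the explicit check of $-(-1)\sigma=(-1)(-\sigma)$ are slightly more detailed than the paper's writeup, but the substance is identical.
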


\begin{proof}
    For (a), observe that in Algorithm \ref{algorithm2}, $n+1$ is inserted in the cycle notation and is not involved in any swaps, as a descent at position $n$ is not considered in the algorithm. This immediately implies that $\psi(\sigma),\psi((-1)\sigma),\psi(-\sigma),\psi(-(-1)\sigma)\in \C_{B,n+1}^+$. Changing the signs of all elements after applying $\psi$ to a signed permutation results in $-\psi(-\sigma),-\psi(-(-1)\sigma)\in \C_{B,n+1}^-$. 

    For (b), observe that inserting $n+1$ and performing the swaps in Algorithm \ref{algorithm2} will not change the number of negatives appearing in the cycle notation, so $\negative(\sigma)=\negative(\psi(\sigma))$. Turning our attention to $-\psi(-(-1)\sigma)$, we use this to find
    \begin{equation*}
        \begin{split}
            \negative(-\psi(-(-1)\sigma)) & =(n+1)-\negative(\psi(-(-1)\sigma)) \\
            & =(n+1)-\negative(-(-1)\sigma) 
            \\
            & =(n+1)-(n-\negative((-1)\sigma)) \\
            & =1+\negative((-1)\sigma),
        \end{split}
    \end{equation*}
    and since $\negative((-1)\sigma)\in \{\negative(\sigma)-1,\negative(\sigma)+1\}$, this has the same parity as $\negative(\sigma)$. The corresponding result for $(-1)\sigma$, $\psi((-1)\sigma)$, and $-\psi(-\sigma)$ follows by replacing $\sigma$ with $(-1)\sigma$ throughout. 
    
    Finally, we consider (c). From \cref{thm:inverses_no_n}, $\phi$ and $\psi$ are inverses. Using this with the definition of $\Phi$, any pre-images of $\{\sigma,(-1)\sigma\}$ in $\C_{B,n+1}^+$ must be in $\{\psi(\sigma),\psi((-1)\sigma)\}$, and any pre-images in $\C_{B,n+1}^-$ must be in $\{-\psi(-\sigma),-\psi(-(-1)\sigma)\}$. 
    Applying $\Phi$ to $\psi(\sigma),\psi((-1)\sigma)\in \C_{B,n+1}^+$ or $-\psi(-\sigma),-\psi(-(-1)\sigma)\in \C_{B,n+1}^-$ must also result in $\sigma$ or $(-1)\sigma$, so combined, we conclude
    \[\Phi^{-1}(\{\sigma,(-1)\sigma\})= \{\psi(\sigma),\psi((-1)\sigma),-\psi(-\sigma),-\psi(-(-1)\sigma)\}.\]
    The remaining claims in (c) now follow from casework based on whether $\negative(\sigma)$ is even or odd, combined with (a) and (b).
\end{proof}

The previous result gives us four candidate elements for where to map each $\sigma\in B_n$ when constructing inverses for $\Phi_{\C_{D,n+1}}$ and $\Phi_{\overline{\C_{D,n+1}}}$. Recall from \cref{cor:descent0} that when $\pi\in \C_{B,n+1}^+$, the multiplication by $(-1)$ in $\Phi(\pi)$ only occurs due to $0\notin \Des(\pi)$ and $\phi(\pi)(1)=-1$. Hence, \cref{lem:preimages_1} is sufficient for determining which element to select when $\sigma(1)\notin \{-1,1\}$. When $\sigma(1)\in \{-1,1\}$, we will need some additional results.

\begin{lemma}\label{lem:preimages_2}
    Let $\sigma\in B_n$, and suppose $\sigma(1)=1$. Define $\pi_1=\psi(\sigma)$, $\pi_2=\psi((-1)\sigma)$, $\pi_3=-\psi(-\sigma)$, and $\pi_4=-\psi(-(-1)\sigma)$. Then $\Phi(\pi_1)=\Phi(\pi_2)=\sigma$ and $\Phi(\pi_3)=\Phi(\pi_4)=(-1)\sigma$.
\end{lemma}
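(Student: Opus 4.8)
The plan is to evaluate $\phi$ on the relevant arguments using the fact that $\phi$ and $\psi$ are inverses (\cref{thm:inverses_no_n}), and then to read off which case of the definition of $\Phi$ applies by analyzing the descent at position $0$ via \cref{cor:descent0} and \cref{cor:nodescent0preserved}. Throughout I will use three elementary algebraic facts: negation $\tau\mapsto-\tau$ is an involution and commutes with every element of $B_n$ (the element $[-1,-2,\ldots,-n]$ is central), $(-1)$ is an involution, and the hypothesis $\sigma(1)=1$ forces $((-1)\sigma)(1)=-1$, $(-\sigma)(1)=-1$, and $(-(-1)\sigma)(1)=1$. By \cref{lem:preimages_1}(a) the four permutations lie in the expected sets, with $\pi_1,\pi_2\in\C_{B,n+1}^+$ and $\pi_3,\pi_4\in\C_{B,n+1}^-$.

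For $\pi_1=\psi(\sigma)$, \cref{thm:inverses_no_n} gives $\phi(\pi_1)=\sigma$, so $\phi(\pi_1)(1)=\sigma(1)=1\ne-1$; by \cref{cor:descent0} this alone forces $0\notin\Des_{\Delta}(\pi_1,\phi(\pi_1))$, whence the first case applies and $\Phi(\pi_1)=\sigma$. For $\pi_2=\psi((-1)\sigma)$ we have $\phi(\pi_2)=(-1)\sigma$ with $\phi(\pi_2)(1)=-1$, so \cref{cor:descent0} says $0\in\Des_{\Delta}(\pi_2,\phi(\pi_2))$ exactly when $\pi_2(1)>0$; granting this, the second case gives $\Phi(\pi_2)=(-1)\phi(\pi_2)=(-1)(-1)\sigma=\sigma$ since $(-1)$ is an involution. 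Thus the statement for $\pi_2$ reduces to verifying $\pi_2(1)>0$.

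For the two permutations in $\C_{B,n+1}^-$, \cref{thm:inverses_no_n} gives $\phi(-\pi_3)=-\sigma$ and $\phi(-\pi_4)=-(-1)\sigma$. Using that negation squares to the identity and is central, one computes $-\phi(-\pi_3)=\sigma$ and $-(-1)\phi(-\pi_3)=(-1)\sigma$, while $-\phi(-\pi_4)=(-1)\sigma$ and $-(-1)\phi(-\pi_4)=\sigma$; so in each case the two candidate outputs of $\Phi$ are precisely $\sigma$ and $(-1)\sigma$, and I need to select $(-1)\sigma$. Since $-\phi(-\pi_3)=\sigma$ has first entry $1>0$ and $-\phi(-\pi_4)=(-1)\sigma$ has first entry $-1<0$, the position-$0$ membership of $-\phi(-\pi_3)$ and $-\phi(-\pi_4)$ is determined, and computing $\Des_{\Delta}(\pi_i,-\phi(-\pi_i))$ shows that both reductions come down to establishing $\pi_3(1)<0$ and $\pi_4(1)<0$: then $0\in\Des(\pi_3)$ while $0\notin\Des(\sigma)$ puts $0$ in the symmetric difference and selects the fourth case $\Phi(\pi_3)=-(-1)\phi(-\pi_3)=(-1)\sigma$, whereas $0\in\Des(\pi_4)$ together with $0\in\Des((-1)\sigma)$ keeps $0$ out of the symmetric difference and selects the third case $\Phi(\pi_4)=-\phi(-\pi_4)=(-1)\sigma$.

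The main obstacle, and the single recurring computation that drives everything, is pinning down the signs $\pi_i(1)$. The decisive tool is the contrapositive of \cref{cor:nodescent0preserved}: if $\rho\in\C_{B,n+1}^+$ satisfies $\phi(\rho)(1)\ge-1$, then $0\notin\Des(\rho)$, i.e.\ $\rho(1)>0$. Applying this to $\rho=\pi_2$, whose $\phi$-image has first entry $-1\ge-1$, yields $\pi_2(1)>0$, completing the $\pi_2$ case. Writing $\pi_3(1)=-\psi(-\sigma)(1)$ and $\pi_4(1)=-\psi(-(-1)\sigma)(1)$, I apply the same observation to the $\C_{B,n+1}^+$ permutations $\psi(-\sigma)$ and $\psi(-(-1)\sigma)$: their $\phi$-images are $-\sigma$ and $-(-1)\sigma$, with first entries $-1$ and $1$, neither below $-1$, so both have positive first entry, and negating gives $\pi_3(1)<0$ and $\pi_4(1)<0$ as required. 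This closes all four cases.
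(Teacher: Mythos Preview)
Your proof is correct and uses essentially the same ingredients as the paper: \cref{thm:inverses_no_n}, \cref{cor:descent0}, and \cref{cor:nodescent0preserved}. The only stylistic difference is that the paper treats each $\pi_i$ by considering both possible cases of $\Phi$ and ruling out the wrong one by contradiction, whereas you first determine the sign of $\pi_i(1)$ via the contrapositive of \cref{cor:nodescent0preserved} and then read off directly which case applies; this is a slightly cleaner packaging of the same argument.
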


\begin{proof}
    We first consider $\Phi(\pi_1)$, where $\pi_1\in \C_{B,n+1}^+$ by \cref{lem:preimages_1}. If $\Phi(\pi_1)=\phi(\pi_1)$, then \cref{thm:inverses_no_n} implies that $\Phi(\pi_1)=\phi(\psi(\sigma))=\sigma$, so the claim holds. Otherwise, $\Phi(\pi_1)=(-1)\phi(\pi_1)$. By \cref{cor:descent0}, this can only occur if $0\notin \Des(\pi_1)$ and $\phi(\pi_1)(1)=-1$. From  \cref{lem:preimages_1} and the assumption $\sigma(1)=1$, we can conclude $\phi(\pi_1)=(-1)\sigma$, so $\Phi(\pi_1)=(-1)(-1)\sigma=\sigma$. 

    We next consider $\Phi(\pi_2)$, where $\pi_2\in \C_{B,n+1}^+$ by \cref{lem:preimages_1}. If $\Phi(\pi_2)=(-1)\phi(\pi_2)$, then $\Phi(\pi_2)=(-1)(-1)\sigma=\sigma$, so the claim holds. Otherwise, $\Phi(\pi_2)=\phi(\pi_2)=(-1)\sigma$. As $\sigma(1)=1$, we know that $(-1)$ is a cycle in $(-1)\sigma$ and $0\in \Des((-1)\sigma)$. Additionally, $\Phi(\pi_2)=\phi(\pi_2)$ occurs when $0\notin \Des_{\Delta}(\pi_2,\phi(\pi_2))$, so $0\in \Des(\pi_2)$. However, $0\in \Des(\pi_2)$ and $\phi(\pi_2)(1)=(-1)\sigma(1)=-1$ contradicts \cref{cor:nodescent0preserved}. 

    We now consider $\Phi(\pi_3)$, where $\pi_3\in \C_{B,n+1}^-$ by \cref{lem:preimages_1}. In the case $\Phi(\pi_3)=-(-1)\phi(-\pi_3)$, we find \[\Phi(\pi_3)=-(-1)\phi(\psi(-\sigma))=(-1)\sigma,\]
    so the claim holds. Otherwise, $\Phi(\pi_3)=-\phi(-\pi_3)=\sigma$. As $\sigma(1)=1$, we know $0\notin \Des(\sigma)$, so \cref{thm:descents_preserved} implies that $0\notin \Des(\pi_3)$ and $0\in \Des(-\pi_3)$. Additionally, $\Phi(\pi_3)=-\phi(-\pi_3)=\sigma$ implies that $\phi(-\pi_3)=-\sigma$, and since $\sigma(1)=1$, we see that $-\sigma(1)=-1$. However, $0\in \Des(-\pi_3)$ and $\phi(-\pi_3)(1)=-\sigma(1)=-1$ similarly contradicts \cref{cor:nodescent0preserved}.

    Finally, consider $\Phi(\pi_4)$, where $\pi_4\in \C_{B,n+1}^-$ by \cref{lem:preimages_1}. If $\Phi(\pi_4)=-\phi(-\pi_4)$, then we find \[\Phi(\pi_4)=-\phi(\psi(-1)\sigma)=(-1)\sigma,\]
    and the claim holds. Otherwise, \[\Phi(\pi_4)=-(-1)\phi(-\pi_4)=-(-1)\phi(\psi(-(-1)\sigma)=\sigma.\]
    As $\sigma(1)=1$, we know that $0\notin \Des(\sigma)$, and \cref{thm:descents_preserved} implies $0\notin \Des(\pi_4)$. We conclude $0\in \Des(-\pi_4)$ and $0\notin \Des(\phi(-\pi_4))=\Des(-(-1)\sigma)$, which again contradicts \cref{cor:nodescent0preserved}.
\end{proof}

\begin{corollary}\label{cor:preimages_2}
    Let $\sigma\in B_n$, and suppose $\sigma(1)=-1$. Define $\pi_1=\psi(\sigma)$, $\pi_2=\psi((-1)\sigma)$, $\pi_3=-\psi(-\sigma)$, and $\pi_4=-\psi(-(-1)\sigma)$. Then $\Phi(\pi_1)=\Phi(\pi_2)=(-1)\sigma$ and $\Phi(\pi_3)=\Phi(\pi_4)=\sigma$.
\end{corollary}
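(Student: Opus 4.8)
The plan is to reduce this corollary directly to \cref{lem:preimages_2} by exploiting the fact that the signed permutation $(-1)$ is an involution. First I would set $\tau=(-1)\sigma$ and observe that the hypothesis $\sigma(1)=-1$ forces $\tau(1)=(-1)\sigma(1)=1$, so that \cref{lem:preimages_2} applies directly with $\tau$ in place of $\sigma$. That lemma produces four candidate preimages $\psi(\tau)$, $\psi((-1)\tau)$, $-\psi(-\tau)$, and $-\psi(-(-1)\tau)$, the first two of which map under $\Phi$ to $\tau$ and the last two of which map to $(-1)\tau$.

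The key step is then to identify these four preimages with $\pi_1,\pi_2,\pi_3,\pi_4$. Since $(-1)(-1)$ is the identity, we have $(-1)\tau=\sigma$, and substituting $\tau=(-1)\sigma$ gives the correspondences $\psi(\tau)=\psi((-1)\sigma)=\pi_2$, $\psi((-1)\tau)=\psi(\sigma)=\pi_1$, $-\psi(-\tau)=-\psi(-(-1)\sigma)=\pi_4$, and $-\psi(-(-1)\tau)=-\psi(-\sigma)=\pi_3$. In other words, the four preimages arising from \cref{lem:preimages_2} applied to $\tau$ are exactly $\pi_2,\pi_1,\pi_4,\pi_3$, respectively. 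Each of these identities requires only the involution relation $(-1)(-1)=\mathrm{id}$ together with the definition of $-\pi$ as $[-1,-2,\ldots,-n]\,\pi$.

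Substituting these identifications into the conclusions of \cref{lem:preimages_2} then yields the result: the first two preimages map to $\tau=(-1)\sigma$, which becomes $\Phi(\pi_1)=\Phi(\pi_2)=(-1)\sigma$, and the last two map to $(-1)\tau=\sigma$, which becomes $\Phi(\pi_3)=\Phi(\pi_4)=\sigma$. I do not expect any genuine obstacle here, since all of the analytic content was already established in \cref{lem:preimages_2}; the only care needed is the bookkeeping that tracks the involution identities and records which of the four preimages of $\tau$ corresponds to each index in the present statement.
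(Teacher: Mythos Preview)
Your proposal is correct and follows exactly the same approach as the paper: set $\sigma'=(-1)\sigma$, note $\sigma'(1)=1$, and apply \cref{lem:preimages_2} to $\sigma'$. The paper's proof is a two-sentence remark, and your version simply spells out the bookkeeping of matching up the four preimages, which you have done correctly.
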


\begin{proof}
    Define $\sigma'=(-1)\sigma$ so that $\sigma'(1)=1$. The result now follows by applying \cref{lem:preimages_2} to $\sigma'$.
\end{proof}

With these results in mind, we can now construct inverses for $\Phi_{\C_{D,n+1}}$ and $\Psi_{\overline{\C_{D,n+1}}}$. This involves casework with \cref{lem:preimages_1}, \cref{lem:preimages_2}, and \cref{cor:preimages_2} to ensure that the appropriate element from $\Phi^{-1}(\sigma,(-1)\sigma)$ is chosen for each $\sigma\in B_n$.

\begin{definition}
    Define $\Psi_{\C_{D,n+1}}:B_n\to \C_{D,n+1}$ by
    \[ \Psi_{\C_{D,n+1}}(\sigma)=\begin{cases} 
    \psi(\sigma) & \text{ if $\sigma(1)\notin \{-1,1\}$ and $\negative(\sigma)$ is even} \\
    -\psi(-\sigma) & \text{ if $\sigma(1)\notin \{-1,1\}$ and $\negative(\sigma)$ is odd} \\
    \psi(\sigma) & \text{ if $\sigma(1)=1$ and $\negative(\sigma)$ is even} \\
    \psi((-1)  \sigma) & \text{ if $\sigma(1)=1$ and $\negative(\sigma)$ is odd} \\
    -\psi(-(-1)  \sigma) & \text{ if $\sigma(1)=-1$ and $\negative(\sigma)$ is even} \\
    -\psi(-\sigma) & \text{ if $\sigma(1)=-1$ and $\negative(\sigma)$ is odd.}\end{cases}\]
    Define $\Psi_{\overline{\C_{D,n+1}}}:B_n\to \overline{\C_{D,n+1}}$ by
    \[ \Psi_{\overline{\C_{D,n+1}}}(\sigma)=\begin{cases} 
    -\psi(-\sigma) & \text{ if $\sigma(1)\notin \{-1,1\}$ and $\negative(\sigma)$ is even} \\
    \psi(\sigma) & \text{ if $\sigma(1)\notin \{-1,1\}$ and $\negative(\sigma)$ is odd} \\
    \psi((-1)  \sigma) & \text{ if $\sigma(1)=1$ and $\negative(\sigma)$ is even} \\
    \psi(\sigma) & \text{ if $\sigma(1)=1$ and $\negative(\sigma)$ is odd} \\
    -\psi(-\sigma) & \text{ if $\sigma(1)=-1$ and $\negative(\sigma)$ is even} \\
    -\psi(-(-1)  \sigma) & \text{ if $\sigma(1)=-1$ and $\negative(\sigma)$ is odd.}\end{cases}\]
\end{definition}

\begin{theorem}\label{lem:inverses}
    The functions $\Phi_{\C_{D,n+1}}$ and $\Psi_{\C_{D,n+1}}$ are inverses, and the functions $\Phi_{\overline{\C_{D,n+1}}}$ and $\Psi_{\overline{\C_{D,n+1}}}$ are inverses. 
\end{theorem}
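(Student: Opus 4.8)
The plan is to prove both inverse claims in parallel, since they differ only in parity bookkeeping, and in each case the crux is to check two things: that the candidate selected by $\Psi$ lands in the intended half of $\C_{B,n+1}$, and that $\Phi$ sends it back to $\sigma$. I would then close by a counting argument.

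First I would verify that $\Psi_{\C_{D,n+1}}$ maps $B_n$ into $\C_{D,n+1}$ and $\Psi_{\overline{\C_{D,n+1}}}$ maps $B_n$ into $\overline{\C_{D,n+1}}$. By \cref{lem:preimages_1}(a), the four candidates $\psi(\sigma)$, $\psi((-1)\sigma)$, $-\psi(-\sigma)$, $-\psi(-(-1)\sigma)$ lie in $\C_{B,n+1}^+$, $\C_{B,n+1}^+$, $\C_{B,n+1}^-$, $\C_{B,n+1}^-$ respectively, and membership in $D_{n+1}$ is determined by the parity of $\negative$. Since $(-1)\sigma$ flips the parity of $\negative(\sigma)$, part (b) of \cref{lem:preimages_1} shows that when $\negative(\sigma)$ is even the two candidates lying in $\C_{D,n+1}$ are $\psi(\sigma)$ and $-\psi(-(-1)\sigma)$, while when $\negative(\sigma)$ is odd they are $\psi((-1)\sigma)$ and $-\psi(-\sigma)$; the complementary pairs lie in $\overline{\C_{D,n+1}}$. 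Matching this against the six-way case definitions of the two $\Psi$ functions confirms each selected candidate lies in the intended set.

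Second I would show $\Phi\circ\Psi=\mathrm{id}_{B_n}$ for both functions. When $\sigma(1)\in\{-1,1\}$ this is immediate from \cref{lem:preimages_2} (for $\sigma(1)=1$) and \cref{cor:preimages_2} (for $\sigma(1)=-1$), which record the values of $\Phi$ on all four candidates; one checks case by case that the candidate chosen by $\Psi$ maps to $\sigma$. When $\sigma(1)\notin\{-1,1\}$ I would argue that the factor of $(-1)$ in $\Phi$ is never triggered. For the two candidates in $\C_{B,n+1}^+$, \cref{thm:inverses_no_n} gives $\phi(\psi(\sigma))=\sigma$ and $\phi(\psi((-1)\sigma))=(-1)\sigma$, whose first entries both equal $\sigma(1)$ (as $(-1)$ fixes entries of absolute value $\neq 1$) and hence are not $-1$; so \cref{cor:descent0} shows the descent at $0$ is not introduced and $\Phi$ agrees with $\phi$. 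For the two candidates in $\C_{B,n+1}^-$ I would first note that $0\in\Des_{\Delta}(\pi,-\phi(-\pi))$ if and only if $0\in\Des_{\Delta}(-\pi,\phi(-\pi))$, since negating all entries simultaneously flips the descent-$0$ status of $\pi$ and of $-\phi(-\pi)$; then the same computation with $\phi(-\pi)(1)=-\sigma(1)\neq-1$ shows $\Phi$ agrees with $-\phi(-\,\cdot\,)$. Together these give $\Phi(\psi(\sigma))=\Phi(-\psi(-\sigma))=\sigma$ and $\Phi(\psi((-1)\sigma))=\Phi(-\psi(-(-1)\sigma))=(-1)\sigma$, and comparing with the definitions of $\Psi$ finishes the verification of $\Phi\circ\Psi=\mathrm{id}$.

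Finally I would conclude by counting. \cref{lem:preimages_1}(c) shows each fiber of $\Phi$ over a pair $\{\sigma,(-1)\sigma\}$ contributes exactly one element to each of the four subsets, whence $|\C_{D,n+1}|=|\overline{\C_{D,n+1}}|=|B_n|$. Having shown $\Psi_{\C_{D,n+1}}\colon B_n\to\C_{D,n+1}$ satisfies $\Phi_{\C_{D,n+1}}\circ\Psi_{\C_{D,n+1}}=\mathrm{id}_{B_n}$, the map $\Psi_{\C_{D,n+1}}$ is injective; an injection between finite sets of equal cardinality is a bijection, so $\Phi_{\C_{D,n+1}}=\Psi_{\C_{D,n+1}}^{-1}$, and the identical argument applies to $\overline{\C_{D,n+1}}$. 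I expect the main obstacle to be the $\sigma(1)\notin\{-1,1\}$ analysis for the two $\C_{B,n+1}^-$ candidates: one must correctly translate the descent-$0$ condition in the definition of $\Phi$ on $\C_{B,n+1}^-$ (phrased via $-\pi$ and $\phi(-\pi)$) back through \cref{cor:descent0}, and the bookkeeping of how $(-1)$ and the global sign change interact with the first entry is where errors are easiest to make.
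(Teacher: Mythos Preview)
Your proposal is correct and follows essentially the same approach as the paper: verify that $\Psi$ lands in the right half via \cref{lem:preimages_1}, check $\Phi\circ\Psi=\mathrm{id}_{B_n}$ case by case using \cref{lem:preimages_2} and \cref{cor:preimages_2} when $\sigma(1)\in\{-1,1\}$ and using \cref{cor:descent0} (the paper cites \cref{lem:descent0}, which amounts to the same thing) when $\sigma(1)\notin\{-1,1\}$, and finish by a cardinality argument. The only differences are cosmetic---you treat both halves in parallel and derive $|\C_{D,n+1}|=|B_n|$ from \cref{lem:preimages_1}(c) rather than stating $2^n\cdot n!$ directly.
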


\begin{proof}
    We will show this for $\Phi_{\C_{D,n+1}}$ and $\Psi_{\C_{D,n+1}}$. The argument for $\Phi_{\overline{\C_{D,n+1}}}$ and $\Psi_{\overline{\C_{D,n+1}}}$ is similar, so we leave this to the reader. From \cref{lem:preimages_1}, it is straightforward to show that the six cases of $\Psi_{\C_{D,n+1}}$ produce elements in $\C_{D,n+1}$, so $\Psi_{\C_{D,n+1}}$ is well-defined. It suffices now to show $\Phi_{\C_{D,n+1}}(\Psi_{\C_{D,n+1}}(\sigma))=\sigma$ for all $\sigma\in B_n$. As $\C_{D,n+1}$ and $B_n$ are finite sets of the same size $2^n\cdot n!$, it will follow that $\Phi_{\C_{D,n+1}}$ and $\Psi_{\C_{D,n+1}}$ are bijections that must be inverses of one another.

    We first consider when $\sigma(1)\notin \{-1,1\}$, where we will use the fact that $\phi$ and $\psi$ are inverses from \cref{thm:inverses_no_n}. In the case where $\negative(\sigma)$ is even, we have that $\Psi_{\C_{D,n+1}}(\sigma)=\psi(\sigma)\in \C_{B,n+1}^+$ and $\phi(\Psi_{\C_{D,n+1}}(\sigma))=\sigma$. Since $\sigma(1)\notin \{-1,1\}$, \cref{lem:descent0} implies that $\Phi$ will not multiply by $(-1)$, so $\Phi_{\C_{D,n+1}}(\Psi_{\C_{D,n+1}}(\sigma))=\phi(\Psi_{\C_{D,n+1}}(\sigma))=\sigma$. In the case where $\negative(\sigma)$ is odd, we have that $\Psi_{\C_{D,n+1}}=-\psi(-\sigma)\in \C_{B,n+1}^-$ and $-\phi(-\Psi_{\C_{D,n+1}}(\sigma))=\sigma$. \cref{lem:descent0} similarly implies that $\Phi_{\C_{D,n+1}}$ will not multiply by $(-1)$, so $\Phi_{\C_{D,n+1}}(\Psi_{\C_{D,n+1}}(\sigma))=\sigma$.

    We now consider when $\sigma(1)\in \{-1,1\}$. If $\sigma(1)=1$ and $\negative(\sigma)$ is even, then \cref{lem:preimages_2} implies
    \[\Phi_{\C_{D,n+1}}(\Psi_{\C_{D,n+1}}(\sigma))=\Phi(\psi(\sigma))=\sigma.\]
    If $\sigma(1)=1$ and $\negative(\sigma)$ is odd, then \cref{lem:preimages_2} implies 
    \[\Phi_{\C_{D,n+1}}(\Psi_{\C_{D,n+1}}(\sigma))=\Phi(\psi((-1)\sigma))=\sigma.\]
    If $\sigma(1)=-1$ and $\negative(\sigma)$ is even, then \cref{cor:preimages_2} implies 
    \[\Phi_{\C_{D,n+1}}(\Psi_{\C_{D,n+1}}(\sigma))=\Phi(-\psi(-(-1)\sigma))=\sigma.\]
    Finally, if $\sigma(1)=-1$ and $\negative(\sigma)$ is odd, then \cref{cor:preimages_2} implies 
    \[\Phi_{\C_{D,n+1}}(\Psi_{\C_{D,n+1}}(\sigma))=\Phi(-\psi(-\sigma))=\sigma. \qedhere \]
\end{proof}

Our main result, \cref{thm:main_thm}, now follows by combining \cref{thm:descents_preserved} and \cref{lem:inverses}. \cref{thm:main_cor} is immediate from \cref{thm:main_thm}.

\section{Asymptotic normality on cyclic signed permutations}\label{sec:normal}

In this section, we establish \cref{thm:CLT}, which shows that the descent and flag major index statistics on $\C_{B,n},\C_{D,n},$ or $\overline{\C_{D,n}}$ are asymptotically normal. Our general approach will be to apply \cref{thm:main_thm} and \cref{thm:main_cor} to approximate the distributions for the descent and flag major index statistics on $\C_{B,n},\C_{D,n},$ or $\overline{\C_{D,n}}$ with the corresponding distribution on $B_{n-1}$. We then derive asymptotic normality on  $\C_{B,n}$, $\C_{D,n}$, or $\overline{\C_{D,n}}$ using asymptotic normality on $B_{n-1}$.

\begin{theorem}\label{thm:CLTdes}
    Let $(X_n)_{n\geq 1}$ be the random variables corresponding to the descent statistic on $(\C_{B,n})_{n\geq 1}$, $(\C_{D,n})_{n\geq 1}$, or $(\overline{\C_{D,n}})_{n\geq 1}$ with means $(\mu_n)_{n\geq 1}$ and variances $(\sigma_n^2)_{n\geq 1}$. The standardized random variable $(X_n-\mu_n)/\sigma_n$ converges in distribution to the standard normal distribution.
\end{theorem}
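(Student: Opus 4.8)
The plan is to transport the problem to $B_{n-1}$ through the bijection $\Phi$ and to absorb the single descent position that $\Phi$ does not control into a negligible error term, after which asymptotic normality on $B_{n-1}$ together with Slutsky's Theorem finishes the argument. I would fix one of the three families $\C_{B,n}$, $\C_{D,n}$, or $\overline{\C_{D,n}}$, equip it with the uniform measure, and apply \cref{thm:main_thm} in the form $\Phi\colon \C_{B,n}\to B_{n-1}$ with $\Des(\pi)\cap\{0,1,\dots,n-2\}=\Des(\Phi(\pi))$. Since every $\pi\in\C_{B,n}\subseteq B_n$ has $\Des(\pi)\subseteq\{0,1,\dots,n-1\}$, the only descent position not governed by $\Phi$ is $n-1$, which yields the pointwise identity
\begin{equation*}
\des(\pi)=\des(\Phi(\pi))+Z_n(\pi),\qquad Z_n(\pi)\coloneqq\begin{cases}1 & \text{if } n-1\in\Des(\pi),\\ 0 & \text{otherwise.}\end{cases}
\end{equation*}

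Next I would pin down the distribution of $\des(\Phi(\pi))$. By \cref{thm:main_cor} (directly for $\C_{D,n}$ and $\overline{\C_{D,n}}$, and for $\C_{B,n}$ after averaging the two equal-sized pieces, which carry identical counts for each prescribed $\Des\cap\{0,\dots,n-2\}$), the statistic $\des(\Phi(\pi))=|\Des(\pi)\cap\{0,\dots,n-2\}|$ has exactly the distribution of the descent statistic $Y_{n-1}$ on $B_{n-1}$ under the uniform measure. By \cref{thm:CM}, $Y_{n-1}$ has mean $\nu_{n-1}=(n-1)/2$ and variance $\tau_{n-1}^2=n/12$, and $(Y_{n-1}-\nu_{n-1})/\tau_{n-1}\xrightarrow{d}\mathcal N(0,1)$. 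Meanwhile \cref{thm:CLLSY} gives $\mu_n=n/2$ and $\sigma_n^2=(n+1)/12$ for $n\ge 5$, so $\nu_{n-1}-\mu_n=-\tfrac12$, $\tau_{n-1}/\sigma_n=\sqrt{n/(n+1)}\to 1$, and $\sigma_n\to\infty$.

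I would then split the standardized variable as
\begin{equation*}
\frac{X_n-\mu_n}{\sigma_n}=\underbrace{\frac{\des(\Phi(\pi))-\nu_{n-1}}{\tau_{n-1}}}_{A_n}\cdot\underbrace{\frac{\tau_{n-1}}{\sigma_n}}_{B_n}+\underbrace{\frac{Z_n(\pi)-\tfrac12}{\sigma_n}}_{C_n}.
\end{equation*}
Here $A_n\xrightarrow{d}\mathcal N(0,1)$, since for each $n$ it shares the distribution of the standardized descent statistic on $B_{n-1}$; $B_n\to 1$ deterministically, hence $B_n\xrightarrow{p}1$; and $|C_n|\le (2\sigma_n)^{-1}\to 0$, so $C_n\xrightarrow{p}0$. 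Two applications of Slutsky's Theorem (\cref{thm:Slutsky}) — first to the product, giving $A_nB_n\xrightarrow{d}\mathcal N(0,1)$, then to the sum $A_nB_n+C_n$ — yield $(X_n-\mu_n)/\sigma_n\xrightarrow{d}\mathcal N(0,1)$, handling all three families simultaneously.

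The crux of the argument, and the only place requiring any care, is the error term $C_n$: the descent at position $n-1$ is precisely the discrepancy flagged in the introduction, and the whole plan rests on the observation that $Z_n$ is bounded in $[0,1]$ while $\sigma_n=\Theta(\sqrt n)\to\infty$, so this correction is crushed to $0$ in the limit. Everything else is bookkeeping with the known moments from \cref{thm:CM,thm:CLLSY} and the distributional identity supplied by \cref{thm:main_cor}.
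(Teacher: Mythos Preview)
Your proposal is correct and essentially identical to the paper's argument: both transport to $B_{n-1}$ via $\Phi$, invoke \cref{thm:main_cor} for the distributional identity, use \cref{thm:CM,thm:CLLSY} for the explicit moments, and finish with Slutsky. The only cosmetic difference is that the paper writes the decomposition with three summands $\frac{X_n-X_n'}{\sigma_n}+\frac{X_n'-\mu_{n-1}}{\sigma_{n-1}}\cdot\frac{\sigma_{n-1}}{\sigma_n}+\frac{\mu_{n-1}-\mu_n}{\sigma_n}$, whereas you fold the first and last of these into the single term $C_n=(Z_n-\tfrac12)/\sigma_n$.
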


\begin{proof}
     Fix $(\Omega_n)_{n\geq 1}$ to be either $(\C_{B,n})_{n\geq 1}$, $(\C_{D,n})_{n\geq 1}$, or $(\overline{\C_{D,n}})_{n\geq 1}$. Define $X_n:\Omega_n\to \mathbb{R}$ to be the random variable corresponding to the descent statistic on $\pi\in \Omega_n$ generated uniformly at random, and define $X_n':\Omega_n\to \mathbb{R}$ to be the corresponding descent statistic on $\Phi(\pi)$, where $\Phi$ is the function from \cref{thm:main_thm}. Using these, express
    \begin{equation}\label{eq:Xn_sum}
            \frac{X_{n}-\mu_{n}}{\sigma_{n}} =\frac{X_n-X'_n}{\sigma_{n}}+\frac{X'_n-\mu_{n-1}}{\sigma_{n-1}}\cdot \frac{\sigma_{n-1}}{\sigma_{n}}+\frac{\mu_{n-1}-\mu_{n}}{\sigma_{n}}.
    \end{equation}
    \cref{thm:main_thm} implies that $X_n-X'_n$ is a random variable that takes values in $\{0,1\}$. When $n$ is sufficiently large, \cref{thm:CLLSY} and \cref{thm:CM} imply $\mu_n=n/2$ and $\sigma_n=\sqrt{(n+1)/12}$. Combining these observations, it is straightforward to verify \eqref{eq:converge} to show that \[\frac{X_n-X'_n}{\sigma_{n}} \xrightarrow{p} 0, \quad  \frac{\sigma_{n-1}}{\sigma_{n}}=\frac{\sqrt{n}}{\sqrt{n+1}} \xrightarrow{p} 1, \quad  \text{ and }\quad  \frac{\mu_{n-1}-\mu_{n}}{\sigma_{n}} = \frac{-1/2}{\sqrt{(n+1)/12}} \xrightarrow{p} 0.\]
    \cref{thm:main_cor} implies that the distribution of $X'_n$ coincides with the distribution of the descent statistic for elements of $B_{n-1}$ generated uniformly at random. Hence, \cref{thm:CM} implies that 
    ${(X'_n-\mu_{n-1})}/{\sigma_{n-1}} \xrightarrow{d} \mathcal{N}(0,1)$. Returning to \eqref{eq:Xn_sum}, we now have that
    \begin{equation}\label{eq:Xn_sum2}
        \frac{X_{n}-\mu_{n}}{\sigma_{n}} =\underbrace{\frac{X_n-X'_n}{\sigma_{n}}}_{\xrightarrow{p}{0}}+\underbrace{\frac{X'_n-\mu_{n-1}}{\sigma_{n-1}}}_{\xrightarrow{d}{\mathcal{N}(0,1)}} \cdot \underbrace{\frac{\sigma_{n-1}}{\sigma_{n}}}_{\xrightarrow{p}{1}}+\underbrace{\frac{\mu_{n-1}-\mu_{n}}{\sigma_{n}}}_{\xrightarrow{p}{0}},
    \end{equation}
    so the result follows from \cref{thm:Slutsky}. 
\end{proof}

\begin{theorem}\label{thm:CLTfmaj}
    Let $(X_n)_{n\geq 1}$ be the random variables corresponding to the flag major index statistic on $(\C_{B,n})_{n\geq 1}$, $(\C_{D,n})_{n\geq 1}$, or $(\overline{\C_{D,n}})_{n\geq 1}$ with means $(\mu_n)_{n\geq 1}$ and variances $(\sigma_n^2)_{n\geq 1}$. The standardized random variable $(X_n-\mu_n)/\sigma_n$ converges in distribution to the standard normal distribution.
\end{theorem}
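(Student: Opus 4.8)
The plan is to follow the proof of \cref{thm:CLTdes} essentially verbatim, replacing the descent statistic by the flag major index and substituting the moment asymptotics of \cref{thm:CM2} and \cref{thm:CLLSY2} for those of \cref{thm:CM} and \cref{thm:CLLSY}. Fix $(\Omega_n)_{n\geq 1}$ to be one of $(\C_{B,n})_{n\geq 1}$, $(\C_{D,n})_{n\geq 1}$, or $(\overline{\C_{D,n}})_{n\geq 1}$. Let $Y_n$ be the flag major index of a uniformly random $\pi\in\Omega_n$, and let $Y_n'=\fmaj(\Phi(\pi))$, where $\Phi:\C_{B,n}\to B_{n-1}$ is the function of \cref{thm:main_thm}. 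I would then write
\begin{equation*}
    \frac{Y_n-\mu_n}{\sigma_n}=\frac{Y_n-Y_n'}{\sigma_n}+\frac{Y_n'-\mu_{n-1}}{\sigma_{n-1}}\cdot\frac{\sigma_{n-1}}{\sigma_n}+\frac{\mu_{n-1}-\mu_n}{\sigma_n},
\end{equation*}
show that the first and third summands tend to $0$ in probability and that the middle factor converges in distribution to $\mathcal N(0,1)$, and conclude by \cref{thm:Slutsky}.

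First I would pin down the distribution of $Y_n'$. By \cref{thm:main_thm}(b), $\Phi$ restricts to bijections $\C_{D,n}\to B_{n-1}$ and $\overline{\C_{D,n}}\to B_{n-1}$, and hence is two-to-one from $\C_{B,n}=\C_{D,n}\sqcup\overline{\C_{D,n}}$ onto $B_{n-1}$. In each of the three cases the pushforward of the uniform measure on $\Omega_n$ under $\Phi$ is therefore the uniform measure on $B_{n-1}$, so $Y_n'$ has exactly the distribution of $\fmaj$ on a uniformly random element of $B_{n-1}$. Here I cannot simply invoke \cref{thm:main_cor}, since $\fmaj$ is not determined by the descent set alone; the bijectivity statement itself is what is needed. \cref{thm:CM2} then gives $(Y_n'-\mu_{n-1})/\sigma_{n-1}\xrightarrow{d}\mathcal N(0,1)$, with $\mu_{n-1}=(n-1)^2/4$ and $\sigma_{n-1}^2=(4(n-1)^3+6(n-1)^2-(n-1))/36$.

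Next I would handle the deterministic quantities. By \cref{thm:CLLSY2}, for $n\geq 5$ the mean and variance of $Y_n$ agree with those of $\fmaj$ on $B_n$, namely $\mu_n=n^2/4$ and $\sigma_n^2=(4n^3+6n^2-n)/36=\Theta(n^3)$. Since $\mu_{n-1}-\mu_n=\Theta(n)$ and $\sigma_n=\Theta(n^{3/2})$, the third summand is $\Theta(n^{-1/2})\to 0$, while $\sigma_{n-1}/\sigma_n\to 1$ by comparing leading terms. The essential estimate is the first summand. Writing $\fmaj=2\maj+\negative$ and using $\Des(\Phi(\pi))=\Des(\pi)\cap\{0,1,\dots,n-2\}$ from \cref{thm:main_thm}(a), the major index can change only through the single possible descent of $\pi$ at position $n-1$, so $2(\maj(\pi)-\maj(\Phi(\pi)))$ equals $2(n-1)$ when $n-1\in\Des(\pi)$ and $0$ otherwise, and is in any case at most $2(n-1)$. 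The negative statistic is not preserved by $\Phi$, but since $\negative(\pi)\in\{0,\dots,n\}$ and $\negative(\Phi(\pi))\in\{0,\dots,n-1\}$, the two differ by at most $n$. Thus $|Y_n-Y_n'|=O(n)$ deterministically, whence $(Y_n-Y_n')/\sigma_n=O(n^{-1/2})\to 0$ uniformly, and in particular in probability.

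The only genuinely new point compared with \cref{thm:CLTdes} is this last estimate. There the error term satisfies $X_n-X_n'\in\{0,1\}$, whereas here $Y_n-Y_n'$ is unbounded and $\Phi$ does not preserve $\negative$. The key observation is that a crude bound $|Y_n-Y_n'|=O(n)$, accounting both for the dropped descent at position $n-1$ (which feeds into $\maj$) and for the unconstrained change in $\negative$, already vanishes against the $\Theta(n^{3/2})$ scale of $\sigma_n$, so no detailed understanding of how $\Phi$ acts on the negative entries is needed. With all three factors controlled, \cref{thm:Slutsky} applied to the displayed decomposition finishes the proof.
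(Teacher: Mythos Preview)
Your proposal is correct and follows essentially the same route as the paper: the identical Slutsky decomposition, the same use of \cref{thm:CM2} and \cref{thm:CLLSY2} for the moments, and the observation that the bijectivity in \cref{thm:main_thm}(b) (rather than \cref{thm:main_cor}) is what pins down the law of $Y_n'$. The only difference is in the error bound: the paper records the sharper inequality $0\le X_n-X_n'\le 2n+1$, which implicitly uses that $\Phi$ changes $\negative$ by at most one (the swaps in Algorithm~\ref{algorithm} preserve signs, and the adjustments by $(-1)$ and global negation alter $\negative$ by exactly $0$ or $1$; cf.\ the proof of \cref{lem:preimages_1}(b)). Your cruder bound $|Y_n-Y_n'|=O(n)$, obtained by bounding each $\negative$ term separately, is perfectly adequate since $\sigma_n=\Theta(n^{3/2})$, and has the virtue of not relying on any finer property of $\Phi$.
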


\begin{proof}
    We proceed as in the proof of \cref{thm:CLTdes} with the flag major index statistic for $X_n$ and $X_n'$ to obtain \eqref{eq:Xn_sum} again. \cref{thm:main_thm}, \cref{thm:CLLSY2}, and \cref{thm:CM2} imply that when $n$ us sufficiently large, we have $0\leq X_n-X_n'\leq 2n+1$, $\mu_n=n^2/4$, and $\sigma_n=\sqrt{4n^3+6n^2-n}/6$. From this, we find that
    \[0\leq \frac{X_n-X_n'}{\sigma_n} \leq \frac{6(2n+1)}{\sqrt{4n^3+6n^2-n}},\]
    \[\frac{\sigma_{n-1}}{\sigma_n} = \frac{\sqrt{4(n-1)^3+6(n-1)^2-(n-1)}}{\sqrt{4n^3+6n^2-n}},\]
    \[\frac{\mu_{n-1}-\mu_n}{\sigma_n} = \frac{-3(2n-1)}{\sqrt{4n^3+6n^2-n}}.\]
    One can similarly show that \eqref{eq:Xn_sum2} holds, and the result again follows from \cref{thm:Slutsky}.
\end{proof}

\section*{Acknowledgements}

The author was partially supported by the University of the South's 2025 Faculty Summer Research Stipend. The author would like to thank Alexander Burstein for proposing a generalization of \cref{thm:elizalde} for signed permutations, which led to this work. The author would also like to thank Zachary Hamaker for helpful conversations.

\bibliographystyle{alphaurl}
\bibliography{Bibliography}

\addresseshere

\newpage 

\appendix

\section{Reduction to Elizalde's bijection}\label{appendix:A}

When $\pi\in \C_{S,n+1}$ is inputted into Algorithm \ref{algorithm}, the algorithm can be simplified significantly. In this case, $\pi$ contains only positives in its cycle notation, so this also holds for all signed permutations $\sigma$ encountered in the algorithm. This leads to our first reduction.

\begin{observation}\label{obs:reduction1}
    The absolute values and negative signs in Algorithm \ref{algorithm} can be omitted. Furthermore, $0\notin \Des(\pi)$ and $0\notin \Des(\phi(\pi))$, so $\Phi(\pi)=\phi(\pi)$.
\end{observation}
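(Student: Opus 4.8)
The plan is to prove the two assertions in turn, with the central work being an induction showing that positivity is preserved throughout Algorithm \ref{algorithm}. First I would argue that when $\pi \in \C_{S,n+1}$, every signed permutation $\sigma$ arising in the course of Algorithm \ref{algorithm} has only positive entries in its cycle notation. The base case is immediate: the initial $\sigma$ in line \ref{line:starting} is assembled directly from the cycle notation of $\pi$, which consists entirely of positive integers, so $\sigma$ is positive. For the inductive step I would examine a single swap. The element $y$ is chosen as whichever of $z+\epsilon$ or $-(z+\epsilon)$ appears in $\sigma$; since $\sigma$ is positive and a swap occurs only when $P_{\pi,\sigma}(|z|,|z+\epsilon|)$ is \texttt{True} (which forces $z+\epsilon$ to be a positive integer in range), we have $y = z+\epsilon > 0$, and of course $x = z > 0$. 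The swap replaces $x$ and $y$ by $\sgn(x)\cdot|y| = y$ and $\sgn(y)\cdot|x| = x$, so it merely interchanges two positive values and leaves $\sigma$ positive. Iterating over all swaps in the \texttt{while} loops gives the claim.

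With positivity established, $|x| = x$ and $\sgn(x) = 1$ for every entry $x$ encountered, so each occurrence of an absolute value or of the sign function in Algorithm \ref{algorithm} acts trivially and may be deleted; the algorithm collapses to its unsigned form. This is the first assertion of the observation.

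For the second assertion I would invoke the descent convention. Since $\pi \in S_{n+1}$ viewed inside $B_{n+1}$ satisfies $\pi(1) > 0$, we have $0 \notin \Des(\pi)$. By the positivity just shown, $\phi(\pi)(1) > 0$ as well, so $0 \notin \Des(\phi(\pi))$, and hence $0 \notin \Des_{\Delta}(\pi,\phi(\pi))$. Finally, because all entries of $\pi$ are positive, $n+1$ appears positively in its cycle notation, so $\pi \in \C_{B,n+1}^+$; the first case in the definition of $\Phi$ therefore applies and yields $\Phi(\pi) = \phi(\pi)$.

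The main obstacle, and really the only point requiring care, is the inductive step on swaps: one must confirm that the choice of $y$ never selects the negative alternative $-(z+\epsilon)$, and that the out-of-range and zero cases (for instance $z+\epsilon = 0$ when $z = 1$ and $\epsilon = -1$) are excluded because $P_{\pi,\sigma}$ returns \texttt{False} whenever $\min\{|z|,|z+\epsilon|\} \notin [n-1]$, so that no negative sign is ever introduced. Once that is pinned down, both assertions follow routinely.
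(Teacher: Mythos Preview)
Your proposal is correct and follows the same line as the paper, which treats this as an observation and offers only the one-sentence justification that ``$\pi$ contains only positives in its cycle notation, so this also holds for all signed permutations $\sigma$ encountered in the algorithm.'' Your inductive argument on swaps is a careful fleshing-out of exactly that claim, and your handling of the boundary cases (where $z+\epsilon$ would fall outside $[n]$) via the definition of $P_{\pi,\sigma}$ is the right way to make it airtight; the deduction of $\Phi(\pi)=\phi(\pi)$ from positivity and $\pi\in\C_{B,n+1}^+$ is likewise correct.
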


Order property \ref{op:descents} can also be used to restate the Boolean function $P_{\pi,\sigma}$ used in lines \ref{line:if0} and \ref{line:whileloop1}. This property holds by \cref{thm:order_swap}.

\begin{observation}
    In lines \ref{line:if0} and \ref{line:whileloop1} of Algorithm \ref{algorithm}, $P_{\pi,\sigma}(z,z+\epsilon)$ is \texttt{True} precisely when $\pi(z)>\pi(z+\epsilon)$ and $\sigma(z)<\sigma(z+\epsilon)$.
\end{observation}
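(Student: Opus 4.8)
The plan is to prove the two directions of the equivalence separately, working in the setting of \cref{obs:reduction1} so that every entry is positive and absolute values may be dropped. Throughout, recall that in lines \ref{line:if0} and \ref{line:whileloop1} the entry $z$ is the \emph{last} (rightmost) element of the cycle $\sigma_j$, and set $d=\min\{z,z+\epsilon\}$, so that $\{d,d+1\}=\{z,z+\epsilon\}$ and $P_{\pi,\sigma}(z,z+\epsilon)$ is \texttt{True} exactly when $d\in\Des_{\Delta}(\pi,\sigma)\cap[n-1]$. Since both lines are checkpoints at the start of an iteration of the \texttt{for} loop or the \texttt{while} loop in line \ref{line:whileloop1}, the order properties hold for $\sigma$ by \cref{thm:order_swap}, and in particular order property \ref{op:descents} is available.

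For the forward direction I would assume $P_{\pi,\sigma}(z,z+\epsilon)$ is \texttt{True}, so $d\in\Des_{\Delta}(\pi,\sigma)\cap[n-1]$, and apply order property \ref{op:descents}. That property asserts that exactly one of $\{d,d+1\}=\{z,z+\epsilon\}$ is the last element of a cycle, with the other appearing non-last to its right. Since $z$ is by hypothesis the last element of $\sigma_j$, uniqueness forces the distinguished element $x$ to be $z$ and the other element to be $z+\epsilon$. Order property \ref{op:descents} then gives precisely $\pi(z)>\pi(z+\epsilon)$ and $\sigma(z)<\sigma(z+\epsilon)$ (after dropping absolute values).

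For the backward direction I would assume $\pi(z)>\pi(z+\epsilon)$ and $\sigma(z)<\sigma(z+\epsilon)$ and split on the sign of $\epsilon$. When $\epsilon=1$ one has $d=z$, and the two hypotheses read $\pi(d)>\pi(d+1)$ and $\sigma(d)<\sigma(d+1)$, i.e.\ $d\in\Des(\pi)$ and $d\notin\Des(\sigma)$; when $\epsilon=-1$ one has $d=z-1$, and the hypotheses become $\pi(d+1)>\pi(d)$ and $\sigma(d+1)<\sigma(d)$, i.e.\ $d\notin\Des(\pi)$ and $d\in\Des(\sigma)$. In either case $d$ lies in exactly one of $\Des(\pi)$ and $\Des(\sigma)$, hence in $\Des_{\Delta}(\pi,\sigma)$; as $z+\epsilon$ is a valid index, the two distinct elements $z,z+\epsilon$ lie in $[n]$, so $d\le n-1$, and therefore $P_{\pi,\sigma}(z,z+\epsilon)$ is \texttt{True}.

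The only real subtlety, and the step where order property \ref{op:descents} is genuinely needed, is the forward direction. Membership $d\in\Des_{\Delta}(\pi,\sigma)$ by itself only records that $d$ lies in exactly one of the two descent sets, which a priori leaves two possible orientations for the inequalities on $\pi$ and $\sigma$. Order property \ref{op:descents}, combined with the structural fact that $z$ occupies the last position of its cycle, selects the correct orientation and thereby pins the inequalities in the asserted direction; the backward implication, by contrast, is a direct unwinding of the definition of the symmetric difference.
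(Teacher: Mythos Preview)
Your argument is correct and matches the paper's own justification: the paper does not give a detailed proof but simply notes before the observation that order property \ref{op:descents} (which holds by \cref{thm:order_swap}) is what allows the restatement, and your forward direction is exactly an unpacking of that property while your backward direction is the trivial unwinding of the definition of $\Des_\Delta$. Your proof is a faithful expansion of the paper's one-line indication.
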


Additionally, the \texttt{while} loop in line \ref{line:whileloop2} can be restated. \cref{lem:continued_swaps} describes when it terminates, and the following cases remain when we combine this with \cref{obs:reduction1}. 

\begin{observation}
The \texttt{while} loop on line \ref{line:whileloop2} terminates when the first element of $\sigma_j$ is swapped or the elements preceding the ones swapped differ by more than $1$.
\end{observation}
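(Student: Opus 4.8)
The plan is to invoke \cref{lem:continued_swaps}, which already enumerates the three circumstances under which the \texttt{while} loop on line \ref{line:whileloop2} of Algorithm \ref{algorithm} can terminate, and then to argue that one of those three cases simply cannot arise in the all-positive setting of this appendix. First I would recall from \cref{obs:reduction1} that when $\pi\in \C_{S,n+1}$, no negative signs ever appear in the cycle notation of $\pi$ or of any signed permutation $\sigma$ produced during Algorithm \ref{algorithm}; every entry encountered stays positive throughout the computation.

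The key point is then that case (b) of \cref{lem:continued_swaps}---the swapped elements $x$ and $y$ failing to share a sign---is vacuous here, since both $x$ and $y$ are positive at every step and hence always have the same sign. Consequently the loop can only halt under cases (a) or (c). Case (a) is already exactly the condition that the first element of $\sigma_j$ is involved in a swap. Case (c), which in the general statement concerns the \emph{absolute values} of the elements preceding $x$ and $y$, simplifies in this setting: because all entries are positive, those absolute values coincide with the elements themselves, so (c) reads as the condition that the elements preceding the ones swapped differ by more than $1$. Combining the two surviving cases yields precisely the termination criterion asserted in the observation.

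I do not anticipate any substantive obstacle, as the entire content is the reduction from three cases to two by eliminating the sign-mismatch case (b). The only step requiring genuine care is the appeal to \cref{obs:reduction1}: it is this guarantee that negatives never appear---rather than merely appearing rarely---that renders case (b) impossible and lets the absolute values in case (c) be dropped. Once that is in hand, the observation follows immediately from \cref{lem:continued_swaps}.
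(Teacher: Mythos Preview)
Your proposal is correct and follows exactly the paper's approach: the paper simply notes that \cref{lem:continued_swaps} describes when the loop terminates, and that combining this with \cref{obs:reduction1} eliminates the sign-mismatch case, leaving only the two cases stated.
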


Combining these observations, we can restate the restriction $\Phi|_{\C_{S,n+1}}$ as the output of Algorithm \ref{algorithm3} given below. This is consistent with the algorithm originally given in \cite[Section 2]{elizalde}.

\begin{algorithm}\DontPrintSemicolon 
\caption{\algname{Cyclic Permutation to Permutation}}\label{algorithm3}
\KwIn{$\pi \in \C_{S,n+1}$}
\KwOut{a permutation in $S_n$}
express $\pi$ in the cycle notation with $n+1$ in the final position \\
$\pi_{i_1},\pi_{i_2},\ldots,\pi_{i_{m+1}}\coloneqq $ the left-to-right maxima in the cycle notation above\\
set $\sigma=\sigma_1\sigma_2\dots \sigma_{m}\coloneqq (\pi_{i_1},\ldots,\pi_{i_2-1})(\pi_{i_2},\ldots,\pi_{i_3-1})\dots (\pi_{i_{m}},\ldots,\pi_{i_{m+1}-1})$ \\
\For{$j=1,2,\ldots,m$} 
{
    $z\coloneqq $ the rightmost entry of $\sigma_j$ \\
    \If{\normalfont $\pi(z)>\pi(z+\epsilon)$ and $\sigma(z)<\sigma(z+\epsilon)$ for some $\epsilon\in \{-1,1\}$}
    {
        set $\epsilon\in \{-1,1\}$ to be the value such that $\pi(z)>\pi(z+\epsilon)$, $\sigma(z)<\sigma(z+\epsilon)$, and $\pi(z+\epsilon)$ is largest\\
        \While{\normalfont $\pi(z)>\pi(z+\epsilon)$ and $\sigma(z)<\sigma(z+\epsilon)$} 
        {
            $x\coloneqq z$ \\
            $y\coloneqq z+\epsilon$\\
            swap $x$ and $y$ in the cycle notation of $\sigma$ \label{line:swap}\\
            \If{\normalfont the swap did not involve the first element of $\sigma_j$ and the elements preceding $x$ and $y$ differ by $1$}
                {
                $x,y\coloneqq $ the elements respectively preceding the ones swapped \\
                continue the algorithm from line \ref{line:swap}
            }
            $z\coloneqq $ the rightmost entry of $\sigma_j$
        }
    }
}
\Return $\sigma$
\end{algorithm}

\newpage 

\section{Colored permutations}\label{appendix:B}

We give a brief description of the colored permutation groups, state analogs of results from this paper, and outline proofs. We refer the reader to \cite{des_fmaj} for a more general description. 

The \emph{colored permutation group} $S_{n,r}$ is the wreath product $\mathbb{Z}_r\wr S_n$, where $\mathbb{Z}_r=\{0,1,\ldots,r-1\}$ is the cyclic group on $r$ elements. Fixing $\zeta = e^{2\pi i/r}$, one can view $S_{n,r}$ as permutations on
\begin{equation}\label{eq:colored_elements}
    \{i\cdot \zeta^c:i\in [n],c\in \mathbb{Z}_r\}
\end{equation}
with the property that whenever $i \mapsto j\cdot \zeta^c$ for $i,j\in [n]$ and $c\in \mathbb{Z}_r$, we have $i\cdot \zeta^{d} \mapsto j\cdot \zeta^{c+d}$ for all $d\in \mathbb{Z}_r$.

We will denote an element of $S_{n,r}$ using a pair $(\omega,\tau)$, where $\omega\in S_n$ and $\tau:[n]\to \mathbb{Z}_r$ is a function. As a permutation on \eqref{eq:colored_elements}, this is defined by $(\omega,\tau)(i\cdot \zeta^c)=\omega(i)\cdot \zeta^{c+\tau(i)}$ for all $i\in [n]$ and $c\in\mathbb{Z}_r$. As the images of the elements in $[n]$ are sufficient for determining a colored permutation, these uniquely determine $(\omega,\tau)$. The \emph{one-line notation} expresses these elements in the form
\[[\omega(1)\cdot \zeta^{\tau(1)},\omega(2)\cdot \zeta^{\tau(2)},\dots,\omega(n)\cdot \zeta^{\tau(n)}].\]

Elements in $S_{n,r}$ can also be expressed in cycle notation. The \emph{two-line cycle notation} for $(\omega,\tau)$ consists of disjoint cycles of the form \[\begin{pmatrix} a_{\ell} &  a_1 & \ldots & a_{\ell-1} \\
a_1\cdot \zeta^{\tau(a_{\ell})} & a_2\cdot \zeta^{\tau(a_1)} & \ldots &   a_{\ell}\cdot \zeta^{\tau(a_{\ell-1})} \end{pmatrix},\]
where $a_1,a_2,\ldots,a_{\ell}\in [n]$. The \emph{(one-line) cycle notation} is obtained by deleting the first line, resulting in
\[\left(a_1\cdot \zeta^{\tau(a_{\ell})},a_2\cdot \zeta^{\tau(a_1)},\ldots, a_{\ell}\cdot \zeta^{\tau(a_{\ell-1})}\right),\]
where we include commas for clarity. A colored permutation $(\omega,\tau)\in S_{n,r}$ is \emph{cyclic} if its cycle notation consists of a single cycle of length $n$, and we use $\C_{S,n,r}$ to denote the set of cyclic permutations in $S_{n,r}$. Additionally, the \emph{color} of $(\omega,\tau)\in S_{n,r}$ is the element $\sum_{i\in [n]} \tau(i) \in \mathbb{Z}_r$. 

We now consider descents. By fixing the ordering 
\[1<2<3<\dots < 1\cdot \zeta <2\cdot \zeta < 3\cdot \zeta <\dots <1\cdot \zeta^{r-1}<2\cdot \zeta^{r-1}<3\cdot \zeta^{r-1}<\ldots,\]
the \emph{descent set} of $(\omega,\tau)\in S_{n,r}$ is defined as
\[\Des(\omega,\tau)=\{i\in [n]: (\omega,\tau)(i)>(\omega,\tau)(i+1)\},\]
where $n+1$ is a fixed point by convention. Alternatively, $i \in \Des(\omega,\tau)$ when either $\tau(i)>\tau(i+1)$, or $\tau(i)=\tau(i+1)$ and $\omega(i)>\omega(i+1)$. From this, the \emph{descent statistic} is defined as $\des(\omega,\tau)=|\Des(\omega,\tau)|$. We emphasize that this differs from the descent set and statistic for a signed permutation in $B_n\cong S_{n,2}$, as the total order used for defining descents is different and $n$ is a possible descent instead of $0$. For this notion of descents, one can establish a generalization of \cref{thm:elizalde}.

\begin{theorem}\label{thm:Snr}
    For any positive integers $n$ and $r$, there exists a function $\Phi: \C_{S,n+1,r}\to S_{n,r}$ with the following properties:
    \begin{enumerate}[label=(\alph*)]
        \item for all $(\omega,\tau)\in \C_{S,n+1,r}$, we have $\Des(\omega,\tau)\cap \{1,2,\ldots,n-1\}=\Des(\Phi(\omega,\tau))$, and
        \item the restriction of $\Phi$ to the elements of $\C_{S,n+1,r}$ with any fixed color is a bijection. 
    \end{enumerate}
\end{theorem}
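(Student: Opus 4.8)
The plan is to generalize the entire machinery developed for $B_n\cong S_{n,2}$ in Sections \ref{sec:bijection}--\ref{sec:inverses} to the colored setting, replacing the sign group $\{-1,1\}$ with the full color group $\mathbb{Z}_r$. The essential structural fact is that in type $B$ the complication was that each element carried one of two signs, and the function $\Phi$ had to track these signs, multiply by $(-1)$ when a spurious descent at position $0$ was introduced, and partition the cyclic permutations according to parity of the negative count. In the colored case the descent at position $0$ no longer exists (here $n$, not $0$, is the omitted descent), which actually \emph{simplifies} the boundary analysis, but instead of a single sign bit we must bookkeep a color in $\mathbb{Z}_r$ on each element. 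Accordingly, I would first reprove the analog of Algorithm \ref{algorithm}: starting from $(\omega,\tau)\in \C_{S,n+1,r}^+$ (the cyclic colored permutations with $n+1\cdot\zeta^0$ appearing with trivial color in the final position of the cycle), break the cycle at left-to-right maxima, then run the same swap procedure where a ``swap'' now exchanges two absolute values while carrying their colors along, exactly as the sign $\sgn(\cdot)$ was carried in the type-$B$ swap.

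The heart of the argument is to re-establish the order and swap properties of Section \ref{sec:for_loop} in the colored setting. Here I expect the statements to go through essentially verbatim once ``$\pm x$'' is reinterpreted as ``$x\cdot\zeta^c$ for the color $c$ it currently carries'' and the total order $1<2<\cdots<1\cdot\zeta<\cdots$ replaces the signed order; the Boolean function $P_{(\omega,\tau),\sigma}$ is defined using $\Des_\Delta$ with respect to this colored order. The crucial point making the induction work is that a swap changes each of the two affected absolute values by exactly $1$ while preserving colors, so the relative order within and between cycles is controlled just as in Lemmas \ref{lem:SPOP1}--\ref{lem:OP4}. I would then state a colored analog of \cref{lem:descents_preserved} giving $\Des(\omega,\tau)\cap[n-1]=\Des(\phi(\omega,\tau))$ directly, with no position-$0$ correction needed, which is why part (a) follows cleanly. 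For the reverse direction, Algorithm \ref{algorithm2} generalizes identically, and the proof that $\phi$ and $\psi$ are inverses (\cref{thm:inverses_no_n}) transfers because the counting argument only uses that $\C_{S,n+1,r}^+$ and $S_{n,r}$ have the same cardinality $r^n\cdot n!$.

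For part (b), the color stratification replaces the type-$D$ parity splitting. The key observation is that appending the fixed point $n+1\cdot\zeta^0$ and performing color-preserving swaps does not change the total color, so $\phi$ and $\psi$ preserve color; meanwhile, multiplying the one-line notation by a fixed ``color shift'' lets one move between the $r$ color classes of $\C_{S,n+1,r}$ in a controlled way, analogous to how $\pi\mapsto-\pi$ toggled between $\C_{B,n+1}^+$ and $\C_{B,n+1}^-$. I would define, for each target color $c\in\mathbb{Z}_r$, a variant $\Phi_c$ obtained by composing $\phi$ (or $\psi$) with the appropriate color shift that lands the preimage in the color-$c$ stratum, then argue that the $r$ candidate preimages of a given $\sigma\in S_{n,r}$ distribute one into each color class, exactly as \cref{lem:preimages_1} distributed four candidates into the four $\C_{B,n+1}^\pm\cap\C_{D,n+1}^{(\complement)}$ strata. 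The main obstacle, and the step I would treat most carefully, is the colored analog of \cref{lem:preimages_1}(b)--(c): verifying that the color of $-\psi(\cdots)$-type constructions lands in the predicted class. In type $B$ this hinged on the identity $\negative(-\psi(-(-1)\sigma))=1+\negative((-1)\sigma)$ together with the fact that $(-1)$ changes the negative count by $\pm1$; in the colored case this becomes an arithmetic statement in $\mathbb{Z}_r$ about how the color functional $\sum_i\tau(i)$ transforms under inversion of colors and under the color shift, and one must confirm it yields a \emph{bijective} correspondence between the $r$ strata rather than merely a well-defined map. Once that color-arithmetic lemma is in hand, the inverse functions can be assembled by casework exactly as in \cref{lem:inverses}, giving a bijection on each color class and hence \cref{thm:Snr}.
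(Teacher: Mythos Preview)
Your proposal is vastly more complicated than what the paper actually does, and it misses the key structural observation that makes the colored case \emph{easier} than the type-$B$ case rather than harder.

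The paper's proof is a two-line construction using only Elizalde's type-$A$ bijection $\phi:\C_{S,n+1}\to S_n$ as a black box: define $\Phi(\omega,\tau)=(\phi(\omega),\tau|_{[n]})$. Part~(a) follows because a descent at position $i\in[n-1]$ in $S_{n,r}$ factors cleanly: if $\tau(i)\neq\tau(i+1)$ the descent is determined by the colors alone (which $\Phi$ does not touch at positions $\leq n$), while if $\tau(i)=\tau(i+1)$ the descent is determined by the underlying permutation, where Elizalde's theorem applies directly. Part~(b) is immediate: the inverse on the color-$c$ stratum sends $(\omega',\tau')$ to $(\phi^{-1}(\omega'),\tau)$ with $\tau(n+1)=c-\sum_{i\in[n]}\tau'(i)$. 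No swap machinery, no order/swap properties, no case analysis.

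The reason this trick works for $S_{n,r}$ but not for $B_n$ is precisely the descent convention. In $S_{n,r}$ the total order is lexicographic with color dominant, so descents decouple into a color part and a permutation part. In $B_n$ the order interleaves positive and negative values, so the sign and the absolute value interact nontrivially in determining descents; that interaction is exactly what forced the swap algorithm in Sections~\ref{sec:bijection}--\ref{sec:inverses}. Your plan to port that algorithm to the colored order is not obviously sound either: the swap lemmas (e.g., \cref{lem:SP4}) rely on the metric fact that swapping two elements whose absolute values differ by $1$ perturbs each entry by exactly $1$ in the ambient order, but under the colored order two elements with adjacent absolute values and different colors can be arbitrarily far apart, so the ``no new descents introduced'' arguments would need genuine reworking. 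In short, you are proposing to rebuild heavy machinery to solve a problem that dissolves once you notice the descent structure factors.
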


\begin{proof}
    Let $\phi:\C_{S,n+1}\to S_n$ be the bijection from \cref{thm:elizalde}. Extend this to a map $\Phi$ on $\C_{S,n+1,r}$ by defining $\Phi(\omega,\tau)=(\phi(\omega),\tau|_{[n]})$, where $\tau|_{[n]}$ denotes the restriction of $\tau$. For any $i\in [n-1]$, we consider a descent at position $i$ by focusing on the one-line notations for $(\omega,\tau)$ and $\Phi(\omega,\tau)$. 
    \begin{itemize}
        \item If $\tau(i)\neq \tau(i+1)$, then a descent at position $i$ in $(\omega,\tau)$ and $\Phi(\omega,\tau)$ is determined by the relative order of $\tau(i)=\tau|_{[n]}(i)$ and $\tau(i+1)=\tau|_{[n]}(i+1)$ as elements in $\mathbb{Z}$. Hence, $i\in \Des(\omega,\tau)$ if and only if $i\in \Des(\Phi(\omega,\tau))$. 
        \item If $\tau(i)=\tau(i+1)$, then a descent at position $i$ in $(\omega,\tau)$ and $\Phi(\omega,\tau)$ is determined by whether or not there is a descent at position $i$ in $\omega$ and $\phi(\omega)$. By \cref{thm:elizalde}, $i\in \Des(\omega)$ if and only if $i\in \Des(\phi(\omega))$, so we conclude $i\in \Des(\omega,\tau)$ if and only if $i\in \Des(\Phi(\omega,\tau))$.
    \end{itemize}
    Combined, we see that (a) holds. 
    
    For (b), we fix a color $c\in \mathbb{Z}$. We map each $(\omega',\tau')\in S_{n,r}$ to an element $(\omega,\tau)\in \C_{S,n+1,r}$ as follows:
    \begin{itemize}
        \item define $\omega=\phi^{-1}(\omega')$, and
        \item extend $\tau':[n]\to \mathbb{Z}_r$ to $\tau:[n+1]\to \mathbb{Z}_r$ by defining \[\tau(n+1)=c-\sum_{i\in [n]} \tau'(i)\in \mathbb{Z}_r.\]
    \end{itemize}
    It is straightforward to show that this is the inverse of $\Phi$ on the elements of $\C_{S,n+1,r}$ with color $c$.
\end{proof}

\begin{remark}
    In general, the bijection in the proof of \cref{thm:Snr} does not preserve the descent at position $n$. This descent for $(\omega,\tau)\in \C_{S,n+1,r}$ is determined by the relative order of $(\omega,\tau)(n)$ and $(\omega,\tau)(n+1)$, while this only depends on $\tau(n)$ for $\Phi(\omega,\tau)\in S_{n,r}$. Additionally, $n+1$ is a possible descent for $(\omega,\tau)\in \C_{S,n+1,r}$, while it is not a possible descent for $\Phi(\omega,\tau)\in S_{n,r}$. Combined, we see that $\Des(\omega,\tau)$ and $\Des(\Phi(\omega,\tau))$ can potentially differ at two values, $n$ and $n+1$.
\end{remark}

We next consider the flag major index statistic. For $(\omega,\tau)\in S_{n,r}$, define the following statistics:
\begin{itemize}
    \item the \emph{major index statistic} is $\maj(\omega,\tau)=\sum_{i\in \Des(\omega,\tau)\cap [n-1]} \, i$,
    \item the \emph{color statistic} is $\col(\omega,\tau)=\sum_{i\in [n]} \tau(i)$, and
    \item the \emph{flag major index statistic} is $\fmaj(\omega,\tau)=r\cdot \maj(\omega,\tau)+\col(\omega,\tau)$. 
\end{itemize}
Note that the color statistic chooses representative elements from $\{0,1,\ldots,r-1\}$ for each $\tau(i)$, but adds them as elements in $\mathbb{Z}$. 

One can now establish the following analog of \cref{thm:CLT} for colored permutations. The proof of this result proceeds similarly to the proof of \cref{thm:CLT} given in \cref{sec:normal}. It will use \cref{thm:Snr} in place of \cref{thm:main_thm}, the general $S_{n,r}$ versions of \cref{thm:CM,thm:CM2} given in \cite{CM2012}, and analogs of \cref{thm:CLLSY,thm:CLLSY2} that can be established using the results in \cite{CLLSY}.

\begin{theorem}\label{thm:coloredCLT}
    Let $(X_n)_{n\geq 1}$ be the random variables corresponding to either the descent or flag major index statistic on the cyclic permutations in $S_{n,r}$ with colors in any fixed, nonempty subset of $\mathbb{Z}_r$. Letting $(\mu_n)_{n\geq 1}$ and $(\sigma_n^2)_{n\geq 1}$ denote the respective means and variances, the standardized random variable $(X_n-\mu_n)/\sigma_n$ converges in distribution to the standard normal distribution. 
\end{theorem}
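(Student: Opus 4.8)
The plan is to mirror the proofs of \cref{thm:CLTdes} and \cref{thm:CLTfmaj}, using \cref{thm:Snr} in place of \cref{thm:main_thm}. Fix a nonempty subset $C\subseteq \mathbb{Z}_r$, and let $\Omega_n$ be the set of cyclic permutations in $S_{n,r}$ whose color lies in $C$. Applying \cref{thm:Snr} with $n$ replaced by $n-1$ yields a map $\Phi:\C_{S,n,r}\to S_{n-1,r}$ that preserves descents in $\{1,\ldots,n-2\}$ and whose restriction to each color class is a bijection. Let $X_n$ be the chosen statistic (descent or flag major index) on $\pi\in\Omega_n$ drawn uniformly at random, let $X_n'$ be the same statistic on $\Phi(\pi)\in S_{n-1,r}$, and let $\mu_{n-1},\sigma_{n-1}$ be the mean and standard deviation of $X_n'$. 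I would then reuse the three-term decomposition \eqref{eq:Xn_sum} verbatim and conclude via \cref{thm:Slutsky}.

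The three ingredients are as follows. First, since $\Omega_n=\bigsqcup_{c\in C}\{\pi\in\C_{S,n,r}:\col(\pi)\equiv c\}$ and each color class is mapped bijectively onto $S_{n-1,r}$ by $\Phi$, the color classes all have the same cardinality $r^{n-1}(n-1)!$, so $\Phi|_{\Omega_n}$ is $|C|$-to-one and pushes the uniform measure on $\Omega_n$ forward to the uniform measure on $S_{n-1,r}$. Thus $X_n'$ is distributed exactly as the statistic on a uniformly random element of $S_{n-1,r}$, and the $S_{n,r}$ versions of \cref{thm:CM,thm:CM2} from \cite{CM2012} supply both explicit formulas for $\mu_{n-1},\sigma_{n-1}$ and the convergence $(X_n'-\mu_{n-1})/\sigma_{n-1}\xrightarrow{d}\mathcal{N}(0,1)$. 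Second, I would bound the discrepancy $X_n-X_n'$: because $\Des(\pi)$ and $\Des(\Phi(\pi))$ agree on $\{1,\ldots,n-2\}$, for the descent statistic $|X_n-X_n'|$ is controlled by the possible descents at the two remaining positions $n-1,n$, hence is at most a constant; for the flag major index, $\maj(\pi)-\maj(\Phi(\pi))\in\{0,n-1\}$ comes solely from a descent at position $n-1$, while $\col(\pi)-\col(\Phi(\pi))=\tau(n)\in\{0,\ldots,r-1\}$, so $0\le X_n-X_n'=O(n)$. Third, I would invoke the colored analogs of \cref{thm:CLLSY,thm:CLLSY2}, obtained by adapting the computations of \cite{CLLSY} to the wreath-product setting, to identify $\mu_n,\sigma_n$ with the mean and standard deviation on $S_{n-1,r}$ for large $n$.

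With these in place the remaining verifications are routine. The $S_{n,r}$ variance formulas show $\sigma_{n-1}$ is of order $\sqrt n$ for the descent statistic and of order $n^{3/2}$ for the flag major index, while $X_n-X_n'$ is of order $O(1)$ and $O(n)$ respectively; in either case $(X_n-X_n')/\sigma_n\xrightarrow{p}0$ since $\sigma_n\to\infty$. The explicit formulas for the means and variances then give $\sigma_{n-1}/\sigma_n\to 1$ and $(\mu_{n-1}-\mu_n)/\sigma_n\to 0$, exactly as annotated in \eqref{eq:Xn_sum2}, and \cref{thm:Slutsky} applied to \eqref{eq:Xn_sum} yields $(X_n-\mu_n)/\sigma_n\xrightarrow{d}\mathcal{N}(0,1)$.

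The main obstacle is establishing the colored analogs of \cref{thm:CLLSY,thm:CLLSY2}, namely that the means and variances of these statistics on cyclic colored permutations with color in $C$ coincide with those on $S_{n-1,r}$ for large $n$; this requires carrying the moment computations of \cite{CLLSY} through the wreath product and checking that restricting the color to $C$ leaves the limiting mean and variance unchanged. I would note, however, that these exact identities can be bypassed: writing $X_n=(X_n-X_n')+X_n'$ with $X_n-X_n'$ of the bounded order above, one has $|\mu_n-\mu_{n-1}|$ and $|\sigma_n-\sigma_{n-1}|$ of strictly lower order than $\sigma_{n-1}$, which already forces $\sigma_{n-1}/\sigma_n\to 1$ and $(\mu_{n-1}-\mu_n)/\sigma_n\to 0$ without any explicit formula, so the CLLSY analogs are a convenience rather than a genuine necessity.
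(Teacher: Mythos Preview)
Your proposal is correct and follows essentially the same route the paper outlines: mimic the proofs of \cref{thm:CLTdes,thm:CLTfmaj} using \cref{thm:Snr} in place of \cref{thm:main_thm}, the $S_{n,r}$ versions of \cref{thm:CM,thm:CM2} from \cite{CM2012}, and colored analogs of \cref{thm:CLLSY,thm:CLLSY2} derived from \cite{CLLSY}. Your final paragraph, observing that the Minkowski-type bound $|\sigma_n-\sigma_{n-1}|\le \sqrt{\operatorname{Var}(X_n-X_n')}$ together with the $O(1)$ (resp.\ $O(n)$) bound on $X_n-X_n'$ already forces $\sigma_{n-1}/\sigma_n\to 1$ and $(\mu_{n-1}-\mu_n)/\sigma_n\to 0$, is a small refinement over what the paper states explicitly and legitimately removes the dependence on the colored CLLSY analogs.
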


\end{document}